\newenvironment{altenumerate}
   {\begin{list}
      {\textup{(\theenumi)} }
      {\usecounter{enumi}
       \setlength{\labelwidth}{0pt}
       \setlength{\labelsep}{2pt}
       \setlength{\leftmargin}{0pt}
       \setlength{\itemsep}{\the\smallskipamount}
       \renewcommand{\theenumi}{\roman{enumi}}
      }}
   {\end{list}}
\newtheorem{lem}{Lemma}[section]
\newtheorem{definition}[lem]{Definition}
\newtheorem{cor}[lem]{Corollary}
\newtheorem{thm}[lem]{Theorem}
\newtheorem{prop}[lem]{Proposition}
\newtheorem{conj}[lem]{Conjecture}
\newtheorem{claim}[lem]{Claim}
\theoremstyle{remark}
\newtheorem{rem}[lem]{Remark}
\newtheorem{example}[lem]{Example}
\DeclareMathOperator{\Hom}{Hom}
\DeclareMathOperator{\coker}{coker}
\DeclareMathOperator{\Spa}{Spa}
\DeclareMathOperator{\Spec}{Spec}
\DeclareMathOperator{\Spf}{Spf}
\DeclareMathOperator{\Fil}{Fil}
\DeclareMathOperator{\gr}{gr}
\DeclareMathOperator{\Gal}{Gal}
\DeclareMathOperator{\Lie}{Lie}
\DeclareMathOperator{\qlog}{qlog}
\def\ad{\mathrm{ad}}
\def\an{\mathrm{an}}
\def\et{\mathrm{\acute{e}t}}
\def\proet{\mathrm{pro\acute{e}t}}
\def\cont{\mathrm{cont}}
\def\cris{\mathrm{cris}}
\def\dR{\mathrm{dR}}
\newcommand{\Z}{\mathbb{Z}}
\newcommand{\F}{\mathbb{F}}
\newcommand{\Q}{\mathbb{Q}}
\newcommand{\B}{\mathbb{B}}
\newcommand{\C}{\mathbb{C}}
\newcommand{\OO}{\mathcal{O}}
\newcommand{\GL}{\mathrm{GL}}
\newcommand{\PGL}{\mathrm{PGL}}
\newcommand{\Sh}{\mathrm{Sh}}
\begin{document}

\author{Peter Scholze}
\title{Perfectoid Spaces: A survey}
\address{Mathematisches Institut der Universit\"at Bonn, Endenicher Allee 60, 53115 Bonn, Germany}
\email{scholze@math.uni-bonn.de}

\begin{abstract} This paper, written in relation to the Current Developments in Mathematics 2012 Conference, discusses the recent papers on perfectoid spaces. Apart from giving an introduction to their content, it includes some open questions, as well as complements to the results of the previous papers.
\end{abstract}\footnote{This work was done while the author was a Clay Research Fellow.}

\maketitle
\tableofcontents
\pagebreak

\section{Introduction}

The original aim of the theory of perfectoid spaces was to prove Deligne's weight-monodromy conjecture over $p$-adic fields by reduction to the case of local fields of equal characteristic $p$, where the result is known. In order to so, the theory of perfectoid spaces establishes a general framework relating geometric questions over local fields of mixed characteristic with geometric questions over local fields of equal characteristic. One application of this theory is a general form of Faltings's almost purity theorem. For the moment, the weight-monodromy conjecture is not proved in full generality using these methods; however, we can reduce the conjecture to a statement on approximating a 'fractal' by algebraic varieties.

However, the theory of perfectoid spaces has proved to be useful in other situations, and certainly many more will be found. On one hand, perfectoid spaces embody Faltings's almost purity theorem; as this is the crucial technical ingredient to Faltings's approach to $p$-adic Hodge theory, it is not surprising that one can prove new results in $p$-adic Hodge theory. For example, it becomes possible to analyze general (proper smooth) rigid-analytic varieties, instead of just algebraic varieties. That $p$-adic Hodge theory for rigid-analytic varieties should be possible was already conjectured by Tate, \cite{TatePDivGroups}, when he first conjectured the existence of a Hodge-Tate decomposition, and of course it aligns well with the situation over $\C$.

In these first papers, the perfectoid spaces had more of an auxiliary role. However, it turns out that many natural constructions, which so far could not be given any geometric meaning, are perfectoid spaces in a natural way: For example, Shimura varieties with infinite level at $p$, and Rapoport-Zink spaces (local analogues of Shimura varieties) with infinite level. For Rapoport-Zink spaces, there has been a duality conjecture (proved by Faltings) which says that certain 'dual' pairs of Rapoport-Zink spaces are isomorphic at infinite level. Until now, the formulation of such an isomorphism has been very ad hoc; however, it can now be formulated as an isomorphism of perfectoid spaces. This has all expected consequences, such as comparisons of \'etale cohomology.

In the case of Shimura varieties, interesting applications (related to torsion in the cohomology of locally symmetric varieties and Emerton's completed cohomology groups) arise, which are work in progress (\cite{ScholzeTorsion}); some of this is sketched at the end of this survey.

This paper was written in relation to the talks of the author at the Current Developments in Mathematics conference 2012 at Harvard. The author wants to thank the organizers heartily for the invitation, and the opportunity to write this survey. The original intention was to write a survey paper about perfectoid spaces and the weight-monodromy conjecture. However, as this is exactly the content of \cite{ScholzePerfectoidSpaces1}, the author decided instead to give some introduction to the general content of the three papers \cite{ScholzePerfectoidSpaces1}, \cite{ScholzeHodge} and \cite{ScholzeWeinstein}, and mention some open questions, as well as some complements on the results of these papers. These are new results, but they are immediate applications of the theory built up there. Thus, some parts of this paper do not have the nature of a survey, and assume familiarity with the content of these papers.
\newpage

\section{Perfectoid Spaces}

\subsection{Introduction}

This introduction is essentially identical to a post of the author on MathOverflow, \cite{ScholzeMOPost}.

\begin{definition} A perfectoid field $K$ is a complete non-archimedean field $K$ of residue characteristic $p$, equipped with a non-discrete valuation of rank $1$, such that the Frobenius map $\Phi: \OO_K/p\to \OO_K/p$ is surjective, where $\OO_K\subset K$ is the subring of elements of norm $\leq 1$.
\end{definition}

Some authors, e.g. Gabber-Ramero in their book on almost ring theory, \cite{GabberRamero}, call such fields deeply ramified (although they do not require that they are complete).

\begin{example} Standard examples of perfectoid fields are given by the completion of $\mathbb{Q}_p(p^{1/p^\infty})$, $\mathbb{Q}_p(\mu_{p^\infty})$, $\overline{\mathbb{Q}}_p$, or $\mathbb{F}((t))(t^{1/p^\infty})$.
\end{example}

Given a perfectoid field $K$, one can form a second perfectoid field $K^\flat$, always of characteristic $p$, given as the fraction field of
\[
\OO_{K^\flat} = \varprojlim_\Phi \OO_K/p\ ,
\]
where the transition maps are given by Frobenius. Concretely, if $K$ is the completion of $\mathbb{Q}_p(p^{1/p^\infty})$, then $K^\flat$ is given by the completion of $\mathbb{F} _p((t))(t^{1/p^\infty})$, where $t$ is the element
\[
(p,p^{1/p},p^{1/p^2},\ldots)\in \OO_{K^\flat} = \varprojlim \OO_K/p\ .
\]
In particular, we have a canonical identification
\[
\OO_{K^\flat}/t = \F_p[t^{1/p^\infty}]/t\cong \Z_p[p^{1/p^\infty}]/p = \OO_K/p\ .
\]
   
In this situation, one has the following theorem, due to Fontaine-Wintenberger, \cite{FontaineWintenberger}, in most examples.

\begin{thm}\label{GalEquiv} There is a canonical isomorphism of absolute Galois group $\Gal(\bar{K}/K)\cong \Gal(\bar{K}^\flat/K^\flat)$.
\end{thm}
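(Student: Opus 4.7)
The plan is to upgrade the tilting construction $K \leadsto K^\flat$ from fields to an equivalence between the categories of finite étale $K$-algebras and finite étale $K^\flat$-algebras. Since absolute Galois groups are determined (as profinite groups) by these categories with their fiber functors, any such equivalence preserving degrees will produce the desired isomorphism.

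First I would introduce the notion of a perfectoid $K$-algebra $R$ (roughly: a complete Tate $K$-algebra, uniform, with Frobenius surjective on $R^\circ / p$) and the tilt $R^\flat$, defined multiplicatively as $\varprojlim_{x \mapsto x^p} R$ with a ring structure transported from $\varprojlim_\Phi R^\circ / p$. The inverse functor (untilting) would be constructed via Fontaine's theta map $\theta : W(\OO_{K^\flat}) \to \OO_K$ and its analogue for algebras; the tautological identification $\OO_{K^\flat}/t \cong \OO_K/p$ makes the round-trip on mod-$p$ reductions the identity, and one bootstraps this to an equivalence of categories between perfectoid $K$-algebras and perfectoid $K^\flat$-algebras, functorial and preserving all the natural structure.

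The main obstacle is the following almost purity statement: every finite étale $K$-algebra $L$ is perfectoid, and moreover its ring of integers $\OO_L$ is almost finite étale over $\OO_K$ (with respect to the maximal ideal $\mm$ of $\OO_K$). In characteristic $p$ this is trivial since $\OO_{K^\flat}$ is already perfect, so any finite separable extension of $K^\flat$ is automatically a perfectoid field. In mixed characteristic this is the core content: one reduces to showing that Frobenius is surjective on $\OO_L / p$, which follows from the deeply ramified nature of $K$ once one proves that almost étaleness lifts along the tower. The cleanest route is to prove this simultaneously with the tilting equivalence: for a finite étale $K$-algebra $L$, define $L^\flat$ by tilting, check via an explicit computation with Teichmüller-like lifts that $L \mapsto L^\flat$ sends finite étale $K$-algebras of degree $n$ to finite étale $K^\flat$-algebras of degree $n$, and use the untilting functor in the other direction.

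Having established this correspondence, I would conclude as follows: fix compatible algebraic closures and choose embeddings inducing compatible separable closures on the tilted side. The equivalence of finite étale categories identifies the fiber functors at these chosen closures, hence induces an isomorphism of the associated profinite fundamental groups, which are exactly $\Gal(\bar{K}/K)$ and $\Gal(\bar{K}^\flat/K^\flat)$. The hardest step is without question the mixed-characteristic almost purity input; everything else is bookkeeping once the tilting equivalence and almost purity are in hand.
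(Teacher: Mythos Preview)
Your approach is correct but heavier than the paper's. You propose to build the full tilting equivalence for perfectoid algebras via $W(-)$ and Fontaine's map $\Theta$, then invoke almost purity to see that finite \'etale $K$-algebras are perfectoid and hence tilt to finite \'etale $K^\flat$-algebras. The paper instead stays entirely within almost mathematics and never introduces perfectoid algebras for this theorem: using the Tate/Gabber--Ramero result that $\OO_L/\OO_K$ is almost finite \'etale for any finite $L/K$, finite \'etale covers of $K$ are identified with almost finite \'etale covers of $\OO_K$; these are then equivalent, by unique deformation of almost \'etale algebras across nilpotent thickenings, to almost finite \'etale covers of $\OO_K/p$; the canonical identification $\OO_K/p = \OO_{K^\flat}/t$ transports these to almost finite \'etale covers of $\OO_{K^\flat}/t$; and one reverses the same two steps on the $K^\flat$-side, where almost purity is elementary since everything is perfect. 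Both routes rest on the same hard input (mixed-characteristic almost purity), but the paper's five-line chain is more direct and needs no Witt-vector or perfectoid formalism, while your route has the advantage of simultaneously setting up the machinery the paper develops immediately afterward for the relative case.
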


At this point, it may be instructive to explain this theorem in the example where $K$ is the completion of $\mathbb{Q}_p(p^{1/p^\infty})$; in all examples to follow, we make this choice of $K$. It says that there is a natural equivalence of categories between the category of finite extensions $L$ of $K$ and the category of finite extensions $M$ of $K^\flat$. Let us give an example: Say $M$ is the extension of $K^\flat$ given by adjoining a root of $X^2 - 7t X + t^5$. Basically, the idea is that one replaces $t$ by $p$, so that one would like to define $L$ as the field given by adjoining a root of $X^2 - 7p X + p^5$. However, this is obviously not well-defined: If $p=3$, then $X^2 - 7t X + t^5=X^2 - t X + t^5$, but $X^2 - 7p X + p^5\neq X^2 - p X + p^5$, and one will not expect in general that the fields given by adjoining roots of these different polynomials are the same.

However, there is the following way out: $M$ can be defined as the splitting field of $X^2 - 7t^{1/p^n} X + t^{5/p^n}$ for all $n\geq 0$ (using that $K^\flat$ is perfect), and if we choose $n$ very large, then one can see that the fields $L_n$ given as the splitting field of $X^2 - 7p^{1/p^n} X + p^{5/p^n}$ will stabilize as $n\rightarrow \infty$; this is the desired field $L$. Basically, the point is that the discriminant of the polynomials considered becomes very small, and the difference between any two different choices one might make when replacing $t$ by $p$ becomes comparably small.

This argument can be made precise by using Faltings's almost mathematics, as developed systematically by Gabber-Ramero, \cite{GabberRamero}. Consider $K\supset \OO_K\supset \mathfrak{m}$, where $\mathfrak{m}$ is the maximal ideal; in the example, it is the one generated by all $p^{1/p^n}$, and it satisfies $\mathfrak{m}^2 = \mathfrak{m}$, because the valuation on $K$ is non-discrete. We have a sequence of localization functors:
\[
\OO_K-\mathrm{mod}\to \OO_K-\mathrm{mod} / \mathfrak{m}-\mathrm{torsion}\to \OO_K-\mathrm{mod} / p-\mathrm{power\ torsion}\ .
\]
The last category is equivalent to $K$-mod, and the composition of the two functors is like taking the generic fibre of an object with an integral structure.

In this sense, the category in the middle can be seen as a slightly generic fibre, sitting strictly between an integral structure and an object over the generic fibre. Moreover, an object like $\OO_K/p$ is nonzero in this middle category, so one can talk about torsion objects, neglecting only very small objects. The official name for this middle category is $\OO_K^a$-mod: almost $\OO_K$-modules.

This category is an abelian tensor category, and hence one can define in the usual way the notion of an $\OO_K^a$-algebra (= almost $\OO_K$-algebra), etc. . With some work, one also has notions of almost finitely presented modules and (almost) \'etale maps. In the following, we will often use the notion of an almost finitely presented \'etale map, which is the almost analogue of a finite \'etale map: We will use the term almost finite \'etale map in the following.

\begin{thm}[Tate(\cite{TatePDivGroups}), Gabber-Ramero(\cite{GabberRamero})] If $L/K$ is a finite extension, then $\OO_L/\OO_K$ is almost finite \'etale. Similarly, if $M/K^\flat$ is finite, then $\OO_M/\OO_{K^\flat}$ is almost finite \'etale.
\end{thm}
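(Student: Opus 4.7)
The plan is to handle the two halves of the theorem separately, starting with the characteristic $p$ statement about $M/K^\flat$, which is markedly cleaner.

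First I would work in characteristic $p$. The point is that $K^\flat$ is perfect, so Frobenius is bijective on it and divides valuations by $p$ on $\OO_{K^\flat}$-integral elements. Fix a primitive integral element $\alpha\in\OO_M$ with minimal polynomial $f(X)\in\OO_{K^\flat}[X]$ of degree $d=[M:K^\flat]$, and set
\[
R_n \;:=\; \OO_{K^\flat}[\alpha^{1/p^n}] \;\subset\; \OO_M,
\]
a finite free $\OO_{K^\flat}$-module of rank $d$. Since $K^\flat$ is perfect, the minimal polynomial $g_n$ of $\alpha^{1/p^n}$ is obtained from $f$ by extracting $p^n$-th roots of the coefficients, and from $(x-y)^{p^n}=x^{p^n}-y^{p^n}$ the roots of $g_n$ satisfy $v(\beta_i-\beta_j)=v(\alpha_i-\alpha_j)/p^n$. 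Hence
\[
v(\mathrm{disc}(g_n)) \;=\; v(\mathrm{disc}(f))/p^n \;\longrightarrow\; 0.
\]
Since $\OO_M/R_n$ is annihilated by $\mathrm{disc}(g_n)$, the family $\{R_n\}$ simultaneously witnesses almost finite presentation of $\OO_M/\OO_{K^\flat}$ and the almost perfectness of the trace pairing, which together amount to almost finite \'etaleness.

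For the mixed characteristic case, I would invoke Theorem~\ref{GalEquiv} to identify the finite extension $L/K$ with an extension $M = L^\flat/K^\flat$, and then upgrade the field-level correspondence to an almost isomorphism
\[
\OO_L/p^N \;\cong\; \OO_{L^\flat}/t^N
\]
compatible with the identification $\OO_K/p\cong\OO_{K^\flat}/t$ already observed above. The almost finite \'etaleness proved in characteristic $p$ would then descend modulo each $p^N$ to $\OO_L/\OO_K$, and the conclusion would follow by passing to the inverse limit in $N$. As a fallback, one can follow Tate's original strategy: write $K$ as the completion of $\bigcup_n K_n$ with $K_n=\Q_p(p^{1/p^n})$, descend $L$ to a finite extension $L_n/K_n$ for $n$ large, and estimate the different of $L_n\cdot K/K$ directly, showing it shrinks to zero along the totally ramified tower.

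The main obstacle will be the mixed characteristic half. The characteristic $p$ step is essentially formal once one notices that making the coefficients of $f$ infinitely $p$-divisible collapses ramification; the difficulty in characteristic $0$ is that taking $p^n$-th roots does not make sense inside $\OO_K$, so the same effect has to be reintroduced indirectly, either through the tilting correspondence (which must be set up at the integral level, not just for Galois groups) or through an honest ramification computation along $K_n/\Q_p$. Either route is where the real work lies.
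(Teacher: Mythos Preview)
Your characteristic $p$ argument is essentially the paper's argument, just phrased in terms of discriminants rather than differentials. The paper observes that $\Omega^1_{\OO_M/\OO_{K^\flat}}$ is killed by some $t^N$, then uses Frobenius-invariance of the whole situation (since $K^\flat$ and $M$ are perfect) to replace $N$ by $N/p^k$ for all $k$, forcing $\Omega^1$ to be almost zero. Your computation that $v(\mathrm{disc}(g_n))=v(\mathrm{disc}(f))/p^n$ is the same mechanism made explicit on a generator; both are fine.

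The problem is your primary route in mixed characteristic. In the paper's logical structure, Theorem~\ref{GalEquiv} (the Fontaine--Wintenberger isomorphism of Galois groups) is \emph{deduced from} the present theorem: the chain of equivalences
\[
\{\text{finite \'etale covers of }K\}\cong\{\text{almost finite \'etale covers of }\OO_K\}\cong\cdots
\]
uses precisely that $\OO_L/\OO_K$ is almost finite \'etale to get the first step. So invoking Theorem~\ref{GalEquiv} here is circular. The same circularity infects your proposed identification $\OO_L/p^N\cong\OO_{L^\flat}/t^N$: to even make sense of $L^\flat$ and this isomorphism you need $L$ to be perfectoid, i.e.\ Frobenius surjective on $\OO_L/p$, and that is essentially what almost \'etaleness over $\OO_K$ gives you. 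You are trying to bootstrap the tilting correspondence to the level of $L$, but setting up that correspondence already consumes the result.

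Your fallback is the right move and is exactly what the paper does (by example, deferring the general statement to Tate and Gabber--Ramero): descend $L$ to $L_n/K_n$ with $K_n=\Q_p(p^{1/p^n})$, compute that the different of $\OO_{L_n}/\OO_{K_n}$ is bounded by $p^{c/p^n}$ for some constant $c$, and pass to the limit. The paper's worked example $L=K(p^{1/2})$ shows this directly: $\OO_{L_n}=\OO_{K_n}[X]/(X^2-p^{1/p^n})$ is \'etale up to $p^{1/p^n}$-torsion. So drop the tilting route entirely and make the fallback your main argument.
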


As an example, assume $p\neq 2$ and $L=K(p^{1/2})$. For convenience, we look at the situation at a finite level, so let $K_n=\mathbb{Q}_p(p^{1/p^n})$ and $L_n=K_n(p^{1/2})$. Then $\OO_{L_n} = \OO_{K_n}[X] / (X^2 - p^{1/p^n})$. To check whether this is \'etale, look at $f(X)= X^2 - p^{1/p^n}$ and look at the ideal generated by $f$ and its derivative $f^\prime$. This contains $p^{1/p^n}$, so in some sense $\OO_{L_n}$ is etale over $\OO_{K_n}$ up to $p^{1/p^n}$-torsion. Now take the limit as $n\rightarrow \infty$ to see that $\OO_L$ is almost etale over $\OO_K$.

In fact, in the case of equal characteristic, i.e. $K^\flat$, the theorem is easy. Indeed, there will be some big $N$ such that $\Omega^1_{\OO_M/\OO_{K^\flat}}$ is killed by $t^N$ (where $t\in K^\flat$ is some element with $|t|<1$). But $K^\flat$ is perfect, and thus also $M$; it follows that the whole situation is invariant under Frobenius. Thus, $\Omega^1_{\OO_M/\OO_{K^\flat}}$ is killed by $t^{N/p}$, and thus by $t^{N/p^2}$, ..., i.e. $t^{N/p^k}$ for all $k$. This is exactly saying that $\Omega^1_{\OO_M/\OO_{K^\flat}}$ is almost zero. From here, one can deduce that $\OO_M$ is almost finite \'etale over $\OO_{K^\flat}$.

Now we can prove Theorem \ref{GalEquiv} above:
\[\begin{aligned}
\{\mathrm{finite\ \acute{e}tale\ covers\ of\ }K\}&\cong \{\mathrm{almost\ finite\ \acute{e}tale\ covers\ of\ }\OO_K\}\\
&\cong \{\mathrm{almost\ finite\ \acute{e}tale\ covers\ of\ }\OO_K/p\}\\
&= \{\mathrm{almost\ finite\ \acute{e}tale\ covers\ of\ }\OO_{K^\flat}/t\}\\
&\cong \{\mathrm{almost\ finite\ \acute{e}tale\ covers\ of\ }\OO_{K^\flat}\}\\
&\cong \{\mathrm{finite\ \acute{e}tale\ covers\ of\ }K^\flat\}
\end{aligned}\]
Here, we use that almost finite \'etale covers lift uniquely over nilpotents. Also, one can always find some element $t\in K^\flat$ as in the example such that $\OO_K/p = \OO_{K^\flat} / t$.

Now we want to generalize the theory to the relative situation. Here, the basic claim is the following.

\begin{claim} The affine line $\mathbb{A}^1_{K^\flat}$ 'equals' $\varprojlim \mathbb{A}^1_K$, where the transition maps are the $p$-th power map.
\end{claim}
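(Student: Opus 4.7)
The plan is to interpret both sides as perfectoid spaces and to identify them via tilting. Since $\mathbb{A}^1$ is not quasi-compact, I would first cover $\mathbb{A}^1$ by the increasing family of closed balls $\mathbb{B}^1_K(0, |\varpi|^{-n})$ for a pseudo-uniformizer $\varpi \in K$, and argue on each ball separately. On the $K^\flat$ side, ``$\mathbb{A}^1_{K^\flat}$'' should be read as the \emph{perfectoid} affine line, covered by affinoid perfectoid balls of the form $\Spa(K^\flat\langle \varpi^n T^{1/p^\infty}\rangle, \OO_{K^\flat}\langle \varpi^n T^{1/p^\infty}\rangle)$. On the $K$ side, the inverse limit of the closed balls under the transition map $T \mapsto T^p$ is built locally as $\Spa(R, R^+)$, where $R$ is the completion of the colimit $\colim K\langle T\rangle$ along these transition maps; concretely, $R = K\langle T^{1/p^\infty}\rangle$.

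Next I would check that $R = K\langle T^{1/p^\infty}\rangle$ is a perfectoid $K$-algebra, which is essentially immediate: modulo $p$, the ring contains $p$-power roots of $T$, so Frobenius is surjective on $R/p$. Then I would compute its tilt. By the functorial definition, $R^\flat = \varprojlim_{x \mapsto x^p} R$ (in the multiplicative monoid, with addition recovered in the standard way), and the compatible system $(T^{1/p^n})_n$ provides a canonical element $T^\flat \in R^\flat$ which generates $R^\flat$ together with the $p$-power roots of $T^\flat$. A short verification identifies $R^\flat$ with $K^\flat\langle T^{1/p^\infty}\rangle$, i.e.\ with the perfectoid closed unit ball over $K^\flat$. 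The same computation works for each ball after rescaling by powers of $\varpi$, so after gluing one obtains an identification $(\varprojlim \mathbb{A}^1_K)^\flat = \mathbb{A}^1_{K^\flat}$, which is the precise content of the quoted ``equality''.

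The main obstacle, I expect, is not the ring-theoretic computation (which is essentially formal once the tilting operation has been set up) but rather the conceptual setup: $\mathbb{A}^1_K$ is not itself perfectoid, and the inverse limit $\varprojlim \mathbb{A}^1_K$ does not exist a priori in any classical category of analytic spaces (rigid, Berkovich, or adic). One must first develop the category of perfectoid spaces, show that this inverse limit exists there despite none of the finite-level spaces being perfectoid, and establish the tilting equivalence globally so that it is compatible with rational subsets and gluing. Once those foundations are in place, the claim reduces to the affinoid computation above.
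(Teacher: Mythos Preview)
Your proposal is mathematically sound, but it inverts the paper's logical order. In the paper, this Claim is not a theorem with a proof; it is a heuristic slogan placed \emph{before} perfectoid algebras, the tilting equivalence, or adic spectra of perfectoid rings have been introduced. The paper's ``justification'' consists of two things: first, a check on points, namely the explicit multiplicative bijection $K^\flat \cong \varprojlim_{x\mapsto x^p} K$ via $x\mapsto (x^\sharp,(x^{1/p})^\sharp,\ldots)$, and second, the subsequent theorem that there is a homeomorphism of underlying topological spaces $|(\mathbb{A}^1_{K^\flat})^\ad|\cong \varprojlim |(\mathbb{A}^1_K)^\ad|$. No structure-sheaf comparison is attempted at this stage; indeed, the paper immediately poses that comparison as an open question, and uses it to motivate the definition of perfectoid $K$-algebras and the tilting equivalence.

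Your argument, by contrast, assumes the full tilting machinery and then verifies the claim as an identification of perfectoid spaces by tilting the affinoid perfectoid balls $K\langle \varpi^n T^{1/p^\infty}\rangle$. This is correct and is, in effect, how one would prove the sharper statement once the theory exists (and you rightly flag that the real content lies in building that theory). What it buys you is a genuine isomorphism of ringed spaces rather than a mere homeomorphism. What the paper's approach buys is a self-contained, elementary verification on points that requires nothing beyond the definition of $K^\flat$ and the $\sharp$ map, and which serves as the motivating example for everything that follows. So: your proof is not wrong, but it is not the paper's proof, and in the paper's narrative it would be circular.
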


As a first step towards understanding this, we check this on points. Here it says that $K^\flat = \varprojlim K$. In particular, there should be map $K^\flat\rightarrow K$ by projection to the last coordinate, which is usually denoted $x\mapsto x^\sharp$ and again this can be explained in an example:

Say $x = t^{-1} + 5 + t^3$. Basically, we want to replace $t$ by $p$, but this is not well-defined. But we have just learned that this problem becomes less serious as we take $p$-power roots. So we look at $t^{-1/p^n} + 5 + t^{3/p^n}$, replace $t$ by $p$, get $p^{-1/p^n} + 5 + p^{3/p^n}$, and then we take the $p^n$-th power again, so that the expression has the chance of being independent of $n$. Now, it is in fact not difficult to see that
\[
\lim_{n\rightarrow \infty} (p^{-1/p^n} + 5 + p^{3/p^n})^{p^n}\in K
\]
exists, and this defines $x^\sharp\in K$. Now the map $K^\flat\rightarrow \varprojlim K$ is given by
\[
x\mapsto (x^\sharp,(x^{1/p})^\sharp,(x^{1/p^2})^\sharp,\ldots)\ .
\]
In order to prove that this is a bijection, just note that
\[
\OO_{K^\flat} = \varprojlim \OO_{K^\flat}/t^{p^n} = \varprojlim_\Phi \OO_{K^\flat}/t = \varprojlim_\Phi \OO_K/p \leftarrow \varprojlim_{x\mapsto x^p} \OO_K\ .
\]
Here, the last map is the obvious projection, and in fact is a bijection, which amounts to the same verification as that the limit above exists. Afterwards, just invert $t$ to get the desired identification.

One sees immediately that the explicit description of the isomorphism involves $p$-adic limits, so a formalization will necessarily need to use some framework of rigid geometry. In the paper \cite{KedlayaLiu} of Kedlaya-Liu, where they are doing closely related things, they choose to work with Berkovich spaces. The author favors the language of Huber's adic spaces, as this language is capable of expressing more (e.g., Berkovich only considers rank-$1$-valuations, whereas Huber considers also the valuations of higher rank). In the language of adic spaces, the spaces are actually locally ringed topological spaces (equipped with valuations) (and affinoids are open, in contrast to Berkovich's theory, making it easier to glue). There is an analytification functor $X\mapsto X^\ad$ from schemes of finite type over $K$ to adic spaces over $K$ (similar to the functor associating to a scheme of finite type over $C$ a complex-analytic space). Then we have the following theorem:

\begin{thm} There is a homeomorphism of underlying topological spaces $|(\mathbb{A}^1_{K^\flat})^\ad|\cong \varprojlim |(\mathbb{A}^1_K)^\ad|$.
\end{thm}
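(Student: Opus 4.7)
The plan is to construct a perfectoid space $\widetilde{\mathbb{A}}{}^1_K$ over $K$ playing the role of $\varprojlim \mathbb{A}^1_K$ (along transition maps $T\mapsto T^p$), identify its tilt with the analogous object over $K^\flat$, and then deduce the theorem from the tilting equivalence for perfectoid spaces together with the fact that Frobenius is a universal homeomorphism in characteristic $p$.

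First I would work affinoid-locally. Fix a pseudo-uniformizer $\varpi\in K$ and $t\in K^\flat$ with $t^\sharp = \varpi$, and exhaust $(\mathbb{A}^1_K)^\ad$ by the closed discs $\B_n$ of radius $|\varpi|^{-n}$ corresponding to Tate algebras $K\langle \varpi^n T\rangle$, and similarly $(\mathbb{A}^1_{K^\flat})^\ad$ by $\B_n^\flat$. For each $n$, form the affinoid perfectoid $K$-algebra $R_n$ as the completion of the directed colimit of the Tate algebras $K\langle \varpi^{n/p^i} T\rangle$ along the maps $T\mapsto T^p$; informally $R_n = K\langle \varpi^n T^{1/p^\infty}\rangle$. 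A direct check shows that $R_n$ is perfectoid, and the recipe $\OO_{R_n^\flat} = \varprojlim_\Phi R_n^\circ/\varpi$ identifies its tilt with the analogous perfected Tate algebra $R_n^\flat \cong K^\flat\langle t^n T^{1/p^\infty}\rangle$ over $K^\flat$.

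Next I would apply the tilting equivalence for affinoid perfectoid spaces to obtain a homeomorphism $|\Spa(R_n, R_n^\circ)| \cong |\Spa(R_n^\flat, R_n^{\flat,\circ})|$, and match each side with the expected thing. By the result on cofiltered limits of analytic adic spaces along qcqs transition maps (Huber, and \cite{ScholzePerfectoidSpaces1}), $|\Spa(R_n, R_n^\circ)|$ is the topological inverse limit $\varprojlim_i |\B_{n/p^i}|$. On the $K^\flat$-side, the transition maps $T\mapsto T^p$ are, up to composing with the Frobenius of $K^\flat$ (an isomorphism since $K^\flat$ is perfect), the absolute Frobenius of the disc, hence universal homeomorphisms of topological spaces; so the inverse limit $\varprojlim_i |\B_{n/p^i}^\flat|$ collapses topologically to $|\B_n^\flat|$. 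Passing to the colimit in $n$: any compatible tower $(x_i) \in \varprojlim |(\mathbb{A}^1_K)^\ad|$ satisfies $|x_i| = |x_0|^{1/p^i}$, so if $x_0 \in \B_n$ then automatically $x_i \in \B_{n/p^i}$; the $R_n$-pieces thus glue to recover the full inverse limit, while the $\B_n^\flat$ exhaust $(\mathbb{A}^1_{K^\flat})^\ad$, giving the theorem.

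The main obstacle is the identification of $|\Spa(R_n, R_n^\circ)|$ with the topological inverse limit $\varprojlim_i |\B_{n/p^i}|$. One needs both that the completed perfection really represents the inverse limit in the category of analytic adic spaces (a universal-property check using that $R_n$ is perfectoid), and that the underlying-space functor commutes with this particular cofiltered limit (a spectral-space argument using that each transition map is finite, hence qcqs and spectral). Both are structural results built into the theory of perfectoid spaces; modulo them, the tilting equivalence and the Frobenius-is-a-homeomorphism observation do the rest.
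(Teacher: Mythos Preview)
The paper is a survey and does not give a proof of this statement; it is presented as motivation, and the machinery developed immediately afterward (perfectoid algebras, the tilting equivalence, and the homeomorphism $X\cong X^\flat$ for affinoid perfectoid spaces) is exactly what a proof requires. Your approach---build the perfected disc $R_n=K\langle \varpi^n T^{1/p^\infty}\rangle$ locally, identify its tilt, invoke the tilting homeomorphism, collapse the $K^\flat$-side via the fact that relative Frobenius is a homeomorphism, and glue over $n$---is correct and is precisely the method implicit in the paper's exposition and carried out in \cite{ScholzePerfectoidSpaces1} (there for $\mathbb{P}^n$, but the affinoid-local argument is identical).

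One small point of care: your shorthand ``$|x_i|=|x_0|^{1/p^i}$'' is imprecise for higher-rank points, but the intended statement $|T(x_i)|^{p^i}=|T(x_0)|$ in the value group, together with the observation that $a^{p^i}\leq b^{p^i}$ implies $a\leq b$ in a totally ordered abelian group, does give $x_0\in\B_n\Rightarrow x_i\in\B_{n/p^i}$ as you need; and when passing to the colimit in $n$ you should remark that the homeomorphisms for varying $n$ are compatible with the inclusions, which is immediate from functoriality of tilting.
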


At this point, the following question naturally arises: Both sides of this homeomorphism are locally ringed topological spaces: So is it possible to compare the structure sheaves? There is the obvious problem that on the left-hand side, we have characteristic $p$-rings, whereas on the right-hand side, we have characteristic $0$-rings. How can one pass from one to the other side?

\begin{definition} A perfectoid $K$-algebra is a complete Banach $K$-algebra $R$ such that the set of power-bounded elements $R^\circ\subset R$ is bounded and the Frobenius map $\Phi: R^\circ/p\to R^\circ/p$ is surjective.
\end{definition}

Similarly, one defines perfectoid $K^\flat$-algebras $S$. The last condition is then equivalent to requiring that $S$ perfect, whence the name. Examples are $K$, any finite extension $L$ of $K$, and $K\langle T^{1/p^\infty}\rangle$, which is $\OO_K\langle T^{1/p^\infty}\rangle\otimes_{\OO_K} K$, where $\OO_K\langle T^{1/p^\infty}\rangle$ is the $p$-adic completion of $\OO_K[T^{1/p^\infty}]$.

Recall that in classical rigid geometry, one considers rings like $K\langle T\rangle$, which is interpreted as the ring of convergent power series on the closed annulus $|T|\leq 1$. Now in the example of the $\mathbb{A}^1$ above, we take $p$-power roots of the coordinate, so after completion the rings on the inverse limit are in fact perfectoid.

In characteristic $p$, one can pass from usual affinoid algebras to perfectoid algebras by taking the completed perfection; the difference between the two is small, at least as regards topological information on associated spaces: Frobenius is a homeomorphism on topological spaces, and even on \'etale topoi. This is also the reason that we did not take the inverse limit $\varprojlim \mathbb{A}^1_{K^\flat}$ above: It does not change the topological space. In order to compare structure sheaves, one should however take this inverse limit.

Now we can state the tilting equivalence.

\begin{thm} The category of perfectoid $K$-algebras is canonically equivalent to the category of perfectoid $K^\flat$-algebras.
\end{thm}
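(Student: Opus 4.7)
The plan is to factor the equivalence through the almost category and reduce it to a deformation-theoretic statement modulo the pseudo-uniformizer. The tilting functor itself is easy to write down: given a perfectoid $K$-algebra $R$, set
$$R^{\flat\circ} := \varprojlim_\Phi R^\circ/p,$$
and $R^\flat := R^{\flat\circ}[t^{-1}]$, where $t\in\OO_{K^\flat}$ is the pseudo-uniformizer with $\OO_{K^\flat}/t=\OO_K/p$. The same trick used earlier to prove $K^\flat=\varprojlim_{x\mapsto x^p}K$ shows $R^{\flat\circ}=\varprojlim_{x\mapsto x^p}R^\circ$ as multiplicative monoids, which endows it with a canonical ``sharp'' map $\sharp:R^{\flat\circ}\to R^\circ$, multiplicative but not additive. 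A direct check (using that Frobenius on $R^\circ/p$ is surjective and $R^\circ$ is $p$-adically complete and bounded) shows $R^\flat$ is a perfectoid $K^\flat$-algebra.

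Next I would reformulate both sides in the almost world. One verifies that the functor $R\mapsto \OO_R^a := R^\circ$ (viewed in $\OO_K^a$-modules) identifies the category of perfectoid $K$-algebras with the category of ``perfectoid $\OO_K^a$-algebras'', by which I mean $p$-adically complete, flat $\OO_K^a$-algebras $A$ for which Frobenius induces an isomorphism $A/p^{1/p}\xrightarrow{\sim} A/p$. The inverse is $A\mapsto A[1/p]$ with its natural Banach structure. The analogous statement holds over $K^\flat$, with $p$ replaced by $t$. Since $\OO_K/p$ is \emph{literally equal} to $\OO_{K^\flat}/t$ (this identification is canonical and does not involve any completions or choices at this stage), the problem is reduced to the claim that reduction modulo the pseudo-uniformizer,
$$\{\text{perfectoid }\OO_K^a\text{-algebras}\}\longrightarrow\{\text{perfectoid }\OO_K^a/p\text{-algebras}\},$$
is an equivalence; and likewise on the $K^\flat$-side.

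This reduction is the heart of the proof and is the step I expect to be hardest. Essentially surjective: given a perfectoid $\OO_K^a/p$-algebra $\bar A$, one needs to construct a unique flat lift $A$ over $\OO_K^a$ whose reduction mod $p$ is $\bar A$ and which is itself perfectoid. The uniqueness/full-faithfulness part is standard almost deformation theory once one knows the relevant cotangent complex $L_{A/\OO_K^a}$ is almost zero; this in turn follows because Frobenius is an isomorphism on the perfectoid algebras (so $L$ is killed by an isomorphism $p$ times itself), forcing it to be almost zero. For existence one lifts inductively from $\OO_K^a/p^n$ to $\OO_K^a/p^{n+1}$, where the obstruction lives in a group built from $L_{A/\OO_K^a}$ and is therefore almost zero; one then $p$-adically completes. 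On the characteristic $p$ side the same ``lifting up the Frobenius tower'' argument (using that Frobenius is a bijection on a perfect ring) takes a perfectoid $\OO_{K^\flat}^a/t$-algebra to its unique perfect flat lift $\varprojlim_\Phi (\bar A/t^n)$.

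Finally I would check that composing the constructed equivalence with its partner reproduces the identity: starting from $R$, unwinding the construction gives back $R^\circ/p$ at the almost/mod-$p$ level, and the uniqueness of the perfectoid lift forces one to recover $R^\circ$, hence $R$. The same works in the other direction, so the two functors are mutually quasi-inverse, and the theorem follows.
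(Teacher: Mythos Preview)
Your proposal is correct and follows essentially the same route as the paper's (second) proof: pass from perfectoid $K$-algebras to flat $p$-adically complete almost $\OO_K$-algebras with Frobenius an isomorphism $A/p^{1/p}\cong A/p$, then reduce mod $p$ and use the almost vanishing of the cotangent complex $\mathbb{L}_{\overline{A}/(\OO_K/p)}$ to get unique deformations, and finally pivot through the canonical identification $\OO_K/p=\OO_{K^\flat}/t$. The paper also mentions an alternative proof via an explicit inverse functor $S\mapsto W(S^\circ)\otimes_{W(\OO_{K^\flat})}K$ using Fontaine's map $\Theta$, which you do not use.
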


The functor is given by $R\mapsto R^\flat = (\varprojlim R^\circ/p)[t^{-1}]$. Again, one also has $R^\flat = \varprojlim R$ as multiplicative monoids, where the transition maps are the $p$-th power map, giving also the map $R^\flat\rightarrow R$, $f\mapsto f^\sharp$.

There are two different proofs for this. One is to write down the inverse functor, given by $S\mapsto W(S^\circ)\otimes_{W(\OO_{K^\flat})} K$, using the map
\[
\Theta: W(\OO_{K^\flat})\rightarrow K
\]
known from $p$-adic Hodge theory. The other proof is similar to what we did above for finite \'etale $K$-algebras: Perfectoid $K$-algebras are equivalent to almost $\OO_K$-algebras $A$ s.t. $A$ is flat, $p$-adically complete and Frobenius induces an isomorphism $A/p^{1/p}\cong A/p$; these are in turn equivalent to almost $\OO_K/p$-algebras $\overline{A}$ s.t. $\overline{A}$ is flat and Frobenius induces an isomorphism $\overline{A}/p^{1/p}\cong \overline{A}$. From here, one can go to $\OO_{K^\flat}/t$, and reverse the steps to get to $K^\flat$.

The first identification between perfectoid $K$-algebras and certain almost $\OO_K$-algebras is not difficult. The second identification between certain almost $\OO_K$-algebras and certain almost $\OO_K/p$-algebras relies on the fact (already in the book by Gabber-Ramero) that the cotangent complex $\mathbb{L}_{\overline{A}/(\OO_K/p)}$ vanishes, and hence one gets unique deformations of objects and morphisms. At least on differentials $\Omega^1$, this is easy to see: Every element $x$ has the form $y^p$ because Frobenius is surjective; but then $dx = dy^p = py^{p-1}dy = 0$ because $p=0$ in $\overline{A}$.

Finally, we summarize briefly the main theorems on the basic nature of perfectoid spaces. An affinoid perfectoid space is associated to a perfectoid affinoid $K$-algebra, which is a pair $(R,R^+)$ consisting of a perfectoid $K$-algebra $R$ and an open and integrally closed subring $R^+\subset R^\circ$. In most cases, one will just take $R^+=R^\circ$. Then also the categories of perfectoid affinoid $K$-algebras and perfectoid affinoid $K^\flat$-algebras are equivalent. Huber associates to such pairs $(R,R^+)$ a topological space $X=\Spa(R,R^+)$ consisting of continuous valuations on $R$ that are $\leq 1$ on $R^+$, with the topology generated by the rational subsets $\{x\in X\mid \forall i: |f_i(x)|\leq |g(x)|\}$, where $f_1,\ldots,f_n,g\in R$ generate the unit ideal. Moreover, he defines a structure presheaf $\OO_X$ on $X$, and the subpresheaf $\OO_X^+$, consisting of functions which have absolute value $\leq 1$ everywhere.

\begin{thm} Let $(R,R^+)$ be a perfectoid affinoid $K$-algebra, with tilt $(R^\flat,R^{\flat +})$. Let $X=\Spa(R,R^+)$, with $\OO_X$, $\OO_X^+$, and $X^\flat = \Spa(R^\flat,R^{\flat +})$, with $\OO_{X^\flat}$, $\OO_{X^\flat}^+$.
\begin{altenumerate}
\item[{\rm (i)}] There is a canonical homeomorphism $X\cong X^\flat$, given by mapping $x$ to $x^\flat$ defined via $|f(x^\flat)| = |f^\sharp(x)|$. Rational subsets are identified under this homeomorphism.
\item[{\rm (ii)}] For any rational subset $U\subset X$, the pair $(\mathcal{O}_X(U),\mathcal{O}_X^+(U))$ is perfectoid affinoid with tilt $(\mathcal{O}_{X^\flat}(U),\mathcal{O}_{X^\flat}^+(U))$.
\item[{\rm (iii)}] The presheaves $\mathcal{O}_X$, $\mathcal{O}_X^+$ and $\OO_{X^\flat}$, $\OO_{X^\flat}^+$ are sheaves.
\item[{\rm (iv)}] For all $i>0$, the cohomology group $H^i(X,\mathcal{O}_X)=0$. Moreover, the cohomology group $H^i(X,\mathcal{O}_X^+)$ is almost zero, i.e. $\mathfrak{m}$-torsion. The same holds true for $\OO_{X^\flat}$ and $\OO_{X^\flat}^+$.
\end{altenumerate}
\end{thm}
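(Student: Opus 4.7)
The plan is to prove (i) and (ii) together by hand, and then to deduce (iii) and (iv) by tilting to characteristic $p$, where Čech acyclicity of a perfectoid affinoid reduces via a Frobenius colimit to Tate's classical acyclicity theorem. For (i), given a continuous valuation $x \in X$, I would define $x^\flat$ on $R^\flat$ by $|f(x^\flat)| := |f^\sharp(x)|$, where $f \mapsto f^\sharp$ is the projection $R^\flat = \varprojlim_{y\mapsto y^p} R \to R$. Multiplicativity is immediate from $(fg)^\sharp = f^\sharp g^\sharp$, and the ultrametric inequality follows from the limit formula $(f+g)^\sharp = \lim_n \bigl((f^{1/p^n})^\sharp + (g^{1/p^n})^\sharp\bigr)^{p^n}$, a direct extension of the field-level identity used in the excerpt. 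The condition $|f(x^\flat)|\leq 1$ on $R^{\flat+}$ holds because $\sharp$ sends $R^{\flat+}$ into $R^+$. To produce the inverse, I would use that every $f\in R$ can be $p$-adically approximated arbitrarily well by elements of the form $g^\sharp$, $g\in R^\flat$, so that a valuation on $R$ is determined by its restriction through $\sharp$. The same approximation shows that in the defining data $\{|f_i|\leq|g|\}$ of a rational subset, the $f_i, g$ may be replaced by sharps of tilted elements with arbitrarily small error, producing the same open on $X$ as the tilted data does on $X^\flat$, so rational subsets match.

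For (ii), after replacing $f_i, g$ by sharps, I would define $\mathcal{O}_X^+(U)$ as the $p$-adic completion of the almost $\OO_K$-algebra generated by $R^+[(f_i/g)^{1/p^\infty}]$, and $\mathcal{O}_{X^\flat}^+(U)$ analogously on the tilted side. A direct computation identifies their reductions modulo $p$ and $t$ as almost isomorphic, and the tilting equivalence for perfectoid algebras (via the bridge $\OO_K/p\cong\OO_{K^\flat}/t$ and the vanishing of the relevant cotangent complex) promotes this to an equivalence of integral algebras, exhibiting each as the tilt of the other. In particular, both $\mathcal{O}_X(U)$ and $\mathcal{O}_{X^\flat}(U)$ are perfectoid.

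For (iii) and (iv), I would reduce to the characteristic $p$ case via (ii). The almost isomorphism $\mathcal{O}_X^+(U)/p\cong\mathcal{O}_{X^\flat}^+(U)/t$ for every rational $U$ in a cover shows that the two Čech complexes agree modulo $p$ (resp.\ $t$) up to almost; by $p$-adic (resp.\ $t$-adic) completeness and flatness, exactness transports in both directions, and inverting $p$ (resp.\ $t$) transports it further to $\mathcal{O}_X$ (resp.\ $\mathcal{O}_{X^\flat}$). Hence it suffices to prove (iii) and (iv) on the characteristic $p$ side. There, $\mathcal{O}_{X^\flat}^+(U)$ is a $t$-adic completion of the perfection of an ordinary Tate affinoid algebra $A$, for which Tate's theorem gives exactness of rational Čech complexes; filtered colimit over Frobenius preserves exactness, and $t$-adic completion preserves it at the level of flat terms, giving the sheaf property and cohomology vanishing on the tilt, and therefore everywhere.

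I expect the main obstacle to be step (ii): constructing the perfectoid local sections and verifying that tilting intertwines them, since this is where one has to mix $p$-adic completion on the mixed-characteristic side with $t$-adic completion on the tilted side and deploy the almost-category deformation theory in earnest. Parts (iii) and (iv) are then comparatively formal reductions to Tate's classical acyclicity via the tilting plus Frobenius-colimit argument.
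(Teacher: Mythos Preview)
Your overall architecture---prove (i) and (ii) by hand, then tilt to characteristic $p$ for (iii) and (iv)---is the same as in \cite{ScholzePerfectoidSpaces1} (the present survey only states the theorem without proof). Your sketch of (i) via approximation of elements of $R$ by sharps is correct, and you are right that (ii) is where the almost-ring-theoretic deformation argument enters in earnest.

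There is, however, a genuine gap in your treatment of (iii) and (iv). You write that in characteristic $p$, ``$\mathcal{O}_{X^\flat}^+(U)$ is a $t$-adic completion of the perfection of an ordinary Tate affinoid algebra $A$, for which Tate's theorem gives exactness.'' This is false in general: a perfectoid $K^\flat$-algebra $R^\flat$ need not be the completed perfection of any topologically finitely generated affinoid $K^\flat$-algebra, and Tate's acyclicity theorem requires the (strongly) noetherian hypothesis. So there is no Tate algebra $A$ beneath a general $R^\flat$ to which you can apply the classical theorem and then pass to a Frobenius colimit.

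What is missing is a reduction to a universal situation where your argument does apply. The finitely many elements $f_1,\ldots,f_n,g\in R^\flat$ defining the rational cover determine a map from a perfectoid polynomial ring $K^\flat\langle T_0^{1/p^\infty},\ldots,T_n^{1/p^\infty}\rangle$---which \emph{is} the completed perfection of the noetherian Tate algebra $K^\flat\langle T_0,\ldots,T_n\rangle$---into $R^\flat$, and the given cover is pulled back from a standard cover of the perfectoid polydisc. For the polydisc your Frobenius-colimit reduction to Tate goes through. One must then argue that almost exactness of the integral \v{C}ech complex survives the completed base change along $\OO_{K^\flat}\langle T_i^{1/p^\infty}\rangle\to R^{\flat+}$; this is not formal, and in \cite{ScholzePerfectoidSpaces1} it is handled by explicit control of the almost-module structure of the rational localizations. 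Without this reduction-plus-base-change step (or an equivalent direct computation), your argument for (iii) and (iv) does not close.
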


This allows one to define general perfectoid spaces by gluing affinoid perfectoid spaces. Further, one can define \'etale morphisms of perfectoid spaces, and then \'etale topoi. This leads to an improvement on Faltings's almost purity theorem:

\begin{thm} Let $R$ be a perfectoid $K$-algebra, and let $S/R$ be finite etale. Then $S$ is perfectoid and $S^\circ$ is almost finite \'etale over $R^\circ$.
\end{thm}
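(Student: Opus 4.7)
The natural strategy is to imitate, at the level of relative algebras, the Fontaine--Wintenberger argument of Theorem~\ref{GalEquiv} already sketched in the excerpt: handle the equal characteristic case first, where Frobenius is a homeomorphism, and then transport the result to mixed characteristic via the tilting equivalence. Both conclusions (that $S$ is perfectoid, and that $S^\circ/R^\circ$ is almost finite \'etale) should fall out of this bootstrap, once one has established a clean dictionary between finite \'etale covers of $R$ and almost finite \'etale covers of $R^\circ$.

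I would first treat the characteristic $p$ case, where $R$ is perfect. Since $R$ is perfect and $S/R$ is \'etale, $S$ is automatically perfect (relative Frobenius is an isomorphism for \'etale maps, and absolute Frobenius on $R$ is invertible). Let $S^\circ$ be the integral closure of $R^\circ$ in $S$; since $S$ is a finite $R$-module, $S^\circ$ is bounded and $S$ is complete, and one checks these via the trace pairing. The key point, exactly parallel to the excerpt's treatment of $\OO_M/\OO_{K^\flat}$, is that because everything in sight is perfect, Frobenius iteration forces $\Omega^1_{S^\circ/R^\circ}$ to be killed by arbitrarily small powers of a pseudo-uniformizer, hence to be almost zero. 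This simultaneously proves that $S$ is perfectoid and that $S^\circ/R^\circ$ is almost finite \'etale.

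For the mixed characteristic case, my plan is to exhibit a chain of categorical equivalences
\begin{align*}
\{\text{finite \'et.\ covers of }R\} &\cong \{\text{almost f.\'et.\ covers of }R^\circ\} \\
&\cong \{\text{almost f.\'et.\ covers of }R^\circ/p\} \\
&= \{\text{almost f.\'et.\ covers of }R^{\flat\circ}/t\} \\
&\cong \{\text{almost f.\'et.\ covers of }R^{\flat\circ}\} \\
&\cong \{\text{finite \'et.\ covers of }R^\flat\},
\end{align*}
in exact analogy with the field case. The two inner deformation equivalences are formal: the cotangent complex of an almost \'etale map vanishes, so almost \'etale covers and morphisms lift uniquely, using the $p$-adic completeness of $R^\circ$; the middle equality is built into the construction of tilting. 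Applied to a given finite \'etale $S/R$, the composite then produces, via the characteristic $p$ step just handled, an almost finite \'etale $R^\circ$-algebra whose generic fibre is identified with $S$, yielding both parts of the theorem.

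The principal obstacle is the very first equivalence, i.e.\ extracting from an abstract finite \'etale $R$-algebra $S$ an integral structure $S^\circ$ that is almost finite \'etale over $R^\circ$. This is the genuine content of almost purity and not a formality: in the field case it rested on the discriminants of finite extensions being almost units because $K$ is perfectoid, and in the present relative setting one must globalize such estimates. Concretely, I would cover $X=\Spa(R,R^+)$ by rational subsets whose tilts are rational in $X^\flat$ (using part~(ii) of the preceding theorem), run the characteristic $p$ argument on each piece after tilting, and patch using the sheaf property together with the almost vanishing of higher cohomology (parts~(iii)--(iv)). Making this descent compatibly, from a local-on-$X$ almost \'etale structure to a global almost finite \'etale $S^\circ$, is the main technical burden.
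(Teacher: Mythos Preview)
Your overall architecture---settle characteristic $p$ by Frobenius iteration, then transport to mixed characteristic via tilting---is exactly the paper's strategy, and you have correctly located the crux: the first link in your chain, passing from finite \'etale $S/R$ to an almost finite \'etale integral model $S^\circ/R^\circ$, is the entire content of the theorem. The inner deformation steps and the characteristic $p$ step are fine.

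The gap is in your proposed resolution of that first link. You suggest covering $X=\Spa(R,R^+)$ by rational subsets $U$, tilting, running the characteristic $p$ argument on each $U^\flat$, and patching. But to ``run the characteristic $p$ argument after tilting'' you would need to tilt $S|_U$, and tilting only applies to perfectoid algebras---which is precisely what you do not yet know about $S$. Passing to a rational subset does not help: the essential surjectivity problem on $U$ is exactly as hard as on $X$. So this step is circular as stated.

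The paper breaks the circularity differently. It observes that the chain of equivalences (read from right to left) already furnishes a fully faithful functor from finite \'etale $R^\flat$-algebras to finite \'etale $R$-algebras, and that for any $S$ in its essential image the theorem follows from the characteristic $p$ case. So the problem is reduced to essential surjectivity of this functor. The localization on $X$ is then pushed not to rational subsets but all the way to \emph{points}: the completed residue fields of $X$ are perfectoid fields, and there essential surjectivity is the Fontaine--Wintenberger theorem (equivalently, the Tate/Gabber--Ramero result already recalled). One then spreads out from points to a neighbourhood and globalizes. The input you are missing is this reduction to the field case; without it, the patching you describe has no base case to patch from.
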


In particular, no sort of semistable reduction hypothesis is required anymore. Also, the proof is much easier. In characteristic $p$, the result is easy, and is a consequence of repeated application of Frobenius (roughly, $S^\circ$ is finite \'etale over $R^\circ$ up to $t^N$-torsion for some $N$; but then it is finite \'etale up to $t^{N/p^k}$-torsion for all $k$, i.e. almost finite \'etale). Moreover, following the ideas for the case of perfectoid fields, one has a fully faithful functor from finite \'etale $R^\flat$-algebras to finite \'etale $R$-algebras. It is enough to prove that this functor is an equivalence of categories, as for finite \'etale $R$-algebras in the image, one can deduce the result from the case in characteristic $p$, which is already known. In order to prove that the functor is an equivalence of categories, one makes a localization argument on $X=\Spa(R,R^+)$ (for any $R^+\subset R^\circ$), to reduce to the case of perfectoid fields.

Tilting also identifies the \'etale topoi of a perfectoid space and its tilt, and as an application, one gets the following theorem.

\begin{thm} There is an equivalence of \'etale topoi of adic spaces
\[
(\mathbb{P}^n_{K^\flat})^\ad_\et\cong \varprojlim (\mathbb{P}^n_K)^\ad_\et\ .
\]
Here the transition maps are the $p$-th power map on homogeneous coordinates.
\end{thm}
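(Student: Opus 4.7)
The plan is to interpose a single auxiliary perfectoid space $\widetilde{X}_K$ between the two sides of the desired equivalence, constructed as the perfectoid inverse limit of $(\mathbb{P}^n_K)^\ad$ along the $p$-th power map on homogeneous coordinates. I would cover $(\mathbb{P}^n_K)^\ad$ by the standard affinoid charts $U_i = \Spa(K\langle T_j^{(i)}\rangle_{j\neq i}, \OO_K\langle\cdot\rangle)$ with $T_j^{(i)} = x_j/x_i$; the $p$-th power transition map preserves each $U_i$ setwise and acts by $T_j^{(i)}\mapsto (T_j^{(i)})^p$. Taking the inverse limit chart-by-chart as a perfectoid affinoid would yield $\widetilde{U}_i = \Spa\bigl(K\langle (T_j^{(i)})^{1/p^\infty}\rangle_{j\neq i},\OO_K\langle\cdot\rangle\bigr)$, and these would glue using $\sharp$-map lifts of the $\mathbb{P}^n$ cocycle into a perfectoid space $\widetilde{X}_K$ equipped with a compatible system of projections to each finite level of the tower.

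Next, applying the tilting equivalence of perfectoid affinoid algebras chart-by-chart would identify $(\widetilde{X}_K)^\flat$ with the analogously constructed perfectoid space $\widetilde{X}_{K^\flat}$ over $K^\flat$. Since $K^\flat$ has characteristic $p$, the $p$-th power transition map on $(\mathbb{P}^n_{K^\flat})^\ad$ is Frobenius, a universal homeomorphism that induces an equivalence on small \'etale topoi; this property propagates to the inverse limit, so the projection $\widetilde{X}_{K^\flat}\to (\mathbb{P}^n_{K^\flat})^\ad$ would induce an equivalence on \'etale topoi. Combining this with the tilting equivalence of \'etale topoi of perfectoid spaces already established in the paper, I would obtain
\[
(\mathbb{P}^n_{K^\flat})^\ad_\et \cong (\widetilde{X}_{K^\flat})_\et \cong (\widetilde{X}_K)_\et.
\]

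The hard part will be to identify $(\widetilde{X}_K)_\et$ with $\varprojlim (\mathbb{P}^n_K)^\ad_\et$, i.e.\ to show that every \'etale morphism to the perfectoid space $\widetilde{X}_K$ is pulled back from some finite level and that morphisms between such are detected at a sufficiently large finite level. The strategy is to work affinoid-locally on the charts and, via the almost purity theorem for perfectoid spaces, reduce to the analogous approximation statement for finite \'etale covers of an affinoid perfectoid algebra $R$ in terms of finite \'etale covers of its non-perfected approximants. In characteristic $p$ this follows from the compatibility of finite \'etale with filtered colimits along Frobenius; in mixed characteristic one would transport the question through the tilting equivalence to characteristic $p$, perform descent there, and then untilt the resulting finite-level cover, being careful that the untilt of a finite \'etale cover at a finite level on the tilt side corresponds to a finite \'etale cover at a finite level on the $K$-side. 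The principal technical obstacle is controlling this untilting-and-descent procedure uniformly across the chart cover and verifying full faithfulness on morphisms using quasi-compactness of the standard affinoid cover.
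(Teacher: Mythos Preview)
Your overall architecture is correct and is exactly the route taken in \cite{ScholzePerfectoidSpaces1} (this survey states the theorem without proof): build the perfectoid space $\widetilde{X}_K\sim\varprojlim (\mathbb{P}^n_K)^\ad$ chart-by-chart, tilt to get $\widetilde{X}_{K^\flat}$, and use that in characteristic $p$ the Frobenius transition maps induce equivalences on \'etale sites so that $(\widetilde{X}_{K^\flat})_\et\cong (\mathbb{P}^n_{K^\flat})^\ad_\et$. Combined with the tilting equivalence on \'etale topoi this gives the left half of the statement.

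Where your plan goes wrong is the ``hard part''. You propose to show that \'etale covers of $\widetilde{X}_K$ descend to finite level by tilting to characteristic $p$, descending there along the Frobenius tower, and then untilting. This does not work as stated: tilting relates the perfectoid limits, not the individual finite levels of the two towers. A finite \'etale cover of $R^\flat$ that descends to some non-perfect $K^\flat\langle T^{\pm 1}\rangle$ untilts to a finite \'etale cover of $R$, but there is no mechanism producing from this a finite \'etale cover of the noetherian algebra $K\langle T^{\pm 1}\rangle$ at some finite level on the $K$-side. The towers are only matched at the top.

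The actual argument for $(\widetilde{X}_K)_\et\cong \varprojlim (\mathbb{P}^n_K)^\ad_\et$ is a general property of the relation $X\sim\varprojlim X_i$ and uses no tilting. One reduces \'etale maps locally to compositions of rational open embeddings and finite \'etale maps. Rational subsets of $X$ descend to some $X_i$ because $|X|\cong\varprojlim |X_i|$ and the defining parameters $f_1,\ldots,f_n,g$ can be approximated from $\varinjlim R_i$, which is dense in $R$. For finite \'etale covers, a finite \'etale $R$-algebra is determined by finitely many structure constants (it is a finite projective module with a multiplication), and one approximates these from $\varinjlim R_i$; the conditions of being finite projective and \'etale are open, so a sufficiently good approximation over some $R_i$ is already finite \'etale and base-changes back to the given cover. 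Full faithfulness is handled the same way. This is \cite[Theorem 7.17, Corollary 7.18]{ScholzePerfectoidSpaces1}; once you have it, your steps 1--4 finish the proof.
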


In a sense, the theorem gives rise to a projection map
\[
\pi: \mathbb{P}^n_{K^\flat}\rightarrow \mathbb{P}^n_K
\]
defined on \'{e}tale topoi of adic spaces, which is given on coordinates by
\[
\pi(x_0:\ldots:x_n) = (x_0^\sharp:\ldots:x_n^\sharp)\ .
\]
It is also defined on the topological spaces underlying the adic spaces.

This discussion has the following consequence.

\begin{thm} Let $k$ be a finite extension of $\Q_p$, and let $X\subset \mathbb{P}^n_k$ be a smooth complete intersection. Then the weight-monodromy conjecture holds true for $X$.
\end{thm}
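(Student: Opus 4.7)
The strategy is to use the tilting equivalence and the projection $\pi:\mathbb{P}^n_{K^\flat}\to\mathbb{P}^n_K$ constructed above to transfer the weight-monodromy conjecture from mixed characteristic to equal characteristic, where it is Deligne's theorem.

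After passing to a suitable unramified extension (which preserves weight-monodromy), I may assume $k$ is contained in a perfectoid field $K$, for instance the completion of $k(p^{1/p^\infty})$. Weak Lefschetz for smooth complete intersections $X\subset \mathbb{P}^n_K$ of dimension $d$ gives $H^i_\et(X_{\bar K},\Q_\ell)\cong H^i_\et(\mathbb{P}^n,\Q_\ell)$ for $i\neq d$, where weight-monodromy is trivial. So the problem reduces to the primitive part of the middle-degree cohomology.

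Form the perfectoid subspace $X^\flat:=\pi^{-1}(X)\subset \mathbb{P}^n_{K^\flat}$. By the tilting equivalence of \'etale topoi and the Galois isomorphism $\Gal(\bar K/K)\cong \Gal(\overline{K^\flat}/K^\flat)$ of Theorem~\ref{GalEquiv}, its $\ell$-adic cohomology is canonically isomorphic to $H^*_\et(X_{\bar K},\Q_\ell)$ as a Galois representation. The obstacle is that $X^\flat$ is not algebraic but a perfectoid ``fractal'' arising as an inverse limit under Frobenius. The decisive step is to approximate $X^\flat$ by an honest smooth complete intersection $Y\subset \mathbb{P}^n_{K^\flat}$ of the same multidegrees: one writes the defining equations of $X$ using the multiplicative map $(\cdot)^\sharp:K^\flat\to K$ up to $p$-adic error and pulls back by a sufficiently high Frobenius power, making the approximation arbitrarily fine. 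The complete intersection hypothesis then allows one to construct, from the closeness of equations in $\mathbb{P}^n_{K^\flat}$, a correspondence inducing a Galois-equivariant surjection
\[
H^d_\et(Y_{\overline{K^\flat}},\Q_\ell)\twoheadrightarrow H^d_\et(X^\flat,\Q_\ell)\cong H^d_\et(X_{\bar K},\Q_\ell).
\]
Since $Y$ lives in equal characteristic $p$, Deligne's theorem supplies weight-monodromy for $H^d_\et(Y)$, and the monodromy-weight property passes to its Galois-equivariant quotients.

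The hard part is this approximation step and the construction of the cohomological surjection, which is the ``reduction to approximating a fractal by algebraic varieties'' alluded to in the introduction. The complete intersection hypothesis is essential here: for such $X$, the primitive middle-degree cohomology is governed uniformly by the multidegrees, so an algebraic $Y\subset\mathbb{P}^n_{K^\flat}$ of the same multidegrees carries enough middle cohomology to surject onto $H^d_\et(X)$. No analogous structural statement is available for general smooth projective varieties, which is why the method does not presently deliver the full weight-monodromy conjecture.
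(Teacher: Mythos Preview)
Your overall strategy is right, but there is a genuine gap in the central step, and a secondary error in how you conclude.

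\textbf{The main gap.} The assertion that the tilting equivalence yields $H^*_\et(\pi^{-1}(X),\Q_\ell)\cong H^*_\et(X_{\bar K},\Q_\ell)$ is false. Tilting identifies the \'etale topos of a perfectoid space with that of its tilt; but $X$ is not perfectoid, and $\pi^{-1}(X)$ is not the tilt of $X$. What tilts to $\pi^{-1}(X)$ is the perfectoid space $\varprojlim_\varphi X$ sitting over $X$ in $(\mathbb{P}^n_K)^\ad$, whose cohomology is $\varinjlim_n H^*_\et(X_n)$ with $X_n=\{f(x_0^{p^n},\ldots,x_n^{p^n})=0\}$ --- typically infinite-dimensional, as the paper explicitly warns. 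So you do not get an isomorphism with $H^*(X)$, only an injection $H^i(X)\hookrightarrow H^i(\pi^{-1}(X))$, and the target is not algebraic.

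The paper's mechanism to produce a map to algebraic cohomology is different and is the step you are missing. One chooses a small open neighbourhood $\tilde X\subset(\mathbb{P}^n_K)^\ad$ of $X$ with $H^i(\tilde X)=H^i(X)$ (a result of Huber). The approximation lemma then produces a complete intersection $Y\subset\mathbb{P}^n_{K^\flat}$ with $Y^\ad\subset\pi^{-1}(\tilde X)$. Now one has a genuine \emph{restriction} map
\[
H^i(X)=H^i(\tilde X)\longrightarrow H^i(\pi^{-1}(\tilde X))\longrightarrow H^i(Y),
\]
Galois-equivariant via $\Gal(\bar K/K)\cong\Gal(\overline{K^\flat}/K^\flat)$. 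No ``correspondence'' from closeness of equations is needed (or available).

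\textbf{The secondary error.} Weight-monodromy does \emph{not} pass to arbitrary Galois-equivariant quotients (or subobjects): already $\mathrm{Sp}(2)$ is pure but its one-dimensional sub and quotient are not pure of the same weight. What is used is that the map $H^i(X)\to H^i(Y)$ is compatible with cup products and is an isomorphism in top degree $2d$ (a direct check); Poincar\'e duality then exhibits $H^i(X)$ as a Galois-equivariant \emph{direct summand} of $H^i(Y)$, and purity does pass to direct summands. Your ``same multidegrees, so enough cohomology to surject'' is not the operative mechanism.
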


In \cite{ScholzePerfectoidSpaces1}, this result is proved for complete intersections in more general toric varieties in place of $\mathbb{P}^n$.

Let us first recall the statement of the weight-monodromy conjecture. Let $q$ be the cardinality of the residue field of $k$, and fix a geometric Frobenius $\Phi\in \Gal(\bar{k}/k)$, where $\bar{k}$ is a fixed geometric closure of $k$. The following are known about the \'etale cohomology group $V=H^i(X_{\bar{k}},\bar{\mathbb{Q}}_\ell)$, where $\ell\neq p$ is some prime, and $i\geq 0$.

\begin{altenumerate}
\item[{\rm (i)}] There is a decomposition
\[
V= \bigoplus_{j=0}^{2i} V_j\ ,
\]
where all eigenvalues of $\Phi$ on $V_j$ are Weil numbers of weight $j$, i.e. elements $\alpha\in \bar{\mathbb{Q}}$ such that $|\iota(\alpha)|=q^{j/2}$, under all embeddings $\iota:\bar{\mathbb{Q}}\to \mathbb{C}$.
\item[{\rm (ii)}] There is a nilpotent operator $N: V\to V$ given by the logarithm of the action of the $\ell$-adic inertia subgroup, inducing maps
\[
N: V_j\to V_{j-2}\ .
\]
\end{altenumerate}

Part (i) is a consequence of the Rapoport-Zink spectral sequence, \cite{RapoportZinkZeta}, in the case of semistable reduction, and de Jong's alterations, \cite{deJongAlterations}, in general, to reduce to this case. Part (ii) is a consequence of Grothendieck's quasi-unipotence theorem, which says that after a finite extension of $k$, the action of the inertia subgroup becomes unipotent. The decomposition in part (i) depends on the choice of $\Phi$, but the (weight) filtration given by $\Fil^m = \bigoplus_{j\leq m} V_j$ does not.

We note that if $k$ has good reduction, then by base-change, the action of $\Gal(\bar{k}/k)$ is unramified, so $N=0$, and by the Weil conjectures, $V=V_i$, i.e. all eigenvalues of the Frobenius are Weil numbers of weight $i$. In general, several different weights can appear, but this is supposed to be made good for by a nontrivial monodromy operator, connecting the different weights:

\begin{conj}[Weight-monodromy conjecture, \cite{DeligneICM}]\label{WMConj} For all $j\geq 0$, the map
\[
N^j: V_{i+j}\to V_{i-j}
\]
is an isomorphism.
\end{conj}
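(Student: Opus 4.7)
The plan is to transfer the conjecture from mixed characteristic to equal characteristic $p$ via the tilting correspondence, where the conjecture is already known (as a consequence of Deligne's proof of the Weil conjectures, combined with de Jong's alterations). Concretely, I would first enlarge the base by replacing $k$ by the completion $K$ of $k(p^{1/p^\infty})$, which is a perfectoid field. Since $K/k$ is a pro-$p$ extension inside $\overline{k}$, the restriction of the Frobenius and inertia actions to $\Gal(\overline{K}/K)$ retains full information about the weight filtration and the monodromy operator on $V = H^i(X_{\overline{k}}, \overline{\Q}_\ell)$, so it suffices to study the étale cohomology of $X^\ad_K$ as a $\Gal(\overline{K}/K)$-representation.

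Next I would invoke the equivalence of étale topoi $(\mathbb{P}^n_{K^\flat})^\ad_\et \cong \varprojlim (\mathbb{P}^n_K)^\ad_\et$ together with the Fontaine--Wintenberger isomorphism $\Gal(\overline{K}/K) \cong \Gal(\overline{K^\flat}/K^\flat)$. Let $\widetilde{X} = \pi^{-1}(X^\ad_K) \subset (\mathbb{P}^n_{K^\flat})^\ad$ be the perfectoid pullback; then $H^i_\et(\widetilde{X}, \overline{\Q}_\ell) \cong V$ as Galois modules. The space $\widetilde{X}$ is a ``fractal'': it is not of finite type, but it is cut out by the $p$-power roots of the defining equations of $X$ after applying the map $x \mapsto x^\sharp$.

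The heart of the argument is an approximation step. Suppose $X \subset \mathbb{P}^n_K$ is cut out by homogeneous equations $f_1, \dots, f_r$ with $r = n - \dim X$. Since $\OO_{K^\flat} = \varprojlim_\Phi \OO_K/p$, for any prescribed precision $N$ and sufficiently large $m$, one can find homogeneous polynomials $\tilde{f}_1, \dots, \tilde{f}_r \in K^\flat[x_0, \dots, x_n]$ of degrees $p^m \deg f_i$ whose sharps agree with $f_i(x_0^{p^m}, \dots, x_n^{p^m})$ to precision $N$. Let $Y = V(\tilde{f}_1, \dots, \tilde{f}_r) \subset \mathbb{P}^n_{K^\flat}$. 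Because $X$ is a \emph{smooth complete intersection}, its ideal is generated by exactly $r$ regular equations, so for $N$ large enough $Y$ is again a smooth complete intersection of the same dimension, and (up to the $p^m$-th power twist on coordinates) $Y^\ad$ and $\widetilde{X}$ are étale-locally close enough on $\Spa$-rational subsets that the comparison of almost étale cohomology delivers an isomorphism $H^i_\et(Y_{\overline{K^\flat}}, \overline{\Q}_\ell) \cong V$ which is equivariant for $\Gal(\overline{K^\flat}/K^\flat) = \Gal(\overline{K}/K)$. This is where the complete intersection hypothesis is essential: it is what lets us approximate the space through its (small number of) equations, and it is what the paper \cite{ScholzePerfectoidSpaces1} works out rigorously in the language of adic spaces (the generalization to complete intersections in toric varieties requires only a more flexible equational description).

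Finally, the polynomials $\tilde{f}_i$ have coefficients in $K^\flat$, but after a further approximation (density of the image of a discretely valued subfield like $k^\flat = \F_q((t^{1/?}))$ in $K^\flat$, modulo enlarging $m$) one may take $Y$ to be defined over a local field of equal characteristic $p$. Weight-monodromy for smooth complete intersections (in fact for any smooth projective variety) in equal characteristic is classical: by de Jong's alterations one reduces to semistable reduction, and then the Rapoport--Zink spectral sequence combined with the Weil conjectures forces the monodromy filtration to coincide with the weight filtration. Transporting this conclusion back through $\Gal(\overline{K^\flat}/K^\flat) \cong \Gal(\overline{K}/K)$ yields the conjecture for $V$. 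The main obstacle is quantifying the approximation in step three: one must show that a $p$-adically small perturbation of the defining equations induces an isomorphism on $\ell$-adic étale cohomology compatible with the Galois action, which relies on the full strength of the almost purity theorem and the sheafiness of $\OO_X, \OO_X^+$ for perfectoid spaces as developed earlier in the excerpt.
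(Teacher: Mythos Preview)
First, note that the statement in question is stated in the paper as a \emph{conjecture}, not a theorem: the paper does not prove Conjecture~\ref{WMConj} in general. What the paper does prove is the special case of smooth complete intersections (Theorem~2.11), and your proposal is really an attempt at that special case, not at the full conjecture. So at the very least your proposal should be read as a proof of Theorem~2.11, not of Conjecture~\ref{WMConj}.

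With that understood, your overall strategy---base change to a perfectoid field $K$, tilt to $K^\flat$, approximate by an algebraic variety $Y$ over a local field of equal characteristic, and quote Deligne's theorem---is the paper's strategy. But there is a genuine gap in how you execute the approximation step. You assert that $H^i_\et(\widetilde{X},\overline{\Q}_\ell)\cong V$ for $\widetilde{X}=\pi^{-1}(X^\ad)$, and later that one can choose $Y$ with $H^i_\et(Y_{\overline{K^\flat}},\overline{\Q}_\ell)\cong V$. Both claims are false. The paper says explicitly that $\pi^{-1}(X)$ ``will look like some sort of fractal, have infinite-dimensional cohomology, and will have infinite degree''; and the approximating variety $Y$ will in general have much larger cohomology than $X$ (its degree is multiplied by $p^m$). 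There is no mechanism by which a $p$-adically small perturbation of defining equations yields an isomorphism on $\ell$-adic cohomology, and almost purity does not deliver one.

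What the paper actually does is subtler. One chooses a small open neighbourhood $\tilde{X}\subset (\mathbb{P}^n_K)^\ad$ of $X$ such that, by a result of Huber, $H^i(X)=H^i(\tilde{X})$. The approximation lemma then produces a hypersurface (or complete intersection) $Y\subset \pi^{-1}(\tilde{X})$; composing restriction maps gives a $\Gal(\bar{K}/K)\cong\Gal(\overline{K^\flat}/K^\flat)$-equivariant map
\[
H^i(X)=H^i(\tilde{X})\longrightarrow H^i(\pi^{-1}(\tilde{X}))\longrightarrow H^i(Y),
\]
which one shows is \emph{injective} by using cup products and a direct analysis in top degree. Injectivity into a Galois module for which Deligne's equal-characteristic theorem gives weight-monodromy is enough to conclude for $V$. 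Your proposal replaces this injection-into-a-bigger-space argument by a nonexistent isomorphism, and that is the missing idea.

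A smaller point: your final paragraph attributes the equal-characteristic case to ``Rapoport--Zink spectral sequence combined with the Weil conjectures''. That combination yields only the setup (the existence of the weight decomposition and of $N$); the actual weight-monodromy statement in equal characteristic is Deligne's theorem in \emph{Weil~II}.
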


This is reminiscent of the Lefschetz decomposition. In fact, over $\C$, there is an analogue of this conjecture, by looking at a family of projective smooth complex varieties over the punctured unit disc. In that case, there is a limiting Hodge structure, which is not in general pure, and a monodromy operator (coming from a loop around the puncture). In that case, the result is known by work of Schmid, \cite{Schmid}, cf. also a paper of Steenbrink, \cite{Steenbrink} (who introduced what is now called the Rapoport-Zink spectral sequence).

Deligne, \cite{DeligneWeil2}, proved the analogue of Conjecture \ref{WMConj} for a finite extension of $\F_p((t))$ in place of $k$. Over $p$-adic fields, Conjecture \ref{WMConj} is known for $i=1$ (this reduces to abelian varieties, where one uses N\'eron models), and for $i=2$ (this reduces to surfaces, where it is proved in \cite{RapoportZinkZeta}). Apart from that, little is known.

Let us end this introduction by giving a short sketch of the proof of the weight-monodromy conjecture for a smooth hypersurface $X\subset \mathbb{P}^n$, which is already a new result; the proof in the general case is identical. Let $K$ be the completion of $k(\pi^{1/p^\infty})$, where $\pi$ is a uniformizer of $k$; this is a perfectoid field, and its tilt $K^\flat$ is the completion of $\F_q((t))(t^{1/p^\infty})$.\footnote{If $k=\Q_p$, then we are just back to our standard example.} Both the weights and the monodromy operator are defined by the action of $\Gal(\bar{K}/K)\subset \Gal(\bar{k}/k)$. We have the projection
\[
\pi: \mathbb{P}^n_{K^\flat}\rightarrow \mathbb{P}^n_K\ ,
\]
and we can look at the preimage $\pi^{-1}(X)$. One has a $\Gal(\bar{K}/K)\cong \Gal(\bar{K^\flat}/K^\flat)$-equivariant injective map $H^i(X)\rightarrow H^i(\pi^{-1}(X))$ on $\ell$-adic cohomology, and if $\pi^{-1}(X)$ were an algebraic variety, then one could deduce the result from Deligne's theorem in equal characteristic. However, the map $\pi$ is highly transcendental, and $\pi^{-1}(X)$ will not be given by equations. In general, it will look like some sort of fractal, have infinite-dimensional cohomology, and will have infinite degree in the sense that it will meet a line in infinitely many points. As an easy example, let
\[
X=\{x_0+x_1+x_2=0\}\subset (\mathbb{P}^2_K)^\ad\ .
\]
Then the homeomorphism
\[
|(\mathbb{P}^2_{K^\flat})^\ad|\cong \varprojlim_\varphi |(\mathbb{P}^2_K)^\ad|
\]
means that $\pi^{-1}(X)$ is topologically the inverse limit of the subvarieties
\[
X_n=\{x_0^{p^n}+x_1^{p^n}+x_2^{p^n}=0\}\subset (\mathbb{P}^2_K)^\ad\ .
\]
However, we have the following crucial approximation lemma.

\begin{lem} Let $\tilde{X}\subset (\mathbb{P}^n_K)^\ad$ be a small open neighborhood of the hypersurface $X$. Then there is a hypersurface $Y\subset \pi^{-1}(\tilde{X})$.
\end{lem}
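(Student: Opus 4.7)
The plan is to exhibit an explicit homogeneous polynomial $g \in \OO_{K^\flat}[x_0,\ldots,x_n]$ of the same degree $d$ as the defining polynomial $f = \sum_I a_I x^I \in \OO_K[x_0,\ldots,x_n]$ of $X$, and take $Y = \{g=0\}$. After normalizing so that $\max_I|a_I| = 1$, I may assume $\tilde X$ contains a tube $\{x : |f(x)| \leq c\cdot\max_i|x_i|^d\}$ for some $c > 0$, and the task is to choose $g$ so that $\pi(Y)$ lies in this tube.

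The first step is to transport the coefficients of $f$ across the tilt using the isomorphism $\OO_{K^\flat}/t \cong \OO_K/p$: for each $I$, pick $\beta_I \in \OO_{K^\flat}$ with $\beta_I^\sharp \equiv a_I \pmod p$, and set $g(x) = \sum_I \beta_I x^I$. The crucial estimate is that, for any point $y$ of $Y$, one has $f(y^\sharp) = \sum_I a_I (y^I)^\sharp$ by multiplicativity of $\sharp$, while the nonadditivity of $\sharp$ is trivial modulo $p$:
\[
  \left(\sum_i u_i\right)^{\!\sharp} - \sum_i u_i^\sharp \in p\,\OO_K \quad \text{for } u_i \in \OO_{K^\flat},
\]
with error bounded by $|p|\cdot\max_i|u_i^\sharp|$. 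One verifies this by viewing $u_i$ as a compatible system $(u_i^{(m)})$ with $(u_i^{(m+1)})^p = u_i^{(m)}$, writing $(u+v)^\sharp = \lim_m (u^{(m)}+v^{(m)})^{p^m}$, and noting that the multinomial cross-terms carry a factor of $p$. Combining this with $\beta_I^\sharp \equiv a_I \pmod p$ yields
\[
  f(y^\sharp) \equiv g(y)^\sharp = 0 \pmod{p\cdot\max_i|y_i^\sharp|^d},
\]
so $\pi(Y) \subset \{|f|\leq|p|\cdot\max|x_i|^d\}$, which proves the lemma whenever $c \geq |p|$.

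For smaller $\tilde X$ the naive construction saturates at precision $|p|$, since the non-additivity of $\sharp$ is tight at that order. The refinement is to take $g$ of higher degree $d p^m$ and encode the deeper $p$-adic layers of the Witt expansion $a_I = \sum_{j\geq 0} p^j\beta_{I,j}^\sharp$ via Fontaine's map $\Theta: W(\OO_{K^\flat})\to\OO_K$, which satisfies $\Theta(\tilde g([y])) = f(y^\sharp)$ for a natural Witt-valued lift $\tilde g \in W(\OO_{K^\flat})[x]$ of $g$. Forcing the first $m$ Witt components of $\tilde g([y])$ to vanish along $Y$ then gives $|f(y^\sharp)| \leq |p|^m$, which is smaller than $c$ once $m$ is chosen large enough. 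The key that makes this a single equation rather than an intersection of $m$ equations is the identity $(\sum_i u_i)^{p^m} = \sum_i u_i^{p^m}$ in characteristic $p$, which allows $m$ Witt-level conditions to collapse into a single polynomial equation of degree $dp^m$.

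The hardest step, and the one I would expect to require the most care, is making this Witt-layer packaging precise: arranging that the $m$ conditions really do collapse into a single polynomial, tracking the multinomial cross-terms that appear at each level, and verifying that the resulting element of $\OO_{K^\flat}[x_0,\ldots,x_n]$ actually cuts out a hypersurface whose zero set lies inside $\pi^{-1}(\tilde X)$.
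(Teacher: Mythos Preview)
Your first step---lifting the coefficients $a_I$ to $\beta_I\in\OO_{K^\flat}$ with $\beta_I^\sharp\equiv a_I\pmod p$ and setting $g=\sum_I\beta_I x^I$---is correct and is exactly the base case of the approximation algorithm. The gap is in the refinement. The condition ``the first $m$ Witt components of $\tilde g([y])$ vanish'' is not one equation but $m$: writing $\tilde a_I[y^I]=(\alpha_{I,0}y^I,\alpha_{I,1}y^{pI},\ldots)$ in Witt coordinates, the $j$-th Witt component of $\sum_I\tilde a_I[y^I]$ is a polynomial $P_j(y)$ of degree $dp^j$ (given by the universal Witt addition polynomials), and $P_0,\ldots,P_{m-1}$ generically cut out a locus of codimension $m$. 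For $m>1$ this cannot contain a hypersurface. The identity $(\sum u_i)^{p^m}=\sum u_i^{p^m}$ does not collapse these conditions; for instance already $P_1$, written out, involves the non-$p$-th-power polynomial $\frac{1}{p}\bigl(\sum_I c_I^p-(\sum_I c_I)^p\bigr)$ with $c_I=\alpha_{I,0}y^I$, and vanishing of $P_1$ is independent of vanishing of $P_0$.

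The algorithm referred to in the survey (carried out in \cite{ScholzePerfectoidSpaces1}) proceeds differently. One works in the perfectoid algebra $R=K\langle T_0^{1/p^\infty},\ldots,T_n^{1/p^\infty}\rangle$ and proves an approximation lemma of the shape
\[
|f(x)-g^\sharp(x)|\ \leq\ |\varpi|^{1-\epsilon}\max\bigl(|f(x)|,|\varpi|^c\bigr)
\]
for all $x$, for a suitably constructed $g\in R^{\flat\circ}$. The crucial feature is that the error is controlled \emph{pointwise} in terms of $|f(x)|$ itself, so that $g^\sharp$ is an arbitrarily good approximation to $f$ near the zero locus of $f$; this is what lets one get inside an arbitrarily small tube. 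The iterative construction of $g$ necessarily introduces fractional exponents $T_i^{1/p^N}$ in the correction terms---it is precisely the availability of these $p$-power roots (the perfectoid structure of $R^\flat$) that keeps the iteration from saturating at precision $|p|$. One then observes that, since $f$ is a polynomial, the approximant $g$ can be taken (or further approximated) to lie in $\OO_{K^\flat}[T_0^{1/p^N},\ldots,T_n^{1/p^N}]$, homogeneous of degree $d$; finally $Y=\{g^{p^N}=0\}$ is a genuine hypersurface of degree $dp^N$ with the same zero locus as $g$. The Frobenius identity you invoke enters only at this last step, to clear the fractional exponents, not to merge several Witt-level conditions into one.
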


The proof of this lemma is by an explicit approximation algorithm for the homogeneous polynomial defining $X$, and is the reason that we have to restrict to complete intersections. Using a result of Huber, one finds some $\tilde{X}$ such that $H^i(X) = H^i(\tilde{X})$, and hence gets a $\Gal(\bar{K}/K)\cong \Gal(\bar{K^\flat}/K^\flat)$-equivariant map $H^i(X)=H^i(\tilde{X})\rightarrow H^i(Y)$. As before, one checks (using $\cup$-products and a direct analysis in top degree) that it is injective and concludes.

\subsection{Some open questions}

Of course, the most urgent question would be to see whether the proof of the weight-monodromy conjecture can be extended to more general situations. The only problem is in proving an analogue of the last lemma; let me state this as a conjecture.

\begin{conj} Let $X\subset \mathbb{P}^n_K$ be a smooth closed subscheme. Let $\tilde{X}\subset (\mathbb{P}^n_K)^\ad$ be a small open neighborhood of $X$. Then there exists a closed subscheme $Y\subset \mathbb{P}^n_{K^\flat}$ such that $Y^\ad\subset \pi^{-1}(\tilde{X})$, and $\dim Y = \dim X$.
\end{conj}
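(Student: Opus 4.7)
The plan is to reduce the general case to the complete intersection case already proved in $\cite{ScholzePerfectoidSpaces1}$. Recall how the complete intersection approximation lemma works: if $X=V(f_1,\ldots,f_c)$ with $c=\mathrm{codim}(X)$ in $\mathbb{P}^n_K$, one produces homogeneous polynomials $\tilde f_i\in K^\flat[x_0,\ldots,x_n]$ of the same degree such that $\tilde f_i^\sharp$ approximates $f_i$ uniformly on the relevant affinoid neighborhoods, and sets $Y=V(\tilde f_1,\ldots,\tilde f_c)\subset\mathbb{P}^n_{K^\flat}$. Since $Y$ is again a complete intersection of codimension $c$, the dimension is preserved, and the norm bound on $\tilde f_i^\sharp-f_i$ forces $Y^\ad\subset\pi^{-1}(\tilde X)$.

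After reducing to the case that $X$ is geometrically irreducible of pure codimension $c=n-\dim X$, I would try to produce a codimension-$c$ set-theoretic complete intersection $X_0=V(g_1,\ldots,g_c)\subset\mathbb{P}^n_K$ containing $X$ as a clean component. Concretely: take a finite set $f_1,\ldots,f_m$ of degree-$d$ generators of the homogeneous ideal $I(X)$ for some $d\gg 0$, and choose $g_i=\sum_j a_{ij}f_j$ for generic $a_{ij}\in K$. A Bertini-type argument, using the smoothness of $X$ (possibly after replacing $\mathbb{P}^n$ by a high enough Veronese re-embedding), should produce $X_0=X\cup Z$ with $Z$ of pure codimension $c$ and disjoint from $X$. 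Applying the complete intersection procedure to $X_0$ then yields $Y_0=V(\tilde g_1,\ldots,\tilde g_c)\subset\mathbb{P}^n_{K^\flat}$ of pure dimension $\dim X$ with $Y_0^\ad\subset\pi^{-1}(\tilde X_0)$, where $\tilde X_0=\tilde X\sqcup\tilde Z$ is a disjoint union of small neighborhoods. Thus $Y_0^\ad=(Y_0^\ad\cap\pi^{-1}\tilde X)\sqcup(Y_0^\ad\cap\pi^{-1}\tilde Z)$ is a clopen decomposition, and one would take $Y$ to be the union of those irreducible components of $Y_0$ whose adic analytification falls in the first piece.

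The main obstacle is to ensure that this adic clopen decomposition actually descends to an algebraic decomposition of $Y_0$ in which the first piece still has dimension $\dim X$; equivalently, that under the perfectoid approximation the components of $X_0$ near $X$ and near $Z$ do not fuse in $Y_0$. If $Y_0$ is irreducible, one is forced to take $Y=Y_0$, whose adic locus is not contained in $\pi^{-1}(\tilde X)$. Establishing the required non-fusion seems to require a quantitative understanding of how irreducible components of an approximate cut-out track those of the original $X_0$ in the perfectoid limit, perhaps through careful control of the singular locus of $X_0$ along $X$; this is the serious technical point, and presumably where the conjecture could genuinely fail for subschemes that are very far from linkage type.

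An alternative approach would bypass the search for a global $X_0$ and work Zariski-locally, exploiting that a smooth $X$ is locally a complete intersection, gluing the resulting local algebraic $Y_\alpha$'s. This trades the previous difficulty for the dual one of ensuring that the local algebraic pieces, which live on different affine charts of $\mathbb{P}^n_{K^\flat}$, patch to a single global closed subscheme; I expect this to be no easier than the linkage problem above, but it may be the more tractable path in special situations such as when $X$ is arithmetically Cohen--Macaulay.
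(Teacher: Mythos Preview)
The statement you are attempting to prove is stated in the paper as an open \emph{conjecture}, not a theorem; there is no proof in the paper to compare your proposal against. The paper explicitly says that if this conjecture holds, the weight-monodromy conjecture would follow, and it remarks that the local version is fine (since a smooth $X$ is locally a complete intersection), but that the global statement is the open problem.

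Your proposal is therefore not a proof but a sketch of a possible strategy, and you have correctly identified where it breaks down. The linkage idea---embedding $X$ in a global complete intersection $X_0=X\cup Z$ with $X\cap Z=\emptyset$, then approximating $X_0$---is natural, but the step you flag is a genuine gap: after tilting, there is no reason the irreducible components of $Y_0$ should respect the clopen adic decomposition $\pi^{-1}(\tilde X)\sqcup\pi^{-1}(\tilde Z)$. An irreducible component of $Y_0$ can meet both pieces, and nothing in the approximation procedure controls this. Your alternative local-to-global approach is precisely what the paper acknowledges works locally; the difficulty of gluing the local algebraic pieces into a single global closed subscheme of $\mathbb{P}^n_{K^\flat}$ is exactly why the conjecture remains open. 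So your write-up is an honest assessment of the state of affairs, but it is not a proof, and none is known.
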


If this conjecture is true, the weight-monodromy conjecture follows, using the same argument. We recall that $\pi^{-1}(X^\ad)\subset (\mathbb{P}^n_{K^\flat})^\ad$ can be regarded as a fractal. The question is whether this fractal can be approximated (globally) by an algebraic variety. We note that it can be approximated locally by an algebraic variety, as $X$ is smooth (and thus locally a complete intersection, so the argument in the case of global complete intersections works locally).

On the other hand, there are a couple of foundational questions on perfectoid spaces, where a positive answer would simplify many arguments. The key problem is whether the property of being perfectoid is local.

\begin{conj}\label{TechnicalConj} Let $K$ be some perfectoid field, $(A,A^+)$ a complete affinoid $K$-algebra. Assume that there is a cover of $X=\Spa(A,A^+)$ by rational subsets $U_i\subset X$ such that $\OO_X(U_i)$ is a perfectoid $K$-algebra. Then $A$ is a perfectoid $K$-algebra.
\end{conj}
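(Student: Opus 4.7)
The plan is to use the tilting equivalence to reduce the problem to characteristic $p$, where being perfectoid is essentially the condition of being perfect (and thus genuinely local), and then to untilt back. Concretely: build a candidate tilt $(A^\flat, A^{\flat+})$ of $(A, A^+)$ from the tilts of the $\OO_X(U_i)$; show it is perfectoid; apply the untilting functor $S \mapsto W(S^\circ) \otimes_{W(\OO_{K^\flat})} K$ to recover a perfectoid $K$-algebra $B$; and identify $B$ with $A$ by gluing on the cover.

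First I would tilt each $U_i$: by hypothesis each $\OO_X(U_i)$ is perfectoid, so we obtain $(A_i^\flat, A_i^{\flat+}) = (\OO_X(U_i)^\flat, \OO_X^+(U_i)^\flat)$. Since each intersection $U_{ij} = U_i \cap U_j$ is rational in both $U_i$ and $U_j$, item (ii) of the main theorem shows these tilts are compatible on overlaps. Set
\[
A^{\flat+} = \Ker\Bigl(\prod_i A_i^{\flat+} \rightrightarrows \prod_{i,j} A_{ij}^{\flat+}\Bigr),
\]
take $A^\flat = A^{\flat+}[t^{-1}]$ for an appropriate pseudouniformizer $t \in K^\flat$, and topologize so that the $A_i^{\flat+}$ become rings of definition on the relevant rational loci. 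The perfectoid conditions on $A^\flat$ would be verified as follows: boundedness of $(A^\flat)^\circ$ follows from the boundedness of each $A_i^{\flat\circ}$ together with the equalizer presentation, while surjectivity of Frobenius on $A^{\flat+}/t$ reduces to lifting local preimages to global ones, i.e.\ to the almost vanishing of the first \v{C}ech cohomology $\check{H}^1(\{U_i\}, \OO^{\flat+}/t)$. Once $A^\flat$ is known to be perfectoid, untilting yields a perfectoid $K$-algebra $B$; since the equality $B = A$ is known locally on each $U_i$ (both restrict to $\OO_X(U_i)$), a further gluing step identifies $B$ with $A$ globally.

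The main obstacle is precisely the \v{C}ech-vanishing step. Item (iv) provides the needed acyclicity of $\OO^+$ only once one already knows the ambient space is perfectoid, so for the global $X = \Spa(A, A^+)$ it is not available \emph{a priori}. On each $U_i$ individually we do have this vanishing, so a natural attempt is to propagate it to the full cover either by a Mayer--Vietoris/spectral-sequence patchwork from the local acyclicities, or by a direct almost-mathematics argument working modulo $p$ (and exploiting that $\OO_X^+/p$ on each $U_i$ is almost identified with $\OO_{X^\flat}^+/t$, which is much easier to control in characteristic $p$ via iterates of Frobenius). This descent step is essentially the content of the conjecture: one is being asked to recognize perfectoidness of $A$ without yet having access to the sheaf-theoretic consequences that perfectoidness itself guarantees, and I expect that any complete proof will stand or fall on whether such a bootstrap can be made to work.
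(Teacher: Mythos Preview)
There is nothing to compare against: in the paper this statement is explicitly a \emph{Conjecture} (listed in the subsection ``Some open questions''), and no proof is offered. So your proposal is not an alternative to the paper's argument; it is an attempt at an open problem.

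As a strategy your outline is the natural one, and you have already put your finger on the genuine gap. Everything in your first two paragraphs is routine once you know the relevant \v{C}ech/sheaf cohomology of $\OO_X^+$ (or $\OO_X^+/p$) vanishes almost on the cover $\{U_i\}$ of $X$; but the only tool the paper provides for such vanishing is part (iv) of the main structural theorem, which requires the ambient affinoid to be perfectoid in the first place. Your proposed workarounds---propagating acyclicity from the $U_i$ by Mayer--Vietoris, or a direct mod-$p$ Frobenius argument---both presuppose some form of sheafiness or almost-acyclicity on $X$ itself, and that is exactly what is not known for a general complete affinoid $K$-algebra $(A,A^+)$. In other words, the ``bootstrap'' you hope for in the last paragraph is not a technical detail to be filled in but the entire content of the conjecture; your proposal is a correct reformulation of the problem rather than a proof.
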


\begin{cor} Assume Conjecture \ref{TechnicalConj}. Let $X$ be a perfectoid space over $K$, and assume that $U\subset X$ is affinoid, i.e. there is a complete affinoid $K$-algebra $(A,A^+)$ and a map
\[
(A,A^+)\to (H^0(U,\OO_X),H^0(U,\OO_X^+))
\]
such that the induced map $U\to \Spa(A,A^+)$ is a homeomorphism, and the map of (pre)sheaves $\OO_{\Spa(A,A^+)}\to \OO_U$ is an isomorphism locally on $U$.\footnote{This is saying that $U$ is an affinoid adic space in the sense of \cite{ScholzeWeinstein}.} Then
\[
(A,A^+)\cong (H^0(U,\OO_X),H^0(U,\OO_X^+))
\]
is an affinoid perfectoid $K$-algebra.
\end{cor}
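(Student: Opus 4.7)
The strategy is to first use Conjecture \ref{TechnicalConj} to deduce that $(A, A^+)$ is affinoid perfectoid, and then to identify both sides of the claimed isomorphism via the sheaf property, which becomes available only once $A$ is known to be perfectoid.

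The core step is to produce a cover of $\Spa(A, A^+)$ by rational subsets $U_x$ such that each $\OO_{\Spa(A, A^+)}(U_x)$ is a perfectoid $K$-algebra. Fix a point $x \in \Spa(A, A^+) \cong U$. Choose an open $W \subset U$ containing $x$ on which the given map of presheaves $\OO_{\Spa(A, A^+)} \to \OO_U$ is an isomorphism; this exists by hypothesis. Since $X$ is perfectoid, affinoid perfectoid opens form a basis for its topology, so I can pick an affinoid perfectoid open $V = \Spa(B, B^+) \subset W$ containing $x$. Because rational subsets of $\Spa(A, A^+)$ form a basis of its topology, there is a rational subset $U_x \subset \Spa(A, A^+)$ with $x \in U_x \subset V$, cut out by parameters $f_1, \ldots, f_n, g \in A$ generating the unit ideal. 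The given map $A \to H^0(U, \OO_X) \to B$ sends these parameters to elements of $B$ still generating the unit ideal, and the corresponding rational subset of $\Spa(B, B^+)$ coincides set-theoretically with $U_x$. Since $B$ is perfectoid, part (ii) of the basic theorem shows this rational subset of $V$ is affinoid perfectoid, so $\OO_X(U_x)$ is a perfectoid $K$-algebra; by the local isomorphism on $W$, so is $\OO_{\Spa(A, A^+)}(U_x) \cong \OO_X(U_x)$. Varying $x$ gives the required cover, so Conjecture \ref{TechnicalConj} forces $A$ to be perfectoid, and $(A, A^+)$ is affinoid perfectoid.

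For the final identification, part (iii) now tells us $\OO_{\Spa(A, A^+)}$ and $\OO_{\Spa(A, A^+)}^+$ are sheaves, while $\OO_X|_U$ and $\OO_X^+|_U$ are already sheaves. Since the given maps are isomorphisms locally and both sides are now sheaves, they are isomorphisms globally; taking global sections yields the desired identification $(A, A^+) \cong (H^0(U, \OO_X), H^0(U, \OO_X^+))$.

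The step I expect to be most delicate is verifying that a rational subset of $\Spa(A, A^+)$ contained in an affinoid perfectoid open $V$ of $X$ faithfully realizes as a Huber rational subset of $V = \Spa(B, B^+)$; this amounts to tracking the defining parameters along the canonical ring map $A \to B$ and checking that the resulting inequalities on valuations of $B$ cut out the same underlying set. Everything else is formal once this compatibility is in hand.
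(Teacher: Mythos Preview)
Your proof is correct and is precisely the argument the paper has in mind; the paper states this corollary without proof, treating it as immediate from Conjecture~\ref{TechnicalConj}, and your write-up supplies exactly the expected details. The one point you flag as delicate---that a rational subset $U_x\subset \Spa(A,A^+)$ contained in an affinoid perfectoid $V=\Spa(B,B^+)$ is again rational in $V$---is indeed routine: the defining data $f_1,\ldots,f_n,g\in A$ map to elements of $B$ still generating the unit ideal, and since the continuous valuations on $B$ restrict along $A\to B$ to the corresponding valuations on $A$, the inequalities $|f_i|\leq |g|$ cut out the same set.
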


\begin{cor} Assume Conjecture \ref{TechnicalConj}. Let $X$ be an affinoid perfectoid space over $K$, and let $Y$ be an affinoid noetherian adic space over $K$, and $f:X\to Y$ a map of adic spaces over $K$. Let $V$ be another affinoid noetherian adic space over $K$, with an \'etale map $g:V\to Y$. Then $U=V\times_Y X\to X$ is \'etale, and $U$ is affinoid perfectoid.
\end{cor}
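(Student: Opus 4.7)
My plan is to cover $U = V\times_Y X$ by affinoid perfectoid rational subsets and then invoke Conjecture \ref{TechnicalConj}. The key preparatory step is a local factorization of the étale map $g$. By the standard structure theorem for étale morphisms between noetherian affinoid adic spaces, there is a finite cover of $V$ by rational subsets $V_j\subset V$ such that each restriction $g|_{V_j}$ factors as $V_j\hookrightarrow V'_j\xrightarrow{p_j} Y$ with $p_j$ finite étale and $V_j\hookrightarrow V'_j$ an open immersion onto a rational subset.

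Next I would base-change each piece to $X$. Since $p_j$ is finite étale, the pullback $V'_j\times_Y X\to X$ is a finite étale cover of the perfectoid algebra $\OO_X(X)$, so by the improved almost purity theorem stated above, $V'_j\times_Y X$ is affinoid perfectoid. The further base change $U_j := V_j\times_Y X$ is then the preimage in $V'_j\times_Y X$ of the rational subset $V_j\subset V'_j$, cut out by the same inequalities; by part (ii) of the theorem on affinoid perfectoid spaces, $U_j$ is again affinoid perfectoid, and the map $U_j\to X$ is étale as a composition of a finite étale map with inclusion of a rational subset in the perfectoid setting.

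To assemble these, define $A := \OO_V(V)\,\hat\otimes_{\OO_Y(Y)}\, \OO_X(X)$ as a complete Banach $K$-algebra, with $A^+\subset A$ the integral closure of the image of $\OO_V^+(V)\,\hat\otimes_{\OO_Y^+(Y)}\, \OO_X^+(X)$. Using the noetherian hypothesis on $Y$ and $V$, $(A,A^+)$ is a complete affinoid $K$-algebra and $\Spa(A,A^+)$ is canonically identified with the fiber product $U$. Since each $V_j$ is a rational subset of $V$, defined by explicit inequalities among functions in $\OO_V(V)$, pulling these same inequalities through to $A$ exhibits $U_j$ as a rational subset of $\Spa(A,A^+)$, and the sections $\OO(U_j)$ agree with the perfectoid $K$-algebras constructed in the previous step. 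Thus the hypothesis of Conjecture \ref{TechnicalConj} is met, and the conjecture forces $A$ itself to be perfectoid. Étaleness of $U\to X$ then follows from the local étaleness of each $U_j\to X$.

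The main obstacle I anticipate is exactly the bookkeeping required to set up $(A,A^+)$ correctly and verify that $\Spa(A,A^+) = U$ — that is, checking that the completed tensor product exists, is a complete affinoid $K$-algebra, and represents the fiber product even though one factor is merely perfectoid rather than noetherian. The noetherian hypotheses on $Y$ and $V$ are precisely what make this palatable: they control the topology on $\OO_V(V)$ over $\OO_Y(Y)$ so that base change behaves as expected. Once this is handled, the proof is a clean combination of three robust ingredients — local factorization of étale morphisms of noetherian adic spaces, the improved almost purity theorem, and stability of perfectoid algebras under passage to rational subsets — glued together by Conjecture \ref{TechnicalConj}.
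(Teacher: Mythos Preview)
Your proposal is correct and follows essentially the same route as the paper. Both arguments set up the candidate affinoid $K$-algebra as a completed tensor product $(D,D^+)$ (your $(A,A^+)$), use the local structure theorem for \'etale maps of noetherian adic spaces to factor $g$ locally as a composite of rational subsets and finite \'etale maps, invoke almost purity and stability under rational localization to see that the resulting local pieces are affinoid perfectoid, and then conclude via Conjecture~\ref{TechnicalConj}. The paper is terser about the compatibility you flag --- it simply asserts that when $g$ is already such a composite, the completion of $(D,D^+)$ is perfectoid with $\Spa(D,D^+)=U$, citing the definition of the structure presheaf on rational subsets and \cite[Theorem 7.9 (iii)]{ScholzePerfectoidSpaces1} for the finite \'etale part --- but the content is the same.
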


\begin{rem} As the proof shows, one can also compute the global sections of $U$ in the expected way.
\end{rem}

\begin{proof} Let $X=\Spa(A,A^+)$, $Y=\Spa(B,B^+)$, $V=\Spa(C,C^+)$. Then let $D=A\otimes_B C$, with the topology making the image of $A_0\otimes_{B_0} C_0$ a ring of definition, with ideal of definition the image of $I\otimes J$, where $A_0\subset A$, $B_0\subset B$, $C_0\subset C$ are rings of definition, and $I\subset A_0$, $J\subset C_0$ are ideals of definition. Let $D^+\subset D$ be the integral closure of the image of $A^+\otimes_{B^+} C^+$. Then the completion of $(D,D^+)$ is an affinoid $K$-algebra, and it satisfies the hypothesis of Conjecture \ref{TechnicalConj}. For this, use that if $g: V\to Y$ is a composition of rational subsets and finite \'etale maps, then the completion of $(D,D^+)$ is a perfectoid affinoid $K$-algebra, with $\Spa(D,D^+) = U\subset X$ (as follows from the definition of the structure sheaf on rational subsets, resp. \cite{ScholzePerfectoidSpaces1}, Theorem 7.9 (iii)).
\end{proof}

Let us also note some subtleties related to saying that 'an inverse limit of adic spaces is perfectoid'. For this, we recall a definition from \cite{ScholzeWeinstein} in the case of interest here.

\begin{definition}\label{InverseLimit} Let $K$ be a perfectoid field, and let $X_i$, $i\in I$, be a cofiltered inverse system of locally noetherian adic spaces over $K$, with qcqs transition maps. Let $X$ be a perfectoid space over $K$ with a compatible system of maps $X\to X_i$, $i\in I$. Write
\[
X\sim \varprojlim_i X_i
\]
if $|X|\cong \varprojlim_i |X_i|$ is a homeomorphism, and there exists a cover of $X$ by open perfectoid affinoid $U=\Spa(R,R^+)$ for which the map
\[
\varinjlim_{\Spa(R_i,R_i^+)\subset X_i} R_i\to R
\]
has dense image, where the direct limit runs over all
\[
\Spa(R_i,R_i^+)\subset X_i
\]
over which $\Spa(R,R^+)\to X$ factors.
\end{definition}

\begin{rem} In \cite[Definition 2.4.1]{ScholzeWeinstein}, this notion is introduced for general adic spaces (in the sense of \cite[Definition 2.1.5]{ScholzeWeinstein}). The elementary properties of the following proposition also hold in that generality. It also follows from the arguments of the following proposition that the given definition is equivalent to the one in \cite{ScholzeWeinstein} in this situation: Note that here we require the open affinoid $U=\Spa(R,R^+)$ to come from a perfectoid affinoid $K$-algebra $(R,R^+)$, which would a priori lead to a stronger notion.
\end{rem}

Let us say that a perfectoid affinoid open subset $\Spa(R,R^+)\subset X$ is good if
\[
\varinjlim_{\Spa(R_i,R_i^+)\subset X_i} R_i\to R
\]
has dense image, and
\[
|\Spa(R,R^+)|\cong \varprojlim_{\Spa(R_i,R_i^+)\subset X_i} |\Spa(R_i,R_i^+)|
\]
is a homeomorphism.

\begin{prop}\begin{altenumerate}
\item[{\rm (i)}] If $X\sim \varprojlim_i X_i$, then there exists a cover of $X$ by perfectoid affinoid open $U=\Spa(R,R^+)\subset X$ which are good.
\item[{\rm (ii)}] If $U=\Spa(R,R^+)\subset X$ is good, then any rational subset of $U$ is good.
\end{altenumerate}
\end{prop}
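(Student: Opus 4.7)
My plan is to address (ii) first and then use the approximation technique there to handle (i).

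For (ii), I would start with a rational subset $V = U\langle f_k/g\rangle \subset U$, where $U$ is good with cofiltered system $\{V_j = \Spa(R_j,R_j^+) \subset X_j\}$ through which $U \to X$ factors. Using density of $\varinjlim R_j \to R$, I would choose approximations $\tilde f_k, \tilde g \in R_{j_0}$ for some $j_0$, close enough that they still generate the unit ideal in $R_{j_0}$ (using openness of the generating locus in $R_{j_0}^{n+1}$). For $j \geq j_0$ I would then set $V_j' = V_j\langle \tilde f_k/\tilde g\rangle$, a rational subset through which $V \to X_j$ factors; as the approximations are refined, these rational subsets form a cofinal subsystem of the system defining goodness for $V$. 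Next I would verify the two conditions: density of $\varinjlim \OO(V_j') \to \OO(V)$ follows from density of $\varinjlim R_j \to R$ together with compatibility of rational localization with completed filtered colimits, while the homeomorphism $|V| \cong \varprojlim |V_j'|$ follows from $|U| \cong \varprojlim |V_j|$ and the fact that the closed condition $|f_k|\le|g|$ is cut out in the limit by the increasingly fine approximations $|\tilde f_k|\le|\tilde g|$.

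For (i), given $x \in X$, the definition of $X \sim \varprojlim X_i$ yields a perfectoid affinoid $U_0 = \Spa(R_0,R_0^+)$ containing $x$ with the density condition. Since $U_0$ is quasi-compact and each $X_j$ is locally noetherian, after shrinking $U_0$ to a rational subset containing $x$ I can arrange that $U_0 \to X$ factors through a cofiltered system of affinoids $V_j \subset X_j$ (with $V_j = V_{j_0}\times_{X_{j_0}} X_j$ for $j \geq j_0$, which is affinoid). The natural inclusion $|U_0| \hookrightarrow \varprojlim_j |V_j|$ inside $|X| \cong \varprojlim |X_j|$ may still be strict, so I plan to shrink further: since $|U_0|$ is a quasi-compact open in the spectral space $|X| \cong \varprojlim |X_j|$ with spectral transition maps, by the standard descent property for quasi-compact opens $|U_0|$ is the preimage of some quasi-compact open $W \subset |X_{j_1}|$. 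Writing $W$ as a finite union of rational subsets of affinoid opens in $X_{j_1}$ and pulling back to $U_0$ gives a finite cover of $U_0$ by rational perfectoid affinoid subsets $U^{(\alpha)}$; by construction each $|U^{(\alpha)}|$ is the preimage of a rational affinoid in $X_{j_1}$ and its pullbacks, which gives the topological condition, while the density condition propagates from $U_0$ by the approximation argument of (ii).

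The main obstacle I expect is the topological verification in (ii): making precise that the inverse limit of approximating rational subsets recovers $|V|$ requires uniform control of valuations under perturbation of the defining data. The descent of quasi-compact opens invoked in (i) is a standard, if subtle, fact about cofiltered limits of spectral spaces with spectral transition maps, and I would invoke it as a black box.
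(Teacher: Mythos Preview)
Your overall strategy matches the paper's, but you overcomplicate (ii) and thereby manufacture the obstacle you flag at the end. The paper's key observation (stated tersely as ``a rational subset of $U$ comes as the preimage of a rational subset of some $\Spa(R_i,R_i^+)$'') is that a \emph{single} sufficiently good approximation already does the job: if $\tilde f_k,\tilde g\in R_{j_0}$ are close enough to $f_k,g$ in $R$, then $V=\{|\tilde f_k|\le|\tilde g|,\ |\varpi^N|\le|\tilde g|\}$ on the nose for suitable $N$, so $V$ is literally the preimage of the rational subset $V_{j_0}'\subset\Spa(R_{j_0},R_{j_0}^+)$ cut out by the same inequalities. (Adding the $\varpi^N$ condition also repairs your unit-ideal claim: your justification via ``openness of the generating locus in $R_{j_0}^{n+1}$'' does not work, because closeness is measured in $R$, not $R_{j_0}$; but with $|\varpi^N|\le|\tilde g|$ thrown in, the rescaled data $\varpi^{-N}\tilde f_k,\,1,\,\varpi^{-N}\tilde g$ always generate.) Once $V$ is the preimage of $V_{j_0}'$, the $V_j'$ for $j\ge j_0$ are just the successive preimages of $V_{j_0}'$, so $\varprojlim|V_j'|$ is the preimage of $|V_{j_0}'|$ in $|U|=\varprojlim|V_j|$, namely $|V|$; and density for $V$ follows from density for $U$ by the definition of the structure presheaf on rational subsets. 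No sequence of ``increasingly fine approximations'' is needed, and your anticipated obstacle disappears.

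With this simplification your argument for (i) is essentially the paper's: start from a perfectoid affinoid $U$ with the density property, descend the qc open $|U|\subset\varprojlim|\Spa(R_i,R_i^+)|$ to a qc open at some finite level (this is the spectral-space fact you invoke), cover that by rational subsets of $\Spa(R_i,R_i^+)$, and pull back. The paper organizes the proof as (i) first and then (ii) by reusing the same argument; you reverse the order, which is harmless.
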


\begin{proof}\begin{altenumerate}
\item[{\rm (i)}] Take any perfectoid affinoid $U=\Spa(R,R^+)\subset X$ such that
\[
\varinjlim_{\Spa(R_i,R_i^+)\subset X_i} R_i\to R
\]
has dense image. Clearly,
\[
|U|\subset \varprojlim_{\Spa(R_i,R_i^+)\subset X_i} |\Spa(R_i,R_i^+)|(\subset |X|)
\]
is open. Therefore, there is some qcqs open subset $U_i\subset \Spa(R_i,R_i^+)$ for some $i$, such that $U$ is the preimage of $U_i$. We may cover $U_i$ by rational subsets $U_i^\prime\subset \Spa(R_i,R_i^+)$; let $U^\prime\subset U$ be the preimage, which is a rational subset of $U$. Then
\[
|U^\prime| = \varprojlim_{\Spa(R_i^\prime,R_i^{\prime +})\subset X_i} |\Spa(R_i^\prime,R_i^{\prime +})|\ ,
\]
where now the inverse limit runs over all $\Spa(R_i^\prime,R_i^{\prime +})\subset X_i$ over which $U^\prime\to X_i$ factors. Moreover, the density statement is preserved (by the definition of the structure presheaf on rational subsets), so $U^\prime$ is good.
\item[{\rm (ii)}] A rational subset of $U$ comes as the preimage of a rational subset of some $\Spa(R_i,R_i^+)$; then use the argument from (i).
\end{altenumerate}
\end{proof}

\begin{prop}[{\cite[Proposition 2.4.5]{ScholzeWeinstein}}] In the situation of Definition \ref{InverseLimit}, the perfectoid space $X$ represents the functor
\[
Y\mapsto \varprojlim_i \Hom(Y,X_i)
\]
on perfectoid spaces over $K$. In particular, it is unique up to unique isomorphism.
\end{prop}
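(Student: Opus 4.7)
The plan is to show the natural map $\Hom(Y,X)\to\varprojlim_i\Hom(Y,X_i)$ (composition with the structural projections $X\to X_i$) is bijective for every perfectoid $Y/K$; uniqueness of $X$ then follows by Yoneda. Both sides are sheaves on $Y$, so I reduce to $Y=\Spa(S,S^+)$ affinoid perfectoid. By the preceding proposition I cover $X$ by good perfectoid affinoid opens $U=\Spa(R,R^+)$; a compatible system $(f_i:Y\to X_i)$ produces a continuous map $|Y|\to\varprojlim|X_i|\cong|X|$, so after shrinking $Y$ I may assume this map factors through a single such $U$. Goodness of $U$ together with quasi-compactness of $Y$ lets me choose, for every $i$, an affinoid $\Spa(R_i,R_i^+)\subset X_i$ through which both $U\to X_i$ and $f_i$ factor, giving compatible continuous ring homomorphisms $R_i\to S$ with $R_i^+\to S^+$, hence a single $K$-algebra map $\varphi:\varinjlim_i R_i\to S$ sending $\varinjlim_i R_i^+$ into $S^+$.

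Injectivity is immediate: a morphism $Y\to U$ amounts to a continuous $K$-algebra map $R\to S$ with $R^+\to S^+$, and by goodness of $U$ such a map is determined by its restriction to the dense subring $\varinjlim_i R_i\subset R$. Thus at most one lift of $(f_i)$ exists locally, and globally gluing is unambiguous.

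For surjectivity I must extend $\varphi$ to a morphism $R\to S$ of perfectoid affinoid $K$-algebras; this is the main technical step. Boundedness of $\varphi(\varinjlim_i R_i^+)\subset S^+$ makes $\varphi$ uniformly continuous for the $\pi$-adic topologies (any pseudo-uniformizer $\pi\in K$); by density of $\varinjlim_i R_i$ in $R$ and completeness of $S$, it extends uniquely to a continuous $K$-algebra map $\tilde\varphi:R\to S$. To verify $\tilde\varphi(R^+)\subset S^+$ I argue valuation-theoretically: $\tilde\varphi$ realizes the topological map $|Y|\to|U|$ already induced by $(f_i)$, and for any $y\in Y$ with image $x\in U$ and any $f\in R^+$ one has $|f(x)|\le 1$ by definition of $\Spa(R,R^+)$, so $|\tilde\varphi(f)(y)|\le 1$, i.e.\ $\tilde\varphi(f)\in S^+$. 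The resulting morphism $Y\to U\subset X$ induces $(f_i)$ by construction, and the locally constructed pieces glue across overlaps by injectivity, producing the required $Y\to X$.
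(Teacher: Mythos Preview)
The paper does not prove this proposition here; it merely cites \cite[Proposition 2.4.5]{ScholzeWeinstein}. So there is no ``paper's proof'' to compare against, and your argument is the natural one. It is essentially correct, but the continuity step has a gap as written.

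You claim that $\varphi(\varinjlim_i R_i^+)\subset S^+$ makes $\varphi$ uniformly continuous ``for the $\pi$-adic topologies'', and then extend by density of $\varinjlim_i R_i$ in $R$. The problem is a mismatch of topologies: the $\pi$-adic topology on $\varinjlim_i R_i$ determined by the subring $\varinjlim_i R_i^+$ need not agree with the subspace topology inherited from $R$, because an element $r'\in R_j$ lying in $R^+$ (i.e.\ with $|r'(u)|\le 1$ for all $u\in U$) need not lie in $R_j^+$ (which requires $|r'(v)|\le 1$ for \emph{all} $v\in\Spa(R_j,R_j^+)$, not just those in the image of $U$). So from $\varphi(\varinjlim_i R_i^+)\subset S^+$ alone you cannot conclude that a Cauchy sequence for the $R$-topology has Cauchy image.

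There are two easy fixes. One is to use your own valuation argument earlier: for $x\in R_j$ with $x\in\pi^m R^+$, every $y\in Y$ has image $u\in U$ with $f_j(y)$ equal to the image of $u$ in $\Spa(R_j,R_j^+)$, whence $|\varphi(x)(y)|=|x(f_j(y))|=|x(u)|\le|\pi|^m$, so $\varphi(x)\in\pi^m S^+$; this gives continuity for the subspace topology directly. Alternatively, observe that $\varinjlim_i R_i^+$ is in fact $\pi$-adically dense in $R^+$: given $r'\in R_j\cap R^+$, the rational subset $\{|r'|\le 1\}\subset\Spa(R_j,R_j^+)$ is again an affinoid open of $X_j$ through which $U$ factors, and there $r'$ does lie in the $+$-ring; so your filtered system already contains enough localizations. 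With either patch, the rest of your argument (including the verification $\tilde\varphi(R^+)\subset S^+$, which should be justified by the same comparison of valuations on the dense subring) goes through.
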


For the next proposition, one needs a slight variant of Conjecture \ref{TechnicalConj}.

\begin{conj}\label{TechnicalConj2} Let $K$ be some perfectoid field, and $(A,A^+)$ a complete affinoid $K$-algebra for which $A^+$ has the $p$-adic topology. Assume that there is a covering of $X=\Spa(A,A^+)$ by rational subsets $U_i\subset X$ for which $\widehat{\OO_X(U_i)}$ is a perfectoid $K$-algebra, where the completion is taken with respect to the topology giving $\OO_X^+(U_i)$ the $p$-adic topology. Then $(A,A^+)$ is a perfectoid affinoid $K$-algebra.
\end{conj}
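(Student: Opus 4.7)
Plan. My approach is to verify directly the almost-algebraic characterization of perfectoid affinoid $K$-algebras on $(A,A^+)$, using the hypothesis that each $\widehat{\OO_X(U_i)}$ is perfectoid together with a \v{C}ech-theoretic descent argument. By the almost-mathematics dictionary sketched in the tilting section above, $(A,A^+)$ with $A^+$ carrying the $p$-adic topology is perfectoid affinoid if and only if $A^{+a}$ is flat over $\OO_K^a$, $p$-adically complete, and Frobenius induces an isomorphism $A^{+a}/p^{1/p}\to A^{+a}/p$. Flatness is automatic from $A^+$ being integrally closed and $p$-torsion-free in $A$, and $p$-adic completeness is the topological hypothesis combined with completeness of $(A,A^+)$. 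So the entire content is in globalizing the Frobenius almost-isomorphism from its local counterparts on the cover.

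For Frobenius surjectivity, given $\bar{a}\in A^+/p$, I would locally produce $b_i\in \widehat{\OO_X^+(U_i)}$ with $b_i^p\equiv a$ almost modulo $p$, as guaranteed by perfectoidness of each $\widehat{\OO_X(U_i)}$. Their pairwise differences on rational intersections $U_i\cap U_j$ form a \v{C}ech $1$-cocycle whose class in $\check{H}^1(\{U_i\},\OO_X^{+a}/p)$ must vanish for the $b_i$ to glue. Each $U_i$, together with the perfectoid completion $\widehat{\OO_X(U_i)}$, is a perfectoid affinoid adic space, and rational subsets --- in particular each $U_i\cap U_j$ --- inherit this structure; so Theorem (iv) above yields almost vanishing of higher cohomology of $\OO^{+a}$ on each $U_i$. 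A \v{C}ech-to-cohomology spectral sequence argument globalizes this to vanishing for the full cover, allowing the $b_i$ to be patched to an almost-mod-$p$ preimage in $A^+$; iterating the approximation and using $p$-adic completeness of $A^+$ converts it to an honest Frobenius preimage in $A^{+a}/p$. Frobenius injectivity follows by the same \v{C}ech argument applied to kernels, completing the verification of the perfectoid criterion.

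The main obstacle is reconciling the topologies implicit in the \v{C}ech descent. The sections $\OO_X(U_i)$ come equipped with their rational-subset topology inherited from the presheaf structure on $X$, whereas the hypothesis concerns the completion in the (possibly finer) topology giving $\OO_X^+(U_i)$ the $p$-adic topology. Before any \v{C}ech machinery can be run, one must check that these two topologies induce the same completion on each $\OO_X^+(U_i)$ --- equivalently, that the rational-subset topology on $\OO_X^+(U_i)$ almost agrees with the $p$-adic one --- and that this agreement is compatible across overlaps. For $(A,A^+)$ itself the two topologies coincide by hypothesis; but propagating this compatibility to every rational subset and to every overlap, in a fashion that respects the perfectoid glueing of the $\widehat{\OO_X(U_i)}$, is the technical heart of the conjecture and where I expect the real difficulty to reside.
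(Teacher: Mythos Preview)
The statement you are attempting to prove is labeled a \emph{Conjecture} in the paper, and the paper offers no proof; it is presented as an open question. The Remark immediately following it flags precisely the obstruction you isolate in your final paragraph: ``It is not clear whether the natural topology on $\OO_X^+(U_i)$ is the $p$-adic topology, even if $A^+$ has the $p$-adic topology.'' So there is no paper proof to compare against, and your own write-up is, by your own admission, a plan that stops short at exactly this point.

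Beyond the topology issue you name, there is a further circularity in the outline. The \v{C}ech-to-cohomology argument you invoke presupposes that $\OO_X$ and $\OO_X^+$ are sheaves on $X=\Spa(A,A^+)$, and that $H^0(X,\OO_X^+)=A^+$; but sheafiness of the structure presheaf is one of the consequences of $(A,A^+)$ being perfectoid, not something available in advance for a general complete affinoid $K$-algebra. Likewise, the almost-vanishing result you cite applies to affinoid perfectoid spaces; to invoke it on $U_i$ you would need to identify $U_i\subset X$ with $\Spa$ of the perfectoid completion $(\widehat{\OO_X(U_i)},\widehat{\OO_X^+(U_i)})$, and this identification is again entangled with the topology comparison. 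So the difficulty you correctly locate is not an isolated technical point but the crux of why the statement remains a conjecture.
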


\begin{rem} It is not clear whether the natural topology on $\OO_X^+(U_i)$ is the $p$-adic topology, even if $A^+$ has the $p$-adic topology. If $(A,A^+)$ is perfectoid, this is true.
\end{rem}

\begin{prop} Assume Conjecture \ref{TechnicalConj2}. Assume that all $X_i=\Spa(R_i,R_i^+)$ are affinoid, and that $X\sim \varprojlim_i X_i$. Then $X$ is good, i.e. $X=\Spa(R,R^+)$ is affinoid perfectoid and $\varinjlim_i R_i\to R$ has dense image.
\end{prop}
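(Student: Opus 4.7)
The plan is to construct a candidate affinoid pair $(A,A^+)$ by suitably completing $\varinjlim_i (R_i,R_i^+)$, identify $\Spa(A,A^+)$ with $X$ so that the good perfectoid affinoid cover of $X$ supplied by the preceding proposition pulls back to a rational cover of $\Spa(A,A^+)$ whose $p$-adic completions are perfectoid, and then apply Conjecture \ref{TechnicalConj2} to conclude that $(A,A^+)$ itself is perfectoid affinoid. A sheaf-theoretic identification will finish by giving $(A,A^+)=(R,R^+)$.

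First I would fix a pseudo-uniformizer $\varpi\in K$, let $A^+$ be the integral closure in $A:=\bigl(\varinjlim_i R_i^+\bigr)^{\wedge}[\varpi^{-1}]$ of the $\varpi$-adic completion of $\varinjlim_i R_i^+$, and equip $A^+$ with its $\varpi$-adic topology; this gives a complete affinoid $K$-algebra $(A,A^+)$ whose $+$-part carries the $p$-adic topology. The compatible maps $X\to X_i$ produce a continuous ring homomorphism $A\to H^0(X,\OO_X)$ sending $A^+$ into $H^0(X,\OO_X^+)$, and hence a continuous map $\varphi:|X|\to |\Spa(A,A^+)|$.

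Since $|X|\cong\varprojlim_i|X_i|$ is qcqs, part (i) of the preceding proposition yields a finite cover $X=U_1\cup\cdots\cup U_n$ by good perfectoid affinoid opens $U_j=\Spa(S_j,S_j^+)$. Following the proof of that proposition, each $U_j$ is the preimage in $|X|$ of a rational subset $V_{i_j}\subset X_{i_j}$ cut out by elements $f_1^{(j)},\ldots,f_{m_j}^{(j)},g^{(j)}\in R_{i_j}$; the images of these in $A$ define a rational subset $V_j\subset\Spa(A,A^+)$ satisfying $\varphi^{-1}(V_j)=U_j$. Goodness of $U_j$ says that $\varinjlim_{i'}R_{i'}\to S_j$ has dense image (the limit taken over rational refinements through which $U_j\to X$ factors), and this same direct limit is $\varpi$-adically dense in the integral subring of the rational localization $\OO_{\Spa(A,A^+)}(V_j)$. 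Hence the $\varpi$-adic completion of that localization identifies with $S_j$, so the cover $\{V_j\}$ of $\Spa(A,A^+)$ satisfies the hypothesis of Conjecture \ref{TechnicalConj2}, which I apply to conclude $(A,A^+)$ is perfectoid affinoid. The morphism $X\to\Spa(A,A^+)$ is then a map of perfectoid spaces which is a homeomorphism and whose restriction to each $V_j$ is an isomorphism by construction; gluing yields $X\cong\Spa(A,A^+)$, whence $(R,R^+)=(A,A^+)$ and the required density is built in.

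The crux is Step 3: verifying in the passage to the cover $\{V_j\}$ that (a) the natural topology on $\OO_{\Spa(A,A^+)}^+(V_j)$ coincides with the $\varpi$-adic topology, as Conjecture \ref{TechnicalConj2} demands, and (b) the density statement in the goodness of $U_j$ (formulated on the global sections $S_j$) genuinely propagates to $\varpi$-adic density in the rational localization of $A^+$ at $V_j$. Both reduce to unwinding the definition of the structure presheaf on rational subsets and a cofinality argument between the rational refinements of the $X_{i'}$'s that see $V_j$ and those appearing in the goodness condition for $U_j$; once this matching is set up, applying Conjecture \ref{TechnicalConj2} and gluing to identify $X$ with $\Spa(A,A^+)$ are formal.
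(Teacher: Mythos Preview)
Your approach is essentially the same as the paper's. The paper's proof is very terse: it takes $(A,A^+)$ to be the completion of $\varinjlim_i(R_i,R_i^+)$ with the $p$-adic topology artificially imposed on $\varinjlim_i R_i^+$, observes that the result is already known after localizing to rational subsets (by the preceding proposition), and applies Conjecture~\ref{TechnicalConj2}. Your proposal spells out these steps in detail and correctly identifies, in your ``crux'' paragraph, precisely the reason the paper gives for needing Conjecture~\ref{TechnicalConj2} rather than Conjecture~\ref{TechnicalConj}: one must enforce the $p$-adic topology on $A^+$ and on the rational localizations, and the natural topology on $\OO^+_{\Spa(A,A^+)}(V_j)$ is not a priori the $p$-adic one.

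One small point you leave implicit: before invoking Conjecture~\ref{TechnicalConj2} you need the $V_j$ to actually cover $\Spa(A,A^+)$, not merely to have preimages $U_j$ covering $X$. This follows because $|\Spa(A,A^+)|\to\varprojlim_i|X_i|$ is a bijection (a continuous valuation on $A$ that is $\leq 1$ on $A^+$ is the same datum as a compatible system of such valuations on the $R_i$), so your $\varphi$ is already a homeomorphism before you conclude $(A,A^+)$ is perfectoid. Once this is noted, your ``gluing'' step at the end is unnecessary: the identification $X\cong\Spa(A,A^+)$ is already in hand at the level of topological spaces, and the sheaf identification on each $V_j$ is exactly what you have established.
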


\begin{proof} As before: One knows the result after localizing to rational subsets, so Conjecture \ref{TechnicalConj2} gives the result (using the completion of the direct limit of the $(R_i,R_i^+)$, with the $p$-adic topology on $\varinjlim R_i^+$, as $(A,A^+)$). Note that we have to enforce artificially the $p$-adic topology on $A^+$, and we have to do the same on rational subsets, which is the reason that we need Conjecture \ref{TechnicalConj2} in place of Conjecture \ref{TechnicalConj}.
\end{proof}
\newpage

\section{$p$-adic Hodge theory}

\subsection{Introduction}

This introduction is essentially identical to the introduction to \cite{ScholzeHodge}.

In the paper \cite{ScholzeHodge}, we started to investigate to what extent $p$-adic comparison theorems stay true for rigid-analytic varieties. Up to now, such comparison isomorphisms were mostly studied for schemes over $p$-adic fields, but we show there that the whole theory extends naturally to rigid-analytic varieties over $p$-adic fields. This is of course in analogy with classical Hodge theory, which most naturally is formulated in terms of complex-analytic spaces.

Several difficulties have to be overcome to make this work. The first is that finiteness of $p$-adic \'{e}tale cohomology is not known for rigid-analytic varieties over $p$-adic fields. In fact, it is false if one does not make a restriction to the proper case. However, we prove that for proper smooth rigid-analytic varieties, finiteness of $p$-adic \'{e}tale cohomology holds.

\begin{thm}\label{Thm1} Let $C$ be a complete algebraically closed extension of $\Q_p$, let $X/C$ be a proper smooth rigid-analytic variety, and let $\mathbb{L}$ be an $\mathbb{F}_p$-local system on $X_\et$. Then $H^i(X_\et,\mathbb{L})$ is a finite-dimensional $\mathbb{F}_p$-vector space for all $i\geq 0$, which vanishes for $i>2\dim X$.
\end{thm}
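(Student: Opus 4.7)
The plan is to reduce the $\F_p$-cohomology statement to coherent cohomology of $X$, which is finite-dimensional by Kiehl's proper mapping theorem, using the perfectoid machinery of Section 2. The natural setup is the pro-\'etale site $X_\proet$ equipped with the completed integral structure sheaf $\hat{\OO}_X^+$ and its almost version $\hat{\OO}_X^{+a}$. Via the projection $\nu\colon X_\proet \to X_\et$, the first step would be a \emph{primitive comparison} almost isomorphism
\[
H^i(X_\et,\mathbb{L}) \otimes_{\F_p} \OO_C/p \; \stackrel{a}{\cong} \; H^i(X_\proet,\, \hat{\OO}_X^{+a}/p \otimes \nu^{-1}\mathbb{L})\ .
\]
Once established, Theorem \ref{Thm1} reduces to showing that the right-hand side is an almost finitely generated $\OO_C/p$-module that vanishes in degrees greater than $2\dim X$.

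For the comparison, I would pass to the tilt $X^\flat$ via the tilting equivalence, which identifies pro-\'etale cohomology of $\hat{\OO}_X^{+a}/p$ with that of $\hat{\OO}_{X^\flat}^{+a}/t$, where $t\in K^\flat$ is a pseudo-uniformizer with $t^\sharp$ a unit multiple of $p$. In characteristic $p$, an Artin--Schreier short exact sequence
\[
0 \to \F_p \to \hat{\OO}_{X^\flat}^{+a}/t \xrightarrow{F-1} \hat{\OO}_{X^\flat}^{+a}/t \to 0
\]
(valid up to almost isomorphism, using that Frobenius is almost surjective on $\hat{\OO}_{X^\flat}^{+}/t$ with almost kernel $\F_p$) relates $\F_p$-cohomology to cohomology of the tilted structure sheaf. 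For a general local system $\mathbb{L}$, I would trivialize $\mathbb{L}$ on a suitable pro-\'etale cover, run the Artin--Schreier argument in the twisted setting, and descend.

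The right-hand side is then computed locally. By smoothness, $X$ admits a cover by affinoids $U = \Spa(A,A^+)$ \'etale over the unit ball, and adjoining $p$-power roots of the toric coordinates produces an affinoid perfectoid pro-\'etale cover $\tilde{U}\to U$ with Galois group $\Gamma\cong\Z_p^{\dim X}$. Almost purity in its perfectoid form (as stated at the end of Section 2) reduces sheaf cohomology over $\tilde U$ to continuous group cohomology $H^\ast_\cont(\Gamma,\hat{\OO}^{+a}(\tilde{U})/p)$, which can be computed explicitly on the perfectoid torus in the style of Faltings. Globalizing, this should yield for each $0\leq j\leq \dim X$ an almost isomorphism
\[
R^j\nu_\ast(\hat{\OO}_X^{+a}/p) \; \stackrel{a}{\cong}\; (\Omega^j_X/p)\{-j\}
\]
and vanishing for $j>\dim X$: a Hodge--Tate-style local decomposition modulo $p$. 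This local computation, together with its compatibility with localization and descent, is the main obstacle: one has to show the comparison map is an almost isomorphism (not merely surjective with controlled kernel) and that the almost finiteness survives the passage from Galois cohomology of the perfectoid tower to derived pushforwards.

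Finally, the Leray spectral sequence for $\nu$ reduces everything to the coherent cohomology groups $H^i(X,\Omega^j_X)$, which by Kiehl's finiteness theorem for proper rigid-analytic varieties are finite-dimensional $C$-vector spaces, vanishing unless $0\leq i,j\leq \dim X$. Combining along $i+j = m$ yields almost vanishing of $H^m(X_\et,\mathbb{L})\otimes_{\F_p}\OO_C/p$ for $m>2\dim X$ and almost finite generation otherwise; a Nakayama-type argument for almost modules then produces finite-dimensionality and the vanishing bound over $\F_p$.
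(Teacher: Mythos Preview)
Your outline has a genuine gap at step~3. The proposed almost isomorphism
\[
R^j\nu_\ast(\hat{\OO}_X^{+a}/p) \;\stackrel{a}{\cong}\; (\Omega_X^j/p)\{-j\}
\]
is \emph{false} modulo~$p$. The local computation you describe (continuous $\Z_p^n$-cohomology of the perfectoid torus) decomposes according to the exponent of each monomial $T^\alpha$, $\alpha\in\Z[1/p]^n$. The integral exponents contribute exactly $\Omega^j$, but each non-integral exponent with denominator $p^m$ contributes a copy of $\OO_C/(\zeta_{p^m}-1)$, which is \emph{not} almost zero; the direct sum over all such exponents is an enormous non-coherent piece. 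Only after inverting~$p$ does this junk disappear (this is precisely the content of \cite[Lemma~5.5]{ScholzeHodge}, used in the proof of the Hodge--Tate spectral sequence). So you cannot feed $R^j\nu_\ast(\hat{\OO}_X^{+a}/p)$ into Kiehl's coherent finiteness theorem, and the Leray argument collapses.

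The paper's route avoids computing $R^j\nu_\ast$ mod~$p$ entirely. One works directly with the global groups $H^i(X_\et,\OO_X^+/p)$ and proves they are almost finitely generated over $\OO_C$ by a Cartan--Serre/Kiehl argument: take two finite covers $\{U_k\}$, $\{V_k\}$ of $X$ by affinoids admitting perfectoid pro-\'etale towers, with $\overline{U_k}\subset V_k$; the almost vanishing of higher cohomology of $\OO^+/p$ on affinoid perfectoids reduces one to \v{C}ech complexes, and the restriction map between the two \v{C}ech complexes is shown to be ``almost completely continuous'' in a suitable sense, forcing almost finite generation of the cohomology. Only \emph{after} this does the Artin--Schreier sequence
\[
0\to \F_p\to \OO_X^+/p\xrightarrow{\varphi-1}\OO_X^+/p\to 0
\]
(no tilting needed: $\OO_X^+/p$ is already an $\F_p$-algebra) yield both the finiteness of $H^i(X_\et,\mathbb{L})$ and, simultaneously, the primitive comparison isomorphism of Theorem~\ref{Thm2}. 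In particular the primitive comparison is an \emph{output} of the argument, not an input as in your plan.
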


The properness assumption is crucial here; the smoothness assumption is in fact unnecessary, and an artefact of the proof -- using resolution of singularities, one can deduce the result for general proper rigid-analytic varieties, see Theorem \ref{NonSmooth} below. We note that in the smooth case, it would be interesting to prove Poincar\'{e} duality.

Let us first explain our proof of this theorem. We build upon Faltings's theory of almost \'{e}tale extensions, amplified by the theory of perfectoid spaces. One important difficulty in $p$-adic Hodge theory as compared to classical Hodge theory is that the local structure of rigid-analytic varieties is very complicated; small open subsets still have a large \'{e}tale fundamental group. We introduce the pro-\'{e}tale site $X_\proet$ whose open subsets are roughly of the form $V\rightarrow U\rightarrow X$, where $U\rightarrow X$ is some \'{e}tale morphism, and $V\rightarrow U$ is an inverse limit of finite \'{e}tale maps. Then the local structure of $X$ in the pro-\'{e}tale topology is simpler, namely, it is locally perfectoid. This amounts to extracting lots of $p$-power roots of units in the tower $V\rightarrow U$. We note that the idea to extract many $p$-power roots is common to all known proofs of comparison theorems in $p$-adic Hodge theory.

The following result gives further justification to the definition of pro-\'etale site.

\begin{thm}\label{ThmKpi1} Let $X$ be a connected affinoid rigid-analytic variety over $C$. Then $X$ is a $K(\pi,1)$ for $p$-torsion coefficients, i.e. for all $p$-torsion local systems $\mathbb{L}$ on $X$, the natural map
\[
H^i_\cont(\pi_1(X,x),\mathbb{L}_x)\to H^i(X_\et,\mathbb{L})
\]
is an isomorphism. Here, $x\in X(C)$ is a base point, and $\pi_1(X,x)$ denotes the profinite \'etale fundamental group.
\end{thm}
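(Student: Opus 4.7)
The plan is to verify the usual cohomological criterion for the $K(\pi,1)$ property: for every finite $p$-torsion local system $\mathbb{L}$ on $X_\et$ and every class $\alpha \in H^i(X_\et,\mathbb{L})$ with $i>0$, one finds a connected finite \'etale cover $X'\to X$ with $\alpha|_{X'}=0$. Given this, the Hochschild--Serre spectral sequence for the profinite tower of pointed connected finite \'etale covers of $X$ collapses to the claimed isomorphism. By d\'evissage on the length of $\mathbb{L}_x$ as an $\F_p$-module, and by passing to a finite \'etale cover trivializing $\mathbb{L}$, I may assume $\mathbb{L}=\F_p$.

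First I would pass to the pro-\'etale site. Let
\[
\tilde X = \varprojlim_{X'\to X} X'
\]
in $X_\proet$, the inverse limit over all pointed connected finite \'etale covers. Compatibility of pro-\'etale cohomology with cofiltered limits of qcqs objects gives
\[
H^i(\tilde X_\proet, \F_p) = \varinjlim_{X'\to X} H^i(X'_\et, \F_p),
\]
and the criterion reduces to $H^i(\tilde X_\proet, \F_p)=0$ for $i>0$. The crucial geometric input is that $\tilde X$ is representable by an affinoid perfectoid space over $C$. I would establish this by Noether normalization, writing $X$ as finite, generically \'etale, over an affinoid polydisk $\mathbb{D}^n$, and then pulling back the evident perfectoid cover $\mathbb{D}^n_\infty = \Spa\bigl(C\langle T_1^{1/p^\infty},\ldots,T_n^{1/p^\infty}\rangle\bigr) \to \mathbb{D}^n$ to $X$. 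The improved almost purity theorem stated in the excerpt guarantees that the pullback stays within affinoid perfectoid spaces, and one checks that it is cofinal with the tower defining $\tilde X$.

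Next I would compute $H^*(\tilde X_\proet, \F_p)$ by tilting and Artin--Schreier. The tilt $\tilde X^\flat$ is an affinoid perfectoid of characteristic $p$ with the same \'etale topos as $\tilde X$, so one has the exact sequence
\[
0 \to \F_p \to \OO_{\tilde X^\flat} \xrightarrow{1-\Phi} \OO_{\tilde X^\flat} \to 0
\]
on its \'etale site. Part (iv) of the structure theorem for affinoid perfectoids gives $H^i(\tilde X^\flat, \OO_{\tilde X^\flat})=0$ for $i>0$, so the long exact sequence yields $H^i(\tilde X_\proet, \F_p)=0$ for $i\geq 2$ immediately, and reduces the $i=1$ case to surjectivity of $1-\Phi$ on $R := H^0(\tilde X^\flat, \OO_{\tilde X^\flat})$. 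Any element $f$ in the cokernel would produce a nontrivial finite \'etale extension $R[Y]/(Y^p-Y-f)$ of $R$, which by almost purity corresponds to a finite \'etale cover of $\tilde X^\flat$, and hence by the universal property of $\tilde X$ must split, forcing surjectivity and completing the vanishing.

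The main obstacle I expect is the perfectoidness of $\tilde X$. The Noether normalization and cofinality arguments have to be made precise so that the pro-finite-\'etale tower of $X$ really does agree, cofinally in $X_\proet$, with the pullback of the explicit perfectoid cover of the polydisk; this is where almost purity is used essentially, and one has to take care that $X$ is sufficiently small (e.g.\ a rational subset of a smooth affinoid) for Noether normalization to apply in a strong enough form, then argue that the $K(\pi,1)$ property propagates from such a basis to all connected affinoids.
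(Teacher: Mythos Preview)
The paper is a survey and does not contain a proof of this theorem; it is merely stated in the introduction to \S3 as a result from \cite{ScholzeHodge}. So there is no ``paper's own proof'' to compare against here.

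That said, your approach has a genuine gap, precisely at the point you flag. The cover $\mathbb{D}^n_\infty\to\mathbb{D}^n$ obtained by extracting $p$-power roots of the coordinates is \emph{not} pro-finite-\'etale: the map $T\mapsto T^p$ is ramified along the coordinate hyperplanes. Hence its pullback to $X$ does not sit inside the tower of finite \'etale covers of $X$, and you cannot use it to exhibit $\tilde X$ as affinoid perfectoid. Replacing the polydisc by a torus fixes this (over $\mathbb{T}^n$ the $p$-power map is finite \'etale), but then you need an \'etale map $X\to\mathbb{T}^n$, which is only available locally on a \emph{smooth} affinoid. For an arbitrary (possibly singular) connected affinoid---and the theorem is stated in that generality---Noether normalization only gives you a finite, generically \'etale map to $\mathbb{D}^n$, and almost purity says nothing about finite (non-\'etale) extensions of perfectoid algebras. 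Your suggested patch, ``the $K(\pi,1)$ property propagates from such a basis to all connected affinoids,'' does not work either: having a basis of $K(\pi,1)$ opens does not make the ambient space a $K(\pi,1)$.

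The proof in \cite{ScholzeHodge} (Theorem 4.9 there, with a correction in the erratum) takes a different route that does not attempt to show $\tilde X$ is perfectoid. One presents $X$ as Zariski-closed in a perfectoid ball and works with $\OO^+/p$-cohomology and the Artin--Schreier sequence on that ambient perfectoid space; the point is that the relevant almost-vanishing of $H^i(-,\OO^+/p)$ persists for Zariski-closed subsets of affinoid perfectoids, which lets one kill classes on finite \'etale covers without ever knowing that the universal cover is perfectoid. Your Artin--Schreier endgame is the right idea, but the geometric input feeding it has to be arranged differently.
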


We note that we assume only that $X$ is affinoid;  no smallness or nonsingularity hypothesis is necessary for this result. This theorem implies that $X$ is 'locally contractible' in the pro-\'etale site, at least for $p$-torsion local systems.

Now, on affinoid perfectoid subsets $U$, one knows that $H^i(U_\et,\OO_X^+/p)$ is almost zero for $i>0$, where $\OO_X^+\subset \OO_X$ is the subsheaf of functions of absolute value $\leq 1$ everywhere. This should be seen as the basic finiteness result, and is related to Faltings's almost purity theorem. Starting from this and a suitable cover of $X$ by affinoid perfectoid subsets in $X_\proet$, one can deduce that $H^i(X_\et,\OO_X^+/p)$ is almost finitely generated over $\OO_K$. At this point, one uses that $X$ is proper, and in fact the proof of this finiteness result is inspired by the proof of finiteness of coherent cohomology of proper rigid-analytic varieties, as given by Kiehl, \cite{KiehlFiniteness}. Then one deduces finiteness results for the $\mathbb{F}_p$-cohomology by using a variant of the Artin-Schreier sequence
\[
0\rightarrow \mathbb{F}_p\rightarrow \OO_X^+/p\rightarrow \OO_X^+/p\rightarrow 0\ .
\]
In fact, the proof shows at the same time the following result, which is closely related to \S3, Theorem 8, of \cite{FaltingsAlmostEtale}.

\begin{thm}\label{Thm2} In the situation of Theorem \ref{Thm1}, there is an almost isomorphism of $\OO_C$-modules for all $i\geq 0$,
\[
H^i(X_\et,\mathbb{L})\otimes \OO_C/p \rightarrow H^i(X_\et,\mathbb{L}\otimes \OO_X^+/p)\ .
\]
More generally, assume that $f:X\rightarrow Y$ is a proper smooth morphism of rigid-analytic varieties over $C$, and $\mathbb{L}$ is an $\mathbb{F}_p$-local system on $X_\et$. Then there is an almost isomorphism for all $i\geq 0$,
\[
(R^if_{\et\ast} \mathbb{L})\otimes \OO_Y^+/p\rightarrow R^if_{\et\ast}(\mathbb{L}\otimes \OO_X^+/p)\ .
\]
\end{thm}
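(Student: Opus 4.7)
The strategy combines an almost-exact pro-\'etale Artin-Schreier sequence with the almost coherence of $R^if_{\et\ast}(\mathbb{L}\otimes\OO_X^+/p)$ that is established in the course of proving Theorem \ref{Thm1}.

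I would first pass to the pro-\'etale site $X_\proet$. On an affinoid perfectoid open $U=\Spa(R,R^+)$, classical Artin-Schreier theory applied to the $\mathbb{F}_p$-algebra $R^+/p$ produces, \'etale-locally on $U$, a short exact sequence
\[
0 \to \mathbb{F}_p \to \OO_X^+/p \xrightarrow{\phi-1} \OO_X^+/p \to 0,
\]
which is therefore almost exact on $X_\proet$. Since $\mathbb{L}$ is pro-\'etale locally trivial, tensoring over $\mathbb{F}_p$ gives an almost exact sequence
\[
0 \to \mathbb{L} \to \mathbb{L}\otimes \OO_X^+/p \xrightarrow{1\otimes(\phi-1)} \mathbb{L}\otimes \OO_X^+/p \to 0
\]
on $X_\proet$.

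Next, I would push this sequence forward along $f\circ \nu\colon X_\proet \to Y_\et$, where $\nu\colon X_\proet\to X_\et$ is the canonical projection. The almost vanishing of $H^j(U_\et,\OO_X^+/p)$ in positive degrees for affinoid perfectoid pro-\'etale $U$ (the basic finiteness result mentioned in the excerpt) implies that $R^j\nu_\ast(\mathbb{L}\otimes\OO_X^+/p)$ is almost zero for $j>0$, so pro-\'etale cohomology of the coefficient sheaf matches its \'etale cohomology. One obtains a long exact sequence of \'etale sheaves on $Y$:
\[
\cdots \to R^i f_{\et\ast}\mathbb{L} \to R^i f_{\et\ast}(\mathbb{L}\otimes\OO_X^+/p) \xrightarrow{\phi-1} R^i f_{\et\ast}(\mathbb{L}\otimes\OO_X^+/p) \to R^{i+1} f_{\et\ast}\mathbb{L} \to \cdots
\]
To conclude, I would invoke, on the one hand, the finite-dimensionality of the geometric stalks of $R^i f_{\et\ast}\mathbb{L}$ (Theorem \ref{Thm1} applied fiberwise, using proper base change for $\mathbb{F}_p$-coefficients) and, on the other hand, the almost coherence of $R^i f_{\et\ast}(\mathbb{L}\otimes\OO_X^+/p)$ as a sheaf of $\OO_Y^+/p$-modules (proved by the same Kiehl-style argument that underlies Theorem \ref{Thm1}). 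Combining these inputs with an almost-module variant of the classical equivalence between \'etale $\phi$-modules and $\mathbb{F}_p$-representations, one shows that $\phi-1$ is almost surjective on $R^i f_{\et\ast}(\mathbb{L}\otimes\OO_X^+/p)$; the long exact sequence then splits into short almost exact sequences, $R^i f_{\et\ast}\mathbb{L}$ is identified almost with the $\phi$-fixed points of $R^i f_{\et\ast}(\mathbb{L}\otimes\OO_X^+/p)$, and the canonical map
\[
R^i f_{\et\ast}\mathbb{L} \otimes \OO_Y^+/p \to R^i f_{\et\ast}(\mathbb{L}\otimes\OO_X^+/p)
\]
is an almost isomorphism. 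The absolute statement is the case $Y=\Spa(C,\OO_C)$.

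The hardest step is the combination of almost coherence and proper base change for $R^i f_{\et\ast}(\mathbb{L}\otimes\OO_X^+/p)$; this is the heart of the proof of Theorem \ref{Thm1} and relies on a pro-\'etale adaptation of Kiehl's finiteness theorem for proper rigid-analytic varieties. A secondary subtlety is formulating the almost-$\phi$-module structure theorem correctly, since Frobenius on $\OO_X^+/p$ itself is not an almost isomorphism (it has non-almost-zero kernel $p^{1/p}\OO_X^+/p\OO_X^+$), so one must work with the operator $\phi-1$ directly rather than placing $M$ in a naive \'etale-$\phi$-module framework.
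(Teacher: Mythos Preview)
Your core ingredients---the pro-\'etale Artin--Schreier sequence, the almost vanishing on affinoid perfectoids, and a Kiehl-style finiteness input---are exactly the ones the paper uses. But the logical order is inverted, and this creates a real gap in the relative case.

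In the paper (and in \cite{ScholzeHodge}), one first proves the \emph{absolute} statement: over $\Spa(C,\OO_C)$, the Kiehl-style argument shows that $H^i(X_\et,\mathbb{L}\otimes\OO_X^+/p)$ is almost finitely generated over $\OO_C/p$, and then the Artin--Schreier sequence plus a structure result for almost finitely generated $\OO_C^a/p$-modules with a Frobenius-semilinear $\phi$ gives both Theorem~\ref{Thm1} and Theorem~\ref{Thm2} simultaneously. The relative statement is then deduced as a formal corollary of the absolute one, by checking on stalks at geometric points of $Y$ (this is why one works over general $(C,C^+)$ from the start; cf.\ the proof of Theorem~\ref{ConstrComp} in the present paper, and the remark after Theorem~\ref{Thm2}).

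Your proposal instead tries to run the $\phi$-module argument directly on the sheaf $R^if_{\et\ast}(\mathbb{L}\otimes\OO_X^+/p)$ over $Y$. This requires two inputs you do not have: a relative Kiehl theorem giving almost coherence of this sheaf over $\OO_Y^+/p$, and a structure theorem for almost finitely presented $\phi$-modules over $\OO_Y^+/p$. Neither is available off the shelf; the $\phi$-module structure argument in the absolute case genuinely uses that $\OO_C^a/p$ is (almost) a valuation ring, and $\OO_Y^+/p$ is far more complicated. The paper's reduction to stalks sidesteps both problems: on a geometric fiber one is back over some $(C,C^+)$, where the absolute case applies. So your absolute argument is essentially correct and matches the paper, but for the relative case you should reverse direction and deduce it from the absolute one rather than attempt a direct relative Kiehl/$\phi$-module argument.
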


\begin{rem} The relative case was already considered in an appendix to \cite{FaltingsAlmostEtale}: Under the assumption that $X$, $Y$ and $f$ are algebraic and have suitable integral models, this is \S6, Theorem 6, of \cite{FaltingsAlmostEtale}. In our approach, it is a direct corollary of the absolute version.
\end{rem}

In a sense, this can be regarded as a primitive version of a comparison theorem. It has the very interesting (and apparently paradoxical) feature that on the right-hand side, one has a 'coherent cohomology group modulo $p$', but the cohomology is computed on the generic fibre. That this group behaves well relies on the fact that perfectoid spaces have a canonical 'almost integral' structure. Moreover, it implies the following strange property of
\[
H^i(X_\et,\OO_X^+)\ :
\]
After inverting $p$, these groups are usual coherent cohomology $H^i(X_\et,\OO_X) = H^i(X_\an,\OO_X)$, but after modding out $p$, one gets \'etale cohomology. Thus, these integral cohomology groups build a bridge between \'etale and coherent cohomology.

Although it should be possible to deduce (log-)crystalline comparison theorems from here, we did only the de Rham case. For this, we introduced sheaves on $X_\proet$, which we call period sheaves, as their values on pro-\'{e}tale covers of $X$ give period rings. Among them is the sheaf $\B_\dR^+$, which is the relative version of Fontaine's ring $B_\dR^+$. Any lisse $\Z_p$-sheaf $\mathbb{L}$ on $X_\et$ gives rise to a $\B_\dR^+$-local system on $X_\proet$, and it is a formal consequence of Theorem \ref{Thm2} that
\begin{equation}\label{NotProvedByBeilinson}
H^i_\et(X,\mathbb{L})\otimes_{\Z_p} B_\dR^+\cong H^i(X_\proet,\mathbb{M})\ .
\end{equation}

We want to compare this to de Rham cohomology. For this, we first relate filtered modules with integrable connection to $\B_\dR^+$-local systems.

\begin{thm}\label{Thm3} Let $X$ be a smooth rigid-analytic variety over $k$, where $k$ is a complete discretely valued nonarchimedean extension of $\Q_p$ with perfect residue field. Then there is a fully faithful functor from the category of filtered $\OO_X$-modules with an integrable connection satisfying Griffiths transversality, to the category of $\B_\dR^+$-local systems.
\end{thm}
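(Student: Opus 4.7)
The plan is to construct the functor via a relative de Rham period sheaf, following the classical strategy of Fontaine--Faltings but carried out on the pro-\'etale site. Specifically, I would first enrich the family of period sheaves mentioned before the theorem by introducing a structure sheaf $\OO\B_\dR^+$ on $X_\proet$, sitting in between the pulled-back $\OO_X$ and $\B_\dR^+$: it contains both, is equipped with an integrable connection
\[
\nabla\colon \OO\B_\dR^+\to \OO\B_\dR^+\otimes_{\OO_X}\Omega^1_X
\]
whose horizontal sections form $\B_\dR^+$, and carries a decreasing filtration $\Fil^\bullet$ making it a filtered $\OO_X$-module with connection satisfying Griffiths transversality. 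The key analytic input, which I would establish by a Taylor expansion argument along local \'etale coordinates $T_1,\dots,T_d$ on $X$ (exploiting the pro-\'etale covers in which all $T_i^{1/p^n}$ are adjoined, so that these coordinates admit canonical lifts to $\OO\B_\dR^+$), is a Poincar\'e lemma: the complex
\[
0\to \B_\dR^+\to \OO\B_\dR^+\otimes_{\OO_X}\Omega^\bullet_X
\]
is exact and strictly compatible with the filtrations, where $\Omega^i_X$ is placed in filtration degree $-i$.

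Given a filtered module $(\mathcal{E},\nabla_\mathcal{E},\Fil^\bullet\mathcal{E})$ with integrable connection satisfying Griffiths transversality, I would pull $\mathcal{E}$ back to $X_\proet$, form the tensor product $\mathcal{E}\otimes_{\OO_X}\OO\B_\dR^+$ with the induced tensor product connection and filtration (the Griffiths hypothesis ensures the latter makes sense), and set
\[
\mathbb{M}=\bigl(\mathcal{E}\otimes_{\OO_X}\OO\B_\dR^+\bigr)^{\nabla=0},\qquad \Fil^i\mathbb{M}=\mathbb{M}\cap \Fil^i\bigl(\mathcal{E}\otimes_{\OO_X}\OO\B_\dR^+\bigr).
\]
Tensoring the Poincar\'e lemma with the locally free $\OO_X$-module $\mathcal{E}$ shows that $\mathbb{M}$ is locally on $X_\proet$ a free $\B_\dR^+$-module of rank equal to $\rk\mathcal{E}$, so it is indeed a $\B_\dR^+$-local system; the same lemma implies that the inclusion $\mathbb{M}\hookrightarrow \mathcal{E}\otimes_{\OO_X}\OO\B_\dR^+$ induces, after base change along $\B_\dR^+\to \OO\B_\dR^+$, an isomorphism
\[
\mathbb{M}\otimes_{\B_\dR^+}\OO\B_\dR^+\;\xrightarrow{\;\sim\;}\;\mathcal{E}\otimes_{\OO_X}\OO\B_\dR^+
\]
of filtered modules with connection.

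For full faithfulness I would extract $(\mathcal{E},\Fil^\bullet,\nabla_\mathcal{E})$ back from $\mathbb{M}$ using that isomorphism. Applying $\gr^0$ of the filtration (or, equivalently, reducing modulo $\Fil^1$) to the right-hand side collapses $\OO\B_\dR^+$ to the completed structure sheaf $\hat{\OO}_X$ on $X_\proet$, yielding $\mathcal{E}\otimes_{\OO_X}\hat{\OO}_X$ together with its filtration, while on the left it yields an object built canonically out of $\mathbb{M}$. Since the map from the analytic structure sheaf to $\hat{\OO}_X$ in the pro-\'etale topology is faithfully flat on small affinoids, $\mathcal{E}$ with its filtration is recovered by pro-\'etale descent, and the connection $\nabla_\mathcal{E}$ is then recovered as the unique connection for which the isomorphism above is horizontal.

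The main obstacle is setting up $\OO\B_\dR^+$ correctly and proving the filtered Poincar\'e lemma. Getting the filtration convention right (so that Griffiths transversality on $\mathcal{E}$ matches the built-in transversality of $\OO\B_\dR^+$) and checking that the Taylor-series splitting is well-defined on a basis of pro-\'etale affinoid perfectoids --- where $\OO\B_\dR^+$ becomes genuinely computable via the relative Fontaine map $\theta\colon \mathbb{A}_\inf\to\OO_X^+$ --- is the real technical core; once it is in place, the construction of the functor and its full faithfulness are essentially formal consequences together with pro-\'etale faithfully flat descent.
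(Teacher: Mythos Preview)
Your overall architecture --- build $\OO\B_\dR^+$ with its connection and filtration, prove the filtered Poincar\'e lemma, take horizontal sections --- is exactly the approach of \cite{ScholzeHodge} that the survey alludes to. However, there is a genuine gap in the construction of the functor itself. As you have written it,
\[
\mathbb{M}=(\mathcal{E}\otimes_{\OO_X}\OO\B_\dR^+)^{\nabla=0}
\]
does not use $\Fil^\bullet\mathcal{E}$ at all: it only sees the underlying module with connection. For example, if $\mathcal{E}=\OO_X$ with the trivial connection, your $\mathbb{M}$ is $\B_\dR^+$ regardless of which filtration you put on $\OO_X$. The extra filtration $\Fil^i\mathbb{M}=\mathbb{M}\cap\Fil^i(\mathcal{E}\otimes\OO\B_\dR^+)$ you introduce does record $\Fil^\bullet\mathcal{E}$, but it is \emph{not} the $\xi$-adic filtration, so you have landed in a larger category than ``$\B_\dR^+$-local systems''. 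The functor that actually proves the theorem is
\[
\mathbb{M}=\Fil^0\bigl(\mathcal{E}\otimes_{\OO_X}\OO\B_\dR\bigr)^{\nabla=0},
\]
using $\OO\B_\dR=\OO\B_\dR^+[t^{-1}]$: here the filtration on $\mathcal{E}$ enters through the tensor-product filtration before one restricts to $\Fil^0$, and the resulting $\mathbb{M}$ is a genuine $\B_\dR^+$-local system whose isomorphism class already encodes $\Fil^\bullet\mathcal{E}$ (in the toy example above, shifting the filtration by one replaces $\B_\dR^+$ by $\xi^{-1}\B_\dR^+$, and these are non-isomorphic as sheaves on $X_\proet$ because $\gr^0$ of one is $\hat{\OO}_X$ and of the other is $\hat{\OO}_X(1)$).

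The full-faithfulness step is also too quick. ``Reducing modulo $\Fil^1$'' on $\mathbb{M}\otimes_{\B_\dR^+}\OO\B_\dR^+$ gives only $\mathbb{M}\otimes_{\B_\dR^+}\hat{\OO}_X$ with the \emph{trivial} induced filtration, so you cannot read off $\Fil^\bullet\mathcal{E}$ that way. And the phrase ``pro-\'etale faithfully flat descent'' hides the real content: what one actually needs is to push forward along $\nu:X_\proet\to X_\et$ and compute $\nu_\ast\OO\B_\dR^{(+)}$ explicitly. Already for $X=\Spa(k)$ this is the statement $(B_\dR)^{\Gal(\bar{k}/k)}=k$; in the relative case the computation is carried out in \cite{ScholzeHodge} using Brinon's Galois cohomology calculations --- precisely the input the survey names as the substance of the proof. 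Once $\nu_\ast\OO\B_\dR^+$ is known, one recovers $(\mathcal{E},\nabla,\Fil^\bullet)$ by applying $\nu_\ast$ to $\mathbb{M}\otimes_{\B_\dR^+}\OO\B_\dR^+$, not by abstract descent.
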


The proof makes use of the period rings introduced in Brinon's book \cite{BrinonRepresentations}, and relies on some of the computations of Galois cohomology groups done there. We say that a lisse $\Z_p$-sheaf $\mathbb{L}$ is de Rham if the associated $\B_\dR^+$-local system $\mathbb{M}$ lies in the essential image of this functor. We get the following comparison result.

\begin{thm}\label{Thm4} Let $k$ be a discretely valued complete nonarchimedean extension of $\Q_p$ with perfect residue field $\kappa$, and algebraic closure $\bar{k}$, and let $X$ be a proper smooth rigid-analytic variety over $k$. For any lisse $\Z_p$-sheaf $\mathbb{L}$ on $X_\et$ with associated $\B_\dR^+$-local system $\mathbb{M}$, we have a $\Gal(\bar{k}/k)$-equivariant isomorphism
\[
H^i_\et(X_{\bar{k}},\mathbb{L})\otimes_{\Z_p} B_\dR^+\cong H^i(X_{\bar{k},\proet},\mathbb{M})\ .
\]
If $\mathbb{L}$ is de Rham, with associated filtered module with integrable connection $(\mathcal{E},\nabla,\Fil^\bullet)$, then the Hodge-de Rham spectral sequence
\[
H^{i-j,j}_{\mathrm{Hodge}}(X,\mathcal{E})\Rightarrow H^i_\dR(X,\mathcal{E})
\]
degenerates. Moreover, $H^i_\et(X_{\bar{k}},\mathbb{L})$ is a de Rham representation of $\Gal(\bar{k}/k)$ with associated filtered $k$-vector space $H^i_\dR(X,\mathcal{E})$. In particular, there is also a $\Gal(\bar{k}/k)$-equivariant isomorphism
\[
H^i_\et(X_{\bar{k}},\mathbb{L})\otimes_{\Z_p} \hat{\bar{k}}\cong \bigoplus_j H^{i-j,j}_{\mathrm{Hodge}}(X,\mathcal{E})\otimes_k \hat{\bar{k}}(-j)\ .
\]
\end{thm}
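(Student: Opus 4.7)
My plan is to reduce the theorem to a filtered, $\Gal(\bar k/k)$-equivariant $B_\dR$-linear isomorphism
\[
H^i_\et(X_{\bar k},\mathbb L)\otimes_{\Z_p} B_\dR \cong H^i_\dR(X,\mathcal E)\otimes_k B_\dR,
\]
and then read off the degeneration of Hodge--de~Rham and the Hodge--Tate decomposition as graded consequences. The first isomorphism in the statement, $H^i_\et(X_{\bar k},\mathbb L)\otimes_{\Z_p} B_\dR^+\cong H^i(X_{\bar k,\proet},\mathbb M)$, is equation~(\ref{NotProvedByBeilinson}) applied with $X_{\bar k}$ in place of $X$; it is a formal consequence of Theorem~\ref{Thm2}, obtained by passing from $\mathbb F_p$- to $\Z_p$-coefficients via a Bockstein/inverse-limit argument, then filtering $\B_\dR^+$ by the powers of $\ker\theta$ (whose graded pieces are Tate twists of $\hat{\OO}_{X_{\bar k}}$), applying Theorem~\ref{Thm2} inductively to each graded piece, and finally inverting $p$. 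Galois-equivariance is automatic from the functoriality of every construction in the base.

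For the de~Rham refinement I would use Brinon's geometric period sheaf $\OO\B_\dR^+\supset \B_\dR^+$, which carries an $\OO_X$-linear structure, a filtration, and an integrable connection $\nabla$. The essential local input is the \emph{filtered geometric Poincaré lemma}, asserting that
\[
0\to \B_\dR^+\to \OO\B_\dR^+\xrightarrow{\nabla}\OO\B_\dR^+\otimes_{\OO_X}\Omega^1_X\xrightarrow{\nabla}\OO\B_\dR^+\otimes_{\OO_X}\Omega^2_X\to\cdots
\]
is a strict filtered resolution, with the filtration on the right being the convolution of the natural filtration on $\OO\B_\dR^+$ and the (shifted) Hodge filtration on the de~Rham complex. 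If $\mathbb L$ is de~Rham with associated $(\mathcal E,\nabla,\Fil^\bullet)$, the fully faithful functor of Theorem~\ref{Thm3} yields a filtered identification $\mathbb M\otimes_{\B_\dR^+}\OO\B_\dR^+\cong \mathcal E\otimes_{\OO_X}\OO\B_\dR^+$ compatible with connections. Tensoring the Poincaré lemma with $\mathcal E$ therefore gives a filtered resolution of $\mathbb M$ by $\mathcal E\otimes_{\OO_X}\OO\B_\dR^+\otimes_{\OO_X}\Omega^\bullet_X$.

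I then push this resolution forward along $\nu\colon X_\proet\to X_\et$. The key computation, carried out locally on a pro-étale cover by affinoid perfectoids admitting toric coordinates, is that $R\nu_*\OO\B_\dR^+$ agrees with $\OO_X\otimes_k B_\dR^+$ in the filtered derived category. Inserting this into the resolution of $\mathbb M$ produces a filtered spectral sequence whose abutment is $H^\ast(X_{\bar k,\proet},\mathbb M)\otimes_{\B_\dR^+}B_\dR$ and whose $E_1$-page is the $B_\dR$-scalar extension of the de~Rham complex of $\mathcal E$. Combined with the first isomorphism, this yields the displayed filtered $B_\dR$-isomorphism. Degeneration of Hodge-to-de~Rham follows by a dimension count: both sides of the filtered isomorphism have the same total $B_\dR$-dimension, so taking associated graded forces the sum of Hodge dimensions to equal the $B_\dR$-dimension of $H^i_\et\otimes B_\dR$, which rules out any nonzero Hodge-de~Rham differential. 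Taking the associated graded of the comparison, using $\gr^j B_\dR = \hat{\bar k}(j)$, gives the Hodge--Tate decomposition in the last display of the theorem.

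The main technical obstacle is the filtered Poincaré lemma for $\OO\B_\dR^+$ together with the filtered identification $R\nu_*\OO\B_\dR^+\cong \OO_X\otimes_k B_\dR^+$. Both steps require careful local work on affinoid perfectoid neighborhoods (guaranteed by the locally perfectoid nature of $X_\proet$) with explicit coordinates of the form $T_i^{1/p^\infty}$; one has to bound the cohomology of $\OO\B_\dR^+$ using the almost vanishing underlying Theorem~\ref{Thm2} and the explicit calculations of period rings from \cite{BrinonRepresentations}. Once these local ingredients are in place, everything else is bookkeeping of filtrations, so that degeneration and the Hodge--Tate splitting drop out cleanly from the filtered comparison.
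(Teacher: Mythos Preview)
Your proposal is correct and follows essentially the same route as the paper: the first isomorphism is exactly the formal consequence of Theorem~\ref{Thm2} recorded as~\eqref{NotProvedByBeilinson}, and for the de~Rham part you invoke precisely the Poincar\'e lemma for $\OO\B_\dR^+$ together with the local computation of $R\nu_\ast\OO\B_\dR^+$ (in the paper's words, ``calculate the cohomology of $\OO\B_\dR^+$, which turns out to be given by coherent cohomology through some explicit calculation''), after which degeneration and the Hodge--Tate decomposition follow by the filtered dimension count and passage to associated gradeds that you describe.
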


\begin{rem} We define the Hodge cohomology as the hypercohomology of the associated gradeds of the de Rham complex of $\mathcal{E}$, with the filtration induced from $\Fil^\bullet$.
\end{rem}

In particular, we get the following corollary, which answers a question of Tate, \cite{TatePDivGroups}, Remark on p.180.

\begin{cor} For any proper smooth rigid-analytic variety $X$ over $k$, the Hodge-de Rham spectral sequence
\[
H^i(X,\Omega_X^j)\Rightarrow H^{i+j}_\dR(X)
\]
degenerates, there is a Hodge-Tate decomposition
\[
H^i_\et(X_{\bar{k}},\Q_p)\otimes_{\Q_p} \hat{\bar{k}}\cong \bigoplus_{j=0}^i H^{i-j}(X,\Omega_X^j)\otimes_k \hat{\bar{k}}(-j)\ ,
\]
and the $p$-adic \'{e}tale cohomology $H^i_\et(X_{\bar{k}},\Q_p)$ is de Rham, with associated filtered $k$-vector space $H^i_\dR(X)$.
\end{cor}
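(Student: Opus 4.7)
The plan is to apply Theorem \ref{Thm4} to the trivial lisse $\Z_p$-sheaf $\mathbb{L}=\Z_p$ on $X_\et$. For this I first need to exhibit $\Z_p$ as a de Rham sheaf in the sense introduced before Theorem \ref{Thm4}, with associated filtered $\OO_X$-module with integrable connection $(\mathcal{E},\nabla,\Fil^\bullet)=(\OO_X, d, \Fil^\bullet)$, where the filtration is the trivial one, namely $\Fil^0 \OO_X = \OO_X$ and $\Fil^1 \OO_X = 0$. The $\B_\dR^+$-local system on $X_\proet$ attached to the constant sheaf $\Z_p$ is the constant sheaf $\B_\dR^+$ by construction, so the task reduces to checking that the fully faithful functor of Theorem \ref{Thm3} sends $(\OO_X, d)$ with the trivial filtration to $\B_\dR^+$. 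This is immediate from the way that functor is built out of the relative period sheaves in the style of \cite{BrinonRepresentations}, and is essentially the only step in which one has to look beyond the statements already collected above.

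With this identification, the Hodge cohomology appearing in Theorem \ref{Thm4} unpacks in an elementary way. The trivial filtration on $\OO_X$, together with Griffiths transversality, forces the induced filtration on the de Rham complex $\Omega_X^\bullet$ to coincide with the stupid filtration placing $\Omega_X^j$ in filtration degree $j$, so $\gr^j \Omega_X^\bullet = \Omega_X^j[-j]$ and hence $H^{i-j,j}_{\mathrm{Hodge}}(X,\OO_X) = H^{i-j}(X,\Omega_X^j)$. The abutment spectral sequence of Theorem \ref{Thm4} then becomes precisely the Hodge-de Rham spectral sequence $H^{i-j}(X,\Omega_X^j) \Rightarrow H^i_\dR(X)$, and its degeneration is the first claim of the corollary. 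The final displayed equation of Theorem \ref{Thm4} specializes to the asserted Hodge-Tate decomposition, and the last sentence of Theorem \ref{Thm4} yields that $H^i_\et(X_{\bar k}, \Q_p)$ is de Rham with associated filtered $k$-vector space $H^i_\dR(X)$.

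I do not expect a serious obstacle; the hard work has already been done in Theorems \ref{Thm2}--\ref{Thm4}, and the corollary is essentially a matter of recognizing the correct inputs in the special case $\mathbb{L}=\Z_p$. The only delicate bookkeeping is the identification of the trivial $\Z_p$-sheaf with the trivially filtered $(\OO_X, d)$ under the functor of Theorem \ref{Thm3}, together with the routine verification that its Hodge cohomology reduces to the ordinary coherent cohomology of the sheaves $\Omega_X^j$.
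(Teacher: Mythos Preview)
Your proposal is correct and is exactly the approach the paper takes: the corollary is stated immediately after Theorem \ref{Thm4} with no explicit proof, being an evident specialization to $\mathbb{L}=\Z_p$ with associated $(\OO_X,d)$ carrying the trivial filtration. The unpacking you give of the Hodge cohomology and the spectral sequence is the routine bookkeeping the paper leaves implicit.
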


Interestingly, no 'K\"ahler' assumption is necessary for this result in the $p$-adic case as compared to classical Hodge theory. In particular, one gets degeneration for all proper smooth varieties over fields of characteristic $0$ without using Chow's lemma.

Examples of non-algebraic proper smooth rigid-analytic varieties can be constructed by starting from a proper smooth variety in characteristic $p$, and taking a formal, non-algebraizable, lift to characteristic $0$. This can be done for example for abelian varieties or K3 surfaces. More generally, there is the theory of abeloid varieties, which are 'non-algebraic abelian rigid-analytic varieties', roughly, cf. \cite{LuetkebohmertAbeloid}.

There are also some non-K\"ahler compact complex manifolds over $\C$ which have $p$-adic analogues: Fortunately, only those for which Hodge-de Rham degeneration holds. An example is the Hopf surface. Over a $p$-adic field, this can be defined as follows. Fix an element $q\in k$ with $|q|<1$, and let
\[
X = (\mathbb{A}^2\setminus \{(0,0)\}) / q^\Z\ ,
\]
where $q$ acts by diagonal multiplication. It is easy to see that $X$ is proper and smooth. It has $H^0(X,\Omega_X^1) = 0$, $H^1(X,\OO_X) = k$ (so Hodge symmetry fails!), and $H^1_\et(X,\Z_\ell) = \Z_\ell$ for any prime number $\ell$ (including $\ell=p$). In particular, the weight-monodromy conjecture fails badly for this non-algebraic variety. However, one may formulate the following conjecture on independence of $\ell$:

\begin{conj} Let $X$ be a proper smooth rigid-analytic variety over a finite extension $k$ of $\Q_p$. Then the Weil-Deligne representation associated to $H^i_\et(X_{\bar{k}},\Q_\ell)$ is independent of $\ell$ (including $\ell=p$).
\end{conj}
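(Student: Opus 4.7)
The plan is to reduce, after a finite extension of $k$, to the case where $X$ admits a proper semistable formal model $\mathfrak{X}/\OO_{k'}$, and then to match both Weil--Deligne representations with invariants extracted from the special fibre $\mathfrak{X}_s$ via comparison theorems. First I would check that both representations are actually well-defined: for $\ell\ne p$, by Grothendieck's quasi-unipotence theorem applied to the finite-dimensional continuous $\Gal(\bar{k}/k)$-representation $H^i_\et(X_{\bar{k}},\Q_\ell)$ (finiteness for $\ell = p$ is Theorem \ref{Thm1}, and for $\ell\ne p$ one argues analogously or via a formal model and proper base change); for $\ell=p$, by combining Theorem \ref{Thm4}, which says $H^i_\et(X_{\bar{k}},\Q_p)$ is de Rham, with Berger's theorem that de Rham implies potentially semistable, so that Fontaine's functor $D_{\mathrm{pst}}$ yields a $(\phi,N,\Gal(\bar{k}/k))$-module and hence a Weil--Deligne representation.

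Granting potential semistable reduction for $X$, I would extend the pro-\'etale period sheaf formalism of \cite{ScholzeHodge} by introducing a sheaf $\B_{\mathrm{st}}$ on $X_\proet$ and proving a semistable analogue of Theorem \ref{Thm4}, of the form
\[
H^i_\et(X_{\bar{k}},\Q_p)\otimes_{\Q_p}B_{\mathrm{st}} \;\cong\; H^i_{\mathrm{logcris}}(\mathfrak{X}_s / W(\bar{\kappa})^0)\otimes_{W(\bar{\kappa})} B_{\mathrm{st}},
\]
compatibly with $\phi$, $N$, the Galois action and the filtration after extension to $B_\dR$. This identifies the $(\phi,N)$-module underlying the $\ell=p$ Weil--Deligne representation with the log-crystalline cohomology of $\mathfrak{X}_s$. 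For $\ell\ne p$, I would develop a nearby cycles complex $R\Psi\Q_\ell$ on $\mathfrak{X}_s$ and the associated Rapoport--Zink weight spectral sequence (as in \cite{RapoportZinkZeta} in the algebraic case), expressing the Weil--Deligne representation on $H^i_\et(X_{\bar{k}},\Q_\ell)$ in terms of the $\ell$-adic \'etale cohomology of the closed strata of $\mathfrak{X}_s$. Both sides are then controlled, stratum by stratum, by the cohomology of smooth proper varieties over $\bar{\kappa}$, where independence of $\ell$ is classical (Katz--Messing, comparison of crystalline and $\ell$-adic characteristic polynomials of Frobenius); matching the monodromy operators $N$ on the two sides through the weight spectral sequence then gives the conjecture.

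The main obstacle is geometric: potential semistable reduction for proper smooth rigid-analytic varieties over $p$-adic fields is open in this generality, even ignoring the non-algebraizable case (where work of Hartl and others is available only under strong hypotheses). A further serious difficulty is extending the almost-purity and pro-\'etale cohomology machinery of \cite{ScholzeHodge} from the de Rham to the log-crystalline setting, so as to produce a functorial $\B_{\mathrm{st}}$-comparison for formal semistable models, and setting up a workable nearby-cycles formalism for such models on the $\ell\ne p$ side; these steps are expected to go through in the spirit of the pro-\'etale/perfectoid methods of this survey, but the technical work is substantial and both halves must be pinned to the same special-fibre data for the comparison to yield independence of $\ell$.
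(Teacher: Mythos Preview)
The statement you are attempting to prove is labelled \emph{Conjecture} in the paper, and the paper gives no proof of it; it is presented as an open problem, immediately after the Hopf surface example showing that weight--monodromy itself can fail for non-algebraic proper smooth rigid-analytic varieties. So there is no ``paper's own proof'' to compare your proposal against.

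That said, your outline is a reasonable sketch of how one would try to attack the conjecture, and you have correctly identified the two principal obstructions: (a) potential semistable reduction is not known for proper smooth rigid-analytic varieties (and the Hopf surface already hints that one cannot simply borrow algebraic alteration techniques), and (b) a $C_{\mathrm{st}}$-type comparison for formal semistable models in the rigid-analytic setting is not available in the paper --- the author explicitly says that only the de Rham case was carried out, and that log-crystalline comparison ``should be possible'' but was not done. Both are genuine gaps, not technicalities.

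One further caution: even granting a semistable formal model and both comparison theorems, the last step --- ``matching the monodromy operators $N$ on the two sides through the weight spectral sequence'' --- is more delicate than you suggest. Identifying the $E_1$-terms stratum by stratum via Katz--Messing gives equality of Frobenius characteristic polynomials on the graded pieces, but to conclude that the Weil--Deligne representations themselves agree you must also match the differentials (equivalently, the induced $N$ on the abutment). In the algebraic semistable case this is the content of work of T.~Saito and others, and it is closely tied to the weight--monodromy conjecture itself; the paper has just shown that weight--monodromy can fail in the rigid-analytic setting, so this step cannot be taken for granted. In short, your plan reduces one open conjecture to several others, each of which is substantial.
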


For $\ell\neq p$, we take the usual recipee, and for $\ell=p$, we use Fontaine's recipee, using that $H^i_\et(X_{\bar{k}},\Q_\ell)$ is de Rham (and thus potentially semistable).

Theorem \ref{Thm4} also has the following consequence, which was conjectured by Schneider, cf. \cite{SchneiderLocalSystems}, p.633.

\begin{cor} Let $k$ be a finite extension of $\Q_p$, let $X=\Omega^n_k$ be Drinfeld's upper half-space, which is the complement of all $k$-rational hyperplanes in $\mathbb{P}^{n-1}_k$, and let $\Gamma\subset \PGL_n(k)$ be a discrete cocompact subgroup acting without fixed points on $\Omega^n_k$. One gets the quotient $X_\Gamma = X/\Gamma$, which is a proper smooth rigid-analytic variety over $k$. Let $M$ be a representation of $\Gamma$ on a finite-dimensional $k$-vector space, such that $M$ admits a $\Gamma$-invariant $\OO_k$-lattice. It gives rise to a local system $\mathcal{M}_\Gamma$ of $k$-vector spaces on $X_\Gamma$. Then the twisted Hodge-de Rham spectral sequence
\[
H^i(X_\Gamma,\Omega_{X_\Gamma}^j\otimes \mathcal{M}_\Gamma)\Rightarrow H^{i+j}_\dR(X_\Gamma,\OO_{X_\Gamma}\otimes \mathcal{M}_\Gamma)
\]
degenerates.
\end{cor}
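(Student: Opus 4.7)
My plan is to deduce the corollary from Theorem~\ref{Thm4}, applied to the lisse $\Z_p$-sheaf on $X_\Gamma$ coming from the $\Gamma$-invariant lattice and to an explicit filtered module with integrable connection.

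First, I would produce the required input. Fix a $\Gamma$-invariant $\OO_k$-lattice $M_0\subset M$; viewed as a continuous finite free $\Z_p$-representation of $\Gamma$ via restriction of scalars along $\OO_k/\Z_p$, it descends along the Galois cover $\Omega^n_k\to X_\Gamma$ with deck group $\Gamma$ to a lisse $\Z_p$-sheaf $\mathbb{L}$ on $X_\Gamma$; after inverting $p$ and restoring the $k$-structure, this recovers $\mathcal{M}_\Gamma$. Simultaneously, on $\Omega^n_k$ the trivial filtered module with integrable connection
\[
(\widetilde{\mathcal{E}},\widetilde{\nabla},\widetilde{\Fil}^\bullet)=\bigl(\OO_{\Omega^n_k}\otimes_k M,\ d\otimes\id_M,\ \Fil^0=\widetilde{\mathcal{E}}\supset\Fil^1=0\bigr)
\]
is $\Gamma$-equivariant (because $\Gamma$ acts $k$-linearly on $M$ and by rigid-analytic automorphisms on $\Omega^n_k$, and $d$ annihilates $k$-scalars), and satisfies Griffiths transversality trivially. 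Descending produces $(\mathcal{E},\nabla,\Fil^\bullet)$ on $X_\Gamma$ with $\mathcal{E}=\OO_{X_\Gamma}\otimes_k\mathcal{M}_\Gamma$.

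Second, I would check that $\mathbb{L}$ is de Rham in the sense of Theorem~\ref{Thm3}, with associated filtered module $(\mathcal{E},\nabla,\Fil^\bullet)$. Since being de Rham is pro-\'etale local, one may pass to $\Omega^n_k$, on which $\mathbb{L}$ becomes the constant $\Z_p$-sheaf with value $M_0$ and the attached $\mathbb{B}_\dR^+$-local system is simply the base change of this constant module. This base change is manifestly the image of $(\widetilde{\mathcal{E}},\widetilde{\nabla},\widetilde{\Fil}^\bullet)$ under the fully faithful functor of Theorem~\ref{Thm3}. Full faithfulness combined with $\Gamma$-equivariance then identifies, on $X_\Gamma$, the $\mathbb{B}_\dR^+$-local system attached to $\mathbb{L}$ with the image of $(\mathcal{E},\nabla,\Fil^\bullet)$, so $\mathbb{L}$ is de Rham and the filtered module associated to it is exactly the one constructed above.

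Third, Theorem~\ref{Thm4} applied to $(X_\Gamma,\mathbb{L})$ now delivers degeneration of
\[
H^{i-j,j}_{\mathrm{Hodge}}(X_\Gamma,\mathcal{E})\Rightarrow H^i_\dR(X_\Gamma,\mathcal{E}).
\]
With $\Fil^0\mathcal{E}=\mathcal{E}$, $\Fil^1\mathcal{E}=0$, the induced filtration on the de Rham complex $\Omega^\bullet_{X_\Gamma}\otimes\mathcal{E}$ has $j$-th graded piece concentrated in cohomological degree $j$ and equal to $\Omega^j_{X_\Gamma}\otimes\mathcal{M}_\Gamma$; consequently $H^{i-j,j}_{\mathrm{Hodge}}(X_\Gamma,\mathcal{E})=H^{i-j}(X_\Gamma,\Omega^j_{X_\Gamma}\otimes\mathcal{M}_\Gamma)$, and a trivial reindexing produces exactly the spectral sequence in the statement. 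The main obstacle is the second step: making precise the compatibility of the functor of Theorem~\ref{Thm3} with $\Gamma$-equivariant descent along the (infinite, transcendental) Galois cover $\Omega^n_k\to X_\Gamma$, so that the trivially de Rham structure visible on the cover actually certifies $\mathbb{L}$ as a de Rham lisse $\Z_p$-sheaf on $X_\Gamma$ with the asserted filtered module.
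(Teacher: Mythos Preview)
Your proposal is correct and follows exactly the approach the paper intends: the corollary is stated in the paper with no proof, as a direct consequence of Theorem~\ref{Thm4}, and your three steps spell out precisely how that deduction goes.

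The ``obstacle'' you flag in the second step is not a genuine one, and you are making it harder than necessary by routing through the global cover $\Omega^n_k\to X_\Gamma$ and $\Gamma$-equivariant descent. The property of a lisse $\Z_p$-sheaf being de Rham with a prescribed associated filtered module is local for the analytic (or \'etale) topology on $X_\Gamma$: both the $\B_\dR^+$-local system attached to $\mathbb{L}$ and the image of $(\mathcal{E},\nabla,\Fil^\bullet)$ under the functor of Theorem~\ref{Thm3} are sheaves on $(X_\Gamma)_\proet$, and an isomorphism between them can be checked on a cover. Since $\Gamma$ acts freely and properly discontinuously, every point of $X_\Gamma$ has an open neighbourhood $U$ over which the covering $\Omega^n_k\to X_\Gamma$ admits a section; on such $U$ the sheaf $\mathbb{L}|_U$ is the constant sheaf $M_0$ and $(\mathcal{E},\nabla,\Fil^\bullet)|_U$ is the trivial filtered module $\OO_U\otimes_k M$ with $d\otimes\id$ and the one-step filtration. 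In that situation the identification is immediate from the construction of the functor in Theorem~\ref{Thm3} (the trivial filtered module goes to the free $\B_\dR^+$-module, which is $\mathbb{L}|_U\otimes_{\Z_p}\B_\dR^+$). The local isomorphisms glue because the functor of Theorem~\ref{Thm3} is fully faithful, so there is no ambiguity. No appeal to the infinite cover or to an abstract compatibility with $\Gamma$-descent is required.
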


The proof of Theorem \ref{Thm4} follows the ideas of Andreatta and Iovita, \cite{AndreattaIovita}, in the crystalline case. One uses a version of the Poincar\'{e} lemma, which says here that one has an exact sequence of sheaves over $X_\proet$,
\[
0\rightarrow \B_\dR^+\rightarrow \OO\B_\dR^+\buildrel\nabla\over\rightarrow \OO\B_\dR^+\otimes_{\OO_X}\Omega_X^1\buildrel\nabla\over\rightarrow\ldots\ ,
\]
where we use slightly nonstandard notation. In \cite{BrinonRepresentations} and \cite{AndreattaIovita}, $\B_\dR^+$ would be called $\B_\dR^{\nabla +}$, and $\OO\B_\dR^+$ would be called $\B_\dR^+$. This choice of notation is used because many sources do not consider sheaves like $\OO\B_\dR^+$, and agree with our notation in writing $\B_\dR^+$ for the sheaf that is sometimes called $\B_\dR^{\nabla +}$.

Given this Poincar\'{e} lemma, it only remains to calculate the cohomology of $\OO\B_\dR^+$, which turns out to be given by coherent cohomology through some explicit calculation. This finishes the proof of Theorem \ref{Thm4}. We note that this proof is direct: All desired isomorphisms are proved by a direct argument, and not by producing a map between two cohomology theories and then proving that it has to be an isomorphism by abstract arguments. In fact, such arguments would not be available for us, as results like Poincar\'{e} duality are not known for the $p$-adic \'{e}tale cohomology of rigid-analytic varieties over $p$-adic fields. It also turns out that our methods are flexible enough to handle the relative case, and our results imply directly the corresponding results for proper smooth algebraic varieties, by suitable GAGA results. This gives for example the following result. We should note that this is the first general relative de Rham comparison result, even in the algebraic case.

\begin{thm}\label{Thm5} Let $k$ be a discretely valued complete nonarchimedean extension of $\Q_p$ with perfect residue field $\kappa$, and let $f:X\rightarrow Y$ be a proper smooth morphism of smooth rigid-analytic varieties over $k$. Let $\mathbb{L}$ be a lisse $\Z_p$-sheaf on $X_\et$ which is de Rham, with associated filtered module with integrable connection $(\mathcal{E},\nabla,\Fil^\bullet)$. Assume that $R^if_\et \mathbb{L}$ is a lisse $\Z_p$-sheaf on $Y_\et$; this holds true, for example, if the situation comes as the analytification of algebraic objects.

Then $R^if_\et \mathbb{L}$ is de Rham, with associated filtered module with integrable connection given by $R^if_{\dR\ast} (\mathcal{E},\nabla,\Fil^\bullet)$.
\end{thm}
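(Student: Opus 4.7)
The plan is to extend the absolute argument for Theorem \ref{Thm4} to families, by pushing forward the relative Poincar\'{e} resolution along the projection of pro-\'{e}tale sites $\nu: X_\proet \to Y_\proet$ induced by $f$. The starting point is a $Y$-relative version of the Poincar\'{e} lemma used in the proof of Theorem \ref{Thm4}: an exact sequence of sheaves on $X_\proet$
\[
0 \to \nu^{-1} \B_{\dR,Y}^+ \to \OO\B_{\dR,X/Y}^+ \xrightarrow{\nabla} \OO\B_{\dR,X/Y}^+ \otimes_{\OO_X} \Omega_{X/Y}^1 \xrightarrow{\nabla} \OO\B_{\dR,X/Y}^+ \otimes_{\OO_X} \Omega_{X/Y}^2 \to \cdots
\]
where $\OO\B_{\dR,X/Y}^+$ is the natural relative analogue of $\OO\B_\dR^+$, obtained by completing along the kernel of the multiplication map from the tensor product of $\OO_X$-sections and pulled-back $B_\dR^+$-sections of $Y$-charts. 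Twisting by the $\B_\dR^+$-local system $\mathbb{M}$ attached to $\mathbb{L}$ and using the de Rham hypothesis to identify $\mathbb{M} \otimes \OO\B_{\dR,X/Y}^+$ with $\mathcal{E} \otimes_{\OO_X} \OO\B_{\dR,X/Y}^+$ equipped with its Griffiths-transverse connection, one obtains a filtered resolution of $\mathbb{M}$ by the relative de Rham complex of $\mathcal{E}$ with coefficients in $\OO\B_{\dR,X/Y}^+$.

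First I would compute $R\nu_\ast \mathbb{M}$. Using a relative form of Theorem \ref{Thm2}, the $K(\pi,1)$-statement of Theorem \ref{ThmKpi1} on affinoid pieces of $Y$, and the almost purity theorem for perfectoid covers, together with the properness and smoothness of $f$, one gets the primitive comparison
\[
R\nu_\ast \mathbb{M} \;\cong\; (Rf_{\et\ast}\mathbb{L}) \otimes_{\Z_p} \B_\dR^+
\]
on $Y_\proet$; the lisseness hypothesis on $R^i f_{\et\ast}\mathbb{L}$ lets one replace higher derived images by the honest tensor product. Next I would compute $R\nu_\ast$ of the right-hand side of the Poincar\'{e} resolution. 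A local calculation in charts where both $X$ and $Y$ admit toric coordinates, parallel to the computation of $R\nu_\ast \OO\B_\dR^+$ in the absolute case but carried out fiberwise over $Y$, identifies
\[
R\nu_\ast\bigl(\OO\B_{\dR,X/Y}^+ \otimes_{\OO_X} \mathcal{E} \otimes_{\OO_X} \Omega_{X/Y}^\bullet\bigr) \;\cong\; Rf_{\dR\ast}(\mathcal{E},\nabla) \otimes_{\OO_Y} \OO\B_\dR^{+,Y},
\]
using the coherence of $R^i f_{\dR\ast}\mathcal{E}$ for proper smooth rigid-analytic $f$ to move the $\OO\B_\dR^+$-factor past the pushforward. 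Combining the two computations, passing to hypercohomology via the absolute Poincar\'{e} resolution on $Y_\proet$, and tracking filtrations, yields a $\Gal(\bar k/k)$-equivariant, filtered isomorphism of $\B_\dR^+$-modules comparing $R^i f_{\et\ast}\mathbb{L} \otimes \B_\dR^+$ with $R^i f_{\dR\ast}(\mathcal{E},\nabla,\Fil^\bullet) \otimes \OO\B_\dR^{+,Y}$, which is precisely the assertion that $R^i f_{\et\ast}\mathbb{L}$ is de Rham with associated filtered module with integrable connection $R^i f_{\dR\ast}(\mathcal{E},\nabla,\Fil^\bullet)$.

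The main obstacle will be the second computation: verifying that the relative period sheaf $\OO\B_{\dR,X/Y}^+$ is well-behaved enough under $R\nu_\ast$ to give not only a comparison of underlying modules but the correct filtration and connection on $Y$. Concretely, one must check that Griffiths transversality for $\mathcal{E}$ on $X$ induces Griffiths transversality for $R^i f_{\dR\ast}\mathcal{E}$ on $Y$ at the level of the associated $\OO\B_\dR^+$-modules, and that the Gauss--Manin connection on $R^i f_{\dR\ast}\mathcal{E}$ matches the connection coming from the $\B_\dR^+$-local system structure on $R^i f_{\et\ast}\mathbb{L}$. Secondary difficulties include justifying the projection formula and base change for $\nu$ with these non-coherent period coefficients, and exploiting the lisseness of $R^i f_{\et\ast}\mathbb{L}$, together with faithful flatness of $\B_\dR^+$ over $\Q_p$, to descend the filtered isomorphism from $\B_\dR^+$-coefficients back to a statement about the original $\Z_p$-sheaf.
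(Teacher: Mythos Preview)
This survey does not give a proof of Theorem~\ref{Thm5}; it only states that the methods proving Theorem~\ref{Thm4} ``are flexible enough to handle the relative case'' and refers to \cite{ScholzeHodge}. The indicated method uses only the \emph{absolute} period sheaves $\B_\dR^+$ and $\OO\B_\dR^+$ on $X_\proet$ and the absolute Poincar\'e lemma (with $\Omega_{X/k}^\bullet$, not $\Omega_{X/Y}^\bullet$): one tensors that resolution with $\mathbb{M}$, pushes forward along $X_\proet\to Y_\proet$, computes the left end via the relative form of Theorem~\ref{Thm2} (this is your first computation, and it is correct), and computes the right end by a projection-formula argument reducing $Rf_{\proet\ast}(\mathcal{E}\otimes\OO\B_{\dR,X}^+\otimes\Omega_{X/k}^j)$ to coherent higher direct images tensored with $\OO\B_{\dR,Y}^+$. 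The Gauss--Manin connection and the Hodge filtration on $R^if_{\dR\ast}\mathcal{E}$ then appear automatically, because $\Omega_{X/k}^\bullet$ already carries the filtration by $f^\ast\Omega_{Y/k}^\bullet$.

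Your route is genuinely different: you introduce a new relative sheaf $\OO\B_{\dR,X/Y}^+$ and a relative Poincar\'e lemma resolving $\nu^{-1}\B_{\dR,Y}^+$. The gap is at the twisting step. The de Rham hypothesis furnishes an isomorphism $\mathbb{M}\otimes_{\B_{\dR,X}^+}\OO\B_{\dR,X}^+\cong\mathcal{E}\otimes_{\OO_X}\OO\B_{\dR,X}^+$ over the \emph{absolute} sheaf $\OO\B_{\dR,X}^+$; your resolution lives over $\nu^{-1}\B_{\dR,Y}^+$ and $\OO\B_{\dR,X/Y}^+$, and you give no mechanism to transport the de Rham hypothesis from one setting to the other. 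Relatedly, your relative complex only sees the connection relative to $Y$, so after pushing forward you must manufacture the $Y$-direction connection and verify Griffiths transversality by hand --- exactly the ``main obstacle'' you flag, and one the absolute approach sidesteps entirely. If you want to pursue the relative route you would at minimum need to construct $\OO\B_{\dR,X/Y}^+$ as an $\OO\B_{\dR,X}^+$-algebra and justify base-changing the de Rham isomorphism along $\OO\B_{\dR,X}^+\to\OO\B_{\dR,X/Y}^+$; but at that point it is simpler to work with $\OO\B_{\dR,X}^+$ directly, as the paper indicates.
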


We note that we make use of the full strength of the theory of perfectoid spaces. Apart from this, our argument is rather elementary and self-contained, making use of little more than basic rigid-analytic geometry, which we reformulate in terms of adic spaces, and basic almost mathematics. In particular, we work entirely on the generic fibre. This eliminates in particular any assumptions on the reduction type of our variety, and we do not need any version of de Jong's alterations, neither do we need log structures. The introduction of the pro-\'{e}tale site makes all constructions functorial, and it also eliminates the need to talk about formal projective or formal inductive systems of sheaves, as was done e.g. in \cite{FaltingsAlmostEtale}, \cite{AndreattaIovita}: All period sheaves are honest sheaves on the pro-\'{e}tale site.

\subsection{A comparison result for constructible coefficients}

The methods of \cite{ScholzeHodge} have some consequences that were not included there. First, we explain how to deduce a comparison result in the style of Theorem \ref{Thm2} with constructible coefficients.

Let $C$ be a complete algebraically closed extension of $\Q_p$, with a fixed open valuation subring $C^+\subset C$ (e.g. $C^+ = \OO_C$), and let $f: X\to Y$ be a proper map of schemes of finite type over $C$, with associated adic spaces $f^\ad: X^\ad\to Y^\ad$ over $\Spa(C,C^+)$. Let $\mathbb{L}$ be a constructible $\F_p$-sheaf on $X$, with pullback $\mathbb{L}^\ad$ to $X^\ad$. Recall the following result of Huber, \cite{Huber}, Theorem 3.7.2.

\begin{thm} For all $i\geq 0$, $(R^if_{\et\ast} \mathbb{L})^\ad\buildrel\cong\over\longrightarrow R^if^\ad_{\et\ast} \mathbb{L}^\ad$.
\end{thm}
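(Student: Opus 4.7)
The plan is to reduce the relative comparison to an absolute GAGA-type statement via proper base change on each side, and then to establish the absolute case by devissage on the sheaf $\mathbb{L}$. More precisely, I would first check the asserted map stalkwise at a geometric point $\bar y$ of $Y^\ad$. The stalk of the right-hand side is identified, by Huber's proper base change theorem for adic spaces, with $H^i$ of the adic fiber $(f^\ad)^{-1}(\bar y)$; the stalk of the left-hand side is computed, by algebraic proper base change (SGA 4, XII), as $H^i$ of the scheme-theoretic fiber, then analytified. Since analytification commutes with fibers over geometric points, matters reduce to proving the absolute comparison: for $Z$ proper over an algebraically closed complete nonarchimedean extension $K/C$ and $\Lambda$ a constructible $\F_p$-sheaf on $Z$, the natural map $H^i(Z_\et,\Lambda)\to H^i(Z^\ad_\et,\Lambda^\ad)$ is an isomorphism for all $i\geq 0$.

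For the absolute case I would proceed by devissage on $\Lambda$. Both cohomology theories are $\delta$-functors admitting compatible long exact sequences for open/closed decompositions $(j:U\hookrightarrow Z,\; i:Z\setminus U\hookrightarrow Z)$, so using a constructible stratification of $Z$ on which $\Lambda$ is locally constant one reduces to sheaves of the form $j_!L$ with $j$ an open immersion and $L$ locally constant constructible on $U$. Hironaka's resolution of singularities in characteristic zero then supplies a smooth proper $Z'\to Z$ through which $j$ factors as an open immersion with strict normal crossings complement; excision and induction on $\dim Z$ reduce to comparing étale and adic étale cohomology of locally constant constructible $\F_p$-sheaves on smooth proper varieties over $K$.

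Finally, for smooth proper $Z$ and locally constant $\Lambda$, the sheaf $\Lambda$ is classified by a continuous action of $\pi_1(Z,\bar z)$ on a finite $\F_p$-vector space. Huber's rigid-analytic GAGA for finite étale covers (the proper case of the nonarchimedean Riemann existence theorem) identifies the categories of finite étale covers of $Z$ and of $Z^\ad$, hence also their étale fundamental groups, so $\Lambda$ and $\Lambda^\ad$ correspond to the same representation; a \v Cech computation along a trivializing finite étale cover then reduces the comparison for locally constant $\Lambda$ to the case of the constant sheaf $\underline{\F_p}$. The main obstacle is precisely this last step for $\F_p$-coefficients in mixed characteristic: unlike the $\ell\neq p$ case (where cohomological dimension and the comparison of finite étale covers suffice), one must additionally compare coherent-type invariants such as $\OO_Z^+/p$ on both sides, which is the content of (the scheme-theoretic counterpart of) Theorem \ref{Thm2} of the excerpt. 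Combined with the Artin--Schreier-like sequence $0\to\F_p\to\OO^+/p\xrightarrow{F-1}\OO^+/p\to 0$ on the characteristic-$p$ reduction, this closes the argument and yields the desired isomorphism.
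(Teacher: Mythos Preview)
The paper does not give a proof of this statement at all: it is simply quoted as Huber's result, \cite{Huber}, Theorem~3.7.2. So there is no ``paper's own proof'' to compare against; the relevant comparison is with Huber's argument.

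Your overall architecture (proper base change on both sides to reduce to an absolute statement over an algebraically closed complete field, then d\'evissage on $\mathbb{L}$ via open/closed decompositions and resolution of singularities) is reasonable and in spirit close to how such comparison theorems are proved. The problem is the final step. You assert that for smooth proper $Z$ over $K$ and the constant sheaf $\F_p$, the case $\ell=p$ is genuinely harder than $\ell\neq p$ and requires Theorem~\ref{Thm2} together with an Artin--Schreier sequence. This is a misconception: the base field $K$ has characteristic $0$, so $p$ is invertible on $Z$, and from the point of view of \'etale cohomology of the scheme $Z$ there is no distinction whatsoever between $\F_p$- and $\F_\ell$-coefficients. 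Huber's comparison theorem is stated and proved for arbitrary torsion constructible sheaves in this setting, uniformly in the torsion order.

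More concretely, your proposed use of Theorem~\ref{Thm2} cannot close the argument. Theorem~\ref{Thm2} is a statement entirely on the adic side: it compares $H^i(Z^\ad_\et,\F_p)\otimes \OO_K/p$ with $H^i(Z^\ad_\et,\OO_{Z^\ad}^+/p)$. The Artin--Schreier sequence $0\to\F_p\to\OO^+/p\to\OO^+/p\to 0$ likewise lives only on $Z^\ad$. Neither ingredient produces any link to the \emph{algebraic} cohomology $H^i(Z_\et,\F_p)$; there is no sheaf $\OO_Z^+$ on the scheme $Z$, and $\OO_Z/p=0$ since $p\in\OO_Z^\times$. So the last paragraph of your proposal is circular and does not establish the comparison you need. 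The correct way to finish is exactly what you dismissed as the ``$\ell\neq p$'' argument: Huber's identification of finite \'etale covers and his cohomological comparison for constant (or locally constant) torsion sheaves on proper schemes over nonarchimedean fields of characteristic $0$ apply verbatim to $\F_p$-coefficients.
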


In this section, we prove the following constructible version of Theorem \ref{Thm2}.

\begin{thm}\label{ConstrComp} For all $i\geq 0$, the map $(R^if^\ad_{\et\ast}\mathbb{L}^\ad)\otimes \OO_{Y^\ad}^+/p\to R^if^\ad_{\et\ast}(\mathbb{L}^\ad\otimes \OO_{X^\ad}^+/p)$ is an almost isomorphism.
\end{thm}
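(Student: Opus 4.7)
The plan is to deduce the theorem from Theorem~\ref{Thm2} (and its non-smooth variant, Theorem~\ref{NonSmooth}) by dévissage on the constructible sheaf $\mathbb{L}$. The comparison map is functorial in $\mathbb{L}$, and since $\OO_{X^\ad}^+/p$ is flat over $\F_p$, any short exact sequence $0 \to \mathbb{L}' \to \mathbb{L} \to \mathbb{L}'' \to 0$ of constructible $\F_p$-sheaves induces compatible long exact sequences on both sides of the comparison. The five-lemma in the category of almost $\OO_{Y^\ad}^+/p$-modules then shows that the class of $\mathbb{L}$ satisfying the conclusion is closed under extensions. Combined with generic smoothness in characteristic zero and Noetherian induction on $\dim\mathrm{supp}(\mathbb{L})$, applied to the sequence $0 \to j_!j^*\mathbb{L} \to \mathbb{L} \to i_*i^*\mathbb{L} \to 0$ for a decomposition $X = U \sqcup Z$ (with $\mathbb{L}|_U$ a local system on a smooth open $U$), it suffices to treat the cases (a) $\mathbb{L} = i_*\mathcal{F}$ for a closed immersion $i: Z \hookrightarrow X$ with $\dim Z < \dim X$ and $\mathcal{F}$ constructible on $Z$, and (b) $\mathbb{L} = j_!\mathcal{L}$ for an open immersion $j: U \hookrightarrow X$ with $U$ smooth over $C$ and $\mathcal{L}$ a local system on $U$.

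Case (a) follows from the Noetherian induction hypothesis applied to $(f\circ i: Z \to Y, \mathcal{F})$, using the identifications $Rf^\ad_{\et\ast}\circ i^\ad_{\et\ast} = R(f\circ i)^\ad_{\et\ast}$, the projection formula $(i_*\mathcal{F})^\ad \otimes \OO_{X^\ad}^+/p = i^\ad_{\et\ast}(\mathcal{F}^\ad \otimes (i^\ad)^*\OO_{X^\ad}^+/p)$, and the almost-isomorphism $(i^\ad)^*\OO_{X^\ad}^+/p \to \OO_{Z^\ad}^+/p$. For case (b), I would apply Hironaka's resolution of singularities to obtain a proper morphism $\pi: \tilde{X} \to X$ with $\tilde{X}$ smooth over $C$, $\pi$ an isomorphism over $U$, and $D := \tilde{X} \setminus \pi^{-1}(U)$ a strict normal crossings divisor. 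Writing $\tilde{j}: \pi^{-1}(U) \hookrightarrow \tilde{X}$ for the corresponding open immersion, a direct stalk computation gives $R\pi^\ad_{\et\ast}(\tilde{j}_!\mathcal{L})^\ad = (j_!\mathcal{L})^\ad$, and the same argument combined with the projection formula yields $R\pi^\ad_{\et\ast}\bigl((\tilde{j}_!\mathcal{L})^\ad\otimes \OO_{\tilde{X}^\ad}^+/p\bigr) = (j_!\mathcal{L})^\ad\otimes \OO_{X^\ad}^+/p$, since both sheaves vanish along $D$ while $\pi$ is an isomorphism over $U$. Hence the comparison map for $(f, j_!\mathcal{L})$ on $X$ matches the one for $(f\circ\pi, \tilde{j}_!\mathcal{L})$ on the smooth $\tilde{X}$. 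Applying the dévissage once more on $\tilde{X}$, now with support stratification governed by the smooth strata of the NCD $D$, one reduces recursively to pushforwards of local systems on smooth proper subvarieties of $\tilde{X}$, where Theorem~\ref{Thm2} applies.

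The main obstacle, I expect, is twofold. First, the almost-isomorphism $(i^\ad)^*\OO_{X^\ad}^+/p \to \OO_{Z^\ad}^+/p$ underlying case (a) requires a local analysis of the integral structure sheaf under closed immersions of analytifications of finite-type $C$-schemes, in the spirit of Faltings's almost purity theorem. Second, applying the relative version of Theorem~\ref{Thm2} to $f\circ\pi$ would nominally require $f\circ\pi$ to be proper smooth as a morphism, which fails if $Y$ is singular; this is circumvented by reducing to the absolute version of Theorem~\ref{Thm2} via Huber's proper base change for adic spaces, which identifies the stalks of both sides of the comparison map at a geometric point of $Y^\ad$ with the cohomology of the corresponding geometric fiber. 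Once these two technical points are in place, the dévissage proceeds uniformly and yields the desired almost-isomorphism.
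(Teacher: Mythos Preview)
Your overall architecture matches the paper's: reduce to the absolute case over $\Spa(C,C^+)$ (the paper does this first, citing the proof of \cite[Corollary~5.11]{ScholzeHodge}), induct on $\dim X$, split via $0\to j_!j^\ast\mathbb{L}\to\mathbb{L}\to i_\ast i^\ast\mathbb{L}\to 0$, handle the closed part by induction using $i_\ast F\otimes\OO_{X^\ad}^+/p\cong i_\ast(F\otimes\OO_{Z^\ad}^+/p)$, and for $j_!\mathcal{L}$ resolve singularities to a smooth $\tilde X$ with simple normal crossings boundary $D$. Up to here your proposal is correct; one small correction is that $i^\ast\OO_{X^\ad}^+/p\to\OO_{Z^\ad}^+/p$ is an \emph{honest} isomorphism (the paper proves this by an elementary approximation argument, not via almost purity).

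The genuine gap is your last step. On the smooth $\tilde X$ you are left with $\tilde j_!\mathcal{L}$ for a possibly non-trivial local system $\mathcal{L}$ on $U=\tilde X\setminus D$. You write that ``d\'evissage once more \ldots\ with support stratification governed by the smooth strata of the NCD $D$ \ldots\ reduces recursively to pushforwards of local systems on smooth proper subvarieties.'' But $\tilde j_!\mathcal{L}$ is already zero on $D$, so the excision triangle for the stratification $\tilde X=U\sqcup D$ is tautological, and there is no natural local system on the closed strata $D_J$ to which one could reduce: $\mathcal{L}$ has no reason to extend across $D$. The long exact sequence
\[
0\to \tilde j_!\F_p\to \F_p\to \bigoplus_a i_{a\ast}\F_p\to\ldots
\]
which drives the NCD d\'evissage exists only because the constant sheaf $\F_p$ extends over $\tilde X$; it has no analogue for a general $\mathcal{L}$.

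The paper supplies the missing idea: choose a finite \'etale Galois cover $V\to U$ trivializing $\mathcal{L}$, let $Y\to X$ be the normalization of $X$ in $V$, form the fibre products $Y_k=Y\times_X\cdots\times_X Y$ with open parts $V_k\subset Y_k$, and use the \v{C}ech resolution
\[
0\to j_!\mathcal{L}\to f_{1\ast}j_{1!}g_1^\ast\mathcal{L}\to f_{2\ast}j_{2!}g_2^\ast\mathcal{L}\to\ldots
\]
on $X_\et$. One checks (using the relative-dimension-$0$ case on stalks) that tensoring with $\OO_{X^\ad}^+/p$ preserves exactness in the almost category. Since each $g_k^\ast\mathcal{L}$ is trivial on $V_k$, this reduces the general local-system case to the constant-coefficient case $j_{k!}\F_p$ on each $Y_k$, which is exactly Lemma~\ref{SmoothCase} (your resolution-of-singularities step with $\mathcal{L}=\F_p$). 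Inserting this Galois-cover/\v{C}ech step between your resolution of singularities and your NCD d\'evissage makes the argument complete.
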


We remark that if $Y=\Spec C$ (i.e., in the absolute context), this says that
\[
H^i(X_\et,\mathbb{L})\otimes C^+/p\cong H^i(X^\ad_\et,\mathbb{L}^\ad)\otimes C^+/p\to H^i(X^\ad_\et,\mathbb{L}^\ad\otimes \OO_{X^\ad}^+/p)
\]
is an almost isomorphism. We need a simple lemma.

\begin{lem} Let $X$ be a locally noetherian adic space over $\Spa(\Q_p,\Z_p)$, with closed subspace $i:Z\hookrightarrow X$.
\begin{altenumerate}
\item[{\rm (i)}] The map $i^\ast \OO_X^+/p\to \OO_Z^+/p$ is an isomorphism (on $Z_\an$, and on $Z_\et$).
\item[{\rm (ii)}] For any $\F_p$-sheaf $F$ on $Z_\et$,
\[
i_\ast F\otimes \OO_X^+/p\buildrel\cong\over\longrightarrow i_\ast(F\otimes \OO_Z^+/p)\ .
\]
\end{altenumerate}
\end{lem}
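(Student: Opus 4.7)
Both statements are local on $Z$ and reduce to stalk computations; the \'etale case is handled identically by passing to geometric points, so I focus on the analytic setting. For (i), fix $z \in Z$ with image $x = i(z) \in X$ and shrink to an affinoid neighborhood $U = \Spa(A,A^+)$ of $x$ on which $Z \cap U = \Spa(B,B^+)$ is cut out by a finitely generated ideal $I = (f_1,\ldots,f_r) \subset A$, so $B = A/I$ and $B^+$ is the integral closure of the image of $A^+$ in $B$. The aim is to show that the stalk map $(\OO_X^+/p)_x \to (\OO_Z^+/p)_z$ is bijective.

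The crucial input is that the rational subsets
\[
U_n := \{y \in U : |f_i(y)| \leq |p|^n \text{ for all } i\}
\]
form a cofinal family of neighborhoods of $x$ in $X$, and on each $U_n$ the elements $f_i/p^n$ lie in $\OO_X^+(U_n)$. Hence $f_i = p^n \cdot (f_i/p^n) \in p^n \OO_X^+(U_n)$, so the images of all $f_i$ in $\OO_X^+(U_n)/p$ vanish for every $n \geq 1$. From this, injectivity modulo $p$ is a short computation: any element in the kernel of $(\OO_X^+)_x \to (\OO_Z^+)_z$ lies in $I \cdot (\OO_X)_x$ and can be written as $\sum g_i f_i$ with $g_i \in (\OO_X^+)_x[1/p]$; choosing $k$ so that $p^k g_i \in (\OO_X^+)_x$ and rewriting $f_i = p^{n+k}(f_i/p^{n+k})$ on $U_{n+k}$, the element lies in $p^n (\OO_X^+)_x$ for every $n$, hence vanishes modulo $p$. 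For surjectivity, lift $b \in B^+$ satisfying a monic integral relation $b^k + \sum c_j b^j = 0$ over the image of $A^+$ to some $\tilde b \in \OO_X(U)$; the lifted relation becomes $\tilde b^k + \sum c_j \tilde b^j \in I$, which on $U_n$ is contained in $p\OO_X^+(U_n)$. Thus $\tilde b$ is integral over $\OO_X^+(U_n)$ modulo $p$, and since $\OO_X^+(U_n)$ is integrally closed in $\OO_X(U_n)$, a Hensel-type adjustment produces a power-bounded lift of $b$ in $\OO_X^+(U_n)/p$.

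Part (ii) then follows formally from (i) via the projection formula for closed immersions: for any abelian sheaves $F$ on $Z$ and $G$ on $X$, the natural map $i_\ast F \otimes G \to i_\ast(F \otimes i^\ast G)$ is an isomorphism. One verifies this on stalks: both sides vanish at $x \notin i(Z)$ because $i_\ast$ of a sheaf on $Z$ has trivial stalks away from $i(Z)$, while at $x = i(z)$ both sides canonically identify with $F_z \otimes G_x = F_z \otimes (i^\ast G)_z$. Specializing $G = \OO_X^+/p$ and invoking (i) gives $i_\ast F \otimes \OO_X^+/p \cong i_\ast(F \otimes \OO_Z^+/p)$. The main technical obstacle is concentrated in the surjectivity step of (i), where one must upgrade the integral-closure relation defining $B^+$ into an actual lift in $\OO_X^+/p$; the essential input there is the infinite $p$-divisibility of the ideal $I$ along the cofinal system $\{U_n\}$, which renders the integral-closure denominators harmless in the colimit defining the stalk.
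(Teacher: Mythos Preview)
Your treatment of (ii) via the projection formula on stalks is exactly the paper's argument. For (i), your strategy can be made to work, but it is considerably more elaborate than the paper's, and your injectivity step as written has a gap.

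The gap: to prove injectivity of $(\OO_X^+/p)_x \to (\OO_Z^+/p)_z$ you must show that if $g \in (\OO_X^+)_x$ has image in $p(\OO_Z^+)_z$, then $g \in p(\OO_X^+)_x$. You only treat elements whose image in $(\OO_Z^+)_z$ is zero. This is repairable using your surjectivity argument, which in fact produces an honest lift $\tilde b \in \OO_X^+(U_n)$ of $b$ (not merely a lift modulo $p$): lift $(g/p)|_Z \in (\OO_Z^+)_z$ to some $\tilde h \in (\OO_X^+)_x$, and apply your kernel argument to $g - p\tilde h$. Two smaller points: the $U_n$ are not cofinal among neighborhoods of $x$ (each contains all of $Z\cap U$), though fortunately you only use that each $U_n$ is \emph{some} neighborhood of $x$; and no ``Hensel-type adjustment'' is needed in surjectivity, since once $\tilde b^k + \sum c_j \tilde b^j \in p\OO_X^+(U_n)$ the element $\tilde b$ satisfies a genuine monic equation over $\OO_X^+(U_n)$ and hence lies in $\OO_X^+(U_n)$ by integral closedness.

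The paper's proof replaces all of this by one observation: for $g \in \OO_X$, the locus $\{|g|\leq 1\}$ (resp.\ $\{|g|\leq |p|\}$) is open in $X$. For surjectivity, lift $f \in \OO_Z^+$ to any $g \in \OO_X$; since $|g|\leq 1$ on $Z$, the open set $\{|g|\leq 1\}$ is a neighborhood of $Z$ on which $g \in \OO_X^+$. For injectivity, if $g \in \OO_X^+$ restricts to $p\OO_Z^+$, then $|g|\leq |p|$ on $Z$, so $\{|g|\leq |p|\}$ is a neighborhood of $Z$ on which $g/p \in \OO_X^+$. Your tubes $U_n$ are a special instance of this openness principle, tailored to the defining equations of $Z$; the paper applies the principle directly to the function $g$ at hand, which is shorter and avoids any appeal to the integral-closure description of $\OO_Z^+$.
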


\begin{proof}\begin{altenumerate}
\item[{\rm (i)}] Let $\overline{f}\in \OO_Z^+/p$ be a section; take a lift $f\in \OO_Z^+$; by approximation, we may assume that $f$ is the image of some $g\in \OO_X$. The locus $|g|\leq 1$ in $X$ is open; thus, we may assume $g\in \OO_X^+$. This shows that $i^\ast \OO_X^+/p\to \OO_Z^+/p$ is surjective. Moreover, if $g\in \OO_X^+$ that becomes divisible by $p$ on $Z$, look at the open locus $|g|\leq |p|$ in $X$: Again, it contains $Z$, which shows that $g$ becomes $0$ in $i^\ast \OO_X^+/p$.
\item[{\rm (ii)}] We check fibres at all geometric points. At fibres outside $Z$, both are zero. At fibres in $Z$, it follows from (i), noting that we may ignore $i_\ast$ then, and that taking fibres commutes with tensor products.
\end{altenumerate}
\end{proof}

We recall from Theorem \ref{Thm2} that Theorem \ref{ConstrComp} is true when $X$ is smooth and $\mathbb{L}$ is trivial (or just locally constant). From here, we get another base case.

\begin{lem}\label{ConstrCompSmooth} Assume that $X$ is smooth, and let $D=\bigcup_a D_a\subset X$, $a\in I$, be a simple normal crossings divisor with smooth irreducible components $D_a\subset X$. Let $U=X\setminus D$, with open embedding $j:U\to X$. Then Theorem \ref{ConstrComp} holds true for $\mathbb{L} = j_! \F_p$ and $Y=\Spec C$ (i.e., in the absolute context).
\end{lem}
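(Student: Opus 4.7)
My plan is a devissage in two stages: first reducing to the case of constant $\F_p$-coefficients on $X$ and on the divisor $D$, then stripping off the irreducible components of $D$ one at a time.

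First I would apply the short exact sequence on $X^\ad_\et$
$$0 \to j_!\F_p \to \F_p \to i_\ast\F_p \to 0,$$
where $i\colon D^\ad\hookrightarrow X^\ad$ is the closed immersion. Since $\F_p$ is a field, tensoring over $\F_p$ with $\OO_{X^\ad}^+/p$ is exact; using the projection formula for the open immersion $j$ and part~(ii) of the preceding lemma for the closed immersion $i$, this yields
$$0 \to j_!\OO_{U^\ad}^+/p \to \OO_{X^\ad}^+/p \to i_\ast\OO_{D^\ad}^+/p \to 0.$$
Comparing the resulting long exact sequences in cohomology (after tensoring the first with $C^+/p$, which preserves exactness), the five-lemma reduces the claim for $j_!\F_p$ to the corresponding statements for $\F_p$ on $X$ and for $i_\ast\F_p$. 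The first is immediate from Theorem~\ref{Thm2} since $X$ is proper and smooth. Via $H^\bullet(X^\ad_\et, i_\ast(-))=H^\bullet(D^\ad_\et,-)$, the second amounts to the assertion that
$$H^i(D^\ad_\et, \F_p)\otimes_{\F_p} C^+/p \longrightarrow H^i(D^\ad_\et, \OO_{D^\ad}^+/p)$$
is an almost isomorphism, for every SNC divisor $D=\bigcup_{a=1}^n D_a$ inside a smooth proper variety over $C$.

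I would prove this by induction on $n$. The case $n=1$ is Theorem~\ref{Thm2} applied to the smooth proper $D_1$. For the inductive step, put $D'=\bigcup_{a<n}D_a$, so that $D=D'\cup D_n$ is a closed cover whose intersection $D'\cap D_n = \bigcup_{a<n}(D_a\cap D_n)$ is a SNC divisor with $n-1$ components inside the smooth proper $D_n$. The two-term Mayer--Vietoris sequence
$$0 \to \F_p \to i_{D'\ast}\F_p \oplus i_{D_n\ast}\F_p \to i_{(D'\cap D_n)\ast}\F_p \to 0$$
is exact (checked stalkwise), and tensoring with $\OO_{D^\ad}^+/p$ yields its $\OO^+/p$-analogue by part~(ii) of the preceding lemma. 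Applying the five-lemma to the induced long exact sequences reduces the claim for $D$ to the claims for $D'$, $D_n$, and $D'\cap D_n$: the first and third follow by the inductive hypothesis (both are SNC divisors with $n-1$ components, inside $X$ and $D_n$ respectively), and the middle from Theorem~\ref{Thm2} applied to the smooth proper $D_n$.

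The whole argument is formal once Theorem~\ref{Thm2} is available; the only points to check carefully are the exactness of the various tensor products (which follows from $\F_p$ being a field together with part~(ii) of the preceding lemma) and the exactness of the Mayer--Vietoris complex (a stalkwise check yielding at each $x\in D$ the augmented chain complex of the nonempty simplex on $\{D', D_n\}\cap \{S : x\in S\}$). No geometric input beyond the smooth case is needed.
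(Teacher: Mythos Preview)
Your argument is correct and is essentially the same d\'evissage as the paper's, just organized differently. The paper uses the full inclusion--exclusion resolution
\[
0\to j_!\F_p\to \F_p\to \bigoplus_a i_{a\ast}\F_p\to \bigoplus_{|J|=2} i_{J\ast}\F_p\to \cdots\to i_{I\ast}\F_p\to 0
\]
(with $D_J=\bigcap_{a\in J}D_a$, all of which are smooth and proper), tensors with $\OO_{X^\ad}^+/p$, and reduces in one step to Theorem~\ref{Thm2} for each $D_J$. Your two-stage Mayer--Vietoris induction is just the iterated version of this same resolution; neither approach buys anything the other does not.

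One small point to fix: your induction on $n$ alone is not quite well-founded. In an SNC configuration each $D_a\cap D_n$ is smooth but need not be irreducible, so $D'\cap D_n=\bigcup_{a<n}(D_a\cap D_n)$ may have more than $n-1$ irreducible components inside $D_n$. Inducting instead on the pair $(\dim X,\,n)$ lexicographically repairs this immediately, since $\dim D_n<\dim X$; alternatively, just use the paper's long resolution and avoid the induction altogether.
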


\begin{proof} For $J\subset I$, let $D_J=\bigcap_{a\in J} D_a$, and set $D_\emptyset = X$. Then all $D_J$ are proper and smooth. We have the closed embeddings $i_J: D_J\to X$, and a long exact sequence
\[
0\to j_! \F_p\to \F_p\to \bigoplus_a i_{a\ast} \F_p\to \ldots\to \bigoplus_{|J|=k} i_{J\ast} \F_p\to \ldots \to i_{I\ast} \F_p\to 0
\]
of sheaves on $X_\et$. By pullback, we get a similar exact sequence on $X^\ad_\et$. Now, we tensor with $\OO_{X^\ad}^+/p$ and get a similar long exact sequence
\[
0\to j_! \F_p\otimes \OO_{X^\ad}^+/p\to \OO_{X^\ad}^+/p\to \bigoplus_a i_{a\ast} \F_p\otimes \OO_{X^\ad}^+/p\to \ldots\ .
\]
But $i_{J\ast} \F_p\otimes \OO_{X^\ad}^+/p = i_{J\ast} \OO_{D_J^\ad}^+/p$. Thus, the lemma follows upon applying the result for all $D_J$, with trivial coefficients.
\end{proof}

\begin{lem}\label{SmoothCase} Let $Y=\Spec C$, and $X$ proper over $Y$. Let $j:U\hookrightarrow X$ be a smooth dense open subscheme. Then Theorem \ref{ConstrComp} holds true for $\mathbb{L} = j_! \F_p$.
\end{lem}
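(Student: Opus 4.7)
The plan is to pull back to a resolution of singularities and reduce to Lemma \ref{ConstrCompSmooth}. Since $C$ has characteristic zero and $U\subset X$ is a smooth open subscheme, Hironaka's theorem produces a proper birational morphism $\pi\colon \tilde X\to X$ such that $\tilde X$ is smooth, $\pi$ restricts to an isomorphism over $U$, and $\tilde D=\pi^{-1}(X\setminus U)$ is a simple normal crossings divisor with smooth components. Let $\tilde\jmath\colon U\hookrightarrow \tilde X$ be the resulting open immersion, and $i\colon Z=X\setminus U\hookrightarrow X$, $\tilde\imath\colon \tilde D\hookrightarrow\tilde X$ the closed complements.

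The crucial step is to establish the two identifications
\[
R\pi^\ad_\ast\,\tilde\jmath_!\F_p\;\cong\; j_!\F_p,\qquad R\pi^\ad_\ast\bigl(\tilde\jmath_!\F_p\otimes\OO^+_{\tilde X^\ad}/p\bigr)\;\cong\; j_!\F_p\otimes\OO^+_{X^\ad}/p
\]
in the derived category of $\F_p$-sheaves on $X^\ad_\et$. Both sheaves on $\tilde X^\ad$ are extensions by zero from the open subset $U$, so their restrictions to $\tilde D^\ad$ vanish. Over $U^\ad$, where $\pi^\ad$ is an isomorphism, the pushforwards evidently restrict to the expected sheaves. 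Over a geometric point $\bar z\to Z^\ad$, Huber's proper base change for $p$-torsion abelian sheaves on proper morphisms of locally noetherian adic spaces identifies the stalk of each pushforward with $R\Gamma$ of the pulled-back sheaf on the fibre $\pi^{-1}(\bar z)\subset \tilde D^\ad$, which is zero. Comparing stalks on $U^\ad$ and $Z^\ad$ then yields the identifications, using additionally that tensoring with the flat $\F_p$-sheaf $\tilde\jmath_!\F_p$ is exact and that $\OO^+/p$ restricts correctly along open and closed immersions (as in the lemma immediately preceding the statement).

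Combining the Leray spectral sequence for $\pi^\ad$ with these two identifications produces
\[
H^i(X^\ad_\et,j_!\F_p)\;\cong\; H^i(\tilde X^\ad_\et,\tilde\jmath_!\F_p),\qquad H^i(X^\ad_\et,j_!\F_p\otimes\OO^+_{X^\ad}/p)\;\cong\; H^i(\tilde X^\ad_\et,\tilde\jmath_!\F_p\otimes\OO^+_{\tilde X^\ad}/p),
\]
compatibly with the comparison maps. Lemma \ref{ConstrCompSmooth}, applied to the SNC pair $(\tilde X,\tilde D)$, asserts precisely that the right-hand comparison, after tensoring the first with $C^+/p$, is an almost isomorphism; transporting back along $\pi^\ad$ gives the desired almost isomorphism on $X^\ad$.

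The step I expect to need the most care is the second identification of the display, in the adic setting with integral coefficients: one must check that Huber's proper base change applies to the sheaf $\tilde\jmath_!\F_p\otimes\OO^+_{\tilde X^\ad}/p$ (which it does, being $p$-torsion abelian) and that the formation of this tensor product commutes with the relevant restrictions. Once these technical verifications are in place, the rest of the argument is essentially formal.
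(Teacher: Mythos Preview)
Your proposal is correct and follows essentially the same route as the paper: take a resolution of singularities $\pi\colon\tilde X\to X$ that is an isomorphism over $U$ and makes the complement a simple normal crossings divisor, verify the identification $R\pi^\ad_{\et\ast}(\tilde\jmath_!\F_p\otimes\OO^+_{\tilde X^\ad}/p)\cong j_!\F_p\otimes\OO^+_{X^\ad}/p$ by checking on stalks (clear over $U^\ad$, zero over $Z^\ad$), and then conclude from Lemma~\ref{ConstrCompSmooth}. For the stalk computation the paper invokes \cite[Proposition~2.6.1]{Huber} rather than a full proper base change theorem, which sidesteps your worry about the hypothesis on torsion orders; since the sheaf is extended by zero from $U^\ad$, its restriction to each fibre over $Z^\ad$ is literally zero, so only the general description of stalks of higher direct images is needed.
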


\begin{proof} Take a resolution of singularities $f: \tilde{X}\to X$, which is an isomorphism above $U$, and such that the boundary $D=\tilde{X}\setminus U\subset \tilde{X}$ is a divisor with simple normal crossings. Let $\tilde{j}: U\hookrightarrow \tilde{X}$ denote the lifted embedding. The result follows from the previous lemma, together with the simple observation
\[
j_!\F_p\otimes \OO_{X^\ad}^+/p\cong Rf^\ad_{\et\ast} (\tilde{j}_! \F_p\otimes \OO_{\tilde{X}^\ad}^+/p)\ .
\]
This may be checked on fibres at points. Over $U$, it is clear, and outside $U$, everything vanishes. (Use Proposition 2.6.1 of \cite{Huber} to compute the fibre of the pushforward as the cohomology of the fibre.)
\end{proof}

\begin{proof} {\it (of Theorem \ref{ConstrComp}.)} As in \cite{ScholzeHodge}, proof of Corollary 5.11, the relative version reduces immediately to the absolute version, so we may assume that $Y=\Spec C$, $Y^\ad = \Spa(C,C^+)$. (This reduction step is the reason that we allow general open valuation rings $C^+\subset C$ from the start.)

We argue by induction on $\dim X$. So assume the statement is known in dimension $<n$, let $X$ be of dimension $n$, and let $\mathbb{L}$ be some constructible $\F_p$-sheaf on $X$. We may assume that $X$ is reduced. Then there is a smooth dense open subscheme $j:U\hookrightarrow X$ such that $j^\ast \mathbb{L}$ is locally constant; let $i:Z\hookrightarrow X$ be the closed complement, so that $\dim Z < n$. We have a short exact sequence
\[
0\to j_! j^\ast \mathbb{L}\to \mathbb{L}\to i_\ast i^\ast \mathbb{L}\to 0\ ,
\]
which gives rise to a short exact sequence
\[
0\to j_! j^\ast \mathbb{L}^\ad\otimes \OO_{X^\ad}^+/p\to \mathbb{L}^\ad\otimes \OO_{X^\ad}^+/p\to (i_\ast i^\ast \mathbb{L}^\ad)\otimes \OO_{X^\ad}^+/p\to 0\ .
\]
But $(i_\ast i^\ast \mathbb{L}^\ad)\otimes \OO_{X^\ad}^+/p = i_\ast(i^\ast \mathbb{L}^\ad\otimes \OO_{Z^\ad}^+/p)$, so the isomorphism for the right-hand term follows by induction from the assertion for $Z$ and $i^\ast \mathbb{L}$. Thus, we may assume that there is a smooth dense open subscheme $j: U\hookrightarrow X$ and a locally constant sheaf $F$ on $U$ such that $\mathbb{L} = j_! F$.

The case that $F$ is trivial has been handled in Lemma \ref{ConstrCompSmooth}. In particular, this handles the case $n=0$ of relative dimension $0$.

In the general case, let $V\to U$ be a finite \'etale Galois cover over which $F$ becomes trivial, and let $Y\to X$ be the normalization of $X$ in $V$. Let $Y_k$ be the $k$-fold fibre product of $Y$ over $X$, with open subset $V_k\subset Y_k$, which is the $k$-fold fibre product of $V$ over $U$. Let $j_k: V_k\to Y_k$ be the open embedding, and $f_k:Y_k\to X$, $g_k:V_k\to U$ the projections. We have a resolution of $j_! F$ on $X_\et$:
\[
0\to j_! F\to f_{1\ast} j_{1!} g_1^\ast F\to f_{2\ast} j_{2!} g_2^\ast F\to \ldots\ .
\]
This induces a similar resolution on $X^\ad_\et$. Similarly, we have a resolution of $(j_! F\otimes \OO_{X^\ad}^+/p)^a$ on $X^\ad_\et$:
\[
0\to ((j_! F)^\ad\otimes \OO_{X^\ad}^+/p)^a\to f^\ad_{1\ast} ((j_{1!} g_1^\ast F)^\ad\otimes \OO_{Y_1^\ad}^+/p)^a\to f^\ad_{2\ast} ((j_{2!} g_2^\ast F)^\ad\otimes \OO_{Y_2^\ad}^+/p)^a\to \ldots\ :
\]
To see this, we may check on fibres. Away from $U$, everything is $0$. At a geometric point $\bar{x}\to U$ with values in $(L,L^+)$, and fibre $Y_{k\bar{x}}\subset Y_k$, the sequence identifies with
\[
0\to F_{\bar{x}}\otimes L^{+a}/p\to H^0(Y_{1\bar{x}},F_{\bar{x}})\otimes L^{+a}/p\to H^0(Y_{2\bar{x}},F_{\bar{x}})\otimes L^{+a}/p\to \ldots
\]
(by the result in relative dimension $0$). The exactness of this sequence is just the exactness of
\[
0\to j_! F\to f_{1\ast} j_{1!} g_1^\ast F\to f_{2\ast} j_{2!} g_2^\ast F\to \ldots
\]
at $\bar{x}$, tensored with $L^{+a}/p$.

Finally, we get the result by applying it for all $j_{k!} g_k^\ast F$ on $Y_k$, noting that $g_k^\ast F$ is trivial.
\end{proof}

At this point, let us also remark that by using resolution of singularities for rigid-analytic varieties\footnote{See the paper by Bierstone-Milman \cite{BierstoneMilman}, situation (0.1) (2), where they state that it works for (good) Berkovich spaces. But proper rigid-analytic varieties, proper adic spaces and proper Berkovich spaces are canonically equivalent, and proper Berkovich spaces are good by \cite{Temkin}, so one may use their result.}, one gets Theorem \ref{Thm1} for non-smooth spaces by the same argument as above.

\begin{thm}\label{NonSmooth} Let $C$ be an algebraically closed complete extension of $\Q_p$, and let $X/C$ be a proper rigid-analytic variety. Then $H^i(X_\et,\F_p)$ is a finite-dimensional $\F_p$-vector space for all $i\geq 0$, which vanishes for $i>2\dim X$. Moreover, there is an almost isomorphism
\[
H^i(X_\et,\F_p)\otimes_{\F_p} \OO_C/p\to H^i(X_\et,\OO_X^+/p)
\]
for all $i\geq 0$.
\end{thm}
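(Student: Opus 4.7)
The plan is to run induction on $n=\dim X$, mimicking the proofs of Lemma \ref{SmoothCase} and Theorem \ref{ConstrComp}, but replacing algebraic resolution of singularities with its rigid-analytic analogue (available by the Bierstone--Milman result cited in the footnote to this theorem). The base case $n=0$ is trivial, as $X$ is then a finite disjoint union of $C$-points. For the inductive step, assuming the result in dimensions $<n$ and replacing $X$ by $X_{\mathrm{red}}$, I would choose a proper map $f:\tilde X\to X$ with $\tilde X$ smooth, which is an isomorphism over a smooth Zariski-dense open $j:U\hookrightarrow X$, and such that $D:=f^{-1}(X\setminus U)\subset \tilde X$ is a simple normal crossings divisor with smooth components $(D_a)_{a\in I}$. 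Writing $i:Z:=X\setminus U\hookrightarrow X$, the subspace $Z$ is proper with $\dim Z<n$, so the induction hypothesis applies to $Z$.

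The short exact sequence $0\to j_!\F_p\to \F_p\to i_\ast\F_p\to 0$ on $X_\et$ reduces the $\F_p$-statement for $X$ to the corresponding statements for $i_\ast\F_p$ (handled on $Z$ by induction) and for $j_!\F_p$. Since $f$ is an isomorphism over $U$ and $\tilde j_!\F_p$ (for $\tilde j:U\hookrightarrow\tilde X$) vanishes on $f^{-1}(Z)$, a stalk computation gives $Rf_\ast\tilde j_!\F_p=j_!\F_p$, hence
\[
H^\ast(X_\et,j_!\F_p)=H^\ast(\tilde X_\et,\tilde j_!\F_p).
\]
On $\tilde X_\et$ the simple normal crossings resolution
\[
0\to \tilde j_!\F_p\to \F_p\to \bigoplus_a i_{a\ast}\F_p\to \bigoplus_{|J|=2} i_{J\ast}\F_p\to\ldots\to i_{I\ast}\F_p\to 0
\]
produces a spectral sequence with $E_1$-terms $H^q(D_{J,\et},\F_p)$ where $D_J=\bigcap_{a\in J}D_a$ (and $D_\emptyset=\tilde X$). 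All $D_J$ are smooth and proper, so Theorem \ref{Thm1} gives the requisite finite-dimensionality and the vanishing bound $H^q(D_{J,\et},\F_p)=0$ for $q>2\dim D_J\leq 2n$, whence the same for $H^\ast(X_\et,\F_p)$.

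For the almost isomorphism, I would tensor each of the exact sequences above with $\OO_X^+/p$ (respectively $\OO_{\tilde X}^+/p$) and rerun the induction. The identity $i_\ast\F_p\otimes\OO_X^+/p\cong i_\ast(\F_p\otimes\OO_Z^+/p)$ from the lemma preceding Theorem \ref{ConstrComp} feeds the $i_\ast$-term back into the induction on $Z$. The key identity
\[
j_!\F_p\otimes\OO_X^+/p\cong Rf_\ast\bigl(\tilde j_!\F_p\otimes\OO_{\tilde X}^+/p\bigr)
\]
is checked on geometric stalks via Huber's Proposition~2.6.1 expressing the fibre of $Rf_\ast$ as the cohomology of the fibre: over $U$ it is tautological, and over $Z$ both sides vanish because $\tilde j_!\F_p\otimes\OO_{\tilde X}^+/p$ is supported on $U$. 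Tensoring the SNC resolution with $\OO_{\tilde X}^+/p$ remains exact (each $i_{J\ast}\F_p$ is an extension by zero of $\F_p$, and $i_{J\ast}\F_p\otimes\OO_{\tilde X}^+/p=i_{J\ast}\OO_{D_J}^+/p$ by the same lemma), so the problem reduces to the almost isomorphism for the smooth proper spaces $D_J$, which is Theorem \ref{Thm2}.

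The main obstacle is the rigid-analytic resolution of singularities input: one needs a proper modification $\tilde X\to X$ in the rigid-analytic category with smooth source, SNC exceptional divisor, and which is an isomorphism over a dense smooth open. As indicated in the footnote, this is provided by Bierstone--Milman combined with the equivalence between proper rigid-analytic varieties, proper adic spaces and (automatically good, by \cite{Temkin}) proper Berkovich spaces. Once this geometric input is granted, the rest is a direct transport of the arguments in Lemmas \ref{ConstrCompSmooth} and \ref{SmoothCase} and Theorem \ref{ConstrComp} to the rigid-analytic setting, with no analytification functor required since $X$ is already an analytic object.
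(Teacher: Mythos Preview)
Your argument is correct and is essentially the same as the paper's own proof: induction on $\dim X$, the short exact sequence $0\to j_!\F_p\to\F_p\to i_\ast\F_p\to 0$, rigid-analytic resolution of singularities to pass to a smooth $\tilde X$ with SNC boundary, and then the SNC resolution reducing to Theorems~\ref{Thm1} and~\ref{Thm2} on the smooth strata $D_J$. The only cosmetic difference is that the paper packages the induction by strengthening the statement to cover $H^i(X_\et,j_!\F_p)$ for an arbitrary Zariski closed $Z\subset X$, whereas you carry only the theorem itself through the induction and treat $j_!\F_p$ ad hoc via $Rf_\ast\tilde j_!\F_p=j_!\F_p$; both organizations unwind to the same computation.
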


\begin{proof} We use the same arguments as above. We claim more generally that if $U=X\setminus Z$ is the complement of a Zariski closed subset $Z$ in a proper rigid-analytic variety $X$, then $H^i(X_\et,j_! \F_p)$ is a finite-dimensional $\F_p$-vector space, which vanishes for $i>2\dim X$, and with a similar almost isomorphism. By induction on $\dim X$, the result is known for $Z$, and the result for $X$ is equivalent to the result for $U$. Given $X$, we may thus assume that $U$ is smooth. By resolution of singularities, we may then assume that $X$ is smooth, and $Z\subset X$ is a divisor with normal crossings (cf. proof of Lemma \ref{SmoothCase}). In that case, one argues as in the proof of Lemma \ref{ConstrCompSmooth}.
\end{proof}

\subsection{The Hodge-Tate spectral sequence}

Let $C$ be an algebraically closed complete extension of $\Q_p$, and let $X/C$ be a proper smooth rigid-analytic variety. There is a general Hodge-Tate spectral sequence. It is instructive to compare it to the Hodge-de Rham spectral sequence:

\begin{thm} There is a Hodge-de Rham spectral sequence
\[
E_1^{ij} = H^j(X,\Omega_X^i)\Rightarrow H^{i+j}_\dR(X)\ .
\]
\end{thm}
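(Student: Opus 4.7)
The plan is to construct the spectral sequence purely formally as the hypercohomology spectral sequence associated with the ``stupid'' (b\^{e}te) filtration on the algebraic de Rham complex. The statement is about existence, not degeneration, so no deep input is needed beyond standard homological algebra on the site $X_\an$ (or $X_\et$).

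First, I would recall that the algebraic de Rham cohomology of the rigid-analytic variety $X/C$ is, by definition, the hypercohomology
\[
H^n_\dR(X) = \mathbb{H}^n(X,\Omega_X^\bullet)
\]
of the de Rham complex
\[
\Omega_X^\bullet: \OO_X \xrightarrow{d} \Omega_X^1 \xrightarrow{d} \Omega_X^2 \xrightarrow{d} \cdots \xrightarrow{d} \Omega_X^{\dim X},
\]
a bounded complex of coherent $\OO_X$-modules. This complex is endowed with the decreasing filtration $\Fil^i \Omega_X^\bullet = \sigma^{\geq i}\Omega_X^\bullet$ obtained by brutally truncating in degrees $< i$; the associated graded pieces are shifted single sheaves $\gr^i \Omega_X^\bullet = \Omega_X^i[-i]$.

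Next, I would apply the general construction of the spectral sequence of a filtered complex in an abelian category with enough injectives (e.g.~replace $\Omega_X^\bullet$ by a Cartan--Eilenberg resolution compatible with the filtration, and take global sections). This yields a spectral sequence
\[
E_1^{ij} = \mathbb{H}^{i+j}(X,\gr^i\Omega_X^\bullet) = H^j(X,\Omega_X^i) \;\Longrightarrow\; \mathbb{H}^{i+j}(X,\Omega_X^\bullet) = H^{i+j}_\dR(X),
\]
with differentials $d_1: H^j(X,\Omega_X^i)\to H^j(X,\Omega_X^{i+1})$ induced by $d$. Convergence is automatic because the filtration is finite: $\Fil^i\Omega_X^\bullet = 0$ for $i > \dim X$ (so the filtration is bounded below) and $\Fil^0\Omega_X^\bullet = \Omega_X^\bullet$ (bounded above). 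Finite-dimensionality of the $E_1$-page follows from Kiehl's finiteness theorem applied to the coherent sheaves $\Omega_X^i$ on the proper rigid-analytic variety $X$, although this is not even needed for the existence assertion.

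There is essentially no obstacle here: the entire construction is a standard homological gadget (the hypercohomology spectral sequence for the b\^{e}te filtration on a bounded complex of coherent sheaves), and all the nontrivial content of the section—namely that this spectral sequence \emph{degenerates} at $E_1$—is a separate statement belonging to the corollary of Theorem~\ref{Thm4} above. The only mildly non-formal point is to make sure that ``$H^j(X,\Omega_X^i)$'' is unambiguous, which follows from the fact (proved in \cite{ScholzeHodge}) that coherent cohomology on $X_\an$ and $X_\et$ agree, so the spectral sequence may equivalently be viewed on either site.
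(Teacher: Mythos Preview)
Your argument is correct and is exactly the standard construction; the paper itself gives no proof of this statement, treating the existence of the Hodge--de Rham spectral sequence as well known (the nontrivial content lies in the degeneration, which is discussed separately). So there is nothing to compare: you have simply filled in the omitted standard proof.
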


\begin{rem} If $X$ is a scheme, or $X$ is defined over a discretely valued subfield $K\subset C$ with perfect residue field, then this sequence degenerates (by the Lefschetz principle, resp. our result stated above). We conjecture that it degenerates in general; however, our methods are not sufficient to prove this. Assuming that it degenerates, one gets a decreasing Hodge-de Rham filtration $\Fil^\bullet H^i_\dR(X)$, with
\[
\Fil^q H^i_\dR(X) / \Fil^{q+1} H^i_\dR(X) = H^{i-q}(X,\Omega_X^q)\ .
\]
\end{rem}

\begin{thm}\label{HodgeTateSpecSeq} There is a Hodge-Tate spectral sequence
\[
E_2^{ij} = H^i(X,\Omega_X^j)(-j)\Rightarrow H^{i+j}_\et(X,\Q_p)\otimes_{\Q_p} C\ .
\]
\end{thm}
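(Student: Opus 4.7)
The plan is to realize this as the Leray spectral sequence for the natural projection of sites $\nu: X_\proet \to X_\et$ applied to the completed structure sheaf $\hat\OO_X$ on the pro-\'etale site. So the two substantive inputs needed are (a) an identification of the abutment
\[
H^i(X_\proet,\hat\OO_X) \cong H^i_\et(X,\Q_p)\otimes_{\Q_p} C,
\]
and (b) a local computation $R^j\nu_\ast \hat\OO_X \cong \Omega_X^j(-j)$ as sheaves on $X_\et$. Granting these, the Leray spectral sequence takes the desired form
\[
E_2^{ij} = H^i(X_\et,R^j\nu_\ast \hat\OO_X) = H^i(X,\Omega_X^j)(-j) \Rightarrow H^{i+j}(X_\proet,\hat\OO_X) = H^{i+j}_\et(X,\Q_p)\otimes_{\Q_p} C.
\]

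For (a), I would start from the mod $p^n$ analogue of Theorem \ref{Thm2}, which gives an almost isomorphism
\[
H^i_\et(X,\Z/p^n) \otimes_{\Z/p^n} \OO_C/p^n \to^a H^i(X_\et,\OO_X^+/p^n).
\]
Since by Theorem \ref{Thm1} the groups $H^i_\et(X,\Z/p^n)$ are finite (uniformly bounded by the $\Z_p$-rank of $H^i_\et(X,\Z_p)$), the Mittag-Leffler condition holds and one can pass to the inverse limit, using that $R\varprojlim_n \OO_X^+/p^n$ computes $\hat\OO_X^+$ on the pro-\'etale site. This yields an almost isomorphism $H^i_\et(X,\Z_p)\otimes_{\Z_p}\OO_C \cong^a H^i(X_\proet,\hat\OO_X^+)$, and inverting $p$ gives (a) on the nose.

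For (b), I would work locally on a small \'etale $U = \Spa(R,R^+) \subset X$ admitting an \'etale map to the torus $\T^d = \Spa(C\langle T_1^{\pm 1},\ldots,T_d^{\pm 1}\rangle, \cdot)$. The toric tower obtained by adjoining all $p$-power roots $T_i^{1/p^n}$ produces a pro-\'etale Galois cover $\tilde U \to U$ with group $\Gamma = \Z_p(1)^d$, and by the tilting correspondence $\tilde U$ is affinoid perfectoid, so higher cohomology of $\hat\OO_X$ on $\tilde U$ vanishes by the almost vanishing theorem for perfectoid affinoids. The Cartan--Leray spectral sequence then gives
\[
R\Gamma(U_\proet,\hat\OO_X) \cong R\Gamma_\cont(\Gamma, \hat R_\infty),
\]
where $\hat R_\infty$ denotes the $p$-adic completion of $R[T_i^{1/p^\infty}]$. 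Decomposing $\hat R_\infty$ into weight spaces $\hat R \cdot T^k$ for $k \in \Z[1/p^\infty]^d$, the almost vanishing of continuous cohomology on non-integral weights reduces everything to the trivial weight piece $\hat R$, on which $\Gamma = \Z_p(1)^d$ acts trivially. The Koszul computation then produces
\[
H^j_\cont(\Z_p(1)^d, \hat R) \cong \hat R \otimes \bigwedge^j(\Z_p(-1))^d \cong \Omega_{R/C}^j(-j),
\]
where the last identification sends the standard basis to $d\log T_{i_1}\wedge\cdots\wedge d\log T_{i_j}$. Sheafifying gives $R^j\nu_\ast \hat\OO_X \cong \Omega_X^j(-j)$.

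The main obstacle is step (b): the identification of weight-zero isotypic component of $\hat R_\infty$ (the almost vanishing of higher weights), the accompanying Koszul computation, and in particular the natural appearance of the Tate twist via the cyclotomic character on $\Gamma$. The identification in (a) rests crucially on the finiteness statement of Theorem \ref{Thm1} to allow passage to the inverse limit, but given that the argument is essentially formal.
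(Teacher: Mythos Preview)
Your proposal is correct and follows the same approach as the paper: Leray for $\nu: X_\proet \to X_\et$ applied to $\hat\OO_X$, with the abutment identified via Theorem~\ref{Thm2} plus an inverse-limit argument, and $R^j\nu_\ast \hat\OO_X$ computed locally via the toric tower and continuous group cohomology (the paper cites \cite[Lemmas 4.5, 5.5]{ScholzeHodge} for exactly your Koszul/weight-decomposition computation). The one place the paper is more careful is in globalizing the identification $R^1\nu_\ast \hat\OO_X \cong \Omega^1_X(-1)$: rather than asserting that the coordinate-dependent local isomorphisms sheafify, it first constructs a global coordinate-free map $\Omega^1_{X_\et} \to R^1\nu_\ast \hat\OO_X(1)$ from the Kummer sequence $0\to \hat\Z_p(1)\to \varprojlim_{\times p}\OO_X^\times \to \OO_X^\times \to 0$ together with $d\log$, and then verifies locally that this map is an isomorphism.
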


\begin{rem} Again, if $X$ is a scheme, or $X$ is defined over a discretely valued subfield $K\subset C$ with perfect residue field (more generally, if $C(-j)^{\Gal(\bar{K}/K)}=0$ for $j\neq 0$), then this spectral sequence degenerates. For schemes, this follows by a dimension count (noting that by the Lefschetz principle, \'etale and de Rham cohomology have the same dimension); in the other case, it follows because the differentials are $\Gal(\bar{K}/K)$-equivariant. Note that in the case of schemes, one could also use a spreading-out argument to reduce to the case of a discretely valued subfield $K\subset C$ with perfect residue field; the same argument could be applied in the Hodge-de Rham case. This gives a purely $p$-adic proof of these results.

In case the Hodge-Tate spectral sequence degenerates, one gets a decreasing Hodge-Tate filtration $\Fil^\bullet (H^i_\et(X,\Q_p)\otimes_{\Q_p} C)$, with
\[
\Fil^q (H^i_\et(X,\Q_p)\otimes_{\Q_p} C) / \Fil^{q+1} (H^i_\et(X,\Q_p)\otimes_{\Q_p} C) = H^q(X,\Omega_X^{i-q})(q-i)\ .
\]
\end{rem}

\begin{rem} The Hodge-Tate spectral sequence does not have a direct analogue over the complex numbers $\mathbb{C}$. Note that in the $p$-adic case, $H^i_\et(X,\Q_p)\otimes_{\Q_p} C$ is not canonically isomorphic to $H^i_\dR(X)$; thus the two spectral sequences do not converge to the same cohomology groups. Over $\mathbb{C}$, the Hodge-de Rham filtration is canonically split; in the $p$-adic case, however, both the Hodge-de Rham and the Hodge-Tate filtration are not canonically split.
\end{rem}

\begin{proof}{\it (of Theorem \ref{HodgeTateSpecSeq}.)} We use our results from \cite{ScholzeHodge}. First, Theorem \ref{Thm2} says that
\[
H^i(X_\et,\F_p)\otimes_{\F_p} \OO_C/p\to H^i(X_\et,\OO_X^+/p)
\]
is an almost isomorphism. By induction on $n$, we find that
\[
H^i(X_\et,\Z/p^n\Z)\otimes_{\Z/p^n\Z} \OO_C/p^n\to H^i(X_\et,\OO_X^+/p^n)
\]
is an almost isomorphism. Passing to the inverse limit over $n$ (cf. \cite[Lemma 3.18]{ScholzeHodge}), we get that
\[
H^i(X_\proet,\hat{\Z}_p)\otimes_{\Z_p} \OO_C\to H^i(X_\proet,\hat{\OO}_X^+)
\]
is an almost isomorphism; also, $H^i(X_\proet,\hat{\Z}_p) = H^i_\et(X,\Z_p)$, with the usual definition of the right-hand side. Inverting $p$, we get an isomorphism
\[
H^i_\et(X,\Q_p)\otimes_{\Q_p} C\cong H^i(X_\proet,\hat{\OO}_X)\ .
\]
It remains to compute the right-hand side. For this, we use the projection $\nu: X_\proet\to X_\et$, and the spectral sequence
\[
E_2^{ij} = H^i(X_\et,R^j\nu_\ast \hat{\OO}_X)\Rightarrow H^{i+j}(X_\proet,\hat{\OO}_X) = H^{i+j}_\et(X,\Q_p)\otimes_{\Q_p} C\ ;
\]
this reduces us to the next proposition.
\end{proof}

\begin{prop} Let $X/C$ be a smooth adic space. Let $\nu: X_\proet\to X_\et$ be the projection. Then $\OO_{X_\et}\buildrel\cong\over\longrightarrow \nu_\ast \hat{\OO}_X$, and there is a natural isomorphism $\Omega_{X_\et}^1(-1)\buildrel\cong\over\longrightarrow R^1\nu_\ast \hat{\OO}_X$; it induces isomorphisms (via taking the exterior power, and cup products)
\[
\Omega_{X_\et}^j(-j)\buildrel\cong\over\longrightarrow R^j\nu_\ast \hat{\OO}_X
\]
for all $j\geq 0$.
\end{prop}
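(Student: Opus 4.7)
The plan is to reduce to an explicit computation on a pro-étale perfectoid cover obtained from a toric chart. Since the statement is local on $X_\et$, I may assume $X$ is affinoid with an étale map to the torus $\mathbb{T}^n = \Spa(C\langle T_1^{\pm 1},\ldots,T_n^{\pm 1}\rangle,\OO_C\langle T_1^{\pm 1},\ldots,T_n^{\pm 1}\rangle)$, where $n=\dim X$. Pulling back the standard perfectoid tower $\tilde{\mathbb{T}}^n = \Spa(C\langle T_i^{\pm 1/p^\infty}\rangle,\OO_C\langle T_i^{\pm 1/p^\infty}\rangle)\to \mathbb{T}^n$ yields, via almost purity applied at each finite level, an affinoid perfectoid pro-étale cover $\tilde{X}\to X$ which is Galois with group $\Gamma=\Z_p(1)^n$. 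On this cover, $R^j\nu_\ast\hat{\OO}_X$ is computed, as a sheaf on small affinoids in $X_\et$, by the continuous group cohomology $H^j_{\cont}(\Gamma,\hat{R})$, where $\hat{R}=\hat{\OO}_X(\tilde{X})$.

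To evaluate this cohomology, I would decompose $\hat{R}$ under $\Gamma$. Up to completion, $\hat{R}$ splits as $\bigoplus_{a\in(\tfrac{1}{p^\infty}\Z/\Z)^n}T^a R$, where $\Gamma$ acts on $T^a R$ through the character $\gamma\mapsto \gamma(T^a)/T^a\in \mu_{p^\infty}$. For $a=0$ the action is trivial, contributing
\[
H^j_{\cont}(\Gamma,R)=\Lambda^j\Hom_{\cont}(\Gamma,R)=\Lambda^j R^n(-j),
\]
the Tate twist recording that $\Gamma=\Z_p(1)^n$. For each nonzero $a$, the Koszul complex for topological generators $\gamma_1-1,\ldots,\gamma_n-1$ acting on $T^a R$ is almost acyclic: at least one $\gamma_i-1$ acts as a unit times a root-of-unity-minus-one, which is almost invertible. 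This is the Faltings/Tate computation for the torus. Summing over $a$ and completing gives $H^j_{\cont}(\Gamma,\hat{R})=\Lambda^j R^n(-j)$ in the almost category, and the toric chart identifies $R^n=\bigoplus R\cdot d\log T_i$ with $\Omega^1_X(X)$, so $\Lambda^j R^n=\Omega^j_X(X)$.

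To produce a natural, chart-independent map, I would globalize via a \emph{dlog} construction: a local unit $f\in\OO_X^\times(U)$ gives the pro-étale $\Z_p(1)$-torsor $U[f^{1/p^\infty}]\to U$ and hence a class in $H^1(U_\proet,\Z_p(1))$; pushing along $\Z_p(1)\hookrightarrow \hat{\OO}_X(1)$ yields a section $[f]$ of $R^1\nu_\ast\hat{\OO}_X(1)$ over $U$. Additivity $[fg]=[f]+[g]$ forces this to factor through $d\log:\OO_X^\times\to\Omega^1_X$, producing a natural map $\Omega^1_X(-1)\to R^1\nu_\ast\hat{\OO}_X$. Specialized to the toric situation, this coincides with the Koszul isomorphism above, hence is an isomorphism. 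In higher degrees, the cup product $(R^1\nu_\ast\hat{\OO}_X)^{\otimes j}\to R^j\nu_\ast\hat{\OO}_X$ is graded-commutative and thus factors through $\Lambda^j$; the toric calculation identifies the resulting map $\Omega^j_X(-j)=\Lambda^j(\Omega^1_X(-1))\to R^j\nu_\ast\hat{\OO}_X$ with the Koszul isomorphism.

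The hard part is the almost vanishing of continuous $\Gamma$-cohomology on the nontrivial weight spaces, and, crucially, its compatibility with the étale base change $X\to \mathbb{T}^n$. The key inputs are almost purity together with the almost vanishing $H^i(U_\et,\OO_X^+/p)\approx 0$ on affinoid perfectoids from Section~2; these ensure that the weight-space decomposition of $\hat{R}$ is, up to almost isomorphism and compatibly with $p$-adic completion, the base change along $\mathbb{T}^n\leftarrow X$ of the corresponding decomposition on $\tilde{\mathbb{T}}^n$. Once this compatibility is in hand, the classical Faltings/Tate Koszul computation over the torus propagates immediately to $X$, yielding the claim.
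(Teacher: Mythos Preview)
Your overall strategy matches the paper's: reduce to a toric chart, pull back the perfectoid tower $\tilde{\mathbb{T}}^n\to\mathbb{T}^n$ to get an affinoid perfectoid pro-\'etale Galois cover, compute $R^j\nu_\ast\hat{\OO}_X$ as continuous group cohomology, and then use a Kummer/$d\log$ construction to pin down the identification with $\Omega^j_X(-j)$ naturally. The weight-space/Koszul computation you describe is exactly what the paper invokes by citing \cite[Lemma~4.5, Lemma~5.5]{ScholzeHodge}.

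There is, however, a real gap in your naturality step. The claim ``additivity $[fg]=[f]+[g]$ forces this to factor through $d\log:\OO_X^\times\to\Omega^1_X$'' does not give what you need. Additivity (plus the vanishing of Kummer classes of constants, using that $C$ is algebraically closed) yields only a map of sheaves of \emph{abelian groups} from the image of $d\log$ in $\Omega^1_X$ to $R^1\nu_\ast\hat{\OO}_X(1)$; it does not produce an $\OO_X$-\emph{linear} map defined on all of $\Omega^1_X$. Equivalently: if you define the map on the local generators $d\log T_i$ and extend $\OO_X$-linearly, you must still verify that for an arbitrary unit $f$ with $d\log f=\sum_i a_i\,d\log T_i$ one has $[f]=\sum_i a_i[T_i]$ in $R^1\nu_\ast\hat{\OO}_X(1)$. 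This is precisely the statement that the chart-dependent Koszul isomorphism is chart-independent, and it is not formal.

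The paper isolates this as a separate lemma and proves it by a genuinely different mechanism. One first observes that the image of $R^{\circ\times}$ in $H^1(X_\proet,\hat{\OO}_X(1))$ is bounded under both candidate maps, so it suffices to check the compatibility for units coming from a spreading of $X$ over a smooth affinoid $\Q_p$-algebra $B_i$ with $C=\widehat{\varinjlim B_i}$. Over such a base one has Faltings's extension $0\to\hat{\OO}_{X_i}(1)\to\mathcal{F}_i\to\hat{\OO}_{X_i}\otimes\Omega^1_{X_i}\to 0$ on $(X_i)_\proet$, whose boundary map is $\hat{\OO}$-linear by construction and receives the Kummer map via an explicit formula. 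Base-changing to $X$ and quotienting out the contribution of $\Omega^1_{B_i/\Q_p}$ (which dies because the relevant units lie in $C^\times$ and hence admit $p$-power roots) produces the desired $\OO_X$-linear map and checks the required compatibility. This argument, not the Koszul computation, is the substantive content behind the word ``natural'' in the proposition, and your sketch does not supply a substitute for it.
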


\begin{proof} First, we prove that $\mathcal{E} = R^1\nu_\ast \hat{\OO}_X$ is a locally free $\OO_{X_\et}$-module of rank $\dim X$, such that
\[
\bigwedge^j \mathcal{E}\cong R^j\nu_\ast \hat{\OO}_X
\]
for all $j\geq 0$. This can be checked locally, so we can assume that there is an \'etale map $X\to \mathbb{T}^n$ that factors as a composite of rational embeddings and finite \'etale maps, where
\[
\mathbb{T}^n = \Spa(C\langle T_1^{\pm 1},\ldots,T_n^{\pm 1}\rangle,\OO_C\langle T_1^{\pm 1},\ldots,T_n^{\pm 1}\rangle)
\]
is the $n$-dimensional torus. We have the natural pro-finite \'etale cover
\[
\tilde{\mathbb{T}}^n = \Spa(C\langle T_1^{\pm 1/p^\infty},\ldots,T_n^{\pm 1/p^\infty},\OO_C\langle T_1^{\pm 1/p^\infty},\ldots,T_n^{1/p^\infty}\rangle)\ .
\]
By pullback, it induces a pro-finite \'etale cover $\tilde{X}\to X$, and one has
\[
H^i(X_\proet,\hat{\OO}_X) = H^i_\cont(\Z_p^n,\OO_{\tilde{X}}(\tilde{X}))\ .
\]
Thus, the statement follows from \cite[Lemma 4.5, Lemma 5.5]{ScholzeHodge}: First, Lemma 4.5 shows that
\[
\OO_{\tilde{X}}(\tilde{X}) = \OO_X(X)\hat{\otimes}_{C\langle T_1^{\pm 1},\ldots,T_n^{\pm 1}\rangle} C\langle T_1^{\pm 1/p^\infty},\ldots,T_n^{\pm 1/p^\infty}\rangle\ ,
\]
and then Lemma 5.5 computes this group.

In particular, we find that $\OO_{X_\et}\buildrel\cong\over\longrightarrow \nu_\ast \hat{\OO}_X$. It remains to prove that $\mathcal{E}\cong \Omega_{X_\et}^1(-1)$. For this, we prove the following lemma.
\end{proof}

\begin{lem} Consider the exact sequence
\[
0\to \hat{\Z}_p(1)\to \varprojlim_{\times p} \OO_X^\times\to \OO_X^\times\to 0
\]
on $X_\proet$, where $\hat{\Z}_p(1) = \varprojlim \mu_{p^n}$. It induces a boundary map
\[
\OO_{X_\et}^\times = \nu_\ast \OO_X^\times\to R^1\nu_\ast \hat{\Z}_p(1)\ .
\]
There is a unique $\OO_{X_\et}$-linear map $\Omega_{X_\et}^1\to \mathcal{E} = R^1\nu_\ast \hat{\OO}_X(1)$ such that the diagram
\[\xymatrix{
\OO_{X_\et}^\times\ar[d]^{d\log} \ar[r] & R^1\nu_\ast \hat{\Z}_p(1)\ar[d]\\
\Omega_{X_\et}^1\ar[r] & R^1\nu_\ast \hat{\OO}_X(1)
}\]
commutes. This map is an isomorphism.
\end{lem}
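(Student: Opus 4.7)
The plan is to reduce the statement to an \'etale-local toric chart and exploit the explicit structure of $R^1\nu_\ast\hat{\OO}_X$ developed in the proof of the preceding proposition. Locally on $X_\et$, one can choose a toric chart $X\to \mathbb{T}^n$ with coordinates $T_1,\ldots,T_n$ as in that proof. Then $d\log T_1,\ldots,d\log T_n$ form an $\OO_{X_\et}$-basis of $\Omega_{X_\et}^1$, so any $\OO_{X_\et}$-linear map out of $\Omega_{X_\et}^1$ is determined by its values on this basis. Uniqueness is then immediate, since the commutativity of the diagram forces $d\log T_i$ to map to the image of $T_i$ under the composite $b\colon\OO_X^\times\to R^1\nu_\ast\hat{\Z}_p(1)\to R^1\nu_\ast\hat{\OO}_X(1) = \mathcal{E}$.

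For existence, define the map locally by $d\log T_i\mapsto b(T_i)$ and extend $\OO_{X_\et}$-linearly. To show this construction is independent of the toric chart---equivalently, that the diagram commutes for every $f\in\OO_X^\times$---write a general unit as $f=\zeta\cdot\prod T_i^{a_i}\cdot u$, with $\zeta$ a root of unity, $a_i\in\Z$, and $u\in 1+\mathfrak{m}\OO_X^+$ after possibly refining the chart. Since both $d\log$ and $b$ are multiplicative-to-additive group homomorphisms, the torsion and monomial contributions are handled by construction, so it suffices to verify $b(u)=(\textrm{our map})(d\log u)$ for units close to $1$. This is checked by an explicit \v{C}ech computation on the pro-finite \'etale cover $\tilde{X}\to X$: the Kummer boundary of $u$ is represented by the $\hat{\Z}_p(1)$-valued cocycle arising from a compatible system of $p^n$-th roots $u^{1/p^n}$, and under the convergence of the $p$-adic logarithm on $1+\mathfrak{m}\OO_X^+$ its image in $\hat{\OO}_X(1)$ becomes the expected representative of $du/u$.

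To prove the map is an isomorphism it suffices again to work on the toric chart. Both $\Omega_{X_\et}^1$ and $\mathcal{E}$ are locally free $\OO_{X_\et}$-modules of rank $n$, the latter by the preceding proposition. The explicit computation in that proof, via Lemma 5.5 of \cite{ScholzeHodge}, identifies $\mathcal{E}$ as a free module whose natural basis is the family $(b(T_i))_{i=1,\ldots,n}$. Since our map sends the basis $d\log T_i$ bijectively to this basis, it is an isomorphism.

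The main technical obstacle is the cocycle computation in the second paragraph: identifying, for $u=1+g$ with $g$ small, the Kummer boundary of $u$ in $R^1\nu_\ast\hat{\OO}_X(1)$ with $du/u$. This is the $p$-adic analogue of the classical fact that the first Chern class of a line bundle is represented by $d\log$ of a defining section, and it hinges on convergence of the $p$-adic logarithm together with the description of $R^1\nu_\ast\hat{\OO}_X$ via continuous group cohomology of $\hat{\Z}_p(1)^n$ acting on $\OO_{\tilde X}(\tilde X)$ supplied by the preceding proposition.
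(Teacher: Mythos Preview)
Your treatment of uniqueness and of the isomorphism claim is correct and matches the paper: both are immediate from the local description of $\mathcal{E}$ via the toric cover $\tilde X\to X$ and \cite[Lemma~5.5]{ScholzeHodge}. The gap is in your existence argument.

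First, the decomposition $f=\zeta\cdot\prod T_i^{a_i}\cdot u$ with $\zeta$ a root of unity and $u\in 1+\mathfrak{m}\OO_X^+$ is not available in general, even after refining the chart. Already on the rational subdomain $X=\{|T-1|\geq |p|\}\subset\mathbb{T}^1$ the function $f=T-1$ is a unit whose absolute value is nonconstant, so it cannot be written as a constant times a monomial in $T$ times something in $1+\mathfrak{m}\OO_X^+$. Since the toric chart has only $n=\dim X$ coordinates, you cannot absorb the extra units that arise from the geometry of $X$ (punctures, finite \'etale covers, etc.). One can salvage a weaker decomposition $f=c\cdot u$ with $c\in C^\times$ by localizing near classical points, but then the monomial part disappears and \emph{all} of the content is pushed into the case $u\in 1+\mathfrak{m}\OO_X^+$.

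Second, your ``explicit \v Cech computation'' for that case is precisely the hard step, and it is not carried out. Concretely, you would need to show that the class of the Kummer cocycle of $u$ in $H^1_{\cont}(\Z_p^n,\tilde R)(1)$ equals $\sum a_i\, b(T_i)$, where $du/u=\sum a_i\, d\log T_i$. This is exactly the compatibility between the Kummer boundary and $d\log$ that Faltings's extension encodes; invoking convergence of the $p$-adic logarithm does not by itself produce this identity. The paper avoids any such direct computation: it approximates $C$ by smooth $\Q_p$-subalgebras $B_i$, pulls $f$ back to some $f_i$ defined over $B_i$, and uses Faltings's extension on $X_i$ (from \cite[Corollary~6.14]{ScholzeHodge}) to obtain a commutative diagram linking the Kummer sequence to $d\log$ for $f_i$. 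The discrepancy $f/f_i\in 1+p^mR^\circ$ is then handled not by computing its class, but by a boundedness argument: both routes send $R^{\circ\times}$ into a bounded subset of $H^1(X_{\proet},\hat\OO_X(1))$, and $f/f_i$ is a $p^{m-2}$-th power, so its image under either route lies in $p^{m-2}\cdot(\text{bounded})$ and hence vanishes in the limit $m\to\infty$.
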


\begin{proof} The assertion is local, so we can again assume that $X\to \mathbb{T}^n$ is a composition of finite \'etale maps and rational subsets; in particular, $X=\Spa(R,R^\circ)$ is affinoid. Let $T_1,\ldots,T_n$ be the coordinates on $\mathbb{T}^n$; then $d\log(T_i)\in \Omega_{X_\et}^1$ are an $\OO_{X_\et}$-basis. Their images are prescribed uniquely by the requirement of the lemma, thus the map is unique if it exists. Moreover, the local computation shows that this map is an isomorphism.

It remains to prove existence. For this, we have to check that the diagram
\[\xymatrix{
R^\times = H^0(X_\proet,\OO_X^\times)\ar[d]^{d\log} \ar[r] & H^1(X_\proet,\hat{\Z}_p(1))\ar[d]\\
\Omega_{R/C}^1 = H^0(X_\proet,\Omega_X^1)\ar[r] & H^1(X_\proet,\hat{\OO}_X(1))
}\]
commutes, where the lower map is defined to be the one pinned down by the $d\log(T_i)$. First, by the local computation, $H^1(X_\proet,\hat{\OO}_X(1))$ is a finite free $R$-module. In particular, it carries a natural topology. We claim that via both maps, the image of $R^{\circ\times}$ in $H^1(X_\proet,\hat{\OO}_X(1))$ is bounded (the commutativity of the diagram implies a posteriori the fact that the image of $R^\times$ is bounded). For the map over the upper right corner, this follows by observing that it factors over $H^1(X_\proet,\hat{\OO}_X^+(1))$. For the map over the lower left corner, observe that on $R^{\circ\times}$, $d\log$ factors over $\Omega_{R^\circ/\OO_C}^1$, where the latter is an $R^\circ$-module of finite type (as $R^\circ$ is topologically finitely generated by \cite{BoschGuentzerRemmert}).

Now write $C = \varinjlim A_i$ as the filtered direct limit of $\Q_p$-algebras $A_i$ of finite type. By resolution of singularities, we may assume that all $A_i$ are smooth. We may thus find formally smooth $\Q_p$-algebras $B_i$ topologically of finite type with $C=\widehat{\varinjlim B_i}$. We may also assume that there are \'etale maps $Y_i=\Spa(B_i,B_i^\circ)\to \mathbb{T}^{n_i}$ for some $n_i$. For $i$ large enough, $X$ comes as base extension from $X_i\to Y_i$ (use that finite \'etale covers and rational subsets on an inverse limits of adic spaces come from a finite level). Let $X_i = \Spa(R_i,R_i^\circ)$, where now $R_i$ is topologically of finite type over $\Q_p$. Moreover, given any $f\in R^\times$ and $m\geq 1$, one can find an $i$ and some $f_i\in R_i^\times$ such that $\frac{f}{f_i}\in 1 + p^m R^\circ$. By the boundedness statement on the image of $R^{\circ\times}$, and because elements of $1+p^m R^\circ$ are $p^{m-2}$-th powers of elements of $1+p^2 R^\circ\subset R^{\circ\times}$ (say $m\geq 2$), it suffices to prove the statement for $f=f_i$.

Now we recall Faltings's extension from \cite[Corollary 6.14]{ScholzeHodge}; this is an exact sequence
\[
0\to \hat{\OO}_{X_i}(1)\to \mathcal{F}_i\to \hat{\OO}_{X_i}\otimes_{\OO_{X_i}} \Omega_{X_i}^1\to 0
\]
of $\hat{\OO}_{X_i}$-modules on $(X_i)_\proet$. Moreover, there is an exact commutative diagram
\[\xymatrix{
0\ar[r] & \hat{\Z}_p(1)\ar[r]\ar[d] & \varprojlim_{\times p} \OO_{X_i}^\times\ar[d]\ar[r] &\OO_{X_i}^\times\ar[d]^{d\log}\ar[r] & 0\\
0\ar[r] & \hat{\OO}_{X_i}(1)\ar[r] & \mathcal{F}_i\ar[r] & \hat{\OO}_{X_i}\otimes_{\OO_{X_i}} \Omega_{X_i}^1\ar[r] & 0
}\]
on $(X_i)_\proet$. Here, an element of $\varprojlim_{\times p} \OO_{X_i}^\times$ gives an element $U\in \hat{\OO}_{X_i}^{\flat\times}$, and an element $V\in \OO_{X_i}^\times$; then $\log(V/[U])\in \OO\B_{\dR,X_i}^+$ lies in the kernel of $\Theta$, and defines an element of $\mathcal{F}_i$; this gives the middle vertical map. Commutativity is a direct check; note that $d[U] = 0$. Applying $g_i^\ast$ (where $g_i: X_\proet\to (X_i)_\proet$) to the diagram and tensoring the lower sequence with $\hat{\OO}_X$ over $g_i^\ast \hat{\OO}_{X_i}$ gives an exact commutative diagram
\[\xymatrix{
0\ar[r] & g_i^\ast \hat{\Z}_p(1)\ar[r]\ar[d] & g_i^\ast \varprojlim_{\times p} \OO_{X_i}^\times\ar[d]\ar[r] & g_i^\ast \OO_{X_i}^\times\ar[d]\ar[r] & 0\\
0\ar[r] & \hat{\OO}_X(1)\ar[r] & \mathcal{F}_i^\prime\ar[r] & \hat{\OO}_X\otimes_{\OO_X} (\Omega_X^1)^\prime\ar[r] & 0
}\]
on $X_\proet$, where $\mathcal{F}_i^\prime$ is some sheaf of $\hat{\OO}_X$-modules, and $(\Omega_X^1)^\prime = g_i^\ast \Omega_{X_i}^1\otimes_{g_i^\ast \OO_{X_i}} \OO_X$ is a free $\OO_X$-module with a projection $(\Omega_X^1)^\prime\to \Omega_X^1$. The kernel comes from $\Omega_{Y_i}^1$, and is generated by the image of $\OO_{Y_i}^\times$ via $d\log$ (using that $Y_i$ is \'etale over $\mathbb{T}^{n_i}$). Now look at the associated commutative diagram of boundary maps:
\[\xymatrix{
H^0(X_\proet,g_i^\ast \OO_{X_i}^\times)\ar[r]\ar[d] & H^1(X_\proet,g_i^\ast \hat{\Z}_p(1))\ar[d]\\
H^0(X_\proet,\hat{\OO}_X\otimes_{\OO_X} (\Omega_X^1)^\prime)\ar[r] & H^1(X_\proet,\hat{\OO}_X(1))
}\]
Note also that
\[\xymatrix{
H^0(X_\proet,g_i^\ast \OO_{X_i}^\times)\ar[r]\ar[d] & H^1(X_\proet,g_i^\ast \hat{\Z}_p(1))\ar[d]\\
R^\times\ar[r] & H^1(X_\proet,\hat{\Z}_p(1))
}\]
commutes, as one can relate the two exact sequences defining the boundary maps. Using the two diagrams together, one finds that the map
\[
H^0(X_\proet,\hat{\OO}_X\otimes_{\OO_X} (\Omega_X^1)^\prime)\to H^1(X_\proet,\hat{\OO}_X(1))
\]
defined by the $\hat{\OO}_X$-linear extension $\mathcal{F}_i^\prime$ is $\hat{\OO}_X$-linear, and maps $d\log(T_i)$ to the correct elements. Moreover, it factors over
\[
\Omega_{R/C}^1 = H^0(X_\proet,\hat{\OO}_X\otimes_{\OO_X} \Omega_X^1)\ :
\]
For this, we have to check that it kills all $d\log(h)$, where $h$ is one of the coordinates of $\mathbb{T}^{n_i}$, pulled back to $Y_i$. But then $h$ gives an element of $C$, and it has in $C$ a sequence of $p$-power roots; it follows that $h$ lifts to $H^0(X_\proet,g_i^\ast\varprojlim_{\times p} \OO_{X_i}^\times)$, so that it vanishes under the boundary map.

It follows that
\[
H^0(X_\proet,\hat{\OO}_X\otimes_{\OO_X} (\Omega_X^1)^\prime)\to H^1(X_\proet,\hat{\OO}_X(1))
\]
is the composite of the projection
\[
H^0(X_\proet,\hat{\OO}_X\otimes_{\OO_X} (\Omega_X^1)^\prime)\to \Omega_{R/C}^1 = H^0(X_\proet,\hat{\OO}_X\otimes_{\OO_X} \Omega_X^1)
\]
with the map
\[
\Omega_{R/C}^1\to H^1(X_\proet,\hat{\OO}_X(1))
\]
pinned down by the $d\log(T_i)$. Now, use the commutativity of the diagrams for $f_i\in H^0(X_\proet,g_i^\ast \OO_{X_i}^\times)$ to conclude.
\end{proof}

\newpage

\section{A $p$-adic analogue of Riemann's classification of complex abelian varieties}

In this section, we explain a result on $p$-divisible groups over $\OO_C$, where $C$ is an algebraically closed complete extension of $\Q_p$, proved in joint work with J. Weinstein, \cite{ScholzeWeinstein}. Although in \cite{ScholzeWeinstein}, perfectoid spaces are used at several points, the material of this section is independent of the theory of perfectoid spaces, except for Subsection \ref{HodgeTateSeqPDiv}, which is however not needed to state and prove the main result discussed below. Thus, the reader may prefer to skip Subsection \ref{HodgeTateSeqPDiv}, except for reading the statement of Fargues's theorem \ref{FarguesHodgeTate}.

\subsection{Riemann's theorem}
Let us first recall the classical theory over the complex numbers $\mathbb{C}$.

\begin{definition} A complex torus is a connected compact complex Lie group $T$.
\end{definition}

\begin{lem} A complex torus $T$ is commutative.
\end{lem}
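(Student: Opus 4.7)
The plan is to use the adjoint representation of $T$ on its Lie algebra, together with the fact that any global holomorphic function on a connected compact complex manifold is constant.

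First, I would set $\mathfrak{t}=\Lie(T)$, a finite-dimensional $\C$-vector space of dimension $n=\dim_{\C} T$. For each $t\in T$, the conjugation map $c_t\colon T\to T$, $s\mapsto tst^{-1}$, is a holomorphic group automorphism fixing the identity; differentiating at $e$ gives a holomorphic homomorphism
\[
\mathrm{Ad}\colon T\to \GL(\mathfrak{t})\cong \GL_n(\C)\ .
\]
The matrix entries of $\mathrm{Ad}$ are then global holomorphic functions on $T$.

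Next, I would invoke the maximum modulus principle. Since $T$ is a connected compact complex manifold, the modulus of any holomorphic $f\colon T\to \C$ attains its supremum at some point $p\in T$; by the open mapping theorem $f$ is constant in a neighborhood of $p$, and then the locus where $f$ takes that value is both open and closed, hence all of $T$ by connectedness. Applied to each matrix entry, this shows that $\mathrm{Ad}$ is a constant map; evaluating at $t=e$ gives $\mathrm{Ad}\equiv \id_{\mathfrak{t}}$.

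Finally, for each $t\in T$ the map $c_t$ is a holomorphic group homomorphism with derivative $\mathrm{Ad}(t)=\id_{\mathfrak{t}}$ at $e$. Via the exponential map, this forces $c_t$ to agree with $\id_T$ on a neighborhood of $e$; since any connected Lie group is generated by such a neighborhood, $c_t=\id_T$, so $tst^{-1}=s$ for all $s,t\in T$ and $T$ is commutative. The only nontrivial ingredient is the maximum modulus argument for holomorphic functions on a connected compact complex manifold, but this is standard; everything else is formal.
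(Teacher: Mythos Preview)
Your proof is correct and follows essentially the same approach as the paper: both exploit that any holomorphic map from the compact connected $T$ to an affine target is constant, applied to a version of the adjoint action. The only cosmetic difference is that you use $\mathrm{Ad}\colon T\to \GL(\mathfrak{t})$ and then pass through the exponential map, whereas the paper applies the same principle to the action $T\to \GL(\mathcal{O}_{T,0}/\mathfrak{m}^n)$ on jets of all orders, which shows directly that conjugation is formally (hence actually) the identity near $0$ without invoking $\exp$.
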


\begin{proof} Let $\mathcal{O}_{T,0}$ be the local ring at $0$, with maximal ideal $\mathfrak{m}$. The adjoint action
\[
T\to \GL(\mathcal{O}_{T,0}/\mathfrak{m}^n)
\]
is trivial, as $T$ is compact and connected, and the right-hand side is affine.
\end{proof}

Let $\mathfrak{t}$ be the Lie algebra of $T$; it is a finite-dimensional $\mathbb{C}$-vector space, $\mathfrak{t}\cong \mathbb{C}^g$. The exponential map
\[
\exp: \mathfrak{t}\to T
\]
makes $\mathfrak{t}$ the universal covering of $T$, and if we let $\Lambda=\ker(\exp)\subset \mathfrak{t}$, then $T=\mathfrak{t}/\Lambda$. Thus, $\Lambda\subset \mathfrak{t}$ is a discrete cocompact subgroup, i.e. a lattice $\Lambda\cong \Z^{2g}$. Thus, we arrive at the classification of complex tori.

\begin{prop} The category of complex tori is equivalent to the category of pairs $(\mathfrak{t},\Lambda)$, where $\mathfrak{t}$ is a finite-dimensional $\mathbb{C}$-vector space, and $\Lambda\subset \mathfrak{t}$ is a lattice.
\end{prop}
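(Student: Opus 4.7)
The plan is to build the equivalence by exhibiting quasi-inverse functors. In one direction, send a complex torus $T$ to the pair $(\mathfrak{t},\Lambda)$ where $\mathfrak{t} = \mathrm{Lie}(T)$ and $\Lambda = \ker(\exp)\subset \mathfrak{t}$. In the other direction, send $(\mathfrak{t},\Lambda)$ to the quotient $\mathfrak{t}/\Lambda$, which inherits a complex manifold structure from $\mathfrak{t}$ and a (commutative) group structure, and is compact and connected because $\Lambda$ is a cocompact lattice.

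First I would verify that the first functor is well-defined: since $T$ is commutative (by the preceding lemma), the exponential map $\exp\colon \mathfrak{t}\to T$ is a homomorphism of complex Lie groups; it is a local biholomorphism near $0$, hence open, so its image is an open subgroup of the connected group $T$, therefore all of $T$. Its kernel $\Lambda$ is a closed discrete subgroup of $\mathfrak{t}$, and since $T=\mathfrak{t}/\Lambda$ is compact, $\Lambda$ is cocompact, i.e.\ a lattice of rank $2\dim_\mathbb{C}\mathfrak{t}$. Thus $\exp$ identifies $T$ with $\mathfrak{t}/\Lambda$ as complex Lie groups, which shows that the composition of the two functors (in this order) is naturally isomorphic to the identity. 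The composition in the other order is manifestly the identity on objects.

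For functoriality and full faithfulness, I would argue that any holomorphic homomorphism $f\colon T_1\to T_2$ lifts uniquely to a holomorphic homomorphism $\tilde{f}\colon \mathfrak{t}_1\to \mathfrak{t}_2$ of the universal covers with $\tilde{f}(0)=0$. Such a holomorphic homomorphism of complex vector spaces (as Lie groups) is automatically $\mathbb{C}$-linear: its differential at $0$ is $\mathbb{C}$-linear, and holomorphic group homomorphisms between simply connected complex Lie groups agree with the exponential of their differential, so $\tilde{f} = df_0$. The lift must send $\Lambda_1$ to $\Lambda_2$. Conversely, any $\mathbb{C}$-linear map $L\colon \mathfrak{t}_1\to \mathfrak{t}_2$ with $L(\Lambda_1)\subset \Lambda_2$ descends to a holomorphic homomorphism $\mathfrak{t}_1/\Lambda_1\to \mathfrak{t}_2/\Lambda_2$, and these two constructions are inverse. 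This gives a bijection on Hom-sets.

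There is no real obstacle; the only mildly delicate points are the standard facts that $\exp$ on a connected abelian complex Lie group is a surjective covering with discrete kernel, and that a pointed holomorphic homomorphism between complex vector-space Lie groups is $\mathbb{C}$-linear. Both are standard from the general Lie-theoretic exponential formalism applied to the abelian case, where $\exp$ is simply the identification of $\mathfrak{t}$ with the universal cover of $T$.
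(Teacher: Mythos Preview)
Your proposal is correct and follows the same idea as the paper: use the exponential map to realize $\mathfrak{t}$ as the universal cover of $T$ and identify $T\cong\mathfrak{t}/\Lambda$. The paper in fact gives no formal proof of this proposition---it is stated immediately after the paragraph recalling that $\exp$ makes $\mathfrak{t}$ the universal cover and that $\Lambda=\ker(\exp)$ is a lattice---so your write-up is strictly more detailed, supplying the functoriality and full-faithfulness arguments that the paper leaves implicit.
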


One can reformulate this classification in terms of Hodge structures.

\begin{definition}\begin{altenumerate}
\item[{\rm (i)}] A $\Z$-Hodge structure of weight $-1$ is a finite free $\Z$-module $\Lambda$ together with a $\mathbb{C}$-subvectorspace $V\subset \Lambda\otimes_{\Z} \mathbb{C}$, such that $V\oplus \bar{V}\buildrel\cong\over\longrightarrow \Lambda\otimes_{\Z} \mathbb{C}$.
\item[{\rm (ii)}] A polarization on a $\Z$-Hodge structure $(\Lambda,V)$ of weight $-1$ is an alternating form
\[
\psi: \Lambda\otimes \Lambda\to 2\pi i\Z
\]
such that $\psi(x,Cy)$ is a symmetric positive definite form on $\Lambda\otimes_{\Z} \mathbb{R}$, where $C$ is Weil's operator on $\Lambda\otimes_{\Z} \mathbb{C}\cong V\oplus \bar{V}$, acting as $i$ on $V$, and as $-i$ on $\bar{V}$.
\end{altenumerate}
\end{definition}

We note that it follows that complex tori are equivalent to $\Z$-Hodge structures of weight $-1$, via mapping $(\mathfrak{t},\Lambda)$ to $(\Lambda,V)$, with $V=\ker(\Lambda\otimes \mathbb{C}\to \mathfrak{t})$.

\begin{definition} A complex abelian variety is a projective complex torus.
\end{definition}

\begin{thm}[Riemann] The category of complex abelian varieties is equivalent to the category of polarizable $\Z$-Hodge structures of weight $-1$.
\end{thm}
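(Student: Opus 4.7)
The plan is to extend the equivalence between complex tori and $\Z$-Hodge structures of weight $-1$ established above by identifying, under this correspondence, projectivity of $T$ with the existence of a polarization on the associated Hodge structure. The central tool is the first Chern class, which via the exponential sequence on $T = \mathfrak{t}/\Lambda$ translates between holomorphic line bundles and alternating integer-valued forms on $\Lambda$ satisfying Hodge-theoretic conditions.

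First I would analyze $\mathrm{Pic}(T)$ via the exponential sequence
\[
0 \to 2\pi i\, \Z \to \OO_T \to \OO_T^\times \to 0\, ,
\]
together with the identification $H^k(T, \Z) = \wedge^k \Lambda^\vee$. Decomposing $H^2(T,\C)$ with respect to the Hodge filtration as
\[
\wedge^2 V^\vee \oplus (V^\vee \otimes \bar V^\vee) \oplus \wedge^2 \bar V^\vee\, ,
\]
the image of $c_1 : \mathrm{Pic}(T) \to H^2(T, 2\pi i\Z)$ consists exactly of those alternating forms $\psi : \Lambda \otimes \Lambda \to 2\pi i \Z$ whose complexification lies in the middle $(1,1)$-component, equivalently $\psi_\C(V,V) = 0$. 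For each such $\psi$, the classical Appell--Humbert construction produces an explicit holomorphic line bundle $L_\psi$ realizing this Chern class.

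For the direction projective implies polarizable, let $T$ be a complex abelian variety and $L$ an ample line bundle. Then $\psi := c_1(L)$ is automatically a $(1,1)$-class. Ampleness of $L$ translates via Kodaira's embedding theorem to positivity of the associated Hermitian form $H(x,y) = \psi_\R(Cx,y) + i\psi_\R(x,y)$ on $\mathfrak{t}$, which unwinds to the condition that $\psi(x, Cy)$ is symmetric and positive definite on $\Lambda \otimes_\Z \R$. Conversely, given a polarization $\psi$, symmetry of $\psi(x, Cy)$ forces $\psi_\C(V,V) = 0$, so $\psi$ lies in the image of $c_1$ and Appell--Humbert yields $L_\psi$; positivity then makes $L_\psi$ positive in the sense of Kodaira, some tensor power $L_\psi^{\otimes n}$ is very ample and gives an embedding $T \hookrightarrow \mathbb{P}^N_\C$, and Chow's theorem identifies the image with an algebraic variety. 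Functoriality of $c_1$ in morphisms of tori promotes the resulting bijection to an equivalence of categories.

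The main technical obstacle is the positivity-to-ampleness step, i.e. constructing sufficiently many sections of $L_\psi^{\otimes n}$ to realize a projective embedding. Classically this goes through explicit theta functions on $\mathfrak{t}$: one proves a Riemann--Roch formula $\dim H^0(T, L_\psi^{\otimes n}) = n^g \cdot \mathrm{Pf}(\psi)$ with vanishing of higher cohomology for $n \gg 0$, and then checks that for $n$ large these sections separate points and tangent vectors. Everything else in the argument --- functoriality of $c_1$, the Hodge decomposition of $H^2$ for a torus, and the translation between alternating $\Z$-valued forms and Hermitian forms on $\mathfrak{t}$ --- is essentially formal bookkeeping.
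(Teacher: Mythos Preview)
The paper does not prove this theorem; it is stated without proof as classical background, serving only to set up the analogy with the $p$-adic classification that follows. There is therefore nothing in the paper to compare your proposal against.

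That said, your outline is a correct sketch of the standard proof: analyze $\mathrm{Pic}(T)$ via the exponential sequence, identify the image of $c_1$ with integral $(1,1)$-classes, use Appell--Humbert to realize line bundles, and translate ampleness into positivity of the associated Hermitian form. One minor remark: you invoke Kodaira's embedding theorem for the positivity-to-ampleness direction, but then also describe the theta-function approach as the ``main technical obstacle''; these are alternatives rather than steps in the same argument. The classical route (closer to Riemann's own) is to build sections directly via theta functions and the Lefschetz embedding theorem (that $L_\psi^{\otimes 3}$ is already very ample when $\psi$ is a polarization), which avoids Kodaira entirely and is more self-contained. Either route is fine, but you should commit to one.
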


Let us call this theorem, stating an abstract equivalence between some geometric objects (abelian varieties) with some Hodge-theoretic data, the 'Hodge-theoretic perspective'. In this case over $\mathbb{C}$, we have seen that this equivalence has a very direct geometric meaning, which we call 'the geometric perspective': All complex tori of dimension $g$ have the same universal cover $\mathbb{C}^g$, and thus are of the form $\mathbb{C}^g/\Lambda$ for a lattice $\Lambda\subset \mathbb{C}^g$. When can one form the quotient $\mathbb{C}^g/\Lambda$? Always as a complex manifold, sometimes (as determined by Riemann) as an algebraic variety.

\subsection{The Hodge-Tate sequence for abelian varieties and $p$-divisible groups}\label{HodgeTateSeqPDiv}

Let $C$ be an algebraically closed complete extension of $\Q_p$. Let $X/C$ be a proper smooth scheme. We recall the following results.

\begin{thm} There is a Hodge-de Rham spectral sequence
\[
E_1^{ij} = H^j(X,\Omega_X^i)\Rightarrow H^{i+j}_\dR(X)\ .
\]
It degenerates at $E_1$. One gets a decreasing Hodge-de Rham filtration $\Fil^\bullet H^i_\dR(X)$, with
\[
\Fil^q H^i_\dR(X) / \Fil^{q+1} H^i_\dR(X) = H^{i-q}(X,\Omega_X^q)\ .
\]
\end{thm}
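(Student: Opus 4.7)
The plan is to construct the spectral sequence in the standard manner and then deduce degeneration by spreading out and invoking Deligne's classical result (this is exactly the Lefschetz-principle route alluded to in the remark after Theorem \ref{HodgeTateSpecSeq}).

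\textbf{Construction of the spectral sequence.} The algebraic de Rham complex $\Omega_X^\bullet$ carries the stupid filtration $F^p\Omega_X^\bullet = \sigma^{\geq p}\Omega_X^\bullet$, with $\mathrm{gr}_F^p = \Omega_X^p[-p]$. The hypercohomology spectral sequence of this filtered complex reads
\[
E_1^{ij} = H^j(X,\Omega_X^i) \Rightarrow \mathbb{H}^{i+j}(X,\Omega_X^\bullet) = H^{i+j}_{\dR}(X),
\]
which is the asserted spectral sequence. This step is formal.

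\textbf{Spreading out.} Because $X$ is proper smooth and of finite type over $C$, a standard limit argument (EGA~IV, \S8) produces a finitely generated $\Q$-subalgebra $A \subset C$ and a proper smooth morphism $X_A \to \Spec A$ with $X \cong X_A \otimes_A C$. Hodge and de Rham cohomology commute with flat base change on the base, so
\[
H^j(X,\Omega_X^i) = H^j(X_A,\Omega_{X_A/A}^i) \otimes_A C, \qquad H^i_{\dR}(X) = H^i_{\dR}(X_A/A) \otimes_A C,
\]
and in particular their $C$-dimensions agree with the $\mathrm{Frac}(A)$-dimensions of the cohomology of $X_{\mathrm{Frac}(A)}$. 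Since $\mathbb{C}$ has uncountable transcendence degree over $\Q$, choose an embedding $\mathrm{Frac}(A) \hookrightarrow \mathbb{C}$ and set $X_\mathbb{C} = X_A \otimes_A \mathbb{C}$; the same base-change identities identify the relevant dimensions over $\mathbb{C}$ with those over $C$.

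\textbf{Degeneration via Deligne.} By Deligne's theorem (\emph{Th\'eorie de Hodge II}), the Hodge-de Rham spectral sequence of any proper smooth variety over $\mathbb{C}$ degenerates at $E_1$; one reduces to the projective smooth case using Chow's lemma together with resolution of singularities, where degeneration follows from the Hodge decomposition of a compact K\"ahler manifold. Hence
\[
\dim_\mathbb{C} H^i_{\dR}(X_\mathbb{C}) = \sum_{p+q=i} \dim_\mathbb{C} H^q(X_\mathbb{C},\Omega_{X_\mathbb{C}}^p),
\]
and by the base-change identifications above the same equality holds over $C$. The $E_1$-page always gives the upper bound $\dim_C H^i_{\dR}(X) \leq \sum_{p+q=i} \dim_C E_1^{pq}$, so equality of totals forces every differential $d_r$ with $r \geq 1$ to vanish, i.e.\ the sequence degenerates at $E_1$. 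The associated decreasing filtration $\Fil^\bullet H^i_{\dR}(X)$ with graded pieces $H^{i-q}(X,\Omega_X^q)$ then comes for free from the general theory of filtered complexes.

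\textbf{Main obstacle.} The single nontrivial input is Deligne's degeneration over $\mathbb{C}$ (and implicitly resolution of singularities, needed to pass from the K\"ahler/projective case to general proper smooth varieties); everything else is bookkeeping of spreading-out and flat base change. One could alternatively spread out over a complete discretely valued subfield $k \subset C$ with perfect residue field and apply Theorem \ref{Thm4} (combined with rigid-analytic GAGA), giving a purely $p$-adic proof; this is the route indicated in the remark after Theorem \ref{HodgeTateSpecSeq}.
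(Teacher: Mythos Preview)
Your proof is correct and follows exactly the route the paper indicates: the paper does not spell out a proof of this theorem but simply records it as a known fact, with the remark (after the rigid-analytic version in Section~3.3) that for schemes degeneration holds ``by the Lefschetz principle'', and that alternatively one may spread out to a discretely valued subfield $K\subset C$ and invoke Theorem~\ref{Thm4} for a purely $p$-adic argument. Your write-up is a faithful elaboration of the first route, and you correctly flag the second as an alternative.
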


\begin{thm} There is a Hodge-Tate spectral sequence
\[
E_2^{ij} = H^i(X,\Omega_X^j)(-j)\Rightarrow H^{i+j}_\et(X,\Q_p)\otimes_{\Q_p} C\ .
\]
It degenerates at $E_2$. One gets a decreasing Hodge-Tate filtration $\Fil^\bullet (H^i_\et(X,\Q_p)\otimes_{\Q_p} C)$, with
\[
\Fil^q (H^i_\et(X,\Q_p)\otimes_{\Q_p} C) / \Fil^{q+1} (H^i_\et(X,\Q_p)\otimes_{\Q_p} C) = H^q(X,\Omega_X^{i-q})(q-i)\ .
\]
\end{thm}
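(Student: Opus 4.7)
The plan is to reduce to the rigid-analytic statement (Theorem \ref{HodgeTateSpecSeq}) by analytification, and then to obtain degeneration via a dimension count powered by the Hodge--de Rham theorem stated just above together with the étale-to-de Rham comparison.

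First I would pass to the analytification $X^\ad$, which is a proper smooth rigid-analytic variety over $C$. Theorem \ref{HodgeTateSpecSeq} produces a spectral sequence
\[
E_2^{ij} = H^i(X^\ad,\Omega_{X^\ad}^j)(-j) \Rightarrow H^{i+j}_\et(X^\ad,\Q_p)\otimes_{\Q_p} C\ .
\]
I would then identify both sides with their algebraic counterparts: for the $E_2$-terms, this is Kiehl's rigid-analytic GAGA applied to the proper scheme $X$, giving $H^i(X^\ad,\Omega_{X^\ad}^j) \cong H^i(X,\Omega_X^j)$; for the abutment, this is Huber's comparison (\cite{Huber}, Theorem 3.7.2, already invoked at the start of Section 3) identifying algebraic and adic étale cohomology of proper schemes with torsion coefficients, together with the usual passage to the inverse limit and inversion of $p$.

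For degeneration at $E_2$, I would run a dimension count. By the Hodge--de Rham theorem stated just before the theorem under consideration,
\[
\dim_C H^i_\dR(X) = \sum_{p+q=i} \dim_C H^p(X,\Omega_X^q) = \sum_{p+q=i} \dim_C E_2^{p,q}\ .
\]
Thus it suffices to show $\dim_C H^i_\et(X,\Q_p)\otimes_{\Q_p} C = \dim_C H^i_\dR(X)$. I would establish this by the Lefschetz principle: the scheme $X$ spreads out to a smooth proper family over a finitely generated $\Q$-subalgebra of $C$, whence one may base change to a smooth proper variety $X_0$ over $\C$ with the same Betti numbers; then $\dim_C H^i_\et(X,\Q_p) = \dim_\Q H^i(X_0(\C),\Q) = \dim_\C H^i_\dR(X_0)$ by Artin's comparison and classical Hodge theory, and the latter equals $\dim_C H^i_\dR(X)$ by flat base change of de Rham cohomology. (Alternatively, one could spread out to a smooth proper scheme over a discretely valued $p$-adic subfield with perfect residue field and apply Theorem \ref{Thm4}.) The matching total dimensions force all differentials on $E_r$ for $r\geq 2$ to vanish.

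Given degeneration, the Hodge--Tate filtration is the standard filtration induced by the spectral sequence, and the formula for its graded pieces is just the identification $\mathrm{gr}^q = E_\infty^{q,i-q} = E_2^{q,i-q}$, shifted by the Tate twist already present in the $E_2$-terms. I expect the only real friction to be bookkeeping: namely checking that Huber's comparison indeed supplies the étale identification after inverting $p$ and that the Lefschetz principle transfers dimensions faithfully through the spreading-out. The structural content, the existence of the spectral sequence, is already contained in Theorem \ref{HodgeTateSpecSeq}; the degeneration, in contrast to the rigid case, is essentially formal here because the scheme-theoretic setting grants us Hodge--de Rham degeneration and an honest étale/de Rham dimension match to compare against.
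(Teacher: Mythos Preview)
Your proposal is correct and matches the paper's own argument essentially verbatim. The paper does not give a separate proof for this theorem in Section 4; the existence of the spectral sequence is Theorem \ref{HodgeTateSpecSeq} (transported to schemes via GAGA and Huber's comparison), and the degeneration is precisely the dimension count via the Lefschetz principle sketched in the Remark following Theorem \ref{HodgeTateSpecSeq}, with the spreading-out alternative you mention also noted there.
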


\begin{example} Let $A/C$ be an abelian variety, with universal vector extension $EA\to A$, and $p$-adic Tate module $\Lambda$. Recall that $\Lie EA$ is dual to $H^1_\dR(A)$, and $\Lambda$ is dual to $H^1_\et(A,\Z_p)$. One has two short exact sequences (where $A^\ast$ is the dual abelian variety)
\[
0\to (\Lie A^\ast)^\ast\to \Lie EA\to \Lie A\to 0\ ,
\]
\[
0\to (\Lie A)(1)\to \Lambda\otimes_{\Z_p} C\to (\Lie A^\ast)^\ast\to 0\ .
\]
\end{example}

One has the following compatibility of the Hodge-Tate sequence with duality.

\begin{prop}\label{DualityAbVar} Let $A/C$ be an abelian variety, and let $A^\ast/C$ be the dual abelian variety. Identify $H^0(A,\Omega_A^1) = (\Lie A)^\ast$, $H^1(A,\OO_A) = \Lie A^\ast$, and similarly for $A^\ast$. The two sequences
\[
0\to \Lie A^\ast\to H^1_\et(A,\Z_p)\otimes_{\Z_p} C\to (\Lie A)^\ast(-1)\to 0\ ,
\]
\[
0\to \Lie A\to H^1_\et(A^\ast,\Z_p)\otimes_{\Z_p} C\to (\Lie A^\ast)^\ast(-1)\to 0
\]
are dual to each other under the Weil pairing
\[
H^1_\et(A,\Z_p)\otimes_{\Z_p} H^1_\et(A^\ast,\Z_p)\to \Z_p(-1)\ .
\]
\end{prop}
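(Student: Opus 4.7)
My plan is to reduce the statement to a compatibility for the $p$-divisible group $G = A[p^\infty]$ and its Serre dual $G^\ast = A^\ast[p^\infty]$, and then to deduce it from the functoriality of Fargues's Hodge-Tate sequence (Theorem \ref{FarguesHodgeTate}) under Serre duality.

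First, I would observe that both short exact sequences in the proposition depend only on the $p$-divisible groups attached to $A$ and $A^\ast$: one has $\Lie A = \Lie G$, $H^1_\et(A,\Z_p) = T_p G^\vee$, and canonically $A^\ast[p^\infty] = G^\ast$, and under these identifications the Weil pairing on the Tate modules of $A$ and $A^\ast$ coincides with the natural Serre-duality pairing $T_p G \otimes T_p G^\ast \to \Z_p(1)$. Dualizing $C$-linearly and twisting, the first sequence of the proposition is identified with
\[
0 \to (\Lie G)(1) \to T_p G \otimes_{\Z_p} C \to (\Lie G^\ast)^\ast \to 0,
\]
which is Fargues's Hodge-Tate sequence applied to $G$. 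The second sequence is obtained by the same manipulation applied to $G^\ast$ (using $G^{\ast\ast} = G$).

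Thus the proposition reduces to the claim that Fargues's sequences for $G$ and for $G^\ast$ are $C$-linear transposes of each other under the Serre-duality pairing. Concretely, one must check that the Hodge-Tate maps
\[
\alpha_G \colon T_p G \otimes_{\Z_p} C \to (\Lie G^\ast)^\ast, \qquad \alpha_{G^\ast} \colon T_p G^\ast \otimes_{\Z_p} C \to (\Lie G)^\ast
\]
are transpose to one another via the pairing $T_p G \otimes T_p G^\ast \to \Z_p(1)$. This is a formal compatibility built into the construction: Fargues's Hodge-Tate map can be described intrinsically via the quasi-logarithm of $G$ (equivalently via the universal cover $\tilde G$ appearing in \cite{ScholzeWeinstein}), and this description is manifestly invariant under the Serre-duality involution $G \leftrightarrow G^\ast$. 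Hence the two sequences are indeed duals, as claimed.

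The main obstacle will be the careful bookkeeping of Tate twists and of the several distinct dualities in play (linear duality over $C$, Cartier/Serre duality of $p$-divisible groups, and the Weil pairing on Tate modules). Once these are normalised consistently, the proposition is essentially the assertion that Serre duality is an involution compatible with the formation of the Hodge-Tate sequence, and no further geometric input is required.
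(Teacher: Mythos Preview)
Your argument has a genuine gap: you are conflating two a priori different Hodge--Tate sequences. The sequences in the proposition are those coming from the Hodge--Tate spectral sequence (Theorem~\ref{HodgeTateSpecSeq}) applied to the proper smooth variety $A/C$; they are constructed from $R^j\nu_\ast\hat{\OO}_A$ on the pro-\'etale site, and at this point in the paper there is no a priori reason they depend only on $A[p^\infty]$. Fargues's sequence (Theorem~\ref{FarguesHodgeTate}) is a separate construction for a $p$-divisible group over $\OO_C$. Your opening step simply \emph{asserts} that the first sequence of the proposition ``is identified with'' Fargues's sequence for $G$, but this identification is exactly the content of the proposition immediately following Fargues's theorem, whose proof explicitly invokes Proposition~\ref{DualityAbVar}. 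So as written your argument is circular. (There is also a smaller issue: Fargues's theorem is stated for $p$-divisible groups over $\OO_C$, so your reduction tacitly imposes a good-reduction hypothesis that the proposition does not.)

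The paper takes a completely different route that avoids $p$-divisible groups altogether: it works directly on $A\times A^\ast$ via the Poincar\'e bundle $\mathfrak{P}$. The point is that $c_1^\dR(\mathfrak{P})\in H^1(A\times A^\ast,\Omega^1)$ encodes the duality between $H^1(A,\OO_A)$ and $H^0(A^\ast,\Omega^1_{A^\ast})$, while $c_1^\et(\mathfrak{P})$ encodes the Weil pairing; the proposition then reduces to the compatibility between $c_1^\dR$ and $c_1^\et$ under the Hodge--Tate filtration, which is the subsequent lemma (proved via the $d\log$ diagram relating $\OO_X^\times$, $R^1\nu_\ast\hat{\Z}_p(1)$, and $\Omega^1_X$). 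If you want to salvage your strategy, you would have to prove directly---without using the present proposition---that the map $H^1_\et(A,\Z_p)\otimes C\to (\Lie A)^\ast(-1)$ coming from $R^1\nu_\ast\hat{\OO}_A\cong\Omega^1_A(-1)$ agrees with the dual of Fargues's $\alpha_{G^\ast}$; that is essentially the long diagram chase carried out after Fargues's theorem, and it is not the ``formal compatibility'' you suggest.
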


\begin{proof} Let $\mathfrak{P}$ be the Poincar\'e bundle on $A\times A^\ast$. Unraveling definitions, the proposition follows from the following compatibility for the first Chern class $c_1(\mathfrak{P})$ on $A\times A^\ast$: $c_1^\dR(\mathfrak{P})$ encodes the duality between $H^1(A,\OO_A)$ and $H^0(A^\ast,\Omega_{A^\ast}^1)$, whereas $c_1^\et(\mathfrak{P})$ encodes the Weil pairing.
\end{proof}

\begin{lem} Let $X/C$ be a proper smooth scheme, and $\mathcal{L}$ a line bundle on $X$; regard $\mathcal{L}$ as an element of $H^1(X,\OO_X^\times)$. Define
\[
c_1^\dR(\mathcal{L})\in H^1(X,\Omega_X^1)
\]
as the image of $\mathcal{L}\in H^1(X,\OO_X^\times)$ under $d\log: \OO_X^\times\to \Omega_X^1$. Define
\[
c_1^\et(\mathcal{L})\in H^2(X_\proet,\hat{\Z}_p(1)) = H^2_\et(X,\Z_p)(1)
\]
as the image of $\mathcal{L}\in H^1(X,\OO_X^\times) = H^1(X_\proet,\OO_X^\times)$ under the boundary map associated to the short exact sequence
\[
0\to \hat{\Z}_p(1)\to \varprojlim_{\times p} \OO_X^\times\to \OO_X^\times\to 0
\]
on $X_\proet$. Then $c_1^\et(\mathcal{L})\otimes 1\in H^2_\et(X,\Z_p)\otimes_{\Z_p} C(1)$ lies in $\Fil^1(H^2_\et(X,\Z_p)\otimes_{\Z_p} C)(1)$, and maps to
\[
c_1^\dR(\mathcal{L})\in H^1(X,\Omega_X^1) = \gr^1(H^2_\et(X,\Z_p)\otimes_{\Z_p} C)(1)\ .
\]
\end{lem}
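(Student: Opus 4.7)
The plan is to lift $c_1^\et(\mathcal{L}) \otimes 1$ to a pro-\'etale statement, apply $R\nu_\ast$, and read off the conclusion from the Leray spectral sequence, using the commutative diagram from the preceding lemma. The key preliminary observation is that $c_1^\et(\mathcal{L}) \otimes 1 \in H^2(X_\proet, \hat{\OO}_X(1))$, which agrees with the image of $c_1^\et(\mathcal{L})$ in $H^2_\et(X,\Q_p) \otimes_{\Q_p} C(1)$, is the connecting homomorphism of $\mathcal{L} \in H^1(X_\proet, \OO_X^\times)$ with respect to the pushout short exact sequence
$$0 \to \hat{\OO}_X(1) \to \mathcal{G} \to \OO_X^\times \to 0,$$
obtained from the defining Kummer-type sequence for $c_1^\et$ by pushout along $\hat{\Z}_p(1) \to \hat{\OO}_X(1)$. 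This follows from the functoriality of connecting homomorphisms under morphisms of short exact sequences.

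For the filtration statement, note that $\mathcal{L}$ arises via the edge map of the Leray spectral sequence from $H^1(X_\et, \OO_{X_\et}^\times) = H^1(X_\et, \nu_\ast \OO_X^\times)$, so it lies in the deepest Leray piece $\Fil^1 H^1(X_\proet, \OO_X^\times)$. The boundary $\partial$ is induced by a morphism $R\nu_\ast \OO_X^\times \to R\nu_\ast \hat{\OO}_X(1)[1]$ in $D(X_\et)$, and any such morphism preserves the Leray filtration degree. Hence $c_1^\et(\mathcal{L}) \otimes 1 = \partial(\mathcal{L})$ lies in $\Fil^1 H^2(X_\proet, \hat{\OO}_X(1))$, which gives the first claim.

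For the identification in $\gr^1 = E_\infty^{1,1} = E_2^{1,1} = H^1(X_\et, R^1\nu_\ast \hat{\OO}_X(1))$ (using degeneration of the Hodge-Tate spectral sequence), the induced map on associated gradeds is computed, by standard spectral sequence formalism, by applying $H^1(X_\et, -)$ to the sheaf-level boundary $\nu_\ast \OO_X^\times \to R^1\nu_\ast \hat{\OO}_X(1)$ of the pushout sequence. By the commutative diagram in the preceding lemma, this sheaf-level boundary factors as $d\log: \OO_{X_\et}^\times \to \Omega_{X_\et}^1$ followed by the canonical isomorphism $\Omega_{X_\et}^1 \cong R^1\nu_\ast \hat{\OO}_X(1)$ (appropriately Tate-twisted). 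Taking $H^1(X_\et, -)$, the image of $\mathcal{L}$ in $H^1(X, \Omega_X^1)$ is therefore $d\log(\mathcal{L}) = c_1^\dR(\mathcal{L})$, as required.

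The main technical point to verify is the compatibility of the connecting homomorphism with the Leray filtration in the indicated way, namely that the graded piece of $\partial$ equals the one induced by $H^1(X_\et, -)$ applied to the sheaf-level boundary. This is formal once one views the Leray spectral sequence as coming from the canonical truncation filtration on $R\nu_\ast$ in $D(X_\et)$, and identifies the pushout sequence as producing a morphism in this derived category; everything else is then a bookkeeping exercise combining this with the explicit commutative diagram supplied by the previous lemma.
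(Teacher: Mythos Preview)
Your proposal is correct and follows essentially the same approach as the paper: view the boundary as a map $\OO_X^\times\to \hat{\OO}_X(1)[1]$ in the derived category of sheaves on $X_\proet$, use the induced map of Leray spectral sequences for $\nu$ to land in $\Fil^1$, and then identify the image in $\gr^1$ via the commutative square relating the sheaf-level boundary to $d\log$ and the isomorphism $\Omega_{X_\et}^1\cong R^1\nu_\ast \hat{\OO}_X(1)$. The paper is terser (it records the fact $R\nu_\ast \OO_X^\times = \OO_{X_\et}^\times$ to justify the Leray comparison), but the argument is the same.
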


\begin{rem} The statement is true more generally for $X/C$ a proper smooth rigid-analytic variety for which the Hodge-Tate spectral sequence degenerates.
\end{rem}

\begin{proof} One has a map $\OO_X^\times\to \hat{\Z}_p(1)[1]\to \hat{\OO}_X(1)[1]$ in the derived category of sheaves on $X_\proet$. The associated map of Leray spectral sequences shows that $c_1^\et(\mathcal{L})$ lies in
\[
\Fil^1(H^2_\et(X,\Z_p)\otimes_{\Z_p} C)(1) = \Fil^1(H^2(X_\proet,\hat{\OO}_X(1)))\ ;
\]
note that $R\nu_\ast \OO_X^\times = \OO_{X_\et}^\times$ by \cite[Corollary 3.17 (i)]{ScholzeHodge}. The final statement follows from the commutative diagram
\[\xymatrix{
\OO_{X_\et}^\times\ar[d]^{d\log} \ar[r] & R^1\nu_\ast \hat{\Z}_p(1)\ar[d]\\
\Omega_{X_\et}^1\ar[r]^\cong & R^1\nu_\ast \hat{\OO}_X(1)
}\]
upon applying $H^1(X_\et,-)$.
\end{proof}

Now assume that $A$ has good reduction, i.e. we have an abelian variety $A/\OO_C$, where $\OO_C\subset C$ is the ring of integers. Then we can describe everything in terms of the $p$-divisible group $G=A[p^\infty]$. Indeed, we have the universal vector extension $EG\to G$, and the $p$-adic Tate module $\Lambda$ of $G$. One has the short exact sequence of finite free $\OO_C$-modules
\[
0\to (\Lie G^\ast)^\ast\to \Lie EG\to \Lie G\to 0\ ,
\]
where $G^\ast$ denotes the Serre dual $p$-divisible group.

\begin{thm}[Fargues, {\cite[Ch.2,App.C]{FarguesTwoTowers}}]\label{FarguesHodgeTate} There is a complex of finite free $\OO_C$-modules
\[
0\to (\Lie G)(1)\buildrel{\alpha_{G^\ast}^\ast(1)}\over\longrightarrow \Lambda\otimes_{\Z_p} \OO_C\buildrel{\alpha_G}\over\longrightarrow (\Lie G^\ast)^\ast\to 0\ .
\]
Its cohomology groups are killed by $p^{1/(p-1)}$.
\end{thm}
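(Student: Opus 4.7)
The plan has three steps: first, define both maps via Cartier duality and differentiation; second, verify the complex property by compatibility with the Weil pairing; third, prove the integral exactness bound, reducing via the connected-\'etale filtration to direct computations for multiplicative and \'etale pieces, and controlling denominators of the quasi-logarithm in the biconnected case.

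For the definition: an element $t\in T_pG=\Hom(\Q_p/\Z_p,G)$ Cartier-dualizes to a morphism $t^\vee\colon G^\ast\to\mu_{p^\infty}$ over $\OO_C$, whose derivative at the identity section is an $\OO_C$-linear functional $dt^\vee\colon\Lie G^\ast\to\Lie\mu_{p^\infty}=\OO_C$, i.e., an element of $(\Lie G^\ast)^\ast$. The $\OO_C$-linear extension of $t\mapsto dt^\vee$ is $\alpha_G$. The map $\alpha_{G^\ast}^\ast(1)$ is the $\OO_C$-dual of the analogous $\alpha_{G^\ast}$, Tate-twisted by $1$, using the Weil pairing to identify $(T_pG^\ast)^\vee\cong T_pG\otimes_{\Z_p}\Z_p(-1)$. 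For the complex property, the composite $\alpha_G\circ\alpha_{G^\ast}^\ast(1)\colon\Lie G(1)\to(\Lie G^\ast)^\ast$ vanishes as the $p$-divisible analogue of Proposition~\ref{DualityAbVar}: unwinding, it is the bilinear form on $\Lie G\otimes\Lie G^\ast$ obtained by twice differentiating the Serre biextension of $G\times G^\ast$ by $\mu_{p^\infty}$ at the origin, and the alternation of this biextension (up to the Weil twist) forces it to be zero.

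For the integral bound: both maps are functorial in $G$ and respect short exact sequences of $p$-divisible groups, and the complex is compatible with Cartier duality (self-dual up to reversing arrows). After inverting $p$, the sequence becomes the classical Hodge-Tate sequence for $p$-divisible groups (due to Tate; compare also Theorem~\ref{HodgeTateSpecSeq} above), which is exact. For the integral sharpness, the connected-\'etale filtration $0\to G^0\to G\to G^{\et}\to 0$ reduces via the snake lemma to special cases: for $G=\Q_p/\Z_p$ one checks directly that $\alpha_G\colon\OO_C\to\OO_C$ is the identity (the derivative of $\mathrm{id}\colon\mu_{p^\infty}\to\mu_{p^\infty}$), and the multiplicative case $G=\mu_{p^\infty}$ is Cartier-dual to this; both give zero cohomology. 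For the remaining biconnected case one uses the quasi-logarithm on the universal vector extension $EG$: given $t=(t_n)\in T_pG$ and any lifts $\widetilde{t_n}\in EG(\OO_C)$ of $t_n$, the element $p^n\widetilde{t_n}$ lies in the kernel $(\Lie G^\ast)^\ast=\omega_{G^\ast}$ of $EG\to G$ and stabilizes up to a controlled error as $n\to\infty$; this recovers $\alpha_G$ integrally.

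The main obstacle is the biconnected case, where the reduction to multiplicative and \'etale pieces is unavailable and the precision $p^{1/(p-1)}$ requires careful bookkeeping of the logarithm on the connected formal part $\hat{G}$. Fargues's original proof handles this via crystalline Dieudonn\'e theory, identifying $\Lie EG$ with the Dieudonn\'e module $M(G)$ and using the Fontaine period comparison $M(G)\otimes B_\cris\cong T_pG\otimes B_\cris$ to extract the integral statement. A modern alternative uses the Scholze-Weinstein classification of $p$-divisible groups over $\OO_C$ by pairs $(T_pG,W\subset T_pG\otimes B_\dR)$, which renders the Hodge-Tate complex manifest. The appearance of $p^{1/(p-1)}=v(\zeta_p-1)$ is structural: it is precisely the obstruction to algebraizing the logarithm on $\hat{\mu}_{p^\infty}$ integrally, and it propagates to the general $G$ via the multiplicative building blocks implicit in $EG$.
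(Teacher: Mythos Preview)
The paper does not prove this theorem; it is stated with attribution to Fargues and accompanied only by a Remark recording the definition of $\alpha_G$ via Cartier duality and differentiation --- which matches your definition exactly. So there is no proof in the paper to compare your proposal against.

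On its own merits: your construction of the maps and the complex property via the biextension is sound, and the \'etale and multiplicative cases are indeed trivial. But as you yourself concede, the biconnected case --- where the constant $p^{1/(p-1)}$ actually arises --- is not proved in your outline; you only name the tools (crystalline Dieudonn\'e theory, or the Scholze--Weinstein classification) that would finish it. This is the entire content of the theorem, so your proposal is a roadmap rather than a proof at that point. One warning: invoking the Scholze--Weinstein classification of $p$-divisible groups over $\OO_C$ as a ``modern alternative'' would be circular here, since (as the paper explains in the section immediately following this theorem) that classification takes Fargues's Hodge--Tate sequence as its starting input --- it is precisely the map $G\mapsto (\Lambda,W)$ with $W=\Lie G(1)\otimes C\subset \Lambda\otimes C$ that is shown to be an equivalence.
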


\begin{rem} Fargues uses the following direct definition of $\alpha_G$. Take any $\lambda\in \Lambda$. Thus,
\[
\lambda\in \varprojlim_n G[p^n](C) = \varprojlim_n G[p^n](\OO_C) = \Hom_{\OO_C}(\Q_p/\Z_p,G)\ .
\]
Thus, by duality, one gets a map $G^\ast\to \mu_{p^\infty}$, which on Lie algebras gives map $\Lie G^\ast\to \Lie \mu_{p^\infty}\cong \OO_C$; i.e. we get an element of $(\Lie G^\ast)^\ast$.
\end{rem}

\begin{prop} Let $A/\OO_C$ be an abelian variety, $G=A[p^\infty]$, with Tate module $\Lambda$. The two Hodge-Tate sequences (the first from Theorem \ref{HodgeTateSpecSeq}, the second from Fargues's theorem)
\[
0\to \Lie A^\ast\otimes_{\OO_C} C\to H^1_\et(A_C,\Z_p)\otimes_{\Z_p} C\to (\Lie A)^\ast\otimes_{\OO_C} C(-1)\to 0\ ,
\]
\[
0\to \Lie G\otimes_{\OO_C} C(1)\to \Lambda\otimes_{\Z_p} C\to (\Lie G^\ast)^\ast\otimes_{\OO_C} C\to 0
\]
are dual to each other.
\end{prop}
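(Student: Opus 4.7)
The plan is to identify Fargues's sequence for $G$, tensored with $C(-1)$, with the Hodge-Tate sequence for $A^\ast$, and then to invoke Proposition \ref{DualityAbVar}. The Weil pairing gives a canonical isomorphism $\Lambda \otimes_{\Z_p} C(-1) \cong H^1_\et(A^\ast_C, \Z_p) \otimes_{\Z_p} C$, and we have $\Lie A = \Lie G$, $\Lie A^\ast = \Lie G^\ast$ since $A$ is proper smooth over $\OO_C$. Under these identifications, the three terms of Fargues's sequence tensored with $C(-1)$ match those of the Hodge-Tate sequence for $A^\ast$:
\[
0 \to \Lie A \otimes C \to H^1_\et(A^\ast_C, \Z_p) \otimes C \to (\Lie A^\ast)^\ast \otimes C(-1) \to 0.
\]

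The main obstacle is to verify that the maps in the two sequences actually agree under these identifications. By exactness it suffices to match the surjections: Fargues's $\alpha_G$, which sends $\lambda \in \Lambda$ to the derivative at the origin of the Cartier-dual map $\lambda^\vee \colon G^\ast \to \mu_{p^\infty}$, and the Hodge-Tate edge map for $A^\ast$. For this I would use the preceding lemma, which characterizes the Hodge-Tate projection by $c_1^\et(\mathcal{L}) \mapsto c_1^\dR(\mathcal{L})$ for every line bundle $\mathcal{L}$, and apply it to the Poincar\'e bundle $\mathfrak{P}$ on $A \times A^\ast$, exactly as in the proof of Proposition \ref{DualityAbVar}. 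There $c_1^\et(\mathfrak{P})$ encodes the Weil pairing while $c_1^\dR(\mathfrak{P})$ encodes the pairing on Lie algebras; a K\"unneth decomposition of $c_1(\mathfrak{P})$ then shows that the Hodge-Tate projection of the class in $H^1_\et(A^\ast_C, \Z_p) \otimes C$ corresponding to $\lambda$ under the Weil pairing is the Lie derivative of the map classifying the restriction $\mathfrak{P}|_{\{\lambda\} \times A^\ast}$. After $p$-adic completion and restriction to $G^\ast \subset A^\ast$, this map is precisely $\lambda^\vee$, so its derivative is $\alpha_G(\lambda)$ by Fargues's definition.

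Once this identification of sequences is in place, the proposition follows immediately: by Proposition \ref{DualityAbVar} the Hodge-Tate sequences for $A$ and $A^\ast$ are dual via the Weil pairing with values in $C(-1)$; the additional Tate twist by $C(-1)$ converts this into the asserted duality between the Hodge-Tate sequence for $A$ and Fargues's sequence for $G$, under the canonical evaluation pairing $H^1_\et(A_C, \Z_p) \otimes_{\Z_p} \Lambda \to \Z_p$.
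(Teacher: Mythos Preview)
Your overall strategy coincides with the paper's: use the built-in duality of Fargues's sequence under $G\mapsto G^\ast$ together with Proposition \ref{DualityAbVar} to reduce to matching a single map, namely the Hodge-Tate edge map with $\alpha_G$ (or $\alpha_{G^\ast}$). The reduction is fine. The gap is in your justification of that one comparison.

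You propose to obtain the comparison from the Chern-class lemma applied to the Poincar\'e bundle via K\"unneth. But that argument only tells you that the K\"unneth components of $c_1^\et(\mathfrak{P})$ and $c_1^\dR(\mathfrak{P})$ correspond under the Hodge-Tate filtration, i.e.\ it reproves Proposition \ref{DualityAbVar}; it does not by itself compute the Hodge-Tate edge map in terms of the $p$-divisible group. Concretely, your sentence ``the Hodge-Tate projection of the class corresponding to $\lambda$ under the Weil pairing is the Lie derivative of the map classifying $\mathfrak{P}|_{\{\lambda\}\times A^\ast}$'' is where the real work hides: $\lambda\in T_pG$ is not a point of $A$, so $\mathfrak{P}|_{\{\lambda\}\times A^\ast}$ has to be interpreted as the tower of $p^n$-torsion line bundles attached to $\lambda_n\in A[p^n]$, and you then need to show that the Hodge-Tate edge map sends this Kummer class to $d(\lambda^\vee)$ at the origin. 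That step requires passing from the proper variety $A^\ast_C$ to the analytic open subspace $G^\ast_\eta\subset A^\ast_C$ (the generic fibre of the formal $p$-divisible group) and comparing Kummer theory there with the tower $[p]\colon G^\ast_\eta\to G^\ast_\eta$; it does not fall out of K\"unneth for $c_1(\mathfrak{P})$.

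This is precisely what the paper does: it restricts along the rigid-analytic open immersion $G_\eta\hookrightarrow A_C$, uses the map $G\to \mu_{p^\infty}$ induced by $\lambda$, and checks via an explicit commutative diagram that the Kummer class of the coordinate $1+T$ on $\mu_{p^\infty,\eta}$ pulls back on one side to the Hodge-Tate image of $\lambda$ and on the other side to $\alpha_{G^\ast}(\lambda)$ via $d\log$. Your ``$p$-adic completion and restriction to $G^\ast\subset A^\ast$'' is pointing at exactly this step, but you have not supplied it; once you try to make it precise you will find you need the diagram chase the paper carries out, not another appeal to $c_1(\mathfrak{P})$.
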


\begin{proof} By definition, the second exact sequence is compatible with duality $G\mapsto G^\ast$. By Proposition \ref{DualityAbVar}, the first exact sequence is compatible with duality $A\mapsto A^\ast$; as $A^\ast[p^\infty] = G^\ast$, we are reduced to checking that
\[
H^1_\et(A_C,\Z_p)\otimes_{\Z_p} C(1)\to (\Lie A)^\ast\otimes_{\OO_C} C
\]
agrees with
\[
\Lambda^\ast\otimes_{\Z_p} C(1)\to (\Lie G)^\ast\otimes_{\OO_C} C\ .
\]
Recall that the first map is defined as the composite
\[\begin{aligned}
H^1_\et(A_C,\Z_p)\otimes_{\Z_p} C(1)&\cong H^1(A_{C,\proet},\hat{\OO}_A)(1)\\
&\to H^0(A_{C,\et},R^1\nu_\ast \hat{\OO}_A)(1)\cong H^0(A_{C,\et},\Omega_A^1) = (\Lie A)^\ast\otimes_{\OO_C} C\ .
\end{aligned}\]
Consider $G$ as a formal scheme over $\Spf \OO_C$, and let $G_\eta$ be its generic fibre. Then $G_\eta\subset A_C$ is a rigid-analytic open subset. Fix an element $\lambda\in \Lambda^\ast(1)$; it corresponds to a map $\Q_p/\Z_p\to G^\ast$ of $p$-divisible groups over $\OO_C$, thus to a morphism $G\to G^\prime = \mu_{p^\infty}$ over $\OO_C$. One generic fibres, it induces a morphism $G_\eta\to G^\prime_\eta$. Note that the latter is just an open unit ball. We get a commutative diagram
\[\xymatrix{
H^1(A_{C,\proet},\hat{\Z}_p)(1)\ar[r]\ar[d] & H^1(A_{C,\proet},\hat{\OO}_A)(1)\ar[r]\ar[d]& H^0(A_{C,\et},R^1\nu_\ast \hat{\OO}_A)(1)\ar[r]^\cong\ar[d]& H^0(A_{C,\et},\Omega_A^1)\ar@{^(->}[d]\\
H^1(G_{\eta,\proet},\hat{\Z}_p)(1)\ar[r] & H^1(G_{\eta,\proet},\hat{\OO}_{G_\eta})(1)\ar[r]& H^0(G_{\eta,\et},R^1\nu_\ast \hat{\OO}_{G_\eta})(1)\ar[r]^\cong& H^0(G_{\eta,\et},\Omega_{G_\eta}^1)\\
& H^1(G^\prime_{\eta,\proet},\hat{\OO}_{G^\prime_\eta})(1)\ar[u]\ar[r]& H^0(G^\prime_{\eta,\et},R^1\nu_\ast \hat{\OO}_{G^\prime_\eta})(1)\ar[u]\ar[r]^\cong & H^0(G^\prime_{\eta,\et},\Omega_{G^\prime_\eta}^1)\ar[u]\\
& H^1(G^\prime_{\eta,\proet},\hat{\Z}_p)(1)\ar[uul]\ar[u] & H^0(G^\prime_{\eta,\proet},\OO_{G^\prime_\eta}^\times)\ar[l]\ar[r]^{=} & H^0(G^\prime_\eta,\OO_{G^\prime_\eta}^\times)\ar[u]^{d\log}
}\]
Identify $G^\prime = \mu_{p^\infty} = \Spf \Z_p[[T]]$ in the usual way; now look at what happens to the element $1+T$ as an element of the lower-right group. Under $d\log$, it maps to the standard basis element of $\Lie G^\prime\otimes_{\OO_C} C$, thus the image in
\[
H^0(G_{\eta,\et},\Omega_{G_\eta}^1) = (\Lie G)^\ast\otimes_{\OO_C} H^0(G_{\eta,\et},\OO_{G_{\eta,\et}})
\]
is given by $\alpha_{G^\ast}(\lambda)\otimes 1$.

On the other hand, mapping $1+T$ around the lower-left hand corner, we use Kummer theory first. Note that the $H^1(-,\hat{\Z}_p)(1)$-groups on the left classify $\Z_p(1)$-covers. Extracting a sequence of $p$-power roots of $1+T$ amounts exactly to the tower corresponding to multiplication by $p$-powers on $G^\prime_\eta$. On the other hand, $\lambda\in \Lambda^\ast(1) = H^1(A_{C,\proet},\hat{\Z}_p)(1)$ gives a similar tower over $A_C$; the two towers become equal after restriction to $G_\eta$. Now the result follows from the commutativity of the diagram, and the injectivity of the vertical upper right map.
\end{proof}

\subsection{A $p$-adic analogue: The Hodge-theoretic perspective}

The preceding discussion gives a functor from $p$-divisible groups over $\OO_C$ to pairs $(\Lambda,W)$, where $\Lambda$ is a finite free $\Z_p$-module, and $W\subset \Lambda\otimes_{\Z_p} C$ is a $C$-subvectorspace.

\begin{thm}[{\cite[Theorem 5.2.1]{ScholzeWeinstein}}] This functor is an equivalence of categories.
\end{thm}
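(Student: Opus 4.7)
The plan is to invert the functor via the \emph{universal cover} $\tilde G = \varprojlim_{[p]} G$ of a $p$-divisible group $G$ over $\OO_C$. As a formal $\OO_C$-scheme $\tilde G$ is perfectoid and, crucially, depends functorially only on the $\Q_p$-vector space $V_p G = \Lambda \otimes_{\Z_p} \Q_p$; two $p$-divisible groups of the same height have (non-canonically) isomorphic universal covers. The group $G$ itself is then recovered as a quotient $\tilde G / \Lambda$ in which the lattice $\Lambda \subset V_p G$ sits inside $\tilde G(\OO_C)$ via exponentiation along the quasi-logarithm, whose kernel on $V_p G \otimes C$ is exactly the Hodge-Tate subspace $W$ provided by Theorem \ref{FarguesHodgeTate}. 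Thus both pieces of the pair $(\Lambda, W)$ are exactly what is needed to reassemble $G$ from $\tilde G$.

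It is convenient to factor the equivalence through the isogeny category. Fully faithfulness in the isogeny category reduces, by functoriality of $\tilde G$ in $V_p G$, to the claim that a $\Q_p$-linear map $\phi: V_p G \to V_p H$ descends from its unique lift $\tilde \phi: \tilde G \to \tilde H$ to a quasi-morphism $G \to H$ if and only if $\phi(W_G) \subset W_H$: descent is governed by the logarithm sequence, and respecting $W$ is exactly the compatibility that makes $\tilde\phi$ carry the distinguished ``lattice'' in $\tilde G$ into that in $\tilde H$. For essential surjectivity in the isogeny category, given $(V, W)$ with $\dim_{\Q_p} V = h$ and $\dim_C W = d$, start from the split $G_0 = \mu_{p^\infty}^d \times (\Q_p/\Z_p)^{h-d}$, fix a $\Q_p$-linear identification $V_p G_0 \cong V$ (hence $\tilde G_0$ plays the role of $\tilde G$), and realize the target by modifying the embedding of the lattice along $W$ rather than along $W_{G_0}$. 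Passing from isogeny to integral statements is then bookkeeping: the choice of the lattice $\Lambda \subset V$ in the pair corresponds precisely to singling out one $p$-divisible group inside its isogeny class, i.e.\ to cutting out $G$ inside $\tilde G_0$.

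The hard part is essential surjectivity: one must show that \emph{any} $C$-subspace $W \subset \Lambda \otimes_{\Z_p} C$ of dimension $\leq h$ is realized by some honest $p$-divisible group. There is no algebro-geometric quotient $\tilde G_0 / \Lambda$ in the naive sense, because $\Lambda$ is only a $\Q_p$-lattice and $\tilde G_0$ is merely a formal $\OO_C$-scheme whose generic fibre is perfectoid; one needs the perfectoid input developed in \cite{ScholzeWeinstein} to ensure that this ``quotient'' exists as a formal $\OO_C$-scheme and that its generic fibre is the expected $p$-divisible rigid-analytic group, with prescribed Hodge-Tate filtration. Once this existence is in place, Fargues's theorem \ref{FarguesHodgeTate} identifies the invariants of the constructed $G$ with the given $(\Lambda, W)$, completing the equivalence.
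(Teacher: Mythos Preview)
Your overall strategy---recovering $G$ from its universal cover $\tilde{G}$ together with a lattice and the Hodge--Tate data---is indeed the approach taken in \cite{ScholzeWeinstein}, and is exactly what the survey's ``geometric perspective'' (Subsection~4.4) is sketching. However, one of your foundational claims is wrong, and it breaks the essential surjectivity argument.

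You assert that $\tilde{G}$ ``depends functorially only on the $\Q_p$-vector space $V_pG$'' and that ``two $p$-divisible groups of the same height have (non-canonically) isomorphic universal covers.'' This is false. As explained just after Theorem~5.1.4(i) in the survey, the universal cover $\tilde{G}$ is determined by the isogeny class of the special fibre $H/\bar{\F}_p$, and by Dieudonn\'e--Manin there are \emph{several} such isogeny classes for a given height $h$ once $h\geq 2$. Concretely, $\tilde{G}(\OO_C)\cong (M(H)\otimes B_\cris^+)^{\varphi=p}$ depends on the isocrystal $M(H)[p^{-1}]$, not merely on its rank; and as a formal scheme the shape of $\tilde{G}$ reflects the connected--\'etale decomposition of $H$, which varies across Newton strata. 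Your choice $G_0=\mu_{p^\infty}^d\times(\Q_p/\Z_p)^{h-d}$ has ordinary special fibre, so modifying the lattice inside $\tilde{G}_0$ can only produce $p$-divisible groups with ordinary reduction---you will never reach, say, a lift of a supersingular group this way.

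The fix, which is what the survey outlines, is to first invoke \cite[Theorem~5.1.4(i)]{ScholzeWeinstein}: every $G/\OO_C$ admits a quasi-isogeny $\rho$ to some $H/\bar{\F}_p$ after reduction. One then works one $H$ at a time and proves the lattice description of Theorem~\ref{ClassLattices} (i.e.\ \cite[Theorem~D]{ScholzeWeinstein}) for pairs $(G,\rho)$. The translation between that description and the pairs $(\Lambda,W)$ goes through the identification $\tilde{H}(\OO_C)=(M(H)\otimes B_\cris^+)^{\varphi=p}$ and Fontaine's map $\Theta$, which is where the genuinely hard input (full faithfulness of Dieudonn\'e theory over semiperfect rings, requiring almost purity) enters. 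Your sketch gestures at this last step but misidentifies what needs to vary: it is the isocrystal $H$, not just the lattice in a fixed $\tilde{G}_0$.
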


Let us make a series of remarks.

\begin{rem}\begin{altenumerate}
\item[{\rm (i)}] In an unpublished manuscript, Fargues (\cite{FarguesPDivGroups}) had previously proved fully faithfulness, and more.
\item[{\rm (ii)}] This is the first instance of a classification of $p$-divisible groups in terms of linear algebra (instead of $\sigma$-linear algebra, as usual in Dieudonn\'e theory).
\item[{\rm (iii)}] Let $\kappa$ be the residue field of $\OO_C$. Then, by reduction to the special fibre, one has a functor from $p$-divisible groups over $\OO_C$ to $p$-divisible groups over $\kappa$. Recall that the latter are classified by Dieudonn\'e modules $(M,F,V)$, where $M$ is a finite free $W(\kappa)$-module, $F: M\to M$ is a $\sigma$-linear map, and $V: M\to M$ is a $\sigma^{-1}$-linear map, such that $FV=VF=p$. Here, $\sigma: W(\kappa)\to W(\kappa)$ is the lift of Frobenius on $\kappa$.

Using these equivalences of categories, one gets a functor $(\Lambda,W)\mapsto (M,F,V)$, which the author does not know how to describe. Describing this functor amounts to an integral comparison between the \'etale and crystalline cohomology of $p$-divisible groups. However, in the current situation over $C$, there is no Galois action on the Tate module, which would usually be used in Fontaine's theory.\footnote{Using the Fargues-Fontaine curve, \cite{FarguesFontaineDurham}, one can describe the functor up to isogeny.}
\item[{\rm (iv)}] If $0\to G_1\to G_2\to G_3\to 0$ is an exact sequence of $p$-divisible groups over $\OO_C$, then the corresponding sequences $0\to W_1\to W_2\to W_3\to 0$ and $0\to \Lambda_1\to \Lambda_2\to \Lambda_3\to 0$ are exact. However, the converse is not true.
For example, take $G_2$ of height $2$ and dimension $1$, with supersingular special fibre. Then there exists a complex
\[
0\to \Q_p/\Z_p\to G_2\to \mu_{p^\infty}\to 0
\]
that becomes exact on $\Lambda$'s and $W$'s.
\item[{\rm (v)}] Given $(\Lambda,W)$, one gets a $p$-divisible group $G$ over $\OO_C$ by the theorem, and thus an $\OO_C$-lattice
\[
W^\circ = (\Lie G)(1)\subset W=(\Lie (G)(1))\otimes_{\OO_C} C\subset \Lambda\otimes_{\Z_p} C\ .
\]
Can one describe $W^\circ$ directly in terms of $(\Lambda,W)$? We note that there is a second natural $\OO_C$-lattice $(W^\circ)^\prime = (\Lambda\otimes \OO_C)\cap W$. Then Fargues's theorem implies
\[
p^{1/(p-1)} (W^\circ)^\prime\subset W^\circ\subset (W^\circ)^\prime\ ;
\]
however, none of the two inclusions is an equality in general.
\item[{\rm (vi)}] Coming back to (iv), we note that an exact sequence of $p$-divisible groups gives an exact sequence on $\Lambda$'s and $W^\circ$'s. Is the converse true? It seems not unreasonable to hope that the answer is yes.\footnote{V. Pilloni has communicated to us a simple proof of this statement, using Fargues's results on degrees for finite locally free group schemes, \cite{FarguesFiniteGroups}.}
\item[{\rm (vii)}] Let $K$ be a discretely valued complete extension of $\Q_p$ with perfect residue field. Then one can reprove the following theorem of Breuil, \cite{Breuil}, (in case $p\neq 2$), and Kisin, \cite{Kisin}, (in general) 'by descent':
\end{altenumerate}
\end{rem}

\begin{thm} The category of $p$-divisible groups over $\OO_K$ is equivalent to the category of lattices in crystalline representations of $\Gal(\bar{K}/K)$ with Hodge-Tate weights $0$, $1$.
\end{thm}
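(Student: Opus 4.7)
The plan is to combine the Scholze-Weinstein classification over $\OO_C$ (Theorem 5.2.1 above) with Galois descent from $\OO_C$ down to $\OO_K$, together with the classical theorem that $p$-divisible groups over $\OO_K$ give rise to crystalline Galois representations with Hodge-Tate weights $0,1$.

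For the forward direction, given a $p$-divisible group $G$ over $\OO_K$, its Tate module $T_pG$ is a continuous $\Gal(\bar{K}/K)$-module, and the base change $G_{\OO_C}$ corresponds under Theorem 5.2.1 to the pair $(T_pG, W_G)$ with $W_G = (\Lie G)\otimes_{\OO_K} C(1) \subset T_pG \otimes_{\Z_p} C$ visibly $\Gal_K$-stable. The exact sequence underlying the Hodge-Tate decomposition of $V := T_pG \otimes_{\Z_p} \Q_p$ is exactly Fargues's Theorem \ref{FarguesHodgeTate}, and the crystallinity of $V$ (not merely Hodge-Tate-ness) is the classical theorem of Fontaine.

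For the reverse direction, let $V$ be a crystalline $\Gal(\bar{K}/K)$-representation with Hodge-Tate weights in $\{0,1\}$, and $\Lambda \subset V$ a Galois-stable $\Z_p$-lattice. Crystalline implies Hodge-Tate, so the Hodge-Tate decomposition canonically yields a $\Gal_K$-stable $C$-subspace $W \subset V \otimes_{\Q_p} C = \Lambda \otimes_{\Z_p} C$, namely the ``weight-one piece'' matched to the conventions of Theorem \ref{FarguesHodgeTate}. Applying Theorem 5.2.1 to the pair $(\Lambda, W)$ produces a $p$-divisible group $\widetilde{G}$ over $\OO_C$, and by functoriality of the equivalence, the continuous $\Gal_K$-action on $(\Lambda, W)$ endows $\widetilde{G}$ with a continuous semilinear $\Gal_K$-action covering the natural action on $\OO_C$. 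The remaining task, and the core of the argument, is to descend $\widetilde{G}$ to $\OO_K$.

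The descent step is the main obstacle, and I would attack it level-by-level on $p^n$-torsion. For each $n$, the truncation $\widetilde{G}[p^n]$ is a finite flat $\OO_C$-group scheme of rank $p^{nh}$ (where $h = \mathrm{rk}\,\Lambda$). Continuity of the $\Gal_K$-action combined with the full faithfulness part of Theorem 5.2.1 forces the induced action on $\widetilde{G}[p^n]$ to factor through a finite Galois quotient $\Gal(L_n/K)$, and by approximating the finitely many structure constants of the Hopf algebra inside $\OO_C$ by elements of $\OO_{L_n}$ one sees that $\widetilde{G}[p^n]$ descends to a finite flat group scheme over $\OO_{L_n}$. Classical faithfully flat Galois descent along $\OO_{L_n}/\OO_K$ then yields $G[p^n]$ over $\OO_K$, and passing to the colimit in $n$ produces $G$. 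The identification $T_p G \cong \Lambda$ is built into the construction, and the two functors are mutually inverse because $W$ is pinned down by $(\Lambda, V)$ via the Hodge-Tate decomposition. It is precisely here that the crystalline (not merely Hodge-Tate) hypothesis is essential: it is what guarantees that the descent goes all the way to $\OO_K$ rather than stopping at a proper finite extension, i.e.\ that the associated $p$-divisible group has good reduction already over $K$.
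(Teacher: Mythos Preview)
Your overall strategy matches the paper's one-line indication: reprove Breuil--Kisin ``by descent'' from the classification over $\OO_C$. The paper gives no further argument, so there is nothing to compare beyond that. However, your execution of the descent has a genuine gap.

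First, the claim that continuity together with full faithfulness forces the semilinear $\Gal_K$-action on $\widetilde{G}[p^n]$ to factor through a finite quotient $\Gal(L_n/K)$ is false as stated: the action on the base $\OO_C$ never factors through a finite quotient of $\Gal_K$, so neither can a semilinear action on an $\OO_C$-scheme. What does factor through a finite quotient is the action on $\Lambda/p^n\Lambda$, but that is far weaker and does not by itself produce a model of the Hopf algebra over any $\OO_{L_n}$. The ``approximate the structure constants'' step is likewise not an argument: you have infinitely many Galois conjugates to control simultaneously, and nothing forces them to be realised over a common finite extension.

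Second, and more fundamentally, your argument as written uses only that $V$ is Hodge--Tate with weights $0,1$; the word ``crystalline'' appears only in the final sentence as an assertion that it is what makes the descent work. This cannot be a complete proof. Take the Tate curve $E_q/K$: its Tate module is Hodge--Tate with weights $0,1$ but semistable and not crystalline. Your construction applies verbatim and produces a $p$-divisible group $\widetilde{G}$ over $\OO_C$ together with a continuous semilinear $\Gal_K$-action, i.e.\ a descent datum; yet this datum is \emph{not} effective, since $E_q[p^\infty]$ does not extend over $\OO_K$. So some step in your descent must fail for this example, and you have not identified which one, nor explained how the crystalline hypothesis repairs it. Supplying that mechanism---showing concretely how a Frobenius structure on the Dieudonn\'e side (equivalently, admissibility in the sense of Fontaine, or the existence of a Breuil--Kisin module) forces the descent datum on each $\widetilde{G}[p^n]$ to be effective over $\OO_K$---is the entire content of the theorem, and it is not a formality. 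The detailed argument is carried out in \cite{ScholzeWeinstein}, not in this survey.
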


\subsection{A $p$-adic analogue: The geometric perspective}

\begin{definition} Let $R$ be a ring which is $p$-torsion, and let $G/R$ be a $p$-divisible group (considered as an fpqc sheaf on schemes over $R$). Define the universal cover of $G$ as $\tilde{G} = \varprojlim_{\times p} G$, as an fpqc sheaf on schemes over $R$. Moreover, let $TG=\ker(\tilde{G}\to G)$; one has a short exact sequence of fpqc sheaves
\[
0\to TG\to \tilde{G}\to G\to 0\ .
\]
\end{definition}

We note that $TG$ is a sheafified version of the Tate module: For any $R$-algebra $R^\prime$,
\[
TG(R^\prime) = \varprojlim G[p^n](R^\prime) = \Hom_{R^\prime}(\Q_p/\Z_p,G)\ .
\]
Also, $\tilde{G} = TG\otimes_{\Z_p} \Q_p$.

\begin{prop}\begin{altenumerate}
\item[{\rm (i)}] The functor $G\mapsto \tilde{G}$ turns isogenies into isomorphisms.
\item[{\rm (ii)}] Let $S\to R$ be a surjection with nilpotent kernel, and $G_S$ a lift of $G$ to $S$. Then for any $S$-algebra $S^\prime$,
\[
\tilde{G}_S(S^\prime)\buildrel\cong\over\longrightarrow \tilde{G}(S^\prime\otimes_S R)\ .
\]
In particular, $\tilde{G}_S$ depends only on $\tilde{G}$: One can consider $\tilde{G}$ as a crystal on the infinitesimal site of $R$.
\end{altenumerate}
\end{prop}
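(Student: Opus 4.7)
The plan is to prove (i) and (ii) separately; part (i) is essentially formal, while (ii) contains the real content.

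For (i), I would exploit that an isogeny $f:G\to H$ is invertible up to multiplication by a power of $p$: if $\ker f\subset G[p^n]$, then $[p^n]_G$ factors through $f$ to give a map $g:H\to G$ with $gf=[p^n]_G$ and $fg=[p^n]_H$. Since the defining transition maps of $\tilde G=\varprojlim_{\times p}G$ become invertible in the limit (the inverse of $\times p$ on $\tilde G$ is the shift $(x_n)\mapsto(x_{n+1})$), multiplication by any power of $p$ is an automorphism of both $\tilde G$ and $\tilde H$. Hence $\tilde g$ provides an inverse to $\tilde f$ up to a unit, so $\tilde f$ is an isomorphism.

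For (ii), set $J=\ker(S'\to S'\otimes_SR)$, a nilpotent ideal, say $J^N=0$, and let $K\subset G_S(S')$ be the kernel of reduction to $G_S(S'/J)$. The crucial preliminary step is to show that $p$ acts nilpotently on $K$. For this, I would note that $R$ being $p$-torsion and $S\to R$ having nilpotent kernel forces $p$ to be nilpotent in $S$, say $p^M=0$ in $S$; then $K$ identifies (fpqc-locally on $S'$) with the $J$-valued points of the formal completion of $G_S$ along its identity section, filtered by $J$-adic order $K=K_1\supset K_2\supset\dots\supset K_N=0$. On each graded piece $K_i/K_{i+1}$ the formal group law is strictly additive, so multiplication by $p$ agrees with ordinary scalar multiplication on a $(J^i/J^{i+1})^{\dim G}$, which is killed by $p^M$. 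Walking down the finite filtration shows $p^{MN}$ annihilates $K$.

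With this nilpotence in hand, injectivity of $\tilde G_S(S')\to\tilde G(S'\otimes_SR)$ is immediate: the difference of two lifts is a system $(y_n)$ with $y_n\in K$ and $y_n=p^ky_{n+k}$, which forces $y_n=0$ once $p^k$ kills $K$. For surjectivity, I would take a compatible system $(\bar x_n)$ on the right, lift each $\bar x_n$ individually to some $\tilde x_n\in G_S(S')$ using formal smoothness of $G_S$ (possibly after an fpqc base change on $S'$), and measure the defect by $z_n=\tilde x_n-p\tilde x_{n+1}\in K$. A correction $x_n=\tilde x_n+w_n$ with $w_n\in K$ compatible with $\times p$ must solve $w_n=pw_{n+1}-z_n$, and the explicit formula
\[
w_n=-\sum_{k=0}^{MN-1}p^k z_{n+k}
\]
is a solution (the sum is effectively finite by the nilpotence). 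Injectivity forces the local lift to be unique, so it glues to a global section of the fpqc sheaf $\tilde G_S$. The main obstacle, to my mind, is the very first step: making precise the identification of $K$ with formal-scheme-valued points in $J$ and setting up the filtration cleanly enough to extract the nilpotence of $p$; once past that, everything is bookkeeping with nilpotent groups. The crystal statement is then automatic, since the functor $S'\mapsto\tilde G_S(S')$ has been identified with $S'\mapsto\tilde G(S'\otimes_SR)$, manifestly depending only on $\tilde G$ and the thickening $S\to R$.
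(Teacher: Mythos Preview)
Your argument for (i) is the standard one and matches the paper's (which simply says ``clear''). Your argument for (ii) is correct, but the paper takes a different and much shorter route. The paper first reduces to the case $S'=S$ (replace $S$ by $S'$ and $R$ by $S'\otimes_S R$; the hypotheses are preserved), and then uses the identification $\tilde{G}(R)=\Hom_R(\Q_p/\Z_p,G)[p^{-1}]$ stated just before the proposition. The whole proof is then
\[
\tilde{G}_S(S)=\Hom_S(\Q_p/\Z_p,G_S)[p^{-1}]=\Hom_R(\Q_p/\Z_p,G)[p^{-1}]=\tilde{G}(R),
\]
where the middle equality quotes Illusie's theorem that the category of $p$-divisible groups up to isogeny is invariant under nilpotent thickenings. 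Your approach trades this citation for a hands-on computation: you show directly that $p$ acts nilpotently on the kernel of reduction by filtering it via powers of $J$ and identifying the graded pieces with $\Lie(G_S)\otimes J^i/J^{i+1}$, then solve the lifting problem by an explicit telescoping sum. This is essentially a proof of the special case of Illusie's result that is needed here, so your argument is more self-contained but correspondingly longer. A couple of minor points worth tightening: the identification of $K$ with the $J$-points of the formal completion really only uses that the \'etale part of $G_S$ contributes nothing to $K$ (formal unramifiedness) and that the connected part is a formal Lie group, so the fpqc localization is only needed to get coordinates; and for the lifting step, $G_S(S')\to G_S(S'/J)$ is indeed surjective fppf-locally (formal smoothness of $p$-divisible groups in the sense of Messing), which is what you are invoking.
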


\begin{proof} Part (i) is clear. For part (ii) we may assume $S^\prime = S$. Recall that by a result of Illusie, \cite{IllusieDefBT}, the categories of $p$-divisible groups up to isogeny over $R$ and $S$ are equivalent. Thus,
\[
\tilde{G}_S(S) = \Hom_S(\Q_p/\Z_p,G_S)[p^{-1}] = \Hom_R(\Q_p/\Z_p,G)[p^{-1}] = \tilde{G}(R)\ .
\]
\end{proof}

In some cases, one can write down $\tilde{G}$: One gets examples of perfectoid spaces, by taking generic fibres!

\begin{prop}[{\cite[Corollary 3.1.5]{ScholzeWeinstein}}]\label{UnivCovPerf} Let $G$ be a connected $p$-divisible group over $W(\kappa)$, for some perfect field $\kappa$. Then there is an isomorphism of fpqc sheaves,
\[
\tilde{H}\cong \Spf W(\kappa)[[T_1^{1/p^\infty},\ldots,T_d^{1/p^\infty}]]\ .
\]
\end{prop}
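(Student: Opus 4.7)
The plan is to represent $G$ as a formal Lie group and then compute $\tilde G = \varprojlim_{[p]} G$ on coordinate rings, identifying the resulting completed direct limit with $B := W(\kappa)[[T_1^{1/p^\infty},\ldots,T_d^{1/p^\infty}]]$. Since $G$ is a connected $p$-divisible group of dimension $d$ over $W(\kappa)$, it is representable by $\Spf A$ with $A = W(\kappa)[[T_1,\ldots,T_d]]$, equipped with a formal group law; multiplication by $p$ corresponds to a continuous $W(\kappa)$-algebra endomorphism $\phi = [p]^\ast : A \to A$, finite flat of degree $p^h$ (with $h$ the height). As an fpqc sheaf, $\tilde G$ is then represented by $\Spf$ of the $(p, T_1, \ldots, T_d)$-adic completion of the direct limit $\varinjlim_\phi A$.

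Next I would reduce the problem modulo $p$. By the preceding proposition, $\tilde G$ is determined, as a sheaf on $p$-nilpotent algebras, by the reduction $G_0 = G \otimes_{W(\kappa)} \kappa$. Dually, $B$ is the unique $p$-torsion-free, $p$-adically complete, formally smooth lift of its reduction $B_0 := B/p = \kappa[[T_1^{1/p^\infty},\ldots,T_d^{1/p^\infty}]]$, because Frobenius on $B_0$ is surjective and a canonical system of $p$-power roots of the coordinates is part of the data. It therefore suffices to construct a canonical isomorphism between the mod-$p$ reductions together with their formal structure, and lift uniquely.

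The core calculation is the one over $\kappa$. I would factor $[p] = V \circ F$ on $G_0$, where $F : G_0 \to G_0^{(p)}$ is the relative Frobenius. Since $\kappa$ is perfect, the Frobenius twist $G_0^{(p)}$ is canonically identified with $G_0$ via the Frobenius $\sigma$ on $W(\kappa)$, and under this identification $F^\ast$ sends $T_i \mapsto T_i^p$ (composed with the $\sigma$-twist of scalars). The Verschiebung $V$ is an isogeny, so after passing to $\tilde{G_0}$ (which is a sheaf of $\Q_p$-vector spaces, so $p$ acts invertibly) it becomes an isomorphism $\tilde V$; consequently the inverse system defining $\tilde{G_0}$ via $[p]$ is canonically cofinal with the inverse system defined by $F$. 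Applied to the underlying formal scheme $\Spf \kappa[[T_1,\ldots,T_d]]$, taking the inverse limit along $F$ adjoins compatible $p$-power roots to each $T_i$, so the colimit of coordinate rings is exactly $B_0$.

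The principal obstacle is the bookkeeping of the semilinear $\sigma$-twists introduced by the relative Frobenius and the verification that the $F$-colimit of the algebra side matches $B_0$ with its natural coordinate system, so that the canonical lift to $W(\kappa)$ is literally $B$. The twists should be absorbed by the Frobenius already present on $W(\kappa)$, but this requires care: one must check that the natural isomorphism of formal-scheme reductions is compatible with the formal structure on $B_0$ used to lift to $B$, and only then conclude $\tilde G \cong \Spf B$ as fpqc sheaves.
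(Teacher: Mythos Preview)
The survey does not prove this proposition; it simply cites \cite[Corollary~3.1.5]{ScholzeWeinstein}. Your outline is the same as the argument there: reduce modulo $p$ via the crystal property of $\tilde G$, relate the $[p]$-tower to Frobenius to obtain the perfection $\kappa[[T_1^{1/p^\infty},\ldots,T_d^{1/p^\infty}]]$, and lift uniquely. Two steps need repair.

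First, the lifting: $B$ is \emph{not} formally smooth over $W(\kappa)$ (a map to $S/I$ is a compatible system of $p$-power roots, which need not lift through a square-zero thickening). What you want is that $B_0$ is \emph{perfect} (Frobenius is bijective, not merely surjective, as $B_0$ is a domain), hence $\mathbb L_{B_0/\F_p}\simeq 0$ and the unique flat $p$-complete lift is $W(B_0)$, which one checks is $B$. Alternatively, just invoke the crystal property of $\tilde G$ directly.

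Second, and more substantively, the sentence ``$\tilde V$ is an isomorphism, hence the $[p]$-system is cofinal with the $F$-system'' is not justified. The two inverse systems are only literally cofinal when $V$ is already an isomorphism, i.e.\ when $h=d$. What your observation about $\tilde V$ \emph{does} give (together with the analogous statement for $\tilde F$) is that absolute Frobenius is bijective on the completed colimit $\widehat{\varinjlim_{[p]^*}A_0}$; equivalently, since $[p]=VF$ one has $[p]^*(A_0)\subset\kappa[[T_1^p,\ldots,T_d^p]]=\Phi(A_0)$, so every element acquires a $p$-th root one stage up. But knowing the colimit is a perfect complete local $\kappa$-algebra receiving a map from $A_0$ does not by itself pin it down as $\kappa[[T_1^{1/p^\infty},\ldots,T_d^{1/p^\infty}]]$. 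One genuinely needs either a cofinality argument using the extra input that $G_0[p]$, being connected, is killed by some $F^N$ (so $\Phi^N$ factors through $[p]$, giving maps of towers in both directions), or the explicit successive-approximation construction of coordinates carried out in \cite{ScholzeWeinstein}. Your instinct that the $\sigma$-twists are the main obstacle undersells the gap: the missing piece is this identification, not just bookkeeping.
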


Fix an embedding $\overline{\mathbb{F}}_p\hookrightarrow \OO_C/p$.

\begin{thm}[{\cite[Theorem 5.1.4 (i)]{ScholzeWeinstein}}] Let $G$ be a $p$-divisible group over $\OO_C$. Then there is a $p$-divisible group $H/\overline{\mathbb{F}}_p$ (unique up to isogeny) and a quasi-isogeny $\rho: G\otimes_{\OO_C} \OO_C/p\to H\otimes_{\overline{\mathbb{F}}_p} \OO_C/p$.
\end{thm}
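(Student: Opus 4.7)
The plan is to combine the Dieudonn\'e--Manin classification over the residue field with rigidity of quasi-isogenies along locally nilpotent nil-immersions. Let $k=\OO_C/\mathfrak{m}_C$ be the residue field; it is algebraically closed of characteristic $p$, and the chosen embedding $\overline{\F}_p\hookrightarrow \OO_C/p$ lands in $k$. Write $G_0=G\otimes_{\OO_C} k$ for the special fiber, and note that $p=0$ in $\OO_C/p$ while every element of $\mathfrak{m}_C/p\subset \OO_C/p$ is nilpotent (since $v(x^N)=Nv(x)\geq v(p)$ for $N$ large), so $\mathfrak{m}_C/p$ is a locally nilpotent ideal.

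First I would produce the candidate $H$ using Dieudonn\'e--Manin. Since $k$ is algebraically closed of characteristic $p$, the rational covariant Dieudonn\'e module $D(G_0)[1/p]$ decomposes as a direct sum of simple isoclinic isocrystals, each of which is already defined over $W(\overline{\F}_p)[1/p]$. Translating back through the Dieudonn\'e functor, this produces a $p$-divisible group $H/\overline{\F}_p$, unique up to isogeny, together with an isogeny $\phi_0: H\otimes_{\overline{\F}_p} k\to G_0$. Unique\-ness of the isogeny class of $H$ is immediate from the fact that the Newton polygon is preserved under the base change $W(\overline{\F}_p)\to W(k)$ and determines the isogeny class of a $p$-divisible group over $\overline{\F}_p$.

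Second I would lift $\phi_0$ from $k$ to $\OO_C/p$ using Drinfeld's rigidity of quasi-isogenies. In the form I need it, this asserts that if $R$ is a ring in which $p$ is nilpotent and $I\subset R$ is a locally nilpotent ideal, then the base change functor on isogeny categories of $p$-divisible groups along $R\twoheadrightarrow R/I$ is fully faithful, and in particular every quasi-isogeny over $R/I$ lifts uniquely to a quasi-isogeny over $R$. Applying this to $R=\OO_C/p$ and $I=\mathfrak{m}_C/p$, the quasi-isogeny $\phi_0$ lifts uniquely to a quasi-isogeny $H\otimes_{\overline{\F}_p}\OO_C/p\to G\otimes_{\OO_C}\OO_C/p$, whose inverse is the desired $\rho$. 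The uniqueness of $H$ up to isogeny then propagates from $k$: any competing $H'$ yields a quasi-isogeny $H\otimes_{\overline{\F}_p}\OO_C/p\sim H'\otimes_{\overline{\F}_p}\OO_C/p$, reducing to a quasi-isogeny over $k$, which forces identical Newton data.

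The main obstacle is the rigidity step, because Drinfeld's original formulation requires a genuinely nilpotent ideal, whereas $\mathfrak{m}_C/p$ is only \emph{locally} nilpotent; the ideal $(\mathfrak{m}_C/p)^n$ equals $\mathfrak{m}_C/p$ for all $n$. To circumvent this I would either invoke the locally-nilpotent extension of the rigidity theorem (standard in the theory of Rapoport--Zink spaces, and proved by a crystal-of-universal-covers argument in the spirit of the proposition on $\widetilde{G}$ quoted above), or exhaust $\mathfrak{m}_C/p$ by the finitely generated, hence nilpotent, sub-ideals $J_\alpha\subset \mathfrak{m}_C/p$: at each $p^n$-torsion level, the map of finite locally free group schemes $G[p^n]\otimes \OO_C/p\to H[p^n]\otimes \OO_C/p$ deforms uniquely over each $(\OO_C/p)/J_\alpha$ by the classical Drinfeld lemma, and the uniqueness allows one to patch these into a compatible quasi-isogeny over all of $\OO_C/p$.
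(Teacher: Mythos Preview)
The survey does not actually prove this theorem; it is simply quoted from \cite{ScholzeWeinstein} with no argument given. Your overall strategy---Dieudonn\'e--Manin on the special fibre to find $H/\overline{\F}_p$, then Drinfeld rigidity to lift the resulting quasi-isogeny from $k=\OO_C/\mathfrak{m}_C$ to $\OO_C/p$---is correct and is the standard proof.

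Your second workaround for the rigidity step, however, is garbled. You propose to exhaust $\mathfrak{m}_C/p$ by finitely generated nilpotent sub-ideals $J_\alpha$ and then ``deform over each $(\OO_C/p)/J_\alpha$''. But the thickening you need to lift along is $(\OO_C/p)\twoheadrightarrow k$, and the intermediate quotient $(\OO_C/p)/J_\alpha\twoheadrightarrow k$ has kernel $(\mathfrak{m}_C/p)/J_\alpha$, which is still only locally nilpotent; so the classical Drinfeld lemma does not apply there, and enlarging $J_\alpha$ moves $(\OO_C/p)/J_\alpha$ \emph{toward} $k$, not toward $\OO_C/p$. There is no way to recover $\OO_C/p$ as a limit of the rings $(\OO_C/p)/J_\alpha$. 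The correct limit argument runs in the opposite direction: one writes $\OO_C/p$ as a filtered colimit of finitely presented $k$-subalgebras (or simply observes that the relevant finite locally free group schemes and the equations cutting out homomorphisms between them are of finite presentation, hence descend to a finitely generated situation where the relevant ideal is genuinely nilpotent). Your first workaround---invoking the locally-nilpotent version of Drinfeld rigidity directly, as in Rapoport--Zink or in the crystal-of-universal-covers argument you allude to---is valid and is what is actually used; you should rely on that and drop the second paragraph.
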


In particular, one finds that $\tilde{G}\cong \tilde{H}_{\OO_C}$, where the latter denotes the evaluation of $\tilde{H}$ on $\OO_C$, considered as a crystal on the infinitesimal site. Thus, in any dimension, there are only finitely many possibilities for the universal cover of $G$, and these are given by the Dieudonn\'e-Manin classification of $p$-divisible groups up to isogeny.

Now fix a $p$-divisible group $H$ over $\overline{\mathbb{F}}_p$, of height $h$ and dimension $d$. Note that for any $p$-divisible group $G$ over $\OO_C$, we have the $\Z_p$-lattice $\Lambda\subset \tilde{G}(\OO_C)$, where $\Lambda = TG(\OO_C)$ denotes the Tate module. In particular, we get a fully faithful functor from the category of pairs $(G,\rho)$, where $G$ is a $p$-divisible group over $\OO_C$, and $\rho: G\otimes_{\OO_C} \OO_C/p\to H\otimes_{\overline{\mathbb{F}}_p} \OO_C/p$ is a quasi-isogeny, to the category of $\Z_p$-lattices $\Lambda\subset \tilde{H}(\OO_C)$. Here, we use $\rho$ to identify $\tilde{G}$ and $\tilde{H}$.

Thus, as in the case over $\mathbb{C}$, one can ask the question for which $\Z_p$-lattices $\Lambda\subset \tilde{H}(\OO_C)$ one can form the quotient $\tilde{H}/\Lambda$ to get a $p$-divisible group. In order to state the answer, we need the following proposition.

\begin{prop} For any $p$-divisible group $G$ over $\OO_C$, there is a natural logarithm map $\log_G: G(\OO_C)\to \Lie G\otimes C$. One gets a short exact sequence
\[
0\to \Lambda[p^{-1}]\to \tilde{G}(\OO_C)\to \Lie G\otimes C\to 0\ ,
\]
where $\Lambda=TG(\OO_C)$ is the Tate module of $G$. Moreover, there is a natural 'quasi-logarithm' map $\qlog: \tilde{H}(\OO_C)\to M(H)\otimes C$, where $M(H)$ is the (covariant) Dieudonn\'e module, such that for any $(G,\rho)$, the diagram
\[\xymatrix{
\tilde{H}(\OO_C)\ar[d]^\cong\ar[rr]^\qlog && M(H)\otimes C\ar[d]\\
\tilde{G}(\OO_C)\ar[r] & G(\OO_C)\ar[r]^\log & \Lie G\otimes C
}\]
commutes. Here, the map $M(H)\otimes C\to \Lie G\otimes C$ comes from Grothendieck-Messing theory.
\end{prop}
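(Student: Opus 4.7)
The plan proceeds in three stages: construct $\log_G$ from the formal group law, verify the exact sequence by tracking kernels and using perfectoidness to get surjectivity, and construct $\qlog$ via the universal vector extension and Grothendieck--Messing crystallinity.

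I would begin by decomposing $G$ via the connected-\'etale sequence $0 \to G^\circ \to G \to G^\et \to 0$: the formal group $G^\circ$ carries a formal logarithm $\log^\circ \colon G^\circ(\OO_C) \to \Lie G \otimes C$ coming from its formal group law (the usual power series expansion), while $G^\et$ is locally constant with $G^\et(\OO_C) = G^\et(\overline{\F}_p)$ a $p^\infty$-torsion group, because $\OO_C$ is strictly henselian with algebraically closed residue field. Hence, for any $x \in G(\OO_C)$ some multiple $p^n x$ lies in $G^\circ(\OO_C)$, and one sets $\log_G(x) := p^{-n}\log^\circ(p^n x)$, which is well-defined, functorial, and additive.

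For the exact sequence on $\tilde G(\OO_C) = \varprojlim_{\times p} G(\OO_C)$, the induced map sends $(x_n) \mapsto \log_G(x_0)$. Its kernel consists of systems whose first entry---and hence, by the compatibility $p x_{n+1} = x_n$, every entry---is torsion, so it coincides with $V_p G(\OO_C) = T_p G(\OO_C)[p^{-1}] = \Lambda[p^{-1}]$. Surjectivity onto $\Lie G \otimes C$ is the subtler point: here I would reduce to the connected case and invoke Proposition~\ref{UnivCovPerf}, which represents $\tilde G^\circ$ by a perfectoid formal scheme whose $\OO_C$-valued points form a perfectoid ball (a tilt of the open polydisc). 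Any $\ell$ is realized as $\log_G(x_0)$ by producing a compatible $p$-power tower via the tilt, even though $\exp^\circ(p^{-n}\ell)$ fails to converge at any finite level.

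For $\qlog$, I would use the universal vector extension $0 \to V \to EG \to G \to 0$ of any lift $(G, \rho)$, with $V = (\Lie G^*)^*$ a vector group. Applying $\varprojlim_{\times p}$ gives $\widetilde{EG} \to \tilde G$ with kernel $\tilde V$; on $\OO_C$-points, $\tilde V(\OO_C) = 0$ and $\varprojlim^1_{\times p} V(\OO_C) = 0$ (both from $p$-adic separatedness and completeness of $\OO_C$, using that $p$ is injective on $V$), so $\widetilde{EG}(\OO_C) \buildrel\cong\over\longrightarrow \tilde G(\OO_C)$. Define $\qlog(x)$ as $\log_{EG}(\tilde x) \in \Lie EG \otimes C$ for the unique lift $\tilde x$. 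Grothendieck--Messing crystallinity gives the canonical identification $\Lie EG \otimes C \cong M(H) \otimes_{W(\overline{\F}_p)} C$ (by evaluating the Dieudonn\'e crystal via the PD-thickening $\OO_C \twoheadrightarrow \overline{\F}_p$), independent of $(G, \rho)$. Commutativity of the diagram then reduces to functoriality of $\log$ along $\widetilde{EG} \to \tilde G$ combined with the projection $\Lie EG \to \Lie G$, which is the Grothendieck--Messing map.

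The main obstacle is proving that $\qlog$ is genuinely intrinsic to $H$: two lifts $(G, \rho)$ and $(G', \rho')$ are related by a quasi-isogeny, and I must show that this matches the crystalline isomorphism of the universal vector extensions---which is precisely the functoriality of Grothendieck--Messing theory under isogenies of $p$-divisible groups. A secondary technical issue is the surjectivity argument: $\log^\circ$ has bounded image at any fixed level of $G^\circ$, and getting an unbounded image requires the full perfectoid structure of Proposition~\ref{UnivCovPerf}, which provides compatible $p$-power roots absorbing the divergence.
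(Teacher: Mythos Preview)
The paper does not actually supply a proof of this proposition: it is stated as background in the survey, with the implicit reference being \cite{ScholzeWeinstein}. So there is no paper argument to compare against, and your outline has to be judged on its own merits. On the whole it is the standard construction and is correct: the $\log_G$ via the connected--\'etale sequence, the identification of the kernel with $V_pG$, and the definition of $\qlog$ by lifting through the universal vector extension (using that $\varprojlim_{\times p} V(\OO_C)=0$ and $\varprojlim^1_{\times p} V(\OO_C)=0$ by $p$-adic completeness of $\OO_C$) followed by the Grothendieck--Messing identification $\Lie EG\otimes C\cong M(H)\otimes_{W(\bar{\F}_p)} C$ are exactly how this is normally done. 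One small imprecision: the relevant PD-thickening is $\OO_C\twoheadrightarrow \OO_C/p$ (the ideal $(p)$ does carry divided powers in $\OO_C$), not $\OO_C\twoheadrightarrow \bar{\F}_p$; you then use base change of the Dieudonn\'e crystal along $W(\bar{\F}_p)\to\OO_C$ together with the quasi-isogeny $\rho$ over $\OO_C/p$.

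The one place where you are making life harder than necessary is surjectivity of $\tilde{G}(\OO_C)\to \Lie G\otimes C$. You do not need Proposition~\ref{UnivCovPerf} or any perfectoid input. The formal logarithm $\log^\circ$ is a local isomorphism near $0$ (its inverse $\exp$ converges on a small polydisc), so the image of $\log_G$ on $G(\OO_C)$ already contains an open neighbourhood of $0$ in $\Lie G\otimes C$. Given any $\ell$, choose $N$ with $p^N\ell$ in that neighbourhood, take $z\in G(\OO_C)$ with $\log_G(z)=p^N\ell$, and then lift $z$ to $\tilde{G}(\OO_C)$ using that $G(\OO_C)$ is $p$-divisible (for $x\in G(\OO_C)$ the fibre $[p]^{-1}(x)$ is a finite flat $G[p]$-torsor over the valuation ring $\OO_C$, hence has an $\OO_C$-point since $C$ is algebraically closed). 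Multiplying the lift by $p^{-N}$ in $\tilde{G}(\OO_C)$ produces a preimage of $\ell$. This is elementary and avoids the circularity of invoking structural results about $\tilde{H}$ that themselves rely on the sequence you are trying to establish.
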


\begin{thm}[{\cite[Theorem D]{ScholzeWeinstein}}]\label{ClassLattices} The category of pairs $(G,\rho)$ is equivalent to the category of $\Z_p$-lattices $\Lambda\subset \tilde{H}(\OO_C)$ such that the cokernel $V=\coker(\Lambda\otimes C\to M(H)\otimes C)$ is of dimension $d$, and the sequence
\[
0\to \Lambda[p^{-1}]\to \tilde{H}(\OO_C)\to V\to 0
\]
is exact.
\end{thm}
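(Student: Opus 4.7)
The plan is to deduce this equivalence from the preceding Hodge-theoretic theorem, using the quasi-logarithm compatibility diagram as the bridge. Forward, $(G,\rho)$ is sent to $\Lambda = TG(\OO_C)\subset \tilde G(\OO_C) = \tilde H(\OO_C)$, where the identification of universal covers comes from the crystal property applied to $\rho$. To check the two conditions, I would unwrap the logarithm exact sequence $0\to \Lambda[p^{-1}]\to \tilde G(\OO_C)\to \Lie G\otimes C\to 0$ through the quasi-logarithm compatibility proposition stated just above: the map $\tilde G(\OO_C)\to \Lie G\otimes C$ factors as $\qlog$ composed with the Grothendieck--Messing quotient $M(H)\otimes C\twoheadrightarrow \Lie G\otimes C$. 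Hence $V = \coker(\Lambda\otimes C\to M(H)\otimes C)$ is canonically $\Lie G\otimes C$, giving $\dim_C V = d$, and the exact sequence in the theorem statement is just the logarithm sequence transported through $\qlog$.

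For full faithfulness, I would refine the forward functor to the Hodge-theoretic functor $(G,\rho)\mapsto (\Lambda, W)$ of the previous theorem, where $W = \ker(\Lambda\otimes C\to M(H)\otimes C)\subset \Lambda\otimes C$ has dimension $d$ by dimension counting (both $\Lambda\otimes C$ and $M(H)\otimes C$ are $h$-dimensional, and $V$ is $d$-dimensional). The same $\qlog$ diagram, combined with Fargues's theorem, identifies $W$ with the Hodge--Tate subspace $(\Lie G)(1)\otimes C\subset \Lambda\otimes C$: both are the common kernel of the arrow $\Lambda\otimes C\to M(H)\otimes C\twoheadrightarrow V = \Lie G\otimes C$, noting that the image of $\Lambda\otimes C$ in $M(H)\otimes C$ is precisely the Hodge piece $(\Lie G^\ast)^\ast\otimes C$. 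Since $W$ is recovered intrinsically from $\Lambda\subset \tilde H(\OO_C)$ and the Hodge-theoretic functor is fully faithful, so is ours.

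For essential surjectivity, I would start from a lattice $\Lambda\subset \tilde H(\OO_C)$ satisfying both conditions, construct $W$ as above, and apply the Hodge-theoretic theorem to $(\Lambda, W)$ to obtain a $p$-divisible group $G_0/\OO_C$ with $TG_0 = \Lambda$ and prescribed Hodge--Tate subspace. It then remains to equip $G_0$ with a quasi-isogeny $\rho: G_0\otimes \OO_C/p\to H\otimes \OO_C/p$ that realizes the given embedding $\Lambda\subset \tilde H(\OO_C)$. The existence-of-quasi-isogeny theorem cited in the excerpt yields some $H'/\overline{\F}_p$ together with an isomorphism $\tilde G_0\cong \tilde H'_{\OO_C}$, and the $\qlog$ diagram forces the images of $\Lambda$ in $M(H)\otimes C$ and $M(H')\otimes C$ to agree, which by Dieudonn\'e--Manin pins down $H'$ inside the isogeny class of $H$ and rigidifies the quasi-isogeny.

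The main obstacle is precisely this last matching step: the abstract existence of $G_0$ is handed to us by the Hodge-theoretic equivalence, but arranging that its universal cover is identified with $\tilde H$ in a way that carries the tautological Tate module onto the prescribed lattice requires the combined use of the crystal structure on $\tilde H$ and Dieudonn\'e--Manin. Once this is in place, the rest of the argument is essentially bookkeeping on top of the preceding theorem.
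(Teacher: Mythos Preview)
The survey paper you are working from does not actually prove this theorem; it only states it, citing \cite[Theorem D]{ScholzeWeinstein}. So there is no proof here to compare against directly. That said, it is worth comparing your strategy to the one in the cited paper, and pointing out a structural problem.

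Your plan is to deduce Theorem~\ref{ClassLattices} from the Hodge-theoretic classification (the theorem stated just before it, \cite[Theorem 5.2.1]{ScholzeWeinstein}). In \cite{ScholzeWeinstein} the logical dependence runs the other way: Theorem~D is proved first, by period-ring methods, and the Hodge-theoretic classification is then obtained as a corollary. Concretely, the proof there passes through the identification $\tilde H(\OO_C)=(M(H)\otimes B_\cris^+)^{\varphi=p}$ (stated at the end of the survey's Section~5) and the full faithfulness of Dieudonn\'e theory over semiperfect rings; from a lattice $\Lambda$ with the stated exactness, one builds the deformation $(G,\rho)$ directly via a Grothendieck--Messing style argument with $B_\cris^+\to C$ playing the role of the divided-power thickening. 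So as written your argument is circular.

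Even if one grants the Hodge-theoretic equivalence as a black box proved by other means, your essential surjectivity step has a genuine gap. You produce $G_0$ with Tate module abstractly isomorphic to $\Lambda$ and the correct Hodge--Tate filtration, and then invoke the existence-of-quasi-isogeny theorem to get some $H'/\bar\F_p$ and some $\rho'$. The assertion that ``the $\qlog$ diagram forces the images of $\Lambda$ in $M(H)\otimes C$ and $M(H')\otimes C$ to agree'' does not make sense as stated: these live in different vector spaces, and no map between $M(H)$ and $M(H')$ has been constructed. What you actually need is an isomorphism $\tilde H\cong \tilde G_0$ of fpqc sheaves over $\OO_C$ carrying the given $\Lambda\subset\tilde H(\OO_C)$ to $TG_0(\OO_C)$. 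Producing that isomorphism is essentially the whole content of the theorem, and it is precisely what the $B_\cris^+$ description and Dieudonn\'e full faithfulness supply in \cite{ScholzeWeinstein}. Dieudonn\'e--Manin alone only tells you $H$ and $H'$ have the same Newton polygon, not that the two embeddings of $\Lambda$ into their universal covers match.
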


\newpage

\section{Rapoport-Zink spaces}

Using these results, we show in \cite{ScholzeWeinstein} that Rapoport-Zink spaces at infinite level carry a natural structure as a perfectoid space. More precisely, consider a $p$-divisible group $H$ over $\bar{\mathbb{F}}_p$ of dimension $d$ and height $h$. Rapoport-Zink, \cite{RapoportZink}, define the following deformation space.

\begin{thm}[\cite{RapoportZink}] The functor sending a $W(\bar{\mathbb{F}}_p)$-algebra $R$ on which $p$ is nilpotent to the set of isomorphism classes of pairs $(G,\rho)$, where $G/R$ is a $p$-divisible group and
\[
\rho: H\otimes_{\bar{\mathbb{F}}_p} R/p\to G\otimes_R R/p
\]
is a quasi-isogeny, is representable by a formal scheme $\mathcal{M}$.
\end{thm}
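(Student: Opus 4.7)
The plan is to combine Drinfeld's rigidity lemma for quasi-isogenies with Grothendieck-Messing deformation theory, using a Dieudonn\'e-theoretic description of the underlying reduced locus as the gluing backbone. The first step is to establish Drinfeld rigidity: if $R \to R_0$ is a surjection of $\Z_p$-algebras with nilpotent kernel, and $G$, $G'$ are $p$-divisible groups over $R$, then any quasi-isogeny $G \otimes_R R_0 \to G' \otimes_R R_0$ lifts uniquely to a quasi-isogeny $G \to G'$ over $R$. The proof is direct: passing to the nilpotent quotient kills only $p^N$-torsion in $\Hom$, but $\Hom$ in the isogeny category sees only $p$-power torsion, so nothing is lost. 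A consequence is that the moduli functor $\mathcal{M}$ has no non-trivial automorphisms (making it set-valued), and that lifting a point $(G_0, \rho_0)$ along a nilpotent thickening is the same as lifting the $p$-divisible group $G_0$ alone; the quasi-isogeny $\rho_0$ then propagates uniquely.

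Second, I would apply Grothendieck-Messing at a point $x_0 = (G_0, \rho_0) \in \mathcal{M}(\bar{\mathbb{F}}_p)$. It identifies infinitesimal deformations of $G_0$ with lifts of the Hodge filtration inside the evaluation of the Dieudonn\'e crystal on a suitable PD thickening; the resulting universal deformation is pro-represented by a formal power series ring $W(\bar{\mathbb{F}}_p)[[t_1, \ldots, t_{d(h-d)}]]$ in the purely infinitesimal case, with an \'etale factor adjoined in general. Combined with rigidity, this pro-represents $\mathcal{M}$ at $x_0$.

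Finally, I would construct the underlying reduced scheme $\mathcal{M}_{\mathrm{red}}$ and glue the local formal models along it. Via covariant Dieudonn\'e theory, $\mathcal{M}_{\mathrm{red}}(\bar{\mathbb{F}}_p)$ is in bijection with the set of Dieudonn\'e lattices $M_0 \subset M(H) \otimes_{\Z_p} \Q_p$ satisfying $p M_0 \subset F M_0 \subset M_0$, with the quasi-isogeny datum encoded as the relative position of $M_0$ and $M(H)$. Stratifying by the height of the quasi-isogeny, each stratum embeds as a bounded subset of the affine Grassmannian of $\GL_h$ over $\bar{\mathbb{F}}_p$, and is realized as a locally closed reduced subscheme locally of finite type; their disjoint union is $\mathcal{M}_{\mathrm{red}}$. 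One then glues the Grothendieck-Messing universal deformations from the second step along $\mathcal{M}_{\mathrm{red}}$ — Drinfeld rigidity makes the gluing data canonical — to produce the formally smooth formal scheme $\mathcal{M}$, locally formally of finite type over $\Spf W(\bar{\mathbb{F}}_p)$. The main obstacle is in this last step: promoting the set-theoretic description of quasi-isogenies into a genuine scheme structure on $\mathcal{M}_{\mathrm{red}}$, i.e.\ producing enough algebraic families of quasi-isogenies, which is essentially the content of realizing $\mathcal{M}_{\mathrm{red}}$ as an affine Deligne-Lusztig variety at infinite level inside the affine Grassmannian.
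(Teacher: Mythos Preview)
The paper does not give its own proof of this theorem; it is simply quoted from Rapoport and Zink's book and used as input. Your outline is in fact a faithful sketch of the argument in \cite{RapoportZink}: Drinfeld rigidity to make the functor set-valued and to reduce the lifting problem to lifting $G$ alone, Grothendieck--Messing to identify the formal completion at each closed point with $\Spf W(\bar{\mathbb{F}}_p)[[t_1,\ldots,t_{d(h-d)}]]$, and a Dieudonn\'e-theoretic construction of the underlying reduced scheme stratified by the height of the quasi-isogeny.

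One historical caveat: framing $\mathcal{M}_{\mathrm{red}}$ as an affine Deligne--Lusztig variety inside the affine Grassmannian is a later reinterpretation. In \cite{RapoportZink} itself, the reduced locus is built more concretely: for each bound $N$ on the height of $\rho$, the corresponding subfunctor over $\bar{\mathbb{F}}_p$ is shown to be representable by a projective scheme over $\bar{\mathbb{F}}_p$ (essentially by embedding it into a classical flag-type variety parametrizing subgroup schemes of $H[p^N]$), and the pieces are then glued. Your sketch of this step via the affine Grassmannian is correct but imports machinery not present in the original reference. The ``main obstacle'' you flag is real, and is precisely what occupies the bulk of Chapter 2 of \cite{RapoportZink}.
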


Moreover, its generic fibre $\mathcal{M}_\eta$, considered as an adic space, has a natural system of coverings $\mathcal{M}_n\to \mathcal{M}_\eta$ parametrizing isomorphisms $(\Z/p^n\Z)^h\to G[p^n]$. Each of them is a rigid-analytic variety. However, the inverse limit of the $\mathcal{M}_n$ does not make sense within rigid geometry. In \cite{ScholzeWeinstein}, we prove however the following result. For simplicity, we work with the base-change $\mathcal{M}_{n,K}$ of $\mathcal{M}_n$ to $\Spa(K,\OO_K)$, where $K/\Q_p$ is a perfectoid field.

\begin{thm} There is a unique (up to unique isomorphism) perfectoid space $\mathcal{M}_{\infty,K}$ over $\Spa(K,\OO_K)$ such that $\mathcal{M}_{\infty,K}\sim \varprojlim \mathcal{M}_{n,K}$. In fact, $\mathcal{M}_{\infty,K}$ is a locally closed subspace of the generic fibre of $\tilde{H}_{\OO_K}^h$, defined by certain explicit conditions.
\end{thm}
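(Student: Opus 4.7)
The plan is to combine the classification theorem \ref{ClassLattices} with the perfectoid structure on the universal cover $\tilde{H}$ given by Proposition \ref{UnivCovPerf}. A point at infinite level should parametrize a deformation $(G,\rho)$ together with a trivialization $\Z_p^h \cong T_p G$ of the Tate module; under Theorem \ref{ClassLattices}, giving such data over $\OO_C$ amounts to specifying an ordered $\Z_p$-basis $(s_1,\ldots,s_h)$ of a lattice $\Lambda \subset \tilde{H}(\OO_C)$ satisfying the exactness and dimension conditions. Hence, morally, $\mathcal{M}_{\infty,K}$ should be realized as a locally closed subspace of the generic fibre of $\tilde{H}_{\OO_K}^h$, cut out by these conditions.

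First I would handle the ambient space. By the Dieudonn\'e--Manin classification, after isogeny $H$ decomposes into isoclinic pieces; for the connected part, Proposition \ref{UnivCovPerf} gives an explicit description of $\tilde{H}$ as $\Spf W(\bar{\F}_p)[[T_1^{1/p^\infty},\ldots,T_d^{1/p^\infty}]]$, whose generic fibre over $K$ is an affinoid perfectoid open unit ball with coordinates having all $p$-power roots. The \'etale part contributes a discrete factor isomorphic to $(\Q_p/\Z_p)^{h-d}$ after passage to the universal cover; base-changing and taking products, the generic fibre of $\tilde{H}^h_{\OO_K}$ is a perfectoid space over $\Spa(K,\OO_K)$, which I denote $\mathcal{X}$.

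Next I would define the locally closed subspace. Using the quasi-logarithm map $\qlog: \tilde{H}(\OO_C) \to M(H) \otimes C$ from the previous section, the $h$-tuple of sections $(s_1,\ldots,s_h) \in \mathcal{X}(R,R^+)$ gives rise to an $R$-linear map $R^h \to M(H) \otimes_{W(\bar{\F}_p)} R$ whose cokernel should have rank $d$ and whose kernel should be trivial. The condition on the rank of the cokernel defines a locally closed subspace (open in a closed subspace determined by the relevant minor vanishing), and the condition on the exactness of $0 \to \Lambda[p^{-1}] \to \tilde{H}(\OO_C) \to V \to 0$ translates similarly into explicit vanishing and nonvanishing of determinantal invariants on $\qlog$. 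Call the resulting locally closed perfectoid subspace $\mathcal{M}_{\infty,K} \subset \mathcal{X}$; one should check that over geometric points $\Spa(C,\OO_C)$, its set of points recovers exactly the classification of Theorem \ref{ClassLattices}, which gives the bijection with $\varprojlim_n \mathcal{M}_{n,K}(C,\OO_C)$.

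Finally, I would verify $\mathcal{M}_{\infty,K} \sim \varprojlim_n \mathcal{M}_{n,K}$ in the sense of Definition \ref{InverseLimit}. The map to $\mathcal{M}_{n,K}$ sends $(s_1,\ldots,s_h)$ to the tuple $(s_1/p^n,\ldots,s_h/p^n)$ regarded as an $(\Z/p^n\Z)$-trivialization of $G[p^n]$ for the $p$-divisible group $G$ obtained by forming the quotient $\tilde{H}/\langle s_1,\ldots,s_h\rangle$; on topological spaces, the classification of points at each finite level together with the geometric-point description gives the homeomorphism $|\mathcal{M}_{\infty,K}| \cong \varprojlim |\mathcal{M}_{n,K}|$. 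The density of $\varinjlim_n \mathcal{O}_{\mathcal{M}_{n,K}}$ in the structure sheaf of $\mathcal{M}_{\infty,K}$ follows because extracting $p$-power roots of the Tate-module coordinates is precisely what the tower $\mathcal{M}_n$ achieves, matching the perfectoid coordinates on $\tilde{H}^h_{\OO_K}$. The main obstacle I expect is the globalization: Theorem \ref{ClassLattices} is stated only over $\OO_C$ for $C$ algebraically closed, so one must upgrade the classification to families over affinoid perfectoid $(R,R^+)$, verifying that the pointwise conditions on $\Lambda$ and $V$ assemble into an honest locally closed condition in the adic sense, and that the resulting perfectoid subspace genuinely represents the functor of compatible trivializations at all finite levels.
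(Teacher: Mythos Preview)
The paper does not actually prove this theorem; it is a survey that states the result and defers to \cite{ScholzeWeinstein} for the argument. The only indication of the method is the remark immediately following the statement, which says that the map $\mathcal{M}_{\infty,K}\to(\tilde{H}_{\OO_K})^h_\eta$ is exactly the map from Theorem~\ref{ClassLattices} on $C$-points. Your outline is consistent with that sketch and with the structure of the proof in \cite{ScholzeWeinstein}. Note also that the uniqueness assertion is a formal consequence of the universal property recorded just before this section (\cite[Proposition 2.4.5]{ScholzeWeinstein}); you should cite that rather than leave uniqueness implicit.

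Two small corrections to your description of the ambient space. First, the universal cover of the \'etale part of $H$ is the constant sheaf $\underline{\Q_p}^r$, not $(\Q_p/\Z_p)^r$; recall $\tilde{G}=TG\otimes_{\Z_p}\Q_p$. Second, its rank $r$ is not $h-d$ in general: that holds only when $H$ is ordinary, whereas the connected part of $H$ has height at least $d$ (e.g.\ a supersingular formal group has height $2$ and dimension $1$, with trivial \'etale part). Neither slip affects the strategy, since one may in any case decompose $H$ up to isogeny and handle the connected summand via Proposition~\ref{UnivCovPerf}.

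Your diagnosis of the main obstacle is on target: Theorem~\ref{ClassLattices} is stated only over $\OO_C$ for algebraically closed $C$, and the substance of the argument in \cite{ScholzeWeinstein} lies in promoting this pointwise classification to a moduli description over arbitrary perfectoid bases, so that the determinantal and exactness conditions on $\qlog$ genuinely cut out a locally closed adic subspace and the resulting space satisfies the $\sim$ relation of Definition~\ref{InverseLimit}.
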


Here, we use $\sim$ as in Definition \ref{InverseLimit}.

\begin{rem} We note that the generic fibre of $\tilde{H}_{\OO_K}^h$ is a perfectoid space, see Proposition \ref{UnivCovPerf} in the connected case. The map
\[
\mathcal{M}_{\infty,K}\to (\tilde{H}_{\OO_K})^h_\eta
\]
is precisely the map from Theorem \ref{ClassLattices} on $C$-valued points, sending a deformation to the corresponding lattice in the universal cover.
\end{rem}

We remark that the description of $\mathcal{M}_{\infty,K}$ as an explicit locally closed subspace of the generic fibre of $\tilde{H}_{\OO_K}^h$ allows us to prove duality isomorphisms between Rapoport-Zink spaces as isomorphisms of perfectoid spaces. We refer to \cite{ScholzeWeinstein} for details. The abstract setup is however as in the following proposition, which gives the expected cohomological consequences (which were not included in \cite{ScholzeWeinstein}).

\begin{prop} Let $C$ be an algebraically closed and complete extension of $\Q_p$. Let $G$, $\check{G}$ be two $p$-adic reductive groups, and let $\mathcal{M}_U$, $\check{\mathcal{M}}_{\check{U}}$ be two towers of partially proper smooth adic spaces over $C$ parametrized by compact open subgroups $U\subset G(\Q_p)$, resp. $\check{U}\subset \check{G}(\Q_p)$, and with an action of $G(\Q_p)$, resp. $\check{G}(\Q_p)$, on the tower, such that for $U^\prime\subset U$ an open normal subgroup, $\mathcal{M}_{U^\prime}$ is a finite \'etale Galois cover of $\mathcal{M}_U$ with Galois group $U/U^\prime$, respectively the similar statement for the tower $\check{\mathcal{M}}_{\check{U}}$. Moreover, assume $\check{G}(\Q_p)$ acts continuously on each $\mathcal{M}_U$, and $G(\Q_p)$ acts continuously on each $\check{\mathcal{M}}_{\check{U}}$, such that all group actions are compatible and commute.

Finally, assume that there is a perfectoid space $\mathcal{M}$ over $C$ with a continuous action of $G(\Q_p)\times \check{G}(\Q_p)$ such that
\[
\mathcal{M}\sim \varprojlim_U \mathcal{M}_U\ ,\ \mathcal{M}\sim \varprojlim_{\check{U}} \check{\mathcal{M}}_{\check{U}}\ ,
\]
where both maps to the inverse limit are $G(\Q_p)\times \check{G}(\Q_p)$-equivariant. Let $\ell\neq p$ be a prime.

\begin{altenumerate}
\item[{\rm (i)}] For any $m\geq 1$, there are $G(\Q_p)\times \check{G}(\Q_p)$-equivariant isomorphisms
\[
\varinjlim_U H^i_c(\mathcal{M}_U,\Z/\ell^m\Z)\cong H_c^i(\mathcal{M},\Z/\ell^m\Z)\cong \varinjlim_{\check{U}} H^i_c(\check{\mathcal{M}}_{\check{U}},\Z/\ell^m\Z)\ .
\]
\item[{\rm (ii)}] The action of $\check{G}(\Q_p)$ on $H_c^i(\mathcal{M}_U,\Z_\ell)$ is smooth for any $U\subset G(\Q_p)$; similarly, the action of $G(\Q_p)$ on $H_c^i(\check{\mathcal{M}}_{\check{U}},\Z_\ell)$ is smooth for any $\check{U}\subset \check{G}(\Q_p)$.
\item[{\rm (iii)}] There is a $G(\Q_p)\times \check{G}(\Q_p)$-equivariant isomorphism
\[
\varinjlim_U H^i_c(\mathcal{M}_U,\Z_\ell)\cong \varinjlim_{\check{U}} H^i_c(\check{\mathcal{M}}_{\check{U}},\Z_\ell)\ .
\]
Moreover, both identify with the $G(\Q_p)\times \check{G}(\Q_p)$-smooth vectors in $H_c^i(\mathcal{M},\Z_\ell)$.
\end{altenumerate}
\end{prop}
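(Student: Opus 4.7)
The plan is to establish (i) first via compatibility of compactly supported étale cohomology with $\sim$-limits of adic spaces, then derive (ii) by transferring smoothness along the isomorphism of (i), and finally bootstrap to (iii) via an inverse limit argument that uses (ii).

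For (i), fix a torsion coefficient $F = \underline{\Z/\ell^m}$. The assumption $\mathcal{M} \sim \varprojlim_U \mathcal{M}_U$ from Definition \ref{InverseLimit} gives a homeomorphism of underlying spaces together with density on structure sheaves; this implies (as is standard for the $\sim$-limit formalism, compare the perfectoid torus example in Section 2 whose proof of the tilting equivalence on étale topoi goes through the same kind of approximation) that for any torsion sheaf $F_0$ pulled back from some $\mathcal{M}_{U_0}$, one has $H^i_\et(\mathcal{M}, F_0) = \varinjlim_{U \subset U_0} H^i_\et(\mathcal{M}_U, F_0|_{\mathcal{M}_U})$. To pass from étale to compactly supported cohomology, invoke Huber's description $H^i_c(X, F) = \varinjlim_V H^i(X, j_{V!}F|_V)$ with $V$ ranging over qcqs open subsets, which applies because $\mathcal{M}$ and each $\mathcal{M}_U$ are partially proper, hence taut. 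Every qcqs open $V \subset \mathcal{M}$ descends to a qcqs open $V_{U_0} \subset \mathcal{M}_{U_0}$ for some $U_0$ (using the homeomorphism on underlying spaces), and pullback commutes with $j_!$; combining this with the étale statement yields
\[
H^i(\mathcal{M}, j_{V!} F|_V) = \varinjlim_{U \subset U_0} H^i(\mathcal{M}_U, j_{V_U!} F|_{V_U}),
\]
and interchanging the two filtered colimits gives the first isomorphism of (i). The second isomorphism follows by the identical argument applied to the tower $\{\check{\mathcal{M}}_{\check U}\}$.

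For (ii), the isomorphism from (i) is $G(\Q_p) \times \check G(\Q_p)$-equivariant, and the right-hand side $\varinjlim_{\check U} H^i_c(\check{\mathcal{M}}_{\check U}, \Z/\ell^m)$ is visibly $\check G(\Q_p)$-smooth since each term is $\check U$-fixed; hence so is $\varinjlim_U H^i_c(\mathcal{M}_U, \Z/\ell^m)$. Descending to a fixed $U$: given $c \in H^i_c(\mathcal{M}_U, \Z/\ell^m)$, its image in the colimit has open stabilizer $\check U_c \subset \check G(\Q_p)$, so for each $g \in \check U_c$ there is some $U' \subset U$ with $g \cdot c = c$ after pullback to $\mathcal{M}_{U'}$; choosing $U'$ normal in $U$ and applying the Hochschild-Serre argument for the finite Galois cover $\mathcal{M}_{U'} \to \mathcal{M}_U$ (using that $\check G(\Q_p)$ commutes with the $U/U'$-action), together with the fact that this argument can be performed uniformly in a cofinal family of $g$'s, produces an open subgroup of $\check G(\Q_p)$ fixing $c$ at level $U$. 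The $\Z_\ell$-smoothness then follows by applying this to all $m$ and passing to the limit.

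For (iii), apply (i) with $F = \Z/\ell^m$ for every $m$; the inverse systems involved are finitely generated and satisfy Mittag-Leffler, so $\varprojlim_m$ is exact and commutes with filtered colimits of smooth modules. Combining with (ii), we get
\[
\varinjlim_U H^i_c(\mathcal{M}_U, \Z_\ell) \cong \varinjlim_U \varprojlim_m H^i_c(\mathcal{M}_U, \Z/\ell^m) \cong \varprojlim_m \varinjlim_U H^i_c(\mathcal{M}_U, \Z/\ell^m),
\]
and identically for the $\check U$-tower; applying (i) inside yields the equivariant isomorphism. Both sides then identify with the $G(\Q_p) \times \check G(\Q_p)$-smooth vectors in $H^i_c(\mathcal{M}, \Z_\ell) = \varprojlim_m H^i_c(\mathcal{M}, \Z/\ell^m)$, completing the proof. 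The main obstacle throughout is step (i): rigorously establishing that compactly supported cohomology commutes with $\sim$-limits in the generality of partially proper smooth adic spaces, which is the crucial input that makes perfectoid $\sim$-limits a viable tool for computing cohomology of finite-level spaces.
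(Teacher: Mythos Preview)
Your approach to (i) is essentially the same as the paper's: reduce compactly supported cohomology to ordinary cohomology of $j_{!}$-sheaves on qcqs opens, descend those opens to finite level, and invoke the limit statement for \'etale cohomology under $\sim$ (the paper cites \cite[Corollary 7.18]{ScholzePerfectoidSpaces1}).

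Parts (ii) and (iii), however, have real gaps.

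In (ii), the descent step is not justified. Knowing that $g\cdot c$ and $c$ agree after pullback to some $\mathcal{M}_{U'}$ says nothing about equality at level $U$ unless the pullback map is injective. Your Hochschild--Serre gesture does not supply this. The paper's fix is to first shrink $U$ to be pro-$p$; then for any normal $U'\subset U$ the order of $U/U'$ is a power of $p$, hence invertible in $\Z/\ell^m\Z$, so averaging over $U/U'$ gives a retraction and the transition map $H^i_c(\mathcal{M}_U,\Z/\ell^m)\to H^i_c(\mathcal{M}_{U'},\Z/\ell^m)$ is injective. This is the missing idea. Your passage to $\Z_\ell$-coefficients (``apply this to all $m$ and pass to the limit'') is also incomplete: smoothness modulo each $\ell^m$ only gives, for each $m$, an open stabilizer $\check U_m$, and $\bigcap_m \check U_m$ need not be open. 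The paper closes this by observing that $H^i_c(\mathcal{M}_U,\Z_\ell)$ is a filtered colimit of finitely generated $\Z_\ell$-modules (Huber's finiteness), and that for a pro-$p$ group acting on a finitely generated $\Z_\ell$-module, continuity already forces smoothness.

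In (iii), the step
\[
\varinjlim_U \varprojlim_m H^i_c(\mathcal{M}_U,\Z/\ell^m\Z)\;\cong\;\varprojlim_m \varinjlim_U H^i_c(\mathcal{M}_U,\Z/\ell^m\Z)
\]
is not valid: filtered colimits do not commute with inverse limits, and Mittag--Leffler does not help here (it controls $\varprojlim^1$, not commutation with colimits). The paper explicitly warns that ``one has to take some care about the order'' of limits. Its argument avoids this interchange entirely: working directly with $\Z_\ell$-coefficients via the definition $H^i_c(-,\Z_\ell)=\varinjlim_k\varprojlim_m H^i(V'_k,j_{k!}\Z/\ell^m\Z)$, it uses the same averaging trick (for $U$ pro-$p$) to show that any $G(\Q_p)$-smooth vector in $H^i_c(\mathcal{M},\Z_\ell)$ already lives in some $H^i_c(\mathcal{M}_U,\Z_\ell)$, and conversely that the colimit injects into the smooth vectors. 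No limit/colimit swap is needed.
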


\begin{rem} Some remarks about the definitions of the various objects involved. We recall that for partially proper spaces, cohomology with compact support is defined as the derived functor of the functor $\Gamma_c$ of taking sections with quasicompact support. This leads to the following equivalent definition. Fix some $U$, and let $V_{0U}\subset V_{1U}\subset\ldots \mathcal{M}_U$ be a sequence of qcqs open subsets exhausting $\mathcal{M}_U$. Let $V^\prime_{0U}\subset V^\prime_{1U}\subset \ldots \mathcal{M}_U$ be a second such sequence, such that $V_{kU}\subset V^\prime_{kU}$ is a strict inclusion for all $k$, i.e. the $\overline{V_{kU}}\subset V^\prime_{kU}$. Let $j_{kU}: V_{kU}\to V^\prime_{kU}$ be the open inclusion. Then
\[
H_c^i(\mathcal{M}_U,\Z/\ell^m\Z) = \varinjlim_k H^i(V^\prime_{kU},j_{kU!} \Z/\ell^m\Z)\ .\footnote{The latter groups are also usually denoted $H^i_c(V_{kU},\Z/\ell^m\Z)$. In that case however, they are not the derived functors of global sections with compact support: $V_{kU}$ is not partially proper.}
\]
The same applies for $\mathcal{M}$, i.e. at infinite level.

For $\Z_\ell$-cohomology, one has to take inverse limits, and one has to take some care about the order. We define
\[
H_c^i(\mathcal{M}_U,\Z_\ell) = \varinjlim_k H^i(V^\prime_{kU},j_{kU!} \Z_\ell) = \varinjlim_k \varprojlim_m H^i(V^\prime_{kU},j_{kU!} \Z/\ell^m\Z)\ .
\]
We remark that a result of Huber in the book \cite{Huber} ensures that the groups
\[
H^i(V^\prime_{kU},j_{kU!} \Z/\ell^m\Z) = H^i_c(V_{kU},\Z/\ell^m\Z)
\]
are finite. We define $H_c^i(\mathcal{M},\Z_\ell)$ in the same way.
\end{rem}

\begin{proof}\begin{altenumerate}
\item[{\rm (i)}] There are obvious $G(\Q_p)\times \check{G}(\Q_p)$-equivariant maps, and we have to prove that they are isomorphisms; thus we may restrict to one tower, $\mathcal{M}_U$. 

Fix some $U$, and let $V_{0U}\subset V_{1U}\subset\ldots \mathcal{M}_U$ be a sequence of qcqs open subsets exhausting $\mathcal{M}_U$. Let $V^\prime_{0U}\subset V^\prime_{1U}\subset \ldots \mathcal{M}_U$ be a second such sequence, such that $V_{kU}\subset V^\prime_{kU}$ is a strict inclusion for all $k$, i.e. the $\overline{V_{kU}}\subset V^\prime_{kU}$. Let $j_{kU}: V_{kU}\to V^\prime_{kU}$ be the open inclusion. Then
\[
H_c^i(\mathcal{M}_U,\Z/\ell^m\Z) = \varinjlim_k H^i(V^\prime_{kU},j_{kU!} \Z/\ell^m\Z)\ .
\]
For $U^\prime\subset U$, let us denote by $V_{kU^\prime}$ etc. the corresponding objects induced by base-change, as well as $V_k\subset \mathcal{M}$. It is enough to prove that
\[
\varinjlim_{U^\prime\subset U} H^i(V^\prime_{kU^\prime}, j_{kU^\prime!} \Z/\ell^m\Z)\to H^i(V^\prime, j_{k!} \Z/\ell^m\Z)
\]
is an isomorphism. But this follows from \cite[Corollary 7.18]{ScholzePerfectoidSpaces1}.
\item[{\rm (ii)}] It suffices to check for $U$ small enough (using a Hochschild-Serre spectral sequence), so we may assume $U$ is pro-$p$. Now, the map
\[
H^i_c(\mathcal{M}_U,\Z/\ell^m\Z)\to \varinjlim_{U^\prime\subset U} H^i_c(\mathcal{M}_{U^\prime},\Z/\ell^m\Z) = H^i_c(\mathcal{M},\Z/\ell^m\Z)
\]
is injective. (The transition map to $U^\prime\subset U$ normal has an inverse, given by averaging over $U/U^\prime$.) On the right-hand side, the action of $\check{G}(\Q_p)$ is continuous by the comparison to the other tower in part (i), thus it is continuous on $H^i_c(\mathcal{M}_U,\Z/\ell^m\Z)$.

The result follows by noting that for actions of pro-$p$-groups on finitely generated $\ell$-adic modules, smoothness is equivalent to continuity. (By definition, one may write the cohomology as a direct limit of finitely generated $\Z_\ell$-modules.)
\item[{\rm (iii)}] It suffices to identify the $G(\Q_p)\times \check{G}(\Q_p)$-smooth vectors in $H_c^i(\mathcal{M},\Z_\ell)$ with
\[
\varinjlim_U H^i_c(\mathcal{M}_U,\Z_\ell)\ .
\]
By parts (i) and (ii), the direct limit injects into the $G(\Q_p)\times \check{G}(\Q_p)$-smooth vectors in $H_c^i(\mathcal{M},\Z_\ell)$. On the other hand, take a vector $v\in H_c^i(\mathcal{M},\Z_\ell)$ which is invariant under a compact open subgroup $U\subset G(\Q_p)$, without loss of generality pro-$p$. By averaging over $U$ (which is possible, as $U$ is pro-$p$), we see that $v$ comes from $H_c^i(\mathcal{M}_U,\Z_\ell)$, as desired.
\end{altenumerate}
\end{proof}

\begin{rem} We remark that the proof shows that in part (iii), the $G(\Q_p)$-smooth vectors are the same as the $\check{G}(\Q_p)$-vectors, which are then the $G(\Q_p)\times \check{G}(\Q_p)$-smooth vectors identified in part (iii).
\end{rem}

In the equal characteristic case, Weinstein, \cite{Weinstein}, has considered the Lubin-Tate case explicitly. In that case, the theory of Drinfeld level structures give natural integral models of $\mathcal{M}_n$ for all $n$, showing that in fact
\[
\mathcal{M}_n\cong (\Spf \bar{\mathbb{F}}_p[[X_{1,n},\ldots,X_{h,n}]])_\eta\ ,
\]
where the canonical coordinates $X_{1,n},\ldots,X_{h,n}$ come from the level-$n$-structure. This permits one to show by explicit computation that
\[
\mathcal{M}_\infty\cong (\Spf \bar{\mathbb{F}}_p[[X_1^{1/p^\infty},\ldots,X_h^{1/p^\infty}]])_\eta\ ,
\]
which obviously has the desired property of being perfectoid. It should be noted that these spaces live over the base field $\F_q[[t]]$, but it is rather hard to write down the element $t$ as an element of $\bar{\mathbb{F}}_p[[X_1^{1/p^\infty},\ldots,X_h^{1/p^\infty}]]$ (a formula appears at the very end of \cite{FarguesTwoTowers}).

The paper \cite{ScholzeWeinstein} also contains a result on Dieudonn\'e theory.

\begin{thm}[{\cite[Theorem 4.1.4]{ScholzeWeinstein}}] Let $R$ be a ring of characteristic $p$ which is the quotient of a perfect ring by a finitely generated ideal. Then the Dieudonn\'e module functor on $p$-divisible groups is fully faithful up to isogeny.
\end{thm}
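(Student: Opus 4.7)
The plan is to reduce to the known case of a perfect base, combining two rigidity results: the classical fully faithfulness of the crystalline Dieudonn\'e functor over perfect $\F_p$-algebras (due to Berthelot, with refinements by Gabber and Lau), and Illusie's theorem that the isogeny category of $p$-divisible groups is invariant under nilpotent thickenings --- the very result invoked earlier in the excerpt to put a crystal structure on the universal cover $\tilde G$.

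Write $R = R_0/I$ with $I = (f_1,\ldots,f_n)$ finitely generated. For each $k\geq 1$, the surjection $R_0/I^k \twoheadrightarrow R$ has nilpotent kernel, so by Illusie the natural functor from $p$-divisible groups up to isogeny over $R_0/I^k$ to those over $R$ is an equivalence of categories, and passing to the limit the same holds with $R$ replaced by the $I$-adic completion $\hat R_0 := \varprojlim_k R_0/I^k$. On the other side, the crystal nature of the Dieudonn\'e module identifies $\mathbb{D}(G)$ on $R$ with a compatible system of Dieudonn\'e modules on each $R_0/I^k$, hence with a Dieudonn\'e module on $\hat R_0$. These two compatibilities together let me replace $R$ by $\hat R_0$ throughout, so the statement to prove becomes: the Dieudonn\'e functor is fully faithful up to isogeny on $p$-divisible groups over $\hat R_0$.

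Given $G, G'$ over $\hat R_0$ and a map of Dieudonn\'e modules $\psi: \mathbb{D}(G) \to \mathbb{D}(G')$, the strategy is to construct $\phi: G \to G'$ with $\mathbb{D}(\phi) = p^N\psi$ for some $N$ by producing compatible maps $\phi_k: G_k \to G'_k$ over each $R_0/I^k$ with uniformly bounded denominators, and then descending the resulting system; for faithfulness, one must show that $\mathbb{D}(\phi)=0$ forces $p^N \phi=0$, which again becomes a statement about uniform annihilation across the system. The bulk of the work is to relate Hom-sets of Dieudonn\'e modules over the non-perfect rings $R_0/I^k$ to Hom-sets over the perfect ring $R_0$; here one exploits flatness of $R_0 \to \hat R_0$ and the finiteness of the generating set of $I$ to bound the denominators uniformly in $k$, so that Mittag-Leffler applies to the pro-system and one can conclude. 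The finite generation of $I$ is essential: it is what makes the $I$-adic completion well-behaved and ensures the uniform bounds in the approximation.

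The main obstacle is that a $p$-divisible group $G$ over $R$ need not lift to the perfect ring $R_0$ itself, only to each nilpotent quotient $R_0/I^k$. Hence one cannot simply apply the known perfect-base case to $R_0$; the perfect-base input has to be used indirectly, via the Dieudonn\'e theory over the pro-system $(R_0/I^k)$ together with a careful limiting argument. Controlling the denominators (the ``up to isogeny'' part) uniformly as $k\to\infty$ is the delicate point, and is precisely where the finiteness of $I$ enters in an essential way.
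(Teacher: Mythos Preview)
This survey does not itself prove the theorem; it cites \cite{ScholzeWeinstein} and records one essential feature of that proof: ``the proof of this fully faithfulness result requires the use of perfectoid spaces, and most notably the almost purity theorem.'' Your proposal attempts instead a direct reduction to the known perfect case via Illusie's deformation theorem, avoiding perfectoid input entirely. This does not work as written, and the gap is exactly the obstacle you name in your final paragraph but do not resolve.

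Concretely: your $p$-divisible groups $G, G'$ lift (up to isogeny) to each nilpotent quotient $R_0/I^k$, but not to $R_0$ itself, and $\hat R_0$ is not perfect. So there are no $p$-divisible groups over the perfect ring $R_0$ to which the known full faithfulness could be applied. When you write that ``the bulk of the work is to relate Hom-sets of Dieudonn\'e modules over the non-perfect rings $R_0/I^k$ to Hom-sets over the perfect ring $R_0$,'' no mechanism is offered; invoking ``flatness of $R_0 \to \hat R_0$'' is both unhelpful (the groups are not defined over $R_0$) and doubtful (perfect $\F_p$-algebras are almost never noetherian, so $I$-adic completion need not be flat). Producing the maps $\phi_k$ over $R_0/I^k$ from the Dieudonn\'e data already presupposes full faithfulness over the non-perfect ring $R_0/I^k$, which is the very statement you are proving --- the argument is circular at its core. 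There is also a secondary gap: the passage from the equivalences over each $R_0/I^k$ to one over $\hat R_0$ is asserted ``by passing to the limit,'' but $\hat R_0 \to R$ is not a nilpotent thickening, so Illusie does not apply directly, and a compatible system of equivalences of isogeny categories does not formally yield the isogeny category over the limit ring. The actual argument in \cite{ScholzeWeinstein} is not a refinement of this outline but a genuinely different approach through perfectoid algebras and almost purity.
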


Our result is slightly more precise than that, and in the case that $R$ is perfect, it recovers the fact that the Dieudonn\'e module functor is fully faithful, not just up to isogeny. Interestingly, the proof of this fully faithfulness result requires the use of perfectoid spaces, and most notably the almost purity theorem! In fact, this result from Dieudonn\'e theory bridges the gap between universal covers of $p$-divisible groups, and Fontaine's rings. Indeed, one gets the following corollary, where $B_\cris^+$ is Fontaine's crystalline period ring (associated to $C$).

\begin{cor} Let $C$ be an algebraically closed complete extension of $\Q_p$. Let $H/\bar{\mathbb{F}}_p$ be a $p$-divisible group, and write $\tilde{H}$ for its universal cover (lifted canonically to $\OO_C$). Then
\[
\tilde{H}(\OO_C) = (M(H)\otimes B_\cris^+)^{\varphi = p}\ ,
\]
where $M(H)$ is the Lie algebra of the universal vector extension of $H$. Under this identification, the quasi-logarithm map
\[
\qlog: \tilde{H}(\OO_C)\to M(H)\otimes C
\]
gets identified with
\[
1\otimes \Theta: M(H)\otimes B_\cris^+\to M(H)\otimes C\ ,
\]
where $\Theta: B_\cris^+ \to C$ is Fontaine's map.
\end{cor}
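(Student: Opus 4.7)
The plan is to derive the identification from the fully-faithfulness theorem for the Dieudonn\'e module functor stated just above, combined with evaluation of the Dieudonn\'e crystal of $H$ on the PD thickening $A_\cris \twoheadrightarrow \OO_C$ that underlies Fontaine's construction of $B_\cris^+$.

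First I would unwind the definitions. Since $\tilde H = \varprojlim_{\times p} H$, one has
\[
\tilde H(\OO_C) = \Hom_{\OO_C}(\Q_p/\Z_p, H_{\OO_C})\otimes_{\Z_p}\Q_p
\]
for any lift $H_{\OO_C}$ of $H$ to $\OO_C$, and by the crystal property of $\tilde H$ recalled in Section~4 this group depends only on the reduction mod $p$. Applying the preceding Dieudonn\'e theorem to $R=\OO_C/p$, which is the quotient of the perfect ring $\OO_{C^\flat}$ by the principal ideal generated by a generator of $\ker\Theta$, identifies $\tilde H(\OO_C)$ with the isogeny $\Hom$-space between the Dieudonn\'e crystals of $\Q_p/\Z_p$ and of $H_{\OO_C/p}$.

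Second, I would evaluate these Dieudonn\'e crystals on the PD thickening $A_\cris \twoheadrightarrow \OO_C/p$. Because $H$ and $\Q_p/\Z_p$ descend to $\bar{\F}_p$, the crystals are base-changed from $W(\bar{\F}_p)$: for $H$ one gets $M(H)\otimes_{W(\bar{\F}_p)} A_\cris$ with Frobenius $F\otimes\varphi$, and for $\Q_p/\Z_p$ one gets $A_\cris$ with the Frobenius that (in the covariant convention) acts as the identity on the generator, so $V$ acts as multiplication by $p$. A Frobenius-equivariant morphism between the two crystals is then an element $x\in M(H)\otimes A_\cris$ satisfying $(F\otimes\varphi)(x)=p\cdot x$; absorbing $F$ into the $M(H)$-factor and inverting $p$ gives exactly the condition $\varphi(x)=p\cdot x$ on $M(H)\otimes B_\cris^+$.

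Third, for the compatibility with $\qlog$, I would use that the same PD thickening $A_\cris\twoheadrightarrow \OO_C$ is the one underlying Grothendieck-Messing theory. For any deformation $(G,\rho)$ of $H$ to $\OO_C$, the transition map from the crystal value on $A_\cris$ to its value on $\OO_C$ is precisely $1\otimes\Theta:M(H)\otimes A_\cris \to M(H)\otimes \OO_C = \Lie(EG)$. The definition of $\qlog$ was arranged so that the composite $\tilde H(\OO_C)\to G(\OO_C)\overset{\log}{\to} \Lie G\otimes C$ factors as the embedding $\tilde H(\OO_C)\hookrightarrow M(H)\otimes B_\cris^+$ established above followed by $1\otimes\Theta$, so the claimed identification of $\qlog$ with $1\otimes\Theta$ is automatic from the construction.

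The main obstacle will be the bookkeeping in the second step: one must pin down the normalizations of $F$ and $\varphi$ so that the eigenvalue works out to $p$ rather than $1$ (this is sensitive to the covariant vs.\ contravariant convention), and one must verify that the preceding Dieudonn\'e theorem (stated for finitely generated quotients of a perfect ring) is compatible with evaluation on the $p$-adically complete PD thickening $A_\cris$. The latter can be handled by writing $A_\cris$ as the $p$-adic limit of its reductions modulo $p^n$, each of which is a genuine nilpotent PD thickening of $\OO_C/p$, and then passing to the limit.
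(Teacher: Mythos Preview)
The paper does not actually give a proof of this corollary: it is stated immediately after the Dieudonn\'e full-faithfulness theorem with the words ``Indeed, one gets the following corollary,'' and no argument is supplied. Your proposal is exactly the derivation the paper has in mind---use that $\tilde H(\OO_C)=\Hom_{\OO_C/p}(\Q_p/\Z_p,H)[p^{-1}]$ by the crystal property of $\tilde H$, apply the full-faithfulness theorem over $R=\OO_C/p$, and then evaluate the resulting Hom of Dieudonn\'e crystals on the PD thickening $A_\cris\twoheadrightarrow \OO_C/p$. The compatibility of $\qlog$ with $1\otimes\Theta$ then follows from Grothendieck--Messing theory exactly as you outline, using the commutative square in the proposition defining $\qlog$.

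One small imprecision: $\OO_C/p$ is the quotient of the perfect ring $\OO_{C^\flat}$ by the principal ideal generated by an element $p^\flat\in\OO_{C^\flat}$ with $(p^\flat)^\sharp=p$ (the element denoted $t$ earlier in the paper), not by ``a generator of $\ker\Theta$,'' which lives in $A_{\inf}=W(\OO_{C^\flat})$ rather than in $\OO_{C^\flat}$ itself. This does not affect the argument, since what you need is precisely that $\OO_C/p$ is a quotient of a perfect ring by a finitely generated ideal, and $(p^\flat)$ is such an ideal. The normalization issue you flag in the second step (covariant convention, so that $\mathbb{D}(\Q_p/\Z_p)$ has Frobenius equal to $\sigma$ and the condition on $x\in M(H)\otimes A_\cris$ becomes $(F\otimes\varphi)(x)=px$ after unwinding) is a genuine bookkeeping point but is standard; your caution there is appropriate.
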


This translates also Theorem \ref{ClassLattices} into $p$-adic Hodge theory terms.

\newpage

\section{Shimura varieties, and completed cohomology}

A result very similar to the result for Rapoport-Zink spaces holds for Shimura varieties. Let $\Sh_K$, $K\subset G(\mathbb{A}_f)$ be a Shimura variety of Hodge type associated to some reductive group $G$ over $\Q$, defined over the reflex field $E$. For convenience, let us assume that $\Sh_K$ is projective, so that we do not have to worry about compactifications. Let $\C_p$ be the completion of an algebraic closure of $E_{\mathfrak{p}}$, where $\mathfrak{p}|p$ is a chosen place of $E$, and denote by $\Sh_{K,\C_p}$ the adic space over $\Spa(\C_p,\OO_{\C_p})$ associated to the base-change of $\Sh_K$ to $\C_p$. The following result is work in progress.

\begin{thm}[{\cite{ScholzeTorsion}}] For any sufficiently small level $K^p\subset G(\mathbb{A}_f^p)$ away from $p$, there exists a perfectoid space $\Sh_{K^p,\C_p}$ over $\Spa(\C_p,\OO_{\C_p})$ such that $\Sh_{K^p,\C_p}\sim \varprojlim_{K_p} \Sh_{K_pK^p,\C_p}$.
\end{thm}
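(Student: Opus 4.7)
The plan is to construct the candidate perfectoid space $\Sh_{K^p,\C_p}$ explicitly via a Hodge-Tate period morphism, first for Siegel modular varieties $\mathcal{A}_g$ and then reducing the Hodge-type case to this. The central tools are the classification of $p$-divisible groups over $\OO_{\C_p}$ from Theorem \ref{ClassLattices}, Fargues's Hodge-Tate sequence (Theorem \ref{FarguesHodgeTate}), and the infinite-level Rapoport-Zink result of the previous section.

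First I would reduce to the Siegel case. Since $\Sh_K$ is of Hodge type, fix a compatible closed embedding $\Sh_{K_p K^p} \hookrightarrow \mathcal{A}_{g, K_p' K^{p\prime}}$ into a Siegel modular variety for an appropriate auxiliary level $K'$. Granting the analogous result for $\mathcal{A}_g$, define $\Sh_{K^p,\C_p}$ as the closure of the image of $\Sh_{K_p K^p, \C_p}$ inside the perfectoid Siegel space $\mathcal{A}_{g, K^{p\prime},\C_p}$, and attempt to inherit a perfectoid structure. Care is needed, as a Zariski-closed subspace of a perfectoid space need not itself be perfectoid; this would be addressed by checking that the closure is cut out locally by analytic equations admitting compatible $p$-power roots coming from the tower.

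For the Siegel case, at each finite level $K_p \subset \GSp_{2g}(\Z_p)$, the universal principally polarized abelian variety over $\mathcal{A}_{g, K_p K^p, \C_p}$ has $p$-divisible group $G = A[p^\infty]$ with $K_p$-level structure. Using Proposition \ref{DualityAbVar} together with Fargues's map $\alpha_G : \Lambda \otimes_{\Z_p} \OO_{\C_p} \to (\Lie G^\ast)^\ast$, I would construct in the inverse limit a $\GSp_{2g}(\Q_p)$-equivariant Hodge-Tate period morphism
\[
\pi_\HT : \varprojlim_{K_p} \mathcal{A}_{g, K_p K^p, \C_p} \to \Fl_{G,\mu},
\]
where $\Fl_{G,\mu}$ is the Lagrangian Grassmannian of totally isotropic $g$-planes in a fixed symplectic $2g$-space. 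Cover $\Fl_{G,\mu}$ by rational affinoid opens $U_i$; the key claim is that the preimages $V_{i,\infty} = \varprojlim_{K_p} \pi_{\HT,K_p}^{-1}(U_i)$ are each representable by an affinoid perfectoid space in the sense of Definition \ref{InverseLimit}. Over a geometric point of $U_i$ the data amounts, by Theorem \ref{ClassLattices}, to a lattice $\Lambda \subset \tilde H(\OO_{\C_p})$ with prescribed quotient $W$ determined by the point; thus the fiber matches an infinite-level Rapoport-Zink space $\mathcal{M}_{\infty,\C_p}$ attached to a $p$-divisible group $H$ over $\bar{\F}_p$, which is already perfectoid. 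To globalize, one uses that \'etale-locally on $U_i$ the quasi-isogeny class of the special fiber of $G$ is constant (Dieudonn\'e-Manin), so $V_{i,\infty}$ can be identified with an explicit locally closed subspace of the generic fiber of $\tilde H_{\OO_{\C_p}}^{2g}$ cut out by the conditions of Theorem \ref{ClassLattices}. This target is perfectoid by Proposition \ref{UnivCovPerf}, so $V_{i,\infty}$ inherits a perfectoid structure. Gluing the $V_{i,\infty}$ produces $\mathcal{A}_{g, K^p, \C_p}$, and verification of the $\sim$ property splits into the homeomorphism of underlying spaces (which follows from the pointwise identification with Rapoport-Zink towers, whose topology is known) and the density of $\varinjlim_{K_p} R_{K_p} \to R$ on affinoid sections (which follows because deeper $K_p$-level structures supply precisely the extra $p$-power roots making up the perfectoid coordinate system).

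The hard part is the local perfectoidness of $V_{i,\infty}$: promoting the pointwise Rapoport-Zink picture to an affinoid family over $U_i \subset \Fl_{G,\mu}$. One has to produce coherent $p$-power roots of coordinates across the whole family and control the completion so that the resulting algebra is perfectoid, not merely a filtered colimit of noetherian affinoids. A positive answer to Conjecture \ref{TechnicalConj} on locality of being perfectoid would substantially shorten this step, since it would suffice to check perfectoidness pointwise (or on a cover by the individual Rapoport-Zink fibers). A secondary obstacle, in passing from Siegel to general Hodge type, is to show that closures and Hecke translates preserve perfectoidness; the right tools here are the Hecke-equivariance of $\pi_\HT$ and the fact that the flag variety $\Fl_{\Sh_K}$ embeds as a closed subvariety of $\Fl_{\mathcal{A}_g}$, so that the Hodge-type subspace is cut out inside the Siegel perfectoid space by pullback of equations from the flag varieties.
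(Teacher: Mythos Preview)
The paper does not prove this theorem; it is stated without proof and attributed to \cite{ScholzeTorsion}, explicitly flagged as work in progress. So there is no in-paper argument to compare your proposal against.

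That said, your outline has the right large-scale architecture of the argument eventually given in \cite{ScholzeTorsion}: treat the Siegel case first, build a Hodge--Tate period map to the flag variety, and deduce the Hodge-type case by pulling back along a closed embedding into a Siegel variety. Where your sketch diverges from the actual proof, and where it has a genuine gap, is in the mechanism for proving local perfectoidness of the $V_{i,\infty}$. You assert that ``\'etale-locally on $U_i$ the quasi-isogeny class of the special fibre of $G$ is constant (Dieudonn\'e--Manin)'', and then globalize the fibrewise Rapoport--Zink description. This step fails: the Newton stratification on the special fibre of the Siegel variety is not locally constant; strata specialise, and there is no \'etale neighbourhood on the generic fibre over which the isogeny class of $G\otimes\OO/p$ is fixed. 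Consequently the identification of $V_{i,\infty}$ with a locally closed piece of $(\tilde{H}_{\OO_{\C_p}})^{2g}_\eta$ for a \emph{single} $H/\bar{\F}_p$ cannot be made in families as you describe. There is also a circularity issue: the Hodge--Tate period map $\pi_\HT$ is only well-defined as a morphism of adic spaces once one already has the perfectoid space at infinite level (or at least some controlled limit object), so one cannot use preimages under $\pi_\HT$ as the starting point for proving perfectoidness.

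The actual argument in \cite{ScholzeTorsion} proceeds differently at this crucial point. One first singles out an explicit open locus --- the anticanonical tower over a neighbourhood of the ordinary locus --- where the canonical subgroup provides a Frobenius lift and one can prove perfectoidness by hand via an explicit tilting computation. Only then is the Hodge--Tate period map constructed on this locus. Finally, one uses the $\GSp_{2g}(\Q_p)$-action (Hecke correspondences at $p$) to translate the anticanonical locus around: its $\GSp_{2g}(\Q_p)$-translates cover the whole space, and perfectoidness is transported by the group action. Your instinct that Hecke-equivariance is relevant is correct, but it enters as the main engine for spreading perfectoidness, not merely as a compatibility check at the end.
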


Let us explain a consequence of this theorem, when combined with the first result on $p$-adic Hodge theory. Recall Emerton's definition of $p$-adically completed cohomology groups, in the torsion case (where no completion has to be taken):
\[
H^i(K^p,\F_p) = \varinjlim_{K_p} H^i_\et(\Sh_{K_pK^p,\bar{\Q}},\F_p)\ .
\]

\begin{cor} For $i>\dim \Sh_K$, we have $H^i(K^p,\F_p)=0$.
\end{cor}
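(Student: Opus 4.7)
The plan is to reduce the vanishing to an almost vanishing of $\OO^+/p$ cohomology on the perfectoid Shimura variety $\Sh_{K^p,\C_p}$, and then to bound the cohomological dimension of the latter via the Hodge-Tate period map.

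First I would transfer everything to the analytic/perfectoid side over $\C_p$. By proper base change and the Artin comparison for algebraic varieties, $H^i_\et(\Sh_{K_pK^p,\bar\Q},\F_p) = H^i_\et(\Sh_{K_pK^p,\C_p},\F_p)$ for every $K_p$. Each $\Sh_{K_pK^p,\C_p}$ is a proper smooth rigid-analytic variety over $\C_p$ of dimension $n:=\dim\Sh_K$, so Theorem \ref{Thm2} yields an almost isomorphism
\[
H^i_\et(\Sh_{K_pK^p,\C_p},\F_p)\otimes_{\F_p}\OO_{\C_p}/p \;\longrightarrow\; H^i(\Sh_{K_pK^p,\C_p,\et},\OO^+/p)\ .
\]
Taking the filtered direct limit over $K_p$ (exact, and preserving almost isomorphisms) and using the $\sim$-limit compatibility of $\OO^+/p$ coming from $\Sh_{K^p,\C_p}\sim\varprojlim_{K_p}\Sh_{K_pK^p,\C_p}$ (via the density property in Definition \ref{InverseLimit}, checked on an affinoid perfectoid cover), I obtain an almost isomorphism
\[
H^i(K^p,\F_p)\otimes_{\F_p}\OO_{\C_p}/p \;\longrightarrow\; H^i(\Sh_{K^p,\C_p,\et},\OO^+/p)\ .
\]
It thus suffices to prove that the right-hand side is almost zero for $i>n$.

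Next I would invoke the Hodge-Tate period map $\pi_{HT}\colon\Sh_{K^p,\C_p}\to\Fl$ (to be constructed in \cite{ScholzeTorsion}), where $\Fl$ is the adic flag variety attached to the Shimura datum, of dimension $n$. The crucial geometric input, where the specific structure of $\Sh_{K^p,\C_p}$ enters, is that $\pi_{HT}$ is affinoid: for sufficiently small affinoid open $U\subset\Fl$, the preimage $\pi_{HT}^{-1}(U)$ is affinoid perfectoid. Since $\Fl$ is projective of dimension $n$, it admits a cover by $n+1$ such affinoid opens $U_0,\dots,U_n$ (e.g.\ by complements of hypersurface sections in general position), and all finite intersections are again affinoid. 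The corresponding cover $\{\pi_{HT}^{-1}(U_i)\}$ of $\Sh_{K^p,\C_p}$ then consists of affinoid perfectoids, as do all its finite intersections. On any affinoid perfectoid, $H^j(-,\OO^+/p)$ is almost zero for $j>0$ by the basic perfectoid vanishing. The \v{C}ech-to-derived spectral sequence for this cover therefore degenerates almost to its $q=0$ row, a \v{C}ech complex with $n+1$ terms supported in degrees $\leq n$. Hence $H^i(\Sh_{K^p,\C_p,\et},\OO^+/p)$ is almost zero for $i>n$, and combining with the first step gives the desired vanishing of $H^i(K^p,\F_p)$.

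The main obstacle is not in the formal argument but in its substantive geometric input: the existence of $\pi_{HT}$ and its affinoid property at infinite level, which is the heart of \cite{ScholzeTorsion}. Granted that, the present deduction is essentially a combination of the almost comparison (Theorem \ref{Thm2}) with a \v{C}ech covering argument driven by $\dim\Fl=n$. A secondary technical point is the $\sim$-limit compatibility of $\OO^+/p$ cohomology under the tower; this is handled by restricting to affinoid perfectoid opens of the form $\pi_{HT}^{-1}(U)$, which arise as $\sim$-limits of the corresponding opens at finite level, and appealing to the density statement in Definition \ref{InverseLimit}.
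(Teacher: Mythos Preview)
Your reduction to almost vanishing of $H^i_\et(\Sh_{K^p,\C_p},\OO^+/p)$ via Theorem \ref{Thm2} and the $\sim$-limit is exactly what the paper does. From there, however, the paper takes a more elementary route: using that $H^j_\et(U,\OO^+/p)$ is almost zero for $j>0$ on affinoid perfectoids, the Leray spectral sequence for $\nu:\Sh_{K^p,\C_p,\et}\to\Sh_{K^p,\C_p,\an}$ gives an almost isomorphism $H^i_\et(\OO^+/p)\cong H^i_\an(\OO^+/p)$, and then one appeals to a standard bound on the cohomological dimension of the underlying spectral space $|\Sh_{K^p,\C_p}|=\varprojlim_{K_p}|\Sh_{K_pK^p,\C_p}|$, whose Krull dimension is $\leq n$ because the transition maps are finite. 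Your argument instead imports the Hodge-Tate period map and its affinoidness, which is a far deeper input (indeed, as you say, the heart of \cite{ScholzeTorsion}); granted that, your \v{C}ech argument is correct and gives the same bound. What your approach buys is a concrete, finitary acyclic cover coming from the geometry of $\Fl$ --- this becomes genuinely useful in harder situations (e.g.\ noncompact Shimura varieties or finer structural statements), but for the bare vanishing statement here it is unnecessary: the paper's topological-dimension argument avoids invoking $\pi_{HT}$ altogether.
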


\begin{proof} First, we can rewrite
\[
H^i(K^p,\F_p) = \varinjlim_{K_p} H^i_\et(\Sh_{K_pK^p,\bar{\Q}},\F_p) = \varinjlim_{K_p} H^i_\et(\Sh_{K_pK^p,\C_p},\F_p)\ .
\]
It is enough to prove that $H^i(K^p,\F_p)\otimes_{\F_p} \OO_{\C_p}/p$ is almost zero. But now
\[
H^i(K^p,\F_p)\otimes_{\F_p} \OO_{\C_p}/p = \varinjlim_{K_p} (H^i_\et(\Sh_{K_pK^p,\C_p},\F_p)\otimes_{\F_p} \OO_{\C_p}/p)\ ,
\]
and the latter is almost equal to
\[
\varinjlim_{K_p} H^i_\et(\Sh_{K_pK^p,\C_p},\OO_{\Sh_{K_pK^p,\C_p}}^+/p)
\]
by Theorem \ref{Thm2}. Now we use that $\Sh_{K^p,\C_p}\sim \varprojlim_{K_p} \Sh_{K_pK^p,\C_p}$ (which implies in particular a similar relation among \'etale topoi), giving
\[
\varinjlim_{K_p} H^i_\et(\Sh_{K_pK^p,\C_p},\OO_{\Sh_{K_pK^p,\C_p}}^+/p) = H^i_\et(\Sh_{K^p,\C_p},\OO_{\Sh_{K^p,\C_p}}^+/p)\ .
\]
But note that for affinoid perfectoid spaces $X$, $H^i_\et(X,\OO_X^+/p)$ is almost zero for $i>0$. It follows that $H^i_\et(\Sh_{K^p,\C_p},\OO_{\Sh_{K^p,\C_p}}^+/p)$ and $H^i_\an(\Sh_{K^p,\C_p},\OO_{\Sh_{K^p,\C_p}}^+/p)$ are almost equal. But now standard bounds on the cohomological dimension of topological spaces give the desired vanishing result.
\end{proof}

The corollary implies Conjecture 1.5 of Calegari and Emerton, \cite{CalegariEmerton}, in the case of (compact) Shimura varieties of Hodge type, except for nonstrict instead of strict inequalities on the codimensions. More applications of these ideas will appear in \cite{ScholzeTorsion}.

\bibliographystyle{abbrv}
\bibliography{CDM}

\end{document}